\def\red{\color{red}}
\def\rr{{\mathbb{R}}}
\def\rn{{\mathbb{R}^n}}
\def\zz{{\mathbb Z}}
\def\nn{{\mathbb N}}
\def\cp{{\mathbb P}}
\def\cs{{\mathcal S}}
\def\fz{\infty }
\def\az{\alpha}
\def\bz{\beta}
\def\dz{\delta}
\def\lz{\lambda}
\def\vaz{\varepsilon}
\def\lf{\left}
\def\r{\right}
\def\hs{\hspace{0.25cm}}
\def\ls{\lesssim}
\def\gs{\gtrsim}
\def\noz{\nonumber}
\def\wz{\widetilde}
\def\com{\complement}
\def\supp{\mathop\mathrm{\,supp\,}}
\def\esup{\mathop\mathrm{\,ess\,sup\,}}
\def\gfz{\genfrac{}{}{0pt}{}}
\def\va{\vec{a}}
\def\vp{\vec{p}}
\def\HL{M_{{\rm HL}}}
\def\vh{{H_A^{\vp}(\rn)}}
\def\vah{{H_A^{\vp,r,s}(\rn)}}
\def\vAh{{H_{A}^{p}(\rn)}}
\def\vahfz{{H_A^{\vp,\fz,s}(\rn)}}
\def\vfah{{H_{A,\,{\rm fin}}^{\vp,r,s}(\rn)}}
\def\vfahfz{{H_{A,\,{\rm fin}}^{\vp,\fz,s}(\rn)}}
\def\lq{\mathcal{L}_{\vp,\,q,\,s}^{A}(\rn)}
\def\lr{\mathcal{L}_{\vp,\,r',\,s}^{A}(\rn)}
\def\lv{{L^{\vp}(\rn)}}
\def\Qik{{Q_i^k}}
\def\lik{{\lz_i^k}}
\def\aik{{a_i^k}}
\def\xik{{x_i^k}}
\newtheorem{theorem}{Theorem}[section]
\newtheorem{lemma}[theorem]{Lemma}
\newtheorem{corollary}[theorem]{Corollary}
\newtheorem{proposition}[theorem]{Proposition}
\theoremstyle{definition}
\newtheorem{remark}[theorem]{Remark}
\newtheorem{definition}[theorem]{Definition}
\renewcommand{\appendix}{\par
   \setcounter{section}{0}%
   \setcounter{subsection}{0}%
   \setcounter{subsubsection}{0}%
   \gdef\thesection{\@Alph\c@section}%
   \gdef\thesubsection{\@Alph\c@section.\@arabic\c@subsection}%
   \gdef\theHsection{\@Alph\c@section.}%
   \gdef\theHsubsection{\@Alph\c@section.\@arabic\c@subsection}%
   \csname appendixmore\endcsname
 }
\numberwithin{equation}{section}
\begin{document}

\arraycolsep=1pt

\title{\bf\Large Real-Variable Characterizations of
New Anisotropic Mixed-Norm Hardy Spaces\footnotetext{\hspace{-0.35cm} 2010 {\it
Mathematics Subject Classification}. Primary 42B35;
Secondary 42B30, 46E30, 42B25, 42B20.
\endgraf {\it Key words and phrases.}
expansive matrix, (mixed-norm) Hardy space,
(mixed-norm) Campanato space, maximal function, (finite) atom,
Littlewood--Paley function, duality, Calder\'{o}n--Zygmund operator.
\endgraf This project is supported by the National
Natural Science Foundation of China
(Grant Nos.~11971058, 11761131002, 11671185 and 11871100).
Jun Liu is also supported by the Scientific Research Foundation
of China University of Mining and Technology (Grant No.~102519054).}}
\author{Long Huang, Jun Liu, Dachun Yang\footnote{Corresponding author/{\red
October 10
, 2019}/Final version.}\ \ and Wen Yuan}
\date{}
\maketitle

\vspace{-0.9cm}

\begin{center}
\begin{minipage}{13cm}
{\small {\bf Abstract}\quad
Let $\vec{p}\in(0,\infty)^n$ and $A$ be a general expansive
matrix on $\mathbb{R}^n$. In this article, via the non-tangential
grand maximal function, the authors first
introduce the anisotropic mixed-norm
Hardy spaces $H_A^{\vec{p}}(\mathbb{R}^n)$ associated with $A$
and then establish their radial or non-tangential
maximal function characterizations. Moreover, the authors characterize
$H_A^{\vec{p}}(\mathbb{R}^n)$, respectively, by means of atoms,
finite atoms, Lusin area functions, Littlewood--Paley $g$-functions
or $g_{\lambda}^\ast$-functions via first establishing an anisotropic
Fefferman--Stein vector-valued inequality on the
mixed-norm Lebesgue space $L^{\vec{p}}(\mathbb{R}^n)$.
In addition, the authors also obtain the duality between
$H_A^{\vec{p}}(\mathbb{R}^n)$ and the anisotropic mixed-norm Campanato spaces.
As applications, the authors establish a criterion on the
boundedness of sublinear operators from $H_A^{\vec{p}}(\mathbb{R}^n)$ into
a quasi-Banach space. Applying this criterion, the authors then
obtain the boundedness of anisotropic convolutional $\delta$-type
and non-convolutional $\beta$-order Calder\'{o}n--Zygmund
operators from $H_A^{\vec{p}}(\mathbb{R}^n)$ to itself
[or to $L^{\vec{p}}(\mathbb{R}^n)$]. As a corollary,
the boundedness of anisotropic convolutional $\delta$-type Calder\'on--Zygmund
operators on the mixed-norm Lebesgue space $L^{\vec{p}}(\mathbb{R}^n)$ with
$\vec{p}\in(1,\infty)^n$ is also presented.}
\end{minipage}
\end{center}

\vspace{0.2cm}

\section{Introduction\label{s1}}

The main purpose of this article is to introduce a new kind of
anisotropic mixed-norm Hardy spaces on $\rn$.
It is well known that the classical Hardy space $H^p(\rn)$, with $p\in(0,1]$,
is a good substitute for the Lebesgue space $L^p(\rn)$, particularly, when studying
the boundedness of maximal functions and Calder\'{o}n--Zygmund operators
(see \cite{s93}).
Nowadays, the theory of $H^p(\rn)$ has been systematically studied
and proved useful in many mathematical fields such as harmonic analysis and
partial differential equations; see, for instance, \cite{fs72,m94,s93,sw60}.

Moreover, due to the notable work of Calder\'{o}n and
Torchinsky \cite{ct75} on parabolic
Hardy spaces and also in order to meet the requirements
arising in the development of harmonic analysis and partial differential
equations, there has been an increasing interest
in extending classical function spaces from Euclidean spaces to
some more general underlying spaces; see, for instance,
\cite{mb03,cgn17,hlyy,t06,yyh13}.
In particular, as a generalization of both the isotropic
Hardy space and the parabolic Hardy space of Calder\'{o}n and Torchinsky
\cite{ct75}, in 2003, Bownik \cite{mb03} first introduced
the anisotropic Hardy space $H_A^p(\rn)$ with $p\in(0,\fz)$,
where $A$ is a general expansive matrix on $\rn$
(see Definition \ref{2d1} below).
Later on, Bownik et al. \cite{blyz08} further extended the
anisotropic Hardy space to the weighted setting.
For more progresses about the real-variable theory of function
spaces in this anisotropic setting, we refer the reader to
\cite{abr16,lby14,lyy16,lyy17hl,lyy17I,t06}.
Nowadays, the anisotropic dilation on Euclidean
space $\rn$ has proved useful not only in developing the
theory of various function spaces,
but also in some other fields such as partial differential equations
(see, for instance, \cite{jm06}) and wavelet theory
(see, for instance, \cite{mb03,zl18}).

On another hand, the mixed-norm Lebesgue space $\lv$, with
the exponent vector $\vp\in (0,\fz]^n$, is a natural generalization
of the classical Lebesgue space $L^p(\rn)$, via replacing
the constant exponent $p$ by an exponent vector $\vp$.
The study of mixed-norm Lebesgue spaces originates from Benedek and Panzone
\cite{bp61} in the early 1960's, which can be traced back to H\"{o}rmander \cite{h60}.
Later on, in 1970, Lizorkin \cite{l70} further investigated both the theory of
multipliers of Fourier integrals and estimates of convolutions
in the mixed-norm Lebesgue spaces.
In particular, based on the mixed-norm Lebesgue space,
the real-variable theory of mixed-norm
function spaces, including mixed-norm Morrey spaces,
mixed-norm Hardy spaces, mixed-norm Besov
spaces and mixed-norm Triebel--Lizorkin spaces, has rapidly been developed
in recent years; see, for instance,
\cite{cs,cgn17-2,gjn17,htw17,tn} and also the survey
article \cite{hy}.

Moreover, since the mixed-norm function spaces have finer structures
than the corresponding classical function spaces, they naturally arise
in the studies on the solutions of partial differential equations
used to model physical processes involving both in space
and time variables, such as the heat or the wave equations (particularly,
the very useful Strichartz estimates); see, for instance,
\cite{kpv93,kim08,tao06}. This is based on the fact that,
while treating some linear or nonlinear equations, functions with different
orders of integrability in different variables induce a better regularity
(of traces) of solutions; see, for instance, \cite{gs91,w02}.
Another motivation to develop the real-variable theory of
mixed-norm function spaces comes from
bilinear estimates and their vector-valued extensions
which have proved useful in partial differential equations involving functions
in $n$ dimension space variable $x$ and one dimension time variable $t$;
see, for instance, \cite{bm16,bm18-2,fk00,htw17}. In particular, in order to obtain
the smoothing properties of bilinear operators and Leibniz-type rules in
mixed-norm Lebesgue spaces, Hart et al. \cite{htw17} introduced the mixed-norm Hardy
space $H^{p,q}(\mathbb{R}^{n+1})$ with $p,\,q \in(0,\fz)$
via the Littlewood--Paley g-function.
Precisely, as was mentioned in \cite[p.\,8586]{htw17}, the space
$H^{p,q}(\mathbb{R}^{n+1})$ plays an important role in overcoming the difficulty
caused by full derivatives both in the space variable $x$ and the time variable $t$
in the mixed-norm Lebesgue spaces.
For more progresses about the applications of mixed-norm function
spaces in partial differential equations, we refer the reader to
\cite{cmr13,dk19,kim08,k07}.

In addition, very recently, Cleanthous et al.
\cite{cgn17} introduced the anisotropic mixed-norm Hardy space
${H_{\va}^{\vp}(\rn)}$ associated with an anisotropic
quasi-homogeneous norm $|\cdot|_{\va}$, where
$$\va:=(a_1,\ldots,a_n)\in [1,\fz)^n\quad
{\rm and}\quad \vp:=(p_1,\ldots, p_n)\in (0,\fz)^n,$$
via the non-tangential grand maximal function and then established its radial
or its non-tangential maximal function characterizations. To complete the
real-variable theory of this Hardy space ${H_{\va}^{\vp}(\rn)}$,
Huang et al. \cite{hlyy} established several equivalent
real-variable characterizations of ${H_{\va}^{\vp}(\rn)}$, respectively,
in terms of atoms, finite atoms, Lusin area functions, Littlewood--Paley $g$-functions or
$g_{\lambda}^\ast$-functions and also obtained the boundedness of anisotropic
Calder\'{o}n--Zygmund operators from $H_{\va}^{\vp}(\rn)$ to itself [or to $\lv$].
In \cite{hlyy18}, via the atomic and the finite atomic characterizations
from \cite{hlyy}, Huang et al. further proved that the dual space of
${H_{\va}^{\vp}(\rn)}$, with $\vp\in(0,1]^n$, is the
anisotropic mixed-norm Campanato space.
For more progresses about the real-variable theory of this sort of anisotropic
function spaces, we refer the reader to
\cite{cgn17,jms15,yam86a}.

Based on the aforementioned mixed-norm Lebesgue space $\lv$ in \cite{bp61} and also
motivated by the works of Cleanthous et al. \cite{cgn17} as well as Hart et al. \cite{htw17},
in this article, we introduce a new kind of anisotropic
mixed-norm Hardy spaces $\vh$ with $\vp\in(0,\fz)^n$ and some
general expansive matrix $A$, via borrowing some ideas from the real-variable
theories of the anisotropic Hardy space $H_A^p(\rn)$ in \cite{mb03} and
the anisotropic mixed-norm Hardy space ${H_{\va}^{\vp}(\rn)}$
in \cite{cgn17,hlyy,hlyy18}.
To be precise, the anisotropic mixed-norm Hardy space $\vh$ is first introduced via the
non-tangential grand maximal function and then characterized
by means of radial or non-tangential maximal functions.
Moreover, via establishing an anisotropic Fefferman--Stein vector-valued inequality
of the Hardy--Littlewood maximal operator on the mixed-norm
Lebesgue space, various real-variable characterizations of $\vh$, respectively,
in terms of atoms, finite atoms and the square functions, including
Lusin area functions, Littlewood--Paley $g$-functions or
$g_{\lambda}^\ast$-functions are also presented. As applications
of these real-variable characterizations,
we first give the dual space of $\vh$ and then establish a criterion on the
boundedness of sublinear operators from $\vh$ into a quasi-Banach space.
Applying this criterion, we further consider the boundedness
of anisotropic convolutional $\delta$-type and non-convolutional
$\beta$-order Calder\'{o}n--Zygmund operators from $\vh$ to itself [or to $\lv$].
As a corollary, the boundedness of anisotropic convolutional
$\dz$-type Calder\'on--Zygmund operators on the mixed-norm Lebesgue space
$\lv$ for any given $\vp\in(1,\fz)^n$ is also obtained.

To the best of our knowledge, Bownik and Wang very recently revealed some connections of
anisotropic Hardy spaces $H^p_A(\rn)$ with partial
differential equations in \cite{bw19},
namely, they first obtained a differential characterization for $H^p_A(\rn)$
and, applying this characterization, further established a parabolic differential equation
characterization of $H^p_A(\rn)$. Moreover,
Johnsen et al. \cite[Theorem 6.12]{jms15} studied
necessary conditions for the existence of a solution of a heat equation in
anisotropic mixed-norm Lizorkin--Triebel spaces, where the anisotropic
setting considered in \cite{jms15} is a special case of the anisotropy
investigated in the present article. In addition, Dong and Kim \cite{dk18}
recently established several generalized versions of the Fefferman--Stein theorem
on sharp functions in spaces of homogeneous type with weights and,
using them, Dong and Kim \cite{dk18} further established a priori weighted
mixed-norm estimates for solutions of elliptic and parabolic equations
and systems with BMO coefficients in divergence and non-divergence form,
and Dong et al. \cite{dkp19} also investigated the local mixed-norm estimate
for solutions of Stokes system.
Note that the underlying space of the present article is a special case of
spaces of homogeneous type. Thus, the generalized versions of
Fefferman--Stein theorem on sharp functions obtained in \cite{dk18}
also hold true for the anisotropic setting in this article.
However, it is still a challenging problem to find some applications
of $\vh$ in partial differential equations.

The organization of this article is as follows.

In Section \ref{s2}, we first present some notation
used in this article, including mixed-norm Lebesgue spaces and
expansive matrices as well as some known facts on
homogeneous quasi-norms from \cite{mb03}. The new
anisotropic mixed-norm Hardy space $\vh$ is also
defined in this section via the non-tangential grand maximal function.

Section \ref{s3} is devoted to characterizing the space $\vh$
by means of the radial or the non-tangential
maximal functions (see Theorem \ref{3t1} below).
For this purpose, via an auxiliary inequality (see Lemma \ref{3l2} below)
from \cite{abr16}, and first establishing the boundedness of the anisotropic Hardy--Littlewood
maximal operator [see \eqref{3e1} below] on $\lv$ with $\vp\in(1,\fz)^n$
(see Lemma \ref{3l1} below), we show that the
$\lv$ quasi-norm of the tangential maximal function $T^{N(K,L)}_\phi(f)$
can be controlled by the same norm of the non-tangential maximal function $M^{(K,L)}_\phi(f)$
for any $f\in\cs'(\rn)$ (see Lemma \ref{3l4} below), where $K\in\zz$
is the truncation level, $L\in[0,\fz)$ the decay level,
$N\in\nn\cap(\frac1{p_-},\fz)$ with $p_-:=\min\{p_1,\ldots,p_n\}$
and $\cs'(\rn)$ denotes the set of all tempered distributions on
$\rn$. By this, the monotone convergence property of increasing
sequences on $\lv$ (see Lemma \ref{3l5} below) and the obtained boundedness of the anisotropic
Hardy--Littlewood maximal operator on $\lv$ again, we then prove Theorem \ref{3t1}.
Moreover, using the obtained radial maximal function characterizations of $\vh$
and Lemma \ref{3l1}, we also show that, for any given $\vp\in(1,\fz)^n$, $\vh=\lv$
with equivalent norms (see Proposition \ref{3p1} below).

In Section \ref{s4}, by borrowing some ideas from the proofs of
\cite[p.\,38, Theorem 6.4]{mb03} and \cite[Theorem 4.8]{lwyy17}
as well as \cite[Theorem 3.16]{hlyy}, we establish the atomic
characterization of $\vh$. Indeed, we first introduce the
anisotropic mixed-norm atomic Hardy space $\vah$ in Definition \ref{4d2}
below and then prove
$\vh=\vah$
with equivalent quasi-norms
(see Theorem \ref{4t1} below). To this end,
we first establish an anisotropic Fefferman--Stein vector-valued inequality of
the Hardy--Littlewood maximal operator as in \eqref{3e1}
on the mixed-norm Lebesgue space $\lv$ (see Lemma \ref{4l3} below),
where $\vp\in(1,\fz)^n$. Moreover, by borrowing
some ideas from the proof of \cite[Lemma 3.15]{hlyy}, we show
that some estimates related to $\lv$ norms for some series of functions
can be reduced into dealing with the $L^r(\rn)$ norms of the corresponding
functions (see Lemma \ref{4l4} below), which plays a key role in the proof of
Theorem \ref{4t1} and is also of independent interest.
Then, using this key lemma, the obtained vector-valued inequality and
some arguments similar to those used in the proof of \cite[Theorem 4.8]{lwyy17},
we prove that $\vah\subset\vh$ and, moreover, the inclusion is continuous.
Conversely, by \cite[p.\,32, Lemma 5.9]{mb03} and borrowing some ideas
from the proof of \cite[Lemma 3.14]{hlyy}, we find that
$\vh\cap L^{\vp/p_-}(\rn)$ is dense in $\vh$ (see Lemma \ref{4l5} below)
with $\vp\in(0,\fz)^n$ and $p_-$ as in \eqref{2e4} below.
By this density and the anisotropic Calder\'{o}n--Zygmund decomposition
associated with non-tangential grand maximal functions from
\cite[p.\,23]{mb03} as well as some arguments similar to those
used in the proof of \cite[Theorem 4.8]{lwyy17}, we then prove that $\vh$ is
continuously embedded into $\vahfz$ and hence also into $\vah$ due to the
fact that each $(\vp,\infty,s)$-atom is also a $(\vp,r,s)$-atom for
any $r\in(1,\fz)$. This proves Theorem \ref{4t1}.

The aim of Section \ref{s5} is to establish a finite atomic characterization
of $\vh$ (see Theorem \ref{5t1} below). To be exact, we first introduce the
anisotropic mixed-norm finite atomic Hardy space $\vfah$ in Definition \ref{5d1}
below and then, via borrowing some ideas from the proofs of
\cite[Theorem 5.4]{lwyy17} and \cite[Theorem 5.9]{hlyy}, we show that,
for any given finite linear combination of $(\vp,r,s)$-atoms with
$r\in(\max\{p_+,1\},\fz)$ [or continuous $(\vp,\fz,s)$-atoms],
its quasi-norm in $\vh$ can be achieved via all its finite atomic decompositions.
This actually extends \cite[Theorem 3.1 and Remark 3.3]{msv08} and
\cite[Theorem 5.9]{hlyy} to the present setting of anisotropic mixed-norm Hardy spaces.

Section \ref{s6} is devoted to establishing the square function characterizations
of $\vh$, including characterizations via the Lusin area function,
the Littlewood--Paley $g$-function or $g_{\lambda}^\ast$-function; see, respectively,
Theorems \ref{6t1} through \ref{6t3} below. To this end, via the anisotropic Calder\'{o}n
reproducing formula from \cite[Proposition 2.14]{blyz10}
(see also Lemma \ref{6l1} below), a key inequality (see Lemma \ref{6l1'} below) and
borrowing some ideas from the proof of Theorem \ref{4t1} as
well as an argument similar to that used in the proof of \cite[Theorem 6.1]{lwyy17},
we first prove Theorem 6.1.
Then, using this, an approach initiated by Ullrich \cite{u12}, which was further developed
by Liang et al. \cite{lsuyy} and Liu et al. \cite{lyy17I}, and the anisotropic Fefferman--Stein vector-valued
inequality of the Hardy--Littlewood maximal operator on $\lv$ (see Lemma \ref{4l3} below),
we obtain the Littlewood--Paley $g$-function and $g_{\lambda}^\ast$-function
characterizations of $\vh$, respectively. In addition,
applying the obtained Littlewood--Paley $g$-function
characterizations of $\vh$, we prove that the Hardy space $\vh$, introduced
in the present article, includes the Hardy space
${H_{\va}^{\vp}(\rn)}$ of Cleanthous et al. \cite{cgn17}
as a special case in the sense of equivalent quasi-norms
(see Proposition \ref{6p1} below).

In Section \ref{s7}, using the atomic and the finite atomic
characterizations of $\vh$ obtained, respectively, in Theorems \ref{4t1} and \ref{5t1},
we prove that the dual space of $\vh$ is the anisotropic mixed-norm
Campanato space (see Theorem \ref{7t1} below). For this purpose,
we first introduce a new kind of anisotropic mixed-norm
Campanato spaces $\lq$ in Definition \ref{7d1} below, which includes the
anisotropic mixed-norm Campanato space from \cite{hlyy18}, the anisotropic Campanato
space of Bownik (see \cite[p.\,50, Definition 8.1]{mb03}) and
the space $\mathop{\mathrm{BMO}}(\rn)$ of John and Nirenberg \cite{jn61}
as well as the classical Campanato space of Campanato \cite{c64} as special cases
[see (ii) and (iii) of Remark \ref{7r1} below]. Then,
by Theorems \ref{4t1} and \ref{5t1}
as well as an argument similar to that used in the proof of \cite[Theorem 3.10]{hlyy18},
we show that the space $\lr$, with $\vp\in(0,1]^n$, $r\in(1,\fz]$ and
$1/r+1/r'=1,$
is continuously embedded into $(\vh)^*$, where $(\vh)^*$ denotes the dual space of $\vh$.
Conversely, motivated by \cite[Lemma 5.9]{zsy16} and \cite[p.\,51, Lemma 8.2]{mb03},
we first establish two useful estimates (see, respectively, Lemmas \ref{7l1} and
\ref{7l2} below), which play a key role in the proof of Theorem \ref{7t1} and
are also of independent interest. Via these two lemmas, the atomic characterization
of $\vh$ again and the Hahn--Banach theorem (see, for instance, \cite[Theorem 3.6]{ru91})
as well as some arguments similar to those used in the proof of \cite[p.\,51, Theorem 8.3]{mb03}, we then show
that $(\vh)^*$ is continuously embedded into $\lr$, which then completes the proof of
Theorem \ref{7t1}. Moreover, as a direct consequence of Theorem \ref{7t1},
we obtain an equivalent characterization of the
anisotropic mixed-norm Campanato spaces $\lq$ (see Corollary \ref{7c1} below).

Section \ref{s8} is aimed to give further applications
of the real-variable characterizations obtained above. Via the finite atomic
characterization of $\vh$ obtained in Section \ref{s5}, we first establish a
criterion on the boundedness of sublinear operators from $\vh$ into a
quasi-Banach space (see Theorem \ref{8t1} below), which further implies
the boundedness of anisotropic Calder\'{o}n--Zygmund operators from $\vh$
to itself [or to $\lv$] (see Theorems \ref{8t2} and \ref{8t3} below).
To be precise, using Theorem \ref{8t1},
we first show that, if $T$ is a sublinear operator and maps all $(\vp,r,s)$-atoms
with $r\in(1,\fz)$ [or all continuous $(\vp,\fz,s)$-atoms] into uniformly bounded
elements of some $\gamma$-quasi-Banach space $\mathcal{B}_{\gamma}$ with
$\gamma\in (0,1]$, then $T$ has a unique bounded $\mathcal{B}_{\gamma}$-sublinear
extension from $\vh$ into $\mathcal{B}_{\gamma}$ (see Corollary \ref{8c1} below),
which extends the corresponding results of Meda et al. \cite[Corollary 3.4]{msv08} and
Grafakos et al. \cite[Theorem 5.9]{gly08} as well as Ky \cite[Theorem 3.5]{ky14}
(see also \cite[Theorem 1.6.9]{ylk17}) and Huang et al.
\cite[Corollary 6.3]{hlyy} to the present setting. Then, by Theorem \ref{8t1} again
and borrowing some ideas from the proofs of \cite[Theorems 6.4, 6.5, 6.8 and 6.9]{hlyy},
we obtain the boundedness of anisotropic convolutional $\delta$-type and
non-convolutional $\bz$-order Calder\'{o}n--Zygmund operators
from $\vh$ to itself [or to $\lv$] (see Theorems \ref{8t2} and \ref{8t3} below).
In addition, as a corollary of Theorem \ref{8t2},
we obtain the boundedness of anisotropic convolutional $\dz$-type
Calder\'on--Zygmund operators on $\lv$ for any given $\vp\in(1,\fz)^n$
(see Corollary \ref{8c2} below).

Recall that, in \cite{lwyy17}, Liu et al. established various real-variable
characterizations of variable anisotropic Hardy spaces $H^{p(\cdot)}_A(\rn)$.
We point out that the integrable exponent of
the Hardy space $H^{p(\cdot)}_A(\rn)$ from \cite{lwyy17} is
a variable exponent function,
$p(\cdot):\ \rn\to(0,\fz],$
satisfying the so-called globally
log-H\"{o}lder continuous condition (see \cite[(2.5) and (2.6)]{lwyy17}), whose
associated basic function space is the variable Lebesgue space $L^{p(\cdot)}(\rn)$;
however, as was mentioned above,
the integrable exponent of the anisotropic mixed-norm Hardy space $\vh$,
investigated in the present article,
is a vector $\vp\in(0,\fz)^n$,
whose associated basic function space is the mixed-norm Lebesgue space $\lv$
which has different orders of integrability in different variables.
Obviously, $L^{p(\cdot)}(\rn)$ and $\lv$ cannot cover each other,
so do the variable anisotropic Hardy space
$H^{p(\cdot)}_A(\rn)$ of \cite{lwyy17} and the Hardy space $\vh$ of
the present article. In addition, the real-variable theory of
mixed-norm Hardy space ${H_{\va}^{\vp}(\rn)}$
associated with a vector $\va\in[1,\fz)^n$ was established
in \cite{cgn17,hlyy,hlyy18}. Observe that
the space ${H_{\va}^{\vp}(\rn)}$, introduced by
Cleanthous et al. \cite{cgn17}, is included in the Hardy space $\vh$
of the present article as a special case in the sense of
equivalent quasi-norms (see Proposition \ref{6p1} below).
Thus, it is indeed a meaningful subject to develop a real-variable theory of
anisotropic mixed-norm Hardy spaces $\vh$.
Moreover, to do so, comparing with \cite{cgn17,hlyy,hlyy18,lwyy17},
we also need to overcome some differently essential difficulties.
For instance, compared with Liu et al. \cite[Theorem 3.10]{lwyy17} on
the various maximal function characterizations of $H^{p(\cdot)}_A(\rn)$, to establish
the corresponding maximal function characterizations of $\vh$ (see Theorem \ref{3t1} below),
the main difficulty exists in the lack of the boundedness of anisotropic Hardy--Littlewood maximal operators
$\HL$ on mixed-norm Lebesgue spaces $\lv$, which is a necessary and
key tool in the proof of Theorem \ref{3t1}.
Thus, we first obtain this necessary boundedness
via the fact that the operator $\HL$ can be controlled
by the iterated maximal operator pointwisely (see Remark \ref{3r1} below)
together with a key inequality on mixed-norms
from Bagby \cite{b75} [see also \eqref{3eq4} below].
On another hand, in Section \ref{s4}, to establish the atomic characterizations of $\vh$,
we have to first establish an anisotropic Fefferman--Stein vector-valued inequality
on mixed-norm Lebesgue spaces $\lv$, which is known to be fundamental tool in developing
a real-variable theory of Hardy spaces; however, in \cite{lwyy17},
the corresponding inequality is known and can be used directly to prove the desired atomic characterization.
We obtain this desired anisotropic Fefferman--Stein vector-valued inequality
on $\lv$, via the obtained boundedness of $\HL$
on $\lv$, a duality argument of $\lv$ (see \cite[p.\,304, Theorem 2]{bp61}),
a useful result from Sawano \cite[Theorem 1.3]{s05} and a key observation [see \eqref{4e2x} below]
as well as borrowing some ideas from the proof of \cite[Theorem 2.7]{zsy16}.
In addition, note that, in the anisotropic setting of \cite{cgn17,hlyy,hlyy18},
the quasi-norm $\|\cdot\|_{\lv}$ of the characteristic function of anisotropic cubes
can be precisely calculated out (see \cite[Lemma 4.7]{hlyy}), which plays a crucial role
in establishing the real-variable characterizations of ${H_{\va}^{\vp}(\rn)}$,
while, in the present article, the corresponding quasi-norm of the characteristic
function of anisotropic cubes can not be precisely calculated out due to its more
general anisotropic structure. To overcome this difficulty, we fully use the relation between
homogeneous quasi-norms and Euclidean norms as well as the homogeneity of $\|\cdot\|_{\lv}$
to obtain some subtle estimates on the quasi-norm in $\lv$ of the characteristic
function of anisotropic cubes as substitutes.

Finally, we make some conventions on notation.
We always let
$\mathbb{N}:=\{1,2,\ldots\}$,
$\mathbb{Z}_+:=\{0\}\cup\mathbb{N}$
and $\vec0_n$ be the \emph{origin} of $\rn$.
For any multi-index
$\az:=(\az_1,\ldots,\az_n)\in(\mathbb{Z}_+)^n=:\mathbb{Z}_+^n,$
let
$|\az|:=\az_1+\cdots+\az_n$ and
$\partial^{\az}:=(\frac{\partial}{\partial x_1})^{\az_1} \cdots
(\frac{\partial}{\partial x_n})^{\az_n}.$
We denote by $C$ a \emph{positive constant}
which is independent of the main parameters,
but may vary from line to line. We also use $C_{(\az,\bz,\ldots)}$
to denote a positive constant
depending on the indicated parameters $\az,\,\bz,\ldots$.
The notation $f\ls g$ means $f\le Cg$ and, if $f\ls g\ls f$,
then we write $f\sim g$. We also use the following
convention: If $f\le Cg$ and $g=h$ or $g\le h$, we then write $f\ls g\sim h$
or $f\ls g\ls h$, \emph{rather than} $f\ls g=h$
or $f\ls g\le h$. For any $p\in[1,\fz]$, we denote by $p'$
its \emph{conjugate index}, namely, $1/p+1/p'=1$.
Moreover, if $\vec{p}:=(p_1,\ldots,p_n)\in[1,\fz]^n$, we denote by
$\vec{p}':=(p_1',\ldots,p_n')$ its \emph{conjugate index}, namely,
for any $i\in\{1,\ldots,n\}$, $1/p_i+1/p_i'=1$. In addition,
for any set $E\subset\rn$, we denote by $E^\complement$ the
set $\rn\setminus E$, by ${\mathbf 1}_E$ its \emph{characteristic function},
by $|E|$ its \emph{n-dimensional Lebesgue measure}
and by $\sharp E$ its \emph{cardinality}.
For any $\ell\in\mathbb{R}$, we denote by $\lfloor \ell\rfloor$ (resp., $\lceil\ell\rceil$)
the \emph{largest} (resp., \emph{least}) \emph{integer not} greater (resp., \emph{less})
\emph{than}  $\ell$. Throughout
this article, the \emph{symbol} $C^{\fz}(\rn)$
denotes the set of all \emph{infinitely differentiable functions} on $\rn$.

\section{Preliminaries \label{s2}}

In this section, we first recall some notions on dilations
and mixed-norm Lebesgue spaces (see, for instance, \cite{bp61,mb03}).
Then we introduce the anisotropic mixed-norm
Hardy space via the non-tangential grand maximal function.

We begin with recalling the notion of dilations
from \cite[p.\,5, Definition 2.1]{mb03}.

\begin{definition}\label{2d1}
A real $n\times n$ matrix $A$ is called an \emph{expansive matrix},
shortly, a \emph{dilation} if
$$\min_{\lz\in\sigma(A)}|\lz|>1,$$
here and thereafter, $\sigma(A)$ denotes the \emph{set of
all eigenvalues of $A$}.
\end{definition}

Let $b:=|\det A|$. Then, from \cite[p.\,6, (2.7)]{mb03}, it follows
that $b\in(1,\fz)$. By the fact that there exist an open
ellipsoid $\Delta$, with $|\Delta|=1$, and $r\in(1,\infty)$ such that
$\Delta\subset r\Delta\subset A\Delta$ (see \cite[p.\,5, Lemma 2.2]{mb03}),
we find that, for any $k\in\zz$, $B_k:=A^k\Delta$ is open,
$B_k\subset rB_k\subset B_{k+1}$ and $|B_k|=b^k$.
For any $x\in\rn$ and $k\in\mathbb{Z}$, an ellipsoid $x+B_k$
is called a \emph{dilated ball}. In what follows,
we always let $\mathfrak{B}$ be the set of all such
dilated balls, namely,
\begin{align}\label{2e1}
\mathfrak{B}:=\lf\{x+B_k:\ x\in\rn\ {\rm and}\ k\in\mathbb{Z}\r\}
\end{align}
and
\begin{align}\label{2e2}
\omega:=\inf\lf\{\ell\in\zz:\ r^\ell\ge2\r\}.
\end{align}

The following notion of homogeneous quasi-norms
is just \cite[p.\,6, Definition 2.3]{mb03}.

\begin{definition}\label{2d2}
A \emph{homogeneous quasi-norm},
associated with a dilation $A$, is a measurable mapping
$\rho:\ \rn \to [0,\infty)$ satisfying
\begin{enumerate}
\item[\rm{(i)}] if $x\neq\vec0_n$, then $\rho(x)\in(0,\fz)$;

\item[\rm{(ii)}] for any $x\in\rn$, $\rho(Ax)=b\rho(x)$;

\item[\rm{(iii)}] there exists an $H\in[1,\fz)$
such that, for any $x$, $y\in\rn$, $\rho(x+y)\le H[\rho(x)+\rho(y)]$.
\end{enumerate}
\end{definition}

In what follows, for a given dilation $A$, by \cite[p.\,6, Lemma 2.4]{mb03},
we may use, for both simplicity and convenience, the \emph{step homogeneous quasi-norm} $\rho$
defined by setting, for any $x\in\rn$,
\begin{equation}\label{2e3}
\rho(x):=\sum_{k\in\mathbb{Z}}
b^k{\mathbf 1}_{B_{k+1}\setminus B_k}(x)\hspace{0.25cm}
{\rm when}\ x\neq\vec0_n,\hspace{0.35cm} {\rm or\ else}
\hspace{0.25cm}\rho(\vec0_n):=0.
\end{equation}

Now we present the definition of mixed-norm Lebesgue spaces from \cite{bp61}.
\begin{definition}\label{2d3}
Let $\vp:=(p_1,\ldots,p_n)\in (0,\fz]^n$. The \emph{mixed-norm Lebesgue space} $\lv$ is
defined to be the set of all measurable functions $f$ such that
$$\|f\|_{\lv}:=\lf\{\int_{\mathbb R}\cdots\lf[\int_{\mathbb R}|f(x_1,\ldots,x_n)|^{p_1}\,dx_1\r]
^{\frac{p_2}{p_1}}\cdots\, dx_n\r\}^{\frac{1}{p_n}}<\fz$$
with the usual modifications made when $p_i=\fz$
for some $i\in \{1,\ldots,n\}$.
\end{definition}

\begin{remark}\label{2r1}
For any $\vp\in(0,\fz]^n$, $(\lv,\|\cdot\|_{\lv})$ is a quasi-Banach
space and, for any $\vp \in [1,\fz]^n$, $(\lv,\|\cdot\|_{\lv})$
becomes a Banach space; see \cite[p.\,304, Theorem 1.b)]{bp61}.
Obviously, when $\vp:=(\overbrace{p,\ldots,p}^{n\ \rm times})$
with some $p\in(0,\fz]$, $\lv$ coincides
with the classical Lebesgue space $L^p(\rn)$ and,
in this case, they have the same norms.
\end{remark}

For any $\vp:=(p_1,\ldots,p_n)\in (0,\fz)^n$, we always let
\begin{align}\label{2e4}
p_-:=\min\{p_1,\ldots,p_n\},\hspace{0.35cm}
p_+:=\max\{p_1,\ldots,p_n\}
\hspace{0.35cm}{\rm and}\hspace{0.35cm}
\underline{p}\in(0,\min\{p_-,1\}).
\end{align}

Recall that a $C^\infty(\rn)$
function $\varphi$ is called a \emph{Schwartz function} if,
for any $m\in\zz_+$ and multi-index $\az\in\zz_+^n$,
$$\|\varphi\|_{\alpha,m}:=
\sup_{x\in\rn}[\rho(x)]^m
|\partial^\alpha\varphi(x)|<\infty.$$
Denote by
$\cs(\rn)$ the set of all Schwartz functions, equipped
with the topology determined by
$\{\|\cdot\|_{\alpha,m}\}_{\az\in\zz_+^n,m\in\zz_+}$,
and $\cs'(\rn)$ its \emph{dual space}, equipped
with the weak-$\ast$ topology.
For any $N\in\mathbb{Z}_+$, let
$$\cs_N(\rn):=\{\varphi\in\cs(\rn):\
\|\varphi\|_{\alpha,m}\le1,\
|\alpha|\le N,\ m\le N\};$$
equivalently,
\begin{align*}
\varphi\in\cs_N(\rn)\hs
{\rm if}\ {\rm and}\ {\rm only}\ {\rm if}\hs
\|\varphi\|_{\cs_N(\rn)}:=\sup_{|\alpha|\le N}
\sup_{x\in\rn}\lf[\lf|\partial^\alpha
\varphi(x)\r|\max\lf\{1,\lf[
\rho(x)\r]^N\r\}\r]\le1.
\end{align*}
In what follows, for any
$\varphi\in\cs(\rn)$ and $k\in\mathbb{Z}$, let
$\varphi_k(\cdot):=b^{k}\varphi(A^{k}\cdot)$.

Let $\lambda_-$, $\lambda_+\in(1,\fz)$
be two \emph{numbers} such that
$$\lambda_-<\min\lf\{|\lambda|:\
\lambda\in\sigma(A)\r\}
\le\max\{|\lambda|:\
\lambda\in\sigma(A)\}<\lambda_+.$$
In addition, we should point out that, if $A$ is diagonalizable over
$\mathbb{C}$, then we may let
$\lambda_-:=\min\{|\lambda|:\
\lambda\in\sigma(A)\}$
and
$\lambda_+:=\max\{|\lambda|:\
\lambda\in\sigma(A)\}$.
Otherwise, we may choose them sufficiently close to these equalities
in accordance with what we need in our arguments.

\begin{definition}\label{2d4}
Let $\varphi\in\cs(\rn)$ and $f\in\cs'(\rn)$. The
\emph{non-tangential maximal function} $M_\varphi(f)$
with respect to $\varphi$ is defined by setting, for any $x\in\rn$,
\begin{align*}
M_\varphi(f)(x):= \sup_{y\in x+B_k,
k\in\mathbb{Z}}|f\ast\varphi_k(y)|.
\end{align*}
Moreover, for any given $N\in\mathbb{N}$, the
\emph{non-tangential grand maximal function} $M_N(f)$ of
$f\in\cs'(\rn)$ is defined by setting, for any $x\in\rn$,
\begin{equation*}
M_N(f)(x):=\sup_{\varphi\in\cs_N(\rn)}
M_\varphi(f)(x).
\end{equation*}
\end{definition}

We now introduce the anisotropic mixed-norm Hardy spaces as follows.

\begin{definition}\label{2d5}
Let $\vp\in (0,\fz)^n$ and
$N\in\mathbb{N}\cap[\lfloor(\frac1{\min\{1,p_-\}}-1)\frac{\ln b}{\ln
\lambda_-}\rfloor+2,\fz)$,
where $p_-$ is as in \eqref{2e4}.
The \emph{anisotropic mixed-norm Hardy space} $\vh$ is defined
by setting
\begin{equation*}
\vh:=\lf\{f\in\cs'(\rn):\ M_N(f)\in\lv\r\}
\end{equation*}
and, for any $f\in\vh$, let
$\|f\|_{\vh}:=\| M_N(f)\|_{\lv}$.
\end{definition}

\begin{remark}\label{2r2}
\begin{enumerate}
\item[{\rm(i)}]
The quasi-norm of $\vh$ in Definition \ref{2d5} depends on $N$, however,
by Theorem \ref{4t1} below, we know that the space $\vh$ is independent
of the choice of $N$ as long as $N$ same as in Definition \ref{2d5}.
\item[{\rm (ii)}]
To study the smoothing properties of bilinear operators and Leibniz-type
rules in mixed-norm Lebesgue spaces, Hart et al. \cite{htw17} introduced
the mixed-norm Hardy space $H^{p,q}(\mathbb{R}^{n+1})$ with $p,\,q\in(0,\fz)$
via the Littlewood--Paley $g$-function. Here we should point out that,
if $$\vp:=(\overbrace{p,\ldots,p}^{n\ \mathrm{times}},q)\quad
{\rm with}\ p,\,q\in(0,\fz),$$
and $A:=d\,{\rm I}_{(n+1)\times (n+1)}$
for some $d\in\mathbb R$ with $|d|\in(1,\fz)$,
here and thereafter, ${\rm I}_{n\times n}$ denotes
the $n\times n$ \emph{unit matrix},
then, from Theorem \ref{6t2} below, it follows that
$H_A^{\vp}(\mathbb{R}^{n+1})$, studied in this article,
and $H^{p,q}(\mathbb{R}^{n+1})$ from \cite{htw17} coincide
with equivalent quasi-norms.
\item[{\rm(iii)}]
Recall that Bownik \cite{mb03} introduced the anisotropic Hardy
space $H^p_A(\rn)$, with $p\in(0,\fz)$, via the non-tangential grand maximal
function (see \cite[p.\,17, Definition 3.11]{mb03}) and investigated its
real-variable theory.
It is easy to see that, when $\vp:=(\overbrace{p,\ldots,p}^{n\ \rm times})$
with some $p\in(0,\fz)$, the space $\vh$ just becomes the anisotropic Hardy
space $H^p_A(\rn)$ from \cite{mb03}.
\item[{\rm(iv)}]
Very recently, Cleanthous et al. \cite{cgn17} introduced the
anisotropic mixed-norm Hardy space ${H_{\va}^{\vp}(\rn)}$, with $\va\in [1,\fz)^n$
and $\vp\in (0,\fz)^n$, via the non-tangential grand maximal
function (see \cite[Definition 3.3]{cgn17}) and established its radial
or non-tangential maximal function characterizations. We should point out that,
by Proposition \ref{6p1} below, we know that, when
\begin{align}\label{2e5}
A:=\lf(\begin{array}{cccc}
2^{a_1} & 0 & \cdots & 0\\
0 & 2^{a_2} & \cdots & 0\\
\vdots & \vdots& &\vdots\\
0 & 0 & \cdots & 2^{a_n}\\
\end{array}\r)
\end{align}
with $\va:=(a_1,\ldots, a_n)\in [1,\fz)^n$ and
$\vp\in(0,\fz)^n$, the Hardy space $\vh$, introduced in this article,
and the anisotropic mixed-norm Hardy space
${H_{\va}^{\vp}(\rn)}$ from \cite{cgn17} coincide with
equivalent quasi-norms.
\end{enumerate}
\end{remark}

\section{Maximal function characterizations of $\vh$\label{s3}}

In this section, we characterize $\vh$
via the radial maximal function $M_\varphi^0$ (see Definition \ref{3d2} below)
or the non-tangential maximal function $M_\varphi$ (see Definition \ref{2d4}).
To this end, we first recall the following notions of some auxiliary maximal functions
from \cite[p.\,44]{mb03}.

\begin{definition}\label{3d1}
Let $K\in\zz$, $L\in[0,\fz)$ and $N\in\nn$. For any $\varphi\in\cs(\rn)$,
the \emph{maximal functions} $M_\varphi^{0(K,L)}(f)$,
$M_\varphi^{(K,L)}(f)$ and $T_\varphi^{N(K,L)}(f)$ of
$f\in\cs'(\rn)$ are defined, respectively, by
setting, for any $x\in\rn$,
\begin{align*}
M_\varphi^{0(K,L)}(f)(x):=\sup_{k\in\mathbb{Z},\,k\le K}
|(f\ast\varphi_k)(x)|\lf[\max\lf\{1,\rho\lf(A^{-K}x\r)\r\}\r]^
{-L}\lf(1+b^{-k-K}\r)^{-L},
\end{align*}
\begin{align*}
M_\varphi^{(K,L)}(f)(x):=\sup_{k\in\mathbb{Z},\,k\le K}
\sup_{y\in x+B_k}|(f\ast\varphi_k)(y)|\lf[\max\lf\{1,\rho
\lf(A^{-K}y\r)\r\}\r]^{-L}\lf(1+b^{-k-K}\r)^{-L}
\end{align*}
and
\begin{align*}
T_\varphi^{N(K,L)}(f)(x):=\sup_{k\in\mathbb{Z},\,k\le K}
\sup_{y\in\rn}\frac{|(f\ast\varphi_k)(y)|}{[\max\{1,\rho
(A^{-k}(x-y))\}]^{N}}\frac{(1+b^{-k-K})^{-L}}
{[\max\{1,\rho(A^{-K}y)\}]^{L}}.
\end{align*}
Moreover, the \emph{maximal functions} $M_N^{0(K,L)}(f)$
and $M_N^{(K,L)}(f)$ of $f\in\cs'(\rn)$ are defined, respectively, by
setting, for any $x\in\rn$,
\begin{align*}
M_N^{0(K,L)}(f)(x):=
\sup_{\varphi\in\cs_N(\rn)}M_\varphi^{0(K,L)}(f)(x)
\end{align*} and
\begin{align*}
M_N^{(K,L)}(f)(x):=
\sup_{\varphi\in\cs_N(\rn)}M_\varphi^{(K,L)}(f)(x).
\end{align*}
\end{definition}
The \emph{symbol} $L_{\rm loc}^1(\rn)$ denotes the set of all locally integrable functions
on $\rn$. Recall that the \emph{anisotropic Hardy--Littlewood maximal operator}
$M_{{\rm HL}}(f)$ of $f\in L^1_{{\rm loc}}(\rn)$ is defined by setting,
for any $x\in\rn$,
\begin{align}\label{3e1}
M_{{\rm HL}}(f)(x):=\sup_{k\in\mathbb{Z}}
\sup_{y\in x+B_k}\frac1{|B_k|}
\int_{y+B_k}|f(z)|\,dz=\sup_{x\in B\in\mathfrak{B}}
\frac1{|B|}\int_B|f(z)|\,dz,
\end{align}
where $\mathfrak{B}$ is as in \eqref{2e1}.
\begin{remark}\label{3r1}
For any $f\in L_{\rm loc}^1(\rn)$ and $x\in \rn$, let
\begin{align*}
M(f)(x):=\sup_{I_n\in \mathbb{I}_{x_n}}\lf\{\frac{1}{|I_n|}\int_{I_n}
\cdots\sup_{I_1\in \mathbb{I}_{x_1}}\lf[\frac{1}{|I_1|}\int_{I_1}
|f(y_1,\ldots,y_n)|\,dy_1\r]\cdots\,dy_n\r\},
\end{align*}
where, for any $k\in \{1,\ldots,n\}$, ${\mathbb I}_{x_k}$
denotes the collection of all intervals in $\mathbb R_{x_k}$ containing
$x_k$. Then there exists a positive constant $C$ such that,
for any $f\in L^1_{{\rm loc}}(\rn)$ and $x\in \rn$,
\begin{align*}
M_{{\rm HL}}(f)(x)\le CM(f)(x).
\end{align*}
\end{remark}

We first establish the following boundedness of
the anisotropic Hardy--Littlewood maximal operator $M_{\rm HL}$ on $\lv$
with any given $\vp\in(1,\fz)^n$.

\begin{lemma}\label{3l1}
Let $\vp\in (1,\fz)^n$. Then there exists a positive
constant C such that, for any $f\in L_{\rm loc}^1(\rn)$,
$$\|M_{{\rm HL}}(f)\|_{\lv}\le C\|f\|_{\lv},$$
where $M_{{\rm HL}}$ is as in \eqref{3e1}.
\end{lemma}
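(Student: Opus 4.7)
My plan is to invoke Remark~\ref{3r1}, which furnishes the pointwise domination $M_{\rm HL}(f)(x)\le CM(f)(x)$, where $M(f)$ is the iterated strong maximal operator obtained by composing the one-dimensional Hardy--Littlewood maximal operators $M^{(1)},\ldots,M^{(n)}$ acting on the respective coordinates (with the other variables held as parameters). This reduces the lemma to showing that $M$ is bounded on $\lv$ for any $\vp\in(1,\fz)^n$. Writing $M=M^{(n)}\circ\cdots\circ M^{(1)}$, the bound would follow at once from the single-coordinate estimates
$$
\bigl\|M^{(i)}g\bigr\|_{\lv}\le C_i\,\|g\|_{\lv},\hs i\in\{1,\ldots,n\},
$$
by repeated substitution, so the core of the argument is to establish these $n$ estimates.

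The case $i=1$ is straightforward, since the $L^{p_1}$-norm in $x_1$ appears innermost in the iterated norm defining $\|\cdot\|_{\lv}$: with $(x_2,\ldots,x_n)$ held as parameters, one applies the classical one-dimensional Hardy--Littlewood strong-type $(p_1,p_1)$ inequality (valid because $p_1\in(1,\fz)$) inside and then takes the remaining norms. For $i\ge 2$, however, the analogous reduction fails, because the $L^{p_i}$-norm in the $i$-th coordinate is no longer innermost, and one cannot commute the supremum defining $M^{(i)}$ past the preceding $L^{p_1},\ldots,L^{p_{i-1}}$ norms by a single Minkowski step without monotonicity assumptions on $(p_1,\ldots,p_i)$ that are not available for an arbitrary $\vp$. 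To bypass this, I would appeal to the mixed-norm inequality of Bagby~\cite{b75}, to be recorded as \eqref{3eq4} below, which asserts precisely that a sublinear operator bounded on the scalar $L^{p_i}(\rr)$ extends, when acting on a single coordinate, to a bounded operator on the whole mixed-norm space $\lv$; applied to the one-dimensional Hardy--Littlewood maximal operator in the $i$-th variable, this yields the desired single-coordinate bound.

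The main obstacle is therefore the delicate handling of the iterated-norm ordering in $\lv$: naive Minkowski-type interchanges would force unwanted monotonicity relations among the components of $\vp$ which need not hold for $\vp\in(1,\fz)^n$, and it is exactly Bagby's inequality that supplies the ``right'' vector-valued extension of the scalar maximal inequality, compatible with any prescribed order of the iterated norms. Once this tool is accepted, the remainder is a routine iteration of the classical one-dimensional strong-type bound, and chaining the $n$ single-coordinate estimates, together with the pointwise domination from Remark~\ref{3r1}, completes the proof.
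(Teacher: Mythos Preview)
Your proposal is correct and follows essentially the same route as the paper: both arguments invoke Remark~\ref{3r1} to pass to the iterated maximal operator $M=M_n\circ\cdots\circ M_1$, and then control each one-dimensional factor on $\lv$ via the classical strong-type bound together with Bagby's mixed-norm inequality \eqref{3eq4}. The paper merely phrases the iteration as peeling off $M_n,M_{n-1},\ldots$ from the outside (applying \eqref{3eq4} with $s=x_k$ and $t=(x_1,\ldots,x_{k-1})$, the outer variables $x_{k+1},\ldots,x_n$ being frozen trivially), which is exactly the content of your single-coordinate bounds $\|M^{(i)}g\|_{\lv}\le C_i\|g\|_{\lv}$.
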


\begin{proof}
Let $\vp\in(1,\fz)^n$, $d_1\in\nn$ and $d_2\in \zz_+$ satisfy $d_1+d_2=n$.
For any $f\in L_{\rm loc}^1(\rn)$,
$s\in\rr^{d_1}$ and $t\in\rr^{d_2}$,
let $$f^*(s,t):=\sup_{r\in (0,\fz)}\frac{1}{|B(s,r)|}
\int_{B(s,r)}|f(y,t)|\,dy,$$
where, for any $s\in \rr^{d_1}$ and $r\in(0,\fz)$,
$B(s,r):=\{z\in\rr^{d_1}:\ |z-s|<r\}$.
If $d_2\in\nn$, then, for any given $\vec{p}_{d_2}
:=(p_1,\ldots,p_{d_2})\in(1,\fz)^{d_2}$, $f\in L_{\rm loc}^1(\rn)$
and $s\in\rr^{d_1}$, define
\begin{align*}
T_{L^{\vec{p}_{d_2}}(\rr^{d_2})}(f)(s)
:=\left\{\int_{\rr}\cdots\left[\int_{\rr}
|f(s,t_1,\ldots,t_{d_2})|^{p_1}\,dt_1\right]^{\frac{p_2}{p_1}}
\cdots \,dt_{d_2}\right\}^{\frac{1}{p_{d_2}}},
\end{align*}
where, if $d_2\equiv 0$, then let $\rr^0:=\emptyset$ and
$T_{L^{\vec{p}_{d_2}}(\rr^{d_2})}(f)(s):=|f(s)|$
for any $s\in\rr^{d_1}$. Moreover, for any given $q\in (1,\fz)$,
we have
\begin{align}\label{3eq4}
\int_{\rr^{d_1}}\lf[T_{L^{\vec{p}_{d_2}}(\rr^{d_2})}(f^*)(s)\r]^q \,ds
\ls \int_{\rr^{d_1}}\lf[T_{L^{\vec{p}_{d_2}}(\rr^{d_2})}(f)(s)\r]^q \,ds
\end{align}
(see \cite[p.\,421]{b75} or \cite[Theorem 2.12]{hy} for a detailed proof).
In addition, for any $k\in\{1,\ldots,n\}$ and $x:=(x_1,\ldots,x_n)\in \rn$, let
$$M_k(f)(x):=\sup_{I\in \mathbb{I}_{x_k}}\frac{1}{|I|}\int_I
|f(x_1,\ldots,y_k,\ldots,x_n)|\,dy_k,$$
where $\mathbb{I}_{x_k}$ is as in Remark \ref{3r1}.
Therefore, for any $x\in\rn$, we have
\begin{align*}
M(f)(x)=M_n\lf(\cdots\lf(M_1(f)\r)\cdots\r)(x).
\end{align*}
From this, Remark \ref{3r1} and \eqref{3eq4} with
$d_1=1,\,d_2=n-1$, $s=x_n$, $t=(x_1,\ldots,x_{n-1})$
and $q=p_n$, it follows that
\begin{align*}
\|M_{{\rm HL}}(f)\|_{\lv}&\ls\|M_n\lf(\cdots\lf(M_1(f)\r)\cdots\r)\|_{\lv}\\
&\ls\lf\{\int_{\rr}\lf[T_{L^{\vec{p}_{n-1}}(\rr^{n-1})}
\lf(\lf(M_{n-1}\lf(\cdots\lf(M_1(f)\r)\cdots\r)\r)^*\r)(x_n)\r]^{p_n}\, dx_n\r\}^{\frac{1}{p_n}}\noz\\
&\ls \lf\{\int_{\rr}\lf[T_{L^{\vec{p}_{n-1}}(\rr^{n-1})}
(M_{n-1}\lf(\cdots\lf(M_1(f)\r)\cdots\r))(x_n)\r]^{p_n} \,dx_n\r\}^{\frac{1}{p_n}}\noz\\
&\sim \|M_{n-1}\lf(\cdots\lf(M_1(f)\r)\cdots\r)\|_{\lv}.\noz
\end{align*}
Repeating this estimate $n-1$ times, we then complete the proof of Lemma \ref{3l1}.
\end{proof}

\begin{remark}
We point out that, when $n:=2$ and $\vp:=(p_1,\infty)$ with $p_1\in(1,\fz)$,
Lemma \ref{3l1} is not true; see \cite[Remark 4.4]{hy}.
\end{remark}

The following Lemmas \ref{3l2} and \ref{3l3} are just,
respectively, \cite[Lemma 2.3]{abr16}
and \cite[Remark 2.8(iii)]{hlyy}.

\begin{lemma}\label{3l2}
There exists a positive constant $C$ such that, for any
$K\in\zz$, $L\in[0,\fz)$, $\lz\in(0,\fz)$,
$N\in\nn\cap[\frac1{\lz},\fz)$, $\varphi\in\cs(\rn)$, $f\in\cs'(\rn)$ and $x\in\rn$,
$$\lf[T_\varphi^{N(K,L)}(f)(x)\r]^{\lz}\le CM_{{\rm HL}}
\lf(\lf[M_\varphi^{(K,L)}(f)\r]^{\lz}\r)(x),$$
where $T_\varphi^{N(K,L)}$ and $M_\varphi^{(K,L)}$ are as in Definition \ref{3d1}
and $M_{{\rm HL}}$ is as in \eqref{3e1}.
\end{lemma}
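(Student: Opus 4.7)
The plan is to establish this pointwise inequality by the standard sub-averaging trick from Calder\'on--Zygmund theory, adapted to the anisotropic setting, and then absorb the resulting polynomial growth in $\rho(A^{-k}(x-y))$ using exactly the constraint $N\lambda\ge 1$.

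First I would fix $x\in\rn$, $\varphi\in\cs(\rn)$, $f\in\cs'(\rn)$, $K\in\zz$, $k\in\zz$ with $k\le K$, and $y\in\rn$. The basic observation is that, because $\Delta$ is symmetric about the origin, so is each $B_k=A^k\Delta$; therefore $z\in y+B_k$ is equivalent to $y\in z+B_k$. Consequently, for every $z\in y+B_k$, the definition of $M_\varphi^{(K,L)}(f)$ gives
\[
|(f\ast\varphi_k)(y)|\le M_\varphi^{(K,L)}(f)(z)\,\bigl[\max\{1,\rho(A^{-K}y)\}\bigr]^{L}\bigl(1+b^{-k-K}\bigr)^{L}.
\]
Raising to the power $\lambda$ and averaging $z$ over $y+B_k$ (a set of measure $b^k=|B_k|$) then yields
\[
|(f\ast\varphi_k)(y)|^{\lambda}\le \bigl[\max\{1,\rho(A^{-K}y)\}\bigr]^{L\lambda}\bigl(1+b^{-k-K}\bigr)^{L\lambda}\,\frac{1}{|B_k|}\int_{y+B_k}\bigl[M_\varphi^{(K,L)}(f)(z)\bigr]^{\lambda}\,dz.
\]

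Next I would compare the average over $y+B_k$ with an average over a dilated ball centered at $x$. Let $j\in\zz$ be such that $\rho(A^{-k}(x-y))=b^{j}$ (or $j=-\infty$ if $x=y$), and let $j^{+}:=\max\{j,0\}$. Then $x-y\in B_{k+j+1}$, so by the standard inclusion $B_{i}+B_{j}\subset B_{\max\{i,j\}+\sigma}$ for a constant $\sigma$ depending only on $A$ (obtained by iterating $rB_i\subset B_{i+1}$), we have $y+B_k\subset x+B_{k+j^{+}+\sigma+1}$. Therefore
\[
\frac{1}{|B_k|}\int_{y+B_k}\bigl[M_\varphi^{(K,L)}(f)(z)\bigr]^{\lambda}\,dz\le b^{j^{+}+\sigma+1}\,\HL\!\bigl(\bigl[M_\varphi^{(K,L)}(f)\bigr]^{\lambda}\bigr)(x),
\]
and $b^{j^{+}}\le \max\{1,\rho(A^{-k}(x-y))\}$. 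Combining the two inequalities gives
\[
|(f\ast\varphi_k)(y)|^{\lambda}\le C\,\max\{1,\rho(A^{-k}(x-y))\}\bigl[\max\{1,\rho(A^{-K}y)\}\bigr]^{L\lambda}\bigl(1+b^{-k-K}\bigr)^{L\lambda}\HL\!\bigl(\bigl[M_\varphi^{(K,L)}(f)\bigr]^{\lambda}\bigr)(x),
\]
with $C:=b^{\sigma+1}$ independent of all the parameters.

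Finally I would divide by the three weight factors appearing in $[T_\varphi^{N(K,L)}(f)(x)]^{\lambda}$. Since $N\lambda\ge 1$,
\[
\max\{1,\rho(A^{-k}(x-y))\}\cdot\bigl[\max\{1,\rho(A^{-k}(x-y))\}\bigr]^{-N\lambda}=\bigl[\max\{1,\rho(A^{-k}(x-y))\}\bigr]^{1-N\lambda}\le 1,
\]
which is precisely where the hypothesis $N\in\nn\cap[1/\lambda,\fz)$ is consumed. Taking the supremum over all $k\le K$ and all $y\in\rn$ on the left then produces
\[
\bigl[T_\varphi^{N(K,L)}(f)(x)\bigr]^{\lambda}\le C\,\HL\!\bigl(\bigl[M_\varphi^{(K,L)}(f)\bigr]^{\lambda}\bigr)(x),
\]
completing the argument.

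The only genuinely delicate point is the bookkeeping in the inclusion $y+B_k\subset x+B_{k+j^{+}+\sigma+1}$, which couples the anisotropic geometry to the factor $\max\{1,\rho(A^{-k}(x-y))\}$; everything else is the standard sub-averaging trick, and the inequality $N\lambda\ge 1$ is used exactly once, at the very end, to kill the $\max\{1,\rho(A^{-k}(x-y))\}^{1-N\lambda}$ factor.
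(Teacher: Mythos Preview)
The paper does not prove this lemma; it simply quotes it as \cite[Lemma~2.3]{abr16}. Your argument is correct and is precisely the standard sub-averaging proof one finds there (and, in the isotropic case, going back to Fefferman--Stein): bound $|(f\ast\varphi_k)(y)|$ by $M_\varphi^{(K,L)}(f)(z)$ for every $z\in y+B_k$, average in $z$, enlarge the ball to be centered at $x$, and absorb the resulting power of $\max\{1,\rho(A^{-k}(x-y))\}$ using $N\lambda\ge 1$. One cosmetic remark: your constant $\sigma$ is exactly the $\omega$ of \eqref{2e2}, since the inclusion $B_i+B_j\subset B_{\max\{i,j\}+\omega}$ is standard in Bownik's framework (it follows from convexity and symmetry of the ellipsoid $\Delta$ together with $2B_m\subset r^{\omega}B_m\subset B_{m+\omega}$), so you could cite it directly rather than rederive it.
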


\begin{lemma}\label{3l3}
Let $\vp\in (0,\fz]^n$. Then, for any $r\in (0,\fz)$ and $f\in\lv$,
$$\lf\||f|^r\r\|_{\lv}=\|f\|_{L^{r \vp}(\rn)}^r,$$
here and thereafter, for any $\az\in\rr$, $\az\vp:=(\az p_1,\ldots,\az p_n)$.
In addition, for any $\mu\in{\mathbb C}$, $\theta\in [0,\min\{1,p_-\}]$
with $p_-$ as in \eqref{2e4} and $f,\ g\in\lv$,
$\|\mu f\|_{\lv}=|\mu|\|f\|_{\lv}$ and
\begin{align*}
\|f+g\|_{\lv}^{\theta}\le \|f\|_{\lv}^{\theta}+\|g\|_{\lv}^{\theta}.
\end{align*}
\end{lemma}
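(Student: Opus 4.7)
The plan is to prove the three claims in sequence, as each builds on the previous. For the identity $\||f|^{r}\|_{\lv}=\|f\|_{L^{r\vp}(\rn)}^{r}$, I would simply unwind both sides against Definition \ref{2d3}. The left side equals
$$\lf\{\int_{\mathbb R}\!\cdots\!\lf[\int_{\mathbb R}|f(x_1,\ldots,x_n)|^{rp_1}\,dx_1\r]^{p_2/p_1}\!\!\cdots dx_n\r\}^{1/p_n},$$
while the right side, by definition of $L^{r\vp}(\rn)$, equals the $r$-th power of
$$\lf\{\int_{\mathbb R}\!\cdots\!\lf[\int_{\mathbb R}|f|^{rp_1}dx_1\r]^{(rp_2)/(rp_1)}\!\!\cdots dx_n\r\}^{1/(rp_n)},$$
and the exponents $(rp_{i+1})/(rp_i)=p_{i+1}/p_i$ and $r\cdot 1/(rp_n)=1/p_n$ match up, giving equality. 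The case when some $p_i=\fz$ is handled with the usual essential supremum modifications, with the convention $r\cdot\fz=\fz$.

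Next, the homogeneity $\|\mu f\|_{\lv}=|\mu|\|f\|_{\lv}$ is immediate by pulling $|\mu|^{p_1}$ outside the innermost integral and then repeatedly outside each successive integral with the appropriate exponent, so that the cumulative exponent of $|\mu|$ is $p_1\cdot(p_2/p_1)\cdots(p_n/p_{n-1})\cdot(1/p_n)=1$.

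For the $\theta$-quasi-triangle inequality, the key observation is that $\theta\in[0,\min\{1,p_-\}]$ forces $\vp/\theta\in[1,\fz]^n$, so by Remark \ref{2r1} the space $L^{\vp/\theta}(\rn)$ is a genuine Banach space. I would combine three ingredients: (a) the pointwise inequality $|f+g|^{\theta}\le|f|^{\theta}+|g|^{\theta}$, which holds for $\theta\in[0,1]$ by subadditivity of $t\mapsto t^{\theta}$ on $[0,\fz)$; (b) monotonicity of the mixed-norm $\|\cdot\|_{L^{\vp/\theta}(\rn)}$ with respect to nonnegative functions; and (c) the already established first identity, applied with $r=\theta$, which gives $\|h\|_{\lv}^{\theta}=\||h|^{\theta}\|_{L^{\vp/\theta}(\rn)}$ for any $h\in\lv$. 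Chaining these yields
$$\|f+g\|_{\lv}^{\theta}=\lf\||f+g|^{\theta}\r\|_{L^{\vp/\theta}(\rn)}\le\lf\||f|^{\theta}+|g|^{\theta}\r\|_{L^{\vp/\theta}(\rn)}\le\lf\||f|^{\theta}\r\|_{L^{\vp/\theta}(\rn)}+\lf\||g|^{\theta}\r\|_{L^{\vp/\theta}(\rn)},$$
and converting the last two norms back via the first identity gives $\|f\|_{\lv}^{\theta}+\|g\|_{\lv}^{\theta}$, as required.

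No step here is a serious obstacle; the only care needed is with boundary cases (some $p_i=\fz$, or $\theta=0$, where the inequality is trivial), and with making sure the triangle inequality is invoked in a space that actually has one, which is exactly why the condition $\theta\le\min\{1,p_-\}$ appears. If desired, the identity and the quasi-triangle inequality can also be read off directly from \cite[Remark 2.8(iii)]{hlyy}, which is cited in the statement.
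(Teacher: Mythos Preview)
Your proof is correct and complete. The paper does not actually prove this lemma; it simply records it as \cite[Remark 2.8(iii)]{hlyy}, which you also note at the end of your proposal. Your direct verification---unwinding Definition \ref{2d3} for the power identity, pulling scalars through the iterated integrals for homogeneity, and combining the pointwise bound $|f+g|^{\theta}\le |f|^{\theta}+|g|^{\theta}$ with the Banach-space triangle inequality in $L^{\vp/\theta}(\rn)$ (valid since $\theta\le\min\{1,p_-\}$ forces $\vp/\theta\in[1,\fz]^n$)---is exactly the standard elementary argument one would expect the cited reference to contain.
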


The following two results are basic facts of $\vh$, whose proof is similar to that
of \cite[p.\,21, Theorem 4.5 and p.\,18, Proposition 3.12]{mb03}; the details are omitted.

\begin{lemma}\label{4l2}
Let $\vp$ and $N$ be as in Definition \ref{2d5}.
Then $\vh\subset\cs'(\rn)$ and the inclusion is continuous.
\end{lemma}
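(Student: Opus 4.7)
The set-theoretic inclusion $\vh\subset\cs'(\rn)$ is immediate from Definition \ref{2d5}, so the real content is the continuity. My plan is to exhibit, for each $\varphi\in\cs(\rn)$, a constant $C_\varphi\in(0,\fz)$ depending only on a Schwartz seminorm of $\varphi$ such that
$$
|\langle f,\varphi\rangle|\le C_\varphi\,\|f\|_{\vh}\quad\text{for every }f\in\vh,
$$
which immediately yields continuity of the inclusion into $\cs'(\rn)$ endowed with its weak-$\ast$ topology, since the bound is linear in $f$.

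Following the strategy of \cite[p.\,18, Proposition 3.12]{mb03}, the key step is to rewrite the pairing as
$\langle f,\varphi\rangle=(f\ast\widetilde{\varphi})(\vec 0_n)$
with $\widetilde{\varphi}(\cdot):=\varphi(-\cdot)\in\cs(\rn)$, and then to dominate this single value pointwise by $M_N(f)$ on a small neighborhood of the origin. Since $\vec 0_n$ lies in the open symmetric ellipsoid $B_0=\Delta$, for every $x\in B_0$ one has $\vec 0_n\in x+B_0$; so taking $k=0$ (with $\widetilde{\varphi}_0=\widetilde{\varphi}$) in Definition \ref{2d4}, together with the elementary remark that $\widetilde{\varphi}/\|\widetilde{\varphi}\|_{\cs_N(\rn)}\in\cs_N(\rn)$ whenever $\|\widetilde{\varphi}\|_{\cs_N(\rn)}>0$ (the trivial case $\varphi\equiv 0$ handled separately), I would obtain
$$
|\langle f,\varphi\rangle|=\lf|(f\ast\widetilde{\varphi})(\vec 0_n)\r|\le M_{\widetilde{\varphi}}(f)(x)\le \|\widetilde{\varphi}\|_{\cs_N(\rn)}\,M_N(f)(x)\quad\text{for every }x\in B_0.
$$

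Multiplying by $\mathbf{1}_{B_0}(x)$ and then taking the $\lv$ quasi-norm of both sides, I get
$$
|\langle f,\varphi\rangle|\cdot\|\mathbf{1}_{B_0}\|_{\lv}\le \|\widetilde{\varphi}\|_{\cs_N(\rn)}\,\|M_N(f)\|_{\lv}=\|\widetilde{\varphi}\|_{\cs_N(\rn)}\,\|f\|_{\vh}.
$$
Since $B_0$ is a bounded open set with $|B_0|=1$, the quantity $\|\mathbf{1}_{B_0}\|_{\lv}$ is a finite and strictly positive constant depending only on $\vp$ and $A$, so dividing through produces the required bound with $C_\varphi:=\|\widetilde{\varphi}\|_{\cs_N(\rn)}/\|\mathbf{1}_{B_0}\|_{\lv}$. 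The only small checkpoint is that $\varphi\mapsto\|\widetilde{\varphi}\|_{\cs_N(\rn)}$ is indeed controlled by a continuous seminorm on $\cs(\rn)$, which is transparent from its explicit definition as a supremum of $\{\|\cdot\|_{\alpha,m}\}$-type quantities. I anticipate no substantive obstacle here; the argument is essentially the anisotropic mixed-norm incarnation of Bownik's classical one, with $L^p$ replaced by $\lv$ throughout.
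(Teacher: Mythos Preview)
Your argument is correct and is precisely the approach the paper has in mind: the authors omit the proof and point to \cite[p.\,18, Proposition 3.12]{mb03}, whose argument you have faithfully reproduced with $L^p$ replaced by $\lv$. The only cosmetic point is that the symmetry of $B_0=\Delta$ about the origin (needed for $\vec 0_n\in x+B_0$ when $x\in B_0$) is indeed part of Bownik's construction, so your use of it is justified.
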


\begin{proposition}\label{2p1}
Let $\vp$ and $N$ be as in Definition \ref{2d5}.
Then $\vh$ is complete.
\end{proposition}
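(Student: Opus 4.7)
The plan is to invoke the standard criterion for completeness of a quasi-normed space: by Lemma \ref{3l3}, the functional $\|\cdot\|_{\vh}^{\underline{p}}$ is subadditive (with $\underline{p}\in(0,\min\{p_-,1\})$), so $\vh$ is complete provided every sequence $\{g_j\}_{j\in\nn}\subset\vh$ satisfying $\sum_{j\in\nn}\|g_j\|_{\vh}^{\underline{p}}<\fz$ has its partial sums $S_m:=\sum_{j=1}^m g_j$ convergent in $\vh$. I would therefore fix such a sequence and produce its limit.

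Applying the subadditivity of $\|\cdot\|_{\vh}^{\underline{p}}$, for any $l>m$ in $\nn$,
$$
\|S_l-S_m\|_{\vh}^{\underline{p}}\le\sum_{j=m+1}^l\|g_j\|_{\vh}^{\underline{p}}\to 0
\hs{\rm as}\hs m\to\fz,
$$
and hence, by the continuous embedding $\vh\subset\cs'(\rn)$ of Lemma \ref{4l2}, $\{S_m\}_{m\in\nn}$ is Cauchy in $\cs'(\rn)$. Since $\cs'(\rn)$ is sequentially complete in its weak-$\ast$ topology, there exists $f\in\cs'(\rn)$ with $S_m\to f$ in $\cs'(\rn)$; in particular, for every $\varphi\in\cs(\rn)$, $k\in\zz$ and $y\in\rn$, the fact that $\varphi_k(y-\cdot)\in\cs(\rn)$ gives $(S_m\ast\varphi_k)(y)\to(f\ast\varphi_k)(y)$ as $m\to\fz$.

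The main step is a pointwise comparison for the grand maximal function. For any $x\in\rn$, $\varphi\in\cs_N(\rn)$, $k\in\zz$ and $y\in x+B_k$, the triangle inequality together with the definition of $M_N$ yields, for every $m\in\nn$,
$$
|(S_m\ast\varphi_k)(y)|\le\sum_{j=1}^m|(g_j\ast\varphi_k)(y)|\le\sum_{j=1}^{\fz}M_N(g_j)(x);
$$
letting $m\to\fz$ and then taking the supremum over such $\varphi$, $k$ and $y$ produces the pointwise bound $M_N(f)(x)\le\sum_{j=1}^{\fz}M_N(g_j)(x)$. Taking $\|\cdot\|_{\lv}^{\underline{p}}$ of both sides, iterating Lemma \ref{3l3} on the partial sums and passing $m\to\fz$ via a standard Fatou argument on $\lv$ then yields $\|f\|_{\vh}^{\underline{p}}\le\sum_{j\in\nn}\|g_j\|_{\vh}^{\underline{p}}<\fz$, so $f\in\vh$; and the identical reasoning applied to $f-S_m=\lim_{l\to\fz}(S_l-S_m)$ in $\cs'(\rn)$ yields $\|f-S_m\|_{\vh}^{\underline{p}}\le\sum_{j=m+1}^{\fz}\|g_j\|_{\vh}^{\underline{p}}\to 0$, finishing the proof. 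The delicate point requiring care---and which I expect to be the main subtlety---is the interchange between the $\cs'(\rn)$-limit and the suprema defining $M_N$; this is handled by carrying out the pointwise majorization \emph{before} taking the suprema, rather than attempting to pass a limit through a supremum.
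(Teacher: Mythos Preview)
Your argument is correct and is precisely the standard route that the paper defers to (it omits the proof, citing \cite[p.\,18, Proposition~3.12]{mb03}, which proceeds in the same way). The only point worth making explicit is that the Fatou/monotone step on $\lv$ for $\vp\in(0,\fz)^n$ does not literally follow from Lemma~\ref{3l5} (stated there only for $\vp\in[1,\fz)^n$), but it is immediate by iterating the one-dimensional Fatou lemma through the mixed-norm definition, so no gap arises.
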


By Lemmas \ref{3l1} through \ref{3l3}, we easily
obtain the following conclusion; the details are omitted.

\begin{lemma}\label{3l4}
Let $\vp\in(0,\fz)^n$. Then there exists a positive constant $C$ such that,
for any $K\in\zz$, $L\in[0,\fz)$, $N\in\nn\cap(\frac1{p_-},\fz)$, $\varphi\in\cs(\rn)$ and $f\in\cs'(\rn)$,
\begin{align*}
\lf\|T_\varphi^{N(K,L)}(f)\r\|_{\lv}
\le C\lf\|M_\varphi^{(K,L)}(f)\r\|_{\lv},
\end{align*}
where $T_\varphi^{N(K,L)}$ and $M_\varphi^{(K,L)}$ are as in Definition \ref{3d1}.
\end{lemma}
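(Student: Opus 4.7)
The plan is to deduce the stated estimate by combining Lemmas~\ref{3l1}, \ref{3l2}, and \ref{3l3} via a standard rescaling argument: use Lemma~\ref{3l3} to transfer the $\lv$ quasi-norm to an $L^{\vp/\lambda}$ quasi-norm for a suitable $\lambda$, apply the pointwise domination from Lemma~\ref{3l2}, then invoke the boundedness of $M_{{\rm HL}}$ from Lemma~\ref{3l1} at the rescaled exponent.

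First, I would choose an auxiliary exponent $\lambda\in(0,\fz)$ satisfying both $\lambda\ge 1/N$ (so that Lemma~\ref{3l2} applies with this $\lambda$) and $\lambda<p_-$ (so that $\vp/\lambda\in(1,\fz)^n$, which is the range of exponents on which Lemma~\ref{3l1} provides boundedness of $M_{{\rm HL}}$). The hypothesis $N\in\nn\cap(\frac{1}{p_-},\fz)$ forces $1/N<p_-$, so any $\lambda\in[1/N,p_-)$ works; for concreteness one may take $\lambda:=1/N$. Then Lemma~\ref{3l3} with $r=\lambda$ applied to the exponent vector $\vp/\lambda$ gives the two identities
\[
\lf\|T_\varphi^{N(K,L)}(f)\r\|_{\lv}^{\lambda}=\lf\|\lf[T_\varphi^{N(K,L)}(f)\r]^{\lambda}\r\|_{L^{\vp/\lambda}(\rn)}
\]
and $\|[M_\varphi^{(K,L)}(f)]^{\lambda}\|_{L^{\vp/\lambda}(\rn)}=\|M_\varphi^{(K,L)}(f)\|_{\lv}^{\lambda}$.

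Next, I would insert the pointwise bound
\[
\lf[T_\varphi^{N(K,L)}(f)(x)\r]^{\lambda}\le C\,M_{{\rm HL}}\lf(\lf[M_\varphi^{(K,L)}(f)\r]^{\lambda}\r)(x)
\]
from Lemma~\ref{3l2} inside the $L^{\vp/\lambda}(\rn)$ quasi-norm and use its monotonicity, followed by Lemma~\ref{3l1} on $L^{\vp/\lambda}(\rn)$ to conclude
\[
\lf\|M_{{\rm HL}}\lf(\lf[M_\varphi^{(K,L)}(f)\r]^{\lambda}\r)\r\|_{L^{\vp/\lambda}(\rn)}\ls \lf\|\lf[M_\varphi^{(K,L)}(f)\r]^{\lambda}\r\|_{L^{\vp/\lambda}(\rn)}.
\]
Chaining these three estimates and taking $\lambda$-th roots produces the desired quasi-norm inequality on $\lv$, with a constant depending only on $N$, $\vp$, and the constants of Lemmas~\ref{3l1} and~\ref{3l2}.

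No genuine analytical obstacle arises; the entire argument is bookkeeping once the correct $\lambda$ has been selected. The only point that truly uses the hypothesis on $N$ is this first step: the numerical matching of the intervals $[1/N,\fz)$ and $(0,p_-)$ is precisely what makes the hypothesis $N>1/p_-$ appear in the statement, and if one tried to relax it the argument would break down at exactly this junction.
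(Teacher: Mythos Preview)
Your proposal is correct and follows exactly the approach the paper indicates: the paper omits the details entirely, simply stating that the lemma follows from Lemmas~\ref{3l1} through~\ref{3l3}, and your rescaling argument via a choice of $\lambda\in[1/N,p_-)$ is precisely the intended route. The only minor remark is that the lemma as stated quantifies over $N$ after fixing $C$, whereas your constant depends on $N$ through the choice $\lambda=1/N$; in the paper's applications $N$ is fixed in advance, so this discrepancy is harmless.
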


Applying the monotone convergence theorem (see \cite[p.\,62, Corollary 1.9]{ss05}) and
\cite[p.\,304, Theorem 2]{bp61}, we have the following monotone
convergence property of $\lv$ with the details omitted.

\begin{lemma}\label{3l5}
Let $\vp\in[1,\fz)^n$ and $\{g_i\}_{i\in\nn}\subset\lv$
be any sequence of non-negative functions satisfying that
$g_i$, as $i\to\fz$, increases pointwisely almost everywhere
to some $g\in\lv$. Then
$$\|g-g_i\|_{\lv}\to0\hspace{0.5cm} as\ \ i\to\fz.$$
\end{lemma}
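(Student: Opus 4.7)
The plan is to prove Lemma \ref{3l5} by induction on the dimension $n$, at each step reducing the mixed-norm convergence to a one-variable dominated convergence argument. The key observation is that the monotonicity hypothesis $g_i\uparrow g$ almost everywhere supplies both the pointwise decay $g-g_i\downarrow 0$ and an integrable majorant $g\in\lv$, which is exactly what legitimizes every invocation of the dominated convergence theorem along the iteration.

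For the base case $n=1$, set $h_i:=g-g_i$. Then $0\le h_i\le g\in L^{p_1}(\rr)$, $h_i\downarrow 0$ pointwise almost everywhere, and hence $h_i^{p_1}\le g^{p_1}\in L^1(\rr)$ with $h_i^{p_1}\downarrow 0$ pointwise almost everywhere; the classical dominated convergence theorem gives $\|h_i\|_{L^{p_1}(\rr)}\to 0$. For the inductive step, assuming the result in dimension $n-1$, I use the Fubini-type identification of Benedek--Panzone \cite[p.\,304, Theorem 2]{bp61},
\[
\|f\|_{\lv}=\lf\{\int_{\rr}\lf\|f(\cdot,x_n)\r\|_{L^{(p_1,\ldots,p_{n-1})}(\rr^{n-1})}^{p_n}\,dx_n\r\}^{1/p_n},
\]
to view $\lv$ as an $L^{p_n}$-space in the variable $x_n$ taking values in the mixed-norm space on $\rr^{n-1}$. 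Fubini simultaneously guarantees that, for almost every $x_n\in\rr$, the slice $g(\cdot,x_n)$ lies in $L^{(p_1,\ldots,p_{n-1})}(\rr^{n-1})$ and the slices $\{g_i(\cdot,x_n)\}_{i\in\nn}$ form an increasing sequence of non-negative functions in that space converging pointwise almost everywhere to $g(\cdot,x_n)$. The inductive hypothesis then yields, for almost every such $x_n$,
\[
F_i(x_n):=\lf\|g(\cdot,x_n)-g_i(\cdot,x_n)\r\|_{L^{(p_1,\ldots,p_{n-1})}(\rr^{n-1})}\longrightarrow 0\quad\text{as }i\to\fz.
\]
Since $0\le g-g_i\le g$ forces $F_i(x_n)\le F_0(x_n):=\|g(\cdot,x_n)\|_{L^{(p_1,\ldots,p_{n-1})}(\rr^{n-1})}$, and since $F_0\in L^{p_n}(\rr)$ precisely because $g\in\lv$, one more application of dominated convergence on $\rr$ gives $\|F_i\|_{L^{p_n}(\rr)}\to 0$, which by the displayed Fubini identity is exactly $\|g-g_i\|_{\lv}\to 0$.

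The only real obstacle is bookkeeping rather than genuine analysis: one must verify that the countably many exceptional null sets (those on which some slice norm is infinite, or on which pointwise monotone convergence of $g_i$ to $g$ fails for some index $i$) can be gathered into a single null set off of which the inductive hypothesis applies uniformly in $i$. This is routine and is exactly what the mixed-norm Fubini theorem \cite[p.\,304, Theorem 2]{bp61} is designed to provide, together with the classical monotone convergence theorem that the authors cite alongside it.
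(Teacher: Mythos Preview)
Your proof is correct and is precisely the detailed execution of what the paper only sketches: the authors simply write ``Applying the monotone convergence theorem \ldots\ and \cite[p.\,304, Theorem 2]{bp61}, we have the following monotone convergence property of $\lv$ with the details omitted,'' and your induction on $n$ with iterated dominated convergence is exactly how one fills in those details. One minor remark: the displayed ``Fubini-type identification'' you attribute to \cite[p.\,304, Theorem 2]{bp61} is in fact just the definition of $\|\cdot\|_{\lv}$ (Definition~\ref{2d3}), not a separate theorem; also your majorant is better denoted $G(x_n):=\|g(\cdot,x_n)\|_{L^{(p_1,\ldots,p_{n-1})}(\rr^{n-1})}$ rather than $F_0$, since the index set is $\nn$.
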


To establish the maximal function characterizations of $\vh$, we also need
the following three technical lemmas, which are just, respectively,
\cite[p.\,45, Lemma 7.5, p.\,46, Lemma 7.6 and p.\,11, Lemma 3.2]{mb03}.

\begin{lemma}\label{3l6}
Let $\varphi\in\cs(\rn)$ and
$\int_{\rn}\varphi(x)\,dx\neq0$. Then,
for any given $N\in\mathbb{N}$ and $L\in[0,\fz)$, there
exist an $I:=N+2(n+1)+L$
and a positive constant $C_{(N,L)}$, depending on $N$ and $L$,
such that, for any $K\in\zz_+$, $f\in\cs'(\rn)$ and $x\in\rn$,
$$M_I^{0(K,L)}(f)(x)\le C_{(N,L)}T_\varphi^{N(K,L)}(f)(x),$$
where $M_I^{0(K,L)}$ and $T_\varphi^{N(K,L)}$ are as in Definition \ref{3d1}.
\end{lemma}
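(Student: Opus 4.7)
The plan is a Fefferman--Stein-style reproducing-formula argument adapted to the anisotropic setting. The goal is to expand any $\psi\in\cs_I(\rn)$ as a convergent sum of dilated copies of $\varphi$ with rapidly decaying coefficients, and then estimate each resulting piece by $T_\varphi^{N(K,L)}$.

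First, I would construct a reproducing formula subordinate to $\varphi$. Since $\int_{\rn}\varphi(x)\,dx\ne 0$, after normalization $\widehat\varphi(\vec0_n)=1$, one can choose $\eta\in\cs(\rn)$ whose Fourier transform is a smoothed reciprocal of $\widehat\varphi$ near the origin, so that $\varphi\ast\eta$ serves as an anisotropic approximation to the identity. Iterating the decomposition $\psi=\psi\ast(\varphi\ast\eta)+\bigl(\psi-\psi\ast(\varphi\ast\eta)\bigr)$, whose remainder vanishes to one higher anisotropic order, yields, for every $\psi\in\cs_I(\rn)$, an expansion
\[
\psi=\sum_{j=0}^{\infty}\omega_j\ast\varphi_{-j},
\]
where $\omega_j\in\cs(\rn)$ depend linearly on $\psi$ and satisfy $\|\omega_j\|_{\cs_{N'}(\rn)}\ls b^{-\delta j}\|\psi\|_{\cs_I(\rn)}$ for some $\delta>0$ and any prescribed $N'$, provided $I$ is taken sufficiently large relative to $N'$. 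Dilating by $A^k$ gives $\psi_k=\sum_{j\ge0}(\omega_j)_k\ast\varphi_{k-j}$, and the assumption $k\le K$ forces $k-j\le K$ for every $j\ge0$, so $T_\varphi^{N(K,L)}$ applies to every summand.

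Second, for fixed $x\in\rn$ and $k\le K$, I would write
\[
(f\ast\psi_k)(x)=\sum_{j=0}^{\infty}\int_{\rn}(f\ast\varphi_{k-j})(x-u)(\omega_j)_k(u)\,du
\]
and apply the defining inequality of $T_\varphi^{N(K,L)}$ at scale $k-j$ with base point $y=x-u$, obtaining
\[
|(f\ast\varphi_{k-j})(x-u)|\le T_\varphi^{N(K,L)}(f)(x)[\max\{1,\rho(A^{j-k}u)\}]^N[\max\{1,\rho(A^{-K}(x-u))\}]^L(1+b^{j-k-K})^L.
\]
Using the quasi-triangle inequality of $\rho$ together with the elementary bounds $\max\{1,a+b\}\le 2\max\{1,a\}\max\{1,b\}$ and $1+b^{j-k-K}\le(1+b^j)(1+b^{-k-K})$, the weight $[\max\{1,\rho(A^{-K}x)\}]^L(1+b^{-k-K})^L$, which is exactly the weight appearing in the definition of $M_I^{0(K,L)}$, can be pulled in front, at the price of extra factors $[1+\rho(A^{-K}u)]^L$ and $(1+b^j)^L$ inside the integral.

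Third, after inserting the Schwartz-type bound $|(\omega_j)_k(u)|\ls b^{k-\delta j}[\max\{1,\rho(A^k u)\}]^{-N'}$ and rescaling the integration variable, the finiteness of the resulting sum reduces to balancing the polynomial growth of the weights of total order $N+L$ against the $N'$-decay contributed by $\omega_j$. Splitting the integral into anisotropic annuli $\{b^m\le\rho(v)<b^{m+1}\}$ and tracking the dominant terms, convergence is secured once $N'$ exceeds $N+L+(n+1)$. The main obstacle is the bookkeeping of these exponents against the constraints imposed by the base reproducing iteration, which itself demands about $n+1$ additional derivatives and moments to make each step $L^1$-integrable; together these force the sharp threshold $I\ge N+2(n+1)+L$. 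Dividing through by the matching weight of $M_I^{0(K,L)}$ and taking the supremum over $\psi\in\cs_I(\rn)$ then produces the claimed pointwise bound $M_I^{0(K,L)}(f)(x)\le C_{(N,L)}T_\varphi^{N(K,L)}(f)(x)$.
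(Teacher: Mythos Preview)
The paper does not give its own proof of this lemma; it simply cites \cite[p.\,45, Lemma~7.5]{mb03}. Your outline---building an anisotropic reproducing expansion $\psi=\sum_{j\ge0}\omega_j\ast\varphi_{-j}$ from the condition $\widehat\varphi(\vec0_n)\ne0$, convolving with $f$, bounding each term via the defining inequality of $T_\varphi^{N(K,L)}$ at scale $k-j\le K$, and then balancing the polynomial weights against the Schwartz decay of $\omega_j$---is exactly the argument used in that reference (which is the anisotropic adaptation of the classical Fefferman--Stein construction). The exponent accounting you describe, requiring roughly $n+1$ extra orders for the reproducing step and $N+L+(n+1)$ for the integral to converge, is what produces the threshold $I=N+2(n+1)+L$, so your sketch is on target.
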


\begin{lemma}\label{3l7}
Let $\varphi$ be as in Lemma \ref{3l6}.
Then, for any given $\lz\in(0,\fz)$ and $K\in\zz_+$,
there exist $L\in(0,\fz)$ and a positive constant $C_{(K,\,\lz)}$,
depending on $K$ and $\lz$, such that,
for any $f\in\cs'(\rn)$ and $x\in\rn$,
\begin{align}\label{3e3}
M_\varphi^{(K,L)}(f)(x)
\le C_{(K,\,\lz)}\lf[\max\lf\{1,\rho(x)\r\}\r]^{-\lz},
\end{align}
where $M_\varphi^{(K,L)}$ is as in Definition \ref{3d1}.
\end{lemma}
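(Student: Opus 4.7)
The plan is to bound $|(f\ast\varphi_k)(y)|$ by a polynomial in $\rho(y)$, with a coefficient depending controllably on the scale $k$, via the finite-order property of the tempered distribution $f$, and then to choose $L$ large enough so that the two weight factors $[\max\{1,\rho(A^{-K}y)\}]^{-L}$ and $(1+b^{-k-K})^{-L}$ in the definition of $M_\varphi^{(K,L)}(f)(x)$ simultaneously absorb this polynomial growth in $y$ and the possible blow-up in $k$, producing the desired decay $[\max\{1,\rho(x)\}]^{-\lz}$.

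First I would fix $M\in\zz_+$ and a positive constant $C_f$, both depending only on $f$, such that
$$
|\langle f,\psi\rangle|\le C_f\sup_{|\az|\le M}\sup_{z\in\rn}
\lf[\max\lf\{1,\rho(z)\r\}\r]^M\lf|\partial^\az\psi(z)\r|
$$
for every $\psi\in\cs(\rn)$; the existence of such $M$ and $C_f$ follows from the continuity of $f$ on $\cs(\rn)$ together with the fact that the anisotropic seminorms above generate the same topology on $\cs(\rn)$ as the standard Schwartz seminorms. Applying this bound to $\psi(z):=\varphi_k(y-z)=b^k\varphi(A^k(y-z))$ and using the chain rule, the Schwartz decay of $\varphi$, the identity $\rho(A^kw)=b^k\rho(w)$, the quasi-triangle inequality for $\rho$, and the elementary observation that $\sup_{k\le K}\|A^k\|<\fz$, a careful split of the supremum in $z$ according to whether $b^k\rho(y-z)$ is small or large yields a pointwise bound of the form
$$
|(f\ast\varphi_k)(y)|\le C_{f,\,\varphi,\,K}\lf[1+b^{-kM}\r]\lf[\max\lf\{1,\rho(y)\r\}\r]^M
$$
for every $k\le K$ and every $y\in\rn$.

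Next I would exploit the constraint $y\in x+B_k$ with $k\le K$: the step quasi-norm gives $\rho(y-x)<b^K$, so the quasi-triangle inequality yields $\max\{1,\rho(y)\}\ls_K 1+\rho(x)$ always, and in fact $\max\{1,\rho(y)\}\gs_K\rho(x)$ as soon as $\rho(x)\ge 2Hb^K$. Inserting the bound from the previous paragraph into the definition of $M_\varphi^{(K,L)}(f)(x)$ and using $\rho(A^{-K}y)=b^{-K}\rho(y)$, I would split into two regimes. When $\rho(x)\le 2Hb^K$ a trivial $K$-dependent upper bound suffices, since $[\max\{1,\rho(x)\}]^{-\lz}$ is bounded below by a $K$-dependent positive constant. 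When $\rho(x)>2Hb^K$, one has $[\max\{1,b^{-K}\rho(y)\}]^{-L}\sim_K b^{KL}[\rho(x)]^{-L}$, so the $\rho(y)$-dependent factors contribute $[\rho(x)]^{M-L}$, which is majorized by $[\rho(x)]^{-\lz}$ as soon as $L\ge M+\lz$. The remaining $k$-dependent factor $[1+b^{-kM}](1+b^{-k-K})^{-L}$ is uniformly bounded in $k\le K$ once $L\ge M$, because $(1+b^{-k-K})^{-L}\ls b^{(k+K)L}$ for $k\le -K$ is precisely what absorbs the $b^{-kM}$ term there.

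Thus, choosing $L:=M+\lceil\lz\rceil$ (or any larger integer) produces the bound $M_\varphi^{(K,L)}(f)(x)\le C_{(K,\,\lz)}[\max\{1,\rho(x)\}]^{-\lz}$, as required. The main obstacle I expect is the first step: producing a seminorm estimate for $\psi(z)=\varphi_k(y-z)$ quantitative in both $y$ and $k$, and in particular isolating the $b^{-kM}$ blow-up as $k\to-\fz$ that is exactly compensated by the factor $(1+b^{-k-K})^{-L}$ built into the definition of $M_\varphi^{(K,L)}$. The rest of the argument is essentially bookkeeping with the quasi-triangle inequality and the relation $\rho(A^kw)=b^k\rho(w)$.
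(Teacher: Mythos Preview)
Your argument is essentially correct and is the standard one; the paper does not give its own proof of this lemma but simply cites \cite[p.\,46, Lemma 7.6]{mb03}, and what you outline is precisely the argument found there. One small remark: as written, the lemma's quantifier order suggests that $L$ and $C_{(K,\lambda)}$ are independent of $f$, but as your proof makes clear (and as in Bownik's original), both must depend on $f$ through the order $M$ of $f$ as a tempered distribution --- which is consistent with how the lemma is actually applied in the proof of Theorem~\ref{3t1}, where $f$ is already fixed.
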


\begin{lemma}\label{3l8}
There exists a positive constant $C$ such that, for any $x\in \rn$,
\begin{equation}\label{2e1'}
C^{-1}[\rho(x)]^{\ln \lambda_-/\ln b} \le \lf|x\r|
\le  C [\rho(x)]^{\ln \lambda_+/\ln b}\quad  when\quad \rho(x)\in[1,\fz),
\end{equation}
and
\begin{equation}\label{2e2'}
C^{-1} [\rho(x)]^{\ln \lambda_+/\ln b} \le \lf|x\r|
\le  C [\rho(x)]^{\ln \lambda_-/\ln b}\quad when\quad \rho(x)\in[0,1).
\end{equation}
\end{lemma}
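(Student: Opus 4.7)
The plan is to translate the definition of the step quasi-norm $\rho$, together with the algebraic description of the dilated balls $B_k=A^k\Delta$, into estimates on $|x|$ by comparing $\Delta$ with Euclidean balls and by using sharp two-sided bounds on the Euclidean norm of $A^{\pm k}x$ coming from the spectrum of $A$.

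First, I would record the following standard operator-theoretic fact, which is a direct consequence of bringing $A$ into Jordan normal form (or equivalently, of the chosen inequalities $\lambda_-<\min\{|\lambda|:\lambda\in\sigma(A)\}\le\max\{|\lambda|:\lambda\in\sigma(A)\}<\lambda_+$): there exist positive constants $c_1,\,c_2$ such that, for any $x\in\rn$ and any $k\in\zz_+$,
\begin{equation*}
c_1\lambda_-^{k}|x|\le|A^{k}x|\le c_2\lambda_+^{k}|x|\quad\text{and}\quad c_1\lambda_+^{-k}|x|\le|A^{-k}x|\le c_2\lambda_-^{-k}|x|.
\end{equation*}
Next, since $\Delta$ is an open ellipsoid of unit Lebesgue measure containing the origin, I would fix $r_1,\,r_2\in(0,\fz)$ such that the Euclidean balls $B(\vec0_n,r_1)$ and $B(\vec0_n,r_2)$ satisfy $B(\vec0_n,r_1)\subset\Delta\subset B(\vec0_n,r_2)$.

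Now let $x\in\rn\setminus\{\vec0_n\}$ and, by \eqref{2e3}, let $k\in\zz$ be the unique integer with $\rho(x)=b^k$, so that $x\in B_{k+1}\setminus B_k$, equivalently $A^{-(k+1)}x\in\Delta$ and $A^{-k}x\notin\Delta$. From the containments above, this yields the key pair $|A^{-(k+1)}x|\le r_2$ and $|A^{-k}x|\ge r_1$. Writing $|x|=|A^{k+1}(A^{-(k+1)}x)|$ and $|A^{-k}x|$ in terms of the operator bounds, I would then split into the two cases.

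When $\rho(x)\in[1,\fz)$, we have $k\ge0$, and the first displayed two-sided estimate applied to $A^{k+1}$ gives $|x|\le c_2\lambda_+^{k+1}r_2\ls\lambda_+^{k}=[\rho(x)]^{\ln\lambda_+/\ln b}$; analogously, $r_1\le|A^{-k}x|\le c_2\lambda_-^{-k}|x|$ yields $|x|\gs\lambda_-^{k}=[\rho(x)]^{\ln\lambda_-/\ln b}$, establishing \eqref{2e1'}. When $\rho(x)\in[0,1)$, we have $k\le-1$, so $-k-1\ge0$ and $-k\ge1$; applying the same two estimates but now with the roles of the exponents $\lambda_\pm$ reversed (because the sign of $k+1$ and of $-k$ has flipped) gives $|x|\le c_2\lambda_-^{k+1}r_2\ls[\rho(x)]^{\ln\lambda_-/\ln b}$ and $|x|\gs\lambda_+^{k}=[\rho(x)]^{\ln\lambda_+/\ln b}$, which is \eqref{2e2'}. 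The two constants are combined into a single $C$, and the case $x=\vec0_n$ is trivial since $\rho(\vec0_n)=0$.

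The only delicate point, and the step I expect to be the real obstacle, is the justification of the Jordan-form operator inequalities for $A^{\pm k}$, because $A$ is a general real matrix whose eigenvalues may be complex and non-semisimple; one has to absorb polynomial-in-$k$ growth coming from Jordan blocks into the exponential factor $\lambda_+^{k}$ (or $\lambda_-^{k}$) by exploiting the strict inequalities $\lambda_-<|\lambda|<\lambda_+$ for every $\lambda\in\sigma(A)$. The rest of the argument is then a clean two-case check.
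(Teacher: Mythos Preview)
Your proof is correct. The paper does not supply its own argument for this lemma but simply cites \cite[p.\,11, Lemma~3.2]{mb03}; your proof is essentially the standard one found there, and the operator-norm estimates $c_1\lambda_-^{k}|x|\le|A^{k}x|\le c_2\lambda_+^{k}|x|$ for $k\in\zz_+$ (together with the analogous bounds for $A^{-k}$) that you flag as the delicate step are recorded explicitly in \cite[p.\,5]{mb03}, so nothing further is required.
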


The following notion of anisotropic radial and radial grand maximal
functions is from \cite[p.\,12, Definition 3.4]{mb03}.

\begin{definition}\label{3d2}
Let $\varphi\in\cs(\rn)$ and $f\in\cs'(\rn)$. The
\emph{radial maximal function} $M_\varphi^0(f)$
of $f$ with respect to $\varphi$ is defined by setting, for any $x\in\rn$,
\begin{equation*}
M_\varphi^0(f)(x):= \sup_{k\in\mathbb{Z}}
|f\ast\varphi_k(x)|.
\end{equation*}
Moreover, for any given $N\in\mathbb{N}$, the
\emph{radial grand maximal function} $M_N^0(f)$
of $f\in\cs'(\rn)$ is defined by setting,
for any $x\in\rn$,
\begin{equation*}
M_N^0(f)(x):=\sup_{\varphi\in\cs_N(\rn)}
M_\varphi^0(f)(x).
\end{equation*}
\end{definition}

We now state the main result of this section as follows.

\begin{theorem}\label{3t1}
Let $\vp\in(0,\fz)^n$, $N\in\nn\cap[\lfloor\frac1{p_-}\rfloor+2n+3,\fz)$
and $\varphi$ be as in Lemma \ref{3l6}. Then
the following statements are mutually equivalent:
\begin{enumerate}
\item[{\rm(i)}] $f\in\vh;$
\item[{\rm(ii)}] $f\in\cs'(\rn)$ and $M_\varphi(f)\in\lv;$
\item[{\rm(iii)}] $f\in\cs'(\rn)$ and $M_\varphi^0(f)\in\lv.$
\end{enumerate}
Moreover, there exist two positive constants $C_1$ and $C_2$, independent
of $f$, such that
\begin{align*}
\|f\|_{\vh}\le C_1\lf\|M_\varphi^0(f)
\r\|_{\lv}\le C_1\lf\|M_\varphi(f)\r\|_{\lv}
\le C_2\|f\|_{\vh}.
\end{align*}
\end{theorem}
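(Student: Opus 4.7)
The plan is to establish the quantitative chain
\[
\|f\|_{\vh}=\|M_N(f)\|_{\lv}\le C_1\|M_\varphi^0(f)\|_{\lv}\le C_1\|M_\varphi(f)\|_{\lv}\le C_2\|f\|_{\vh},
\]
from which the equivalence of (i), (ii) and (iii) follows at once. The middle inequality is immediate since $M_\varphi^0(f)\le M_\varphi(f)$ by restricting to $y=x$ in Definition~\ref{2d4}, and the rightmost comes from the pointwise bound $M_\varphi(f)\le\|\varphi\|_{\cs_N(\rn)}M_N(f)$ obtained by normalising $\varphi/\|\varphi\|_{\cs_N(\rn)}\in\cs_N(\rn)$. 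The substantive content of the theorem is therefore the leftmost estimate $\|M_N(f)\|_{\lv}\lesssim\|M_\varphi^0(f)\|_{\lv}$, and the plan is to obtain it by passing through the truncated maximal functions of Definition~\ref{3d1}.

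Set $N_1:=N-2(n+1)$. The hypothesis $N\ge\lfloor 1/p_-\rfloor+2n+3$ forces $N_1\in\nn\cap(1/p_-,\fz)$, which is the range required by Lemma~\ref{3l4}. For any $K\in\zz_+$ and any admissible $L\ge 0$, Lemma~\ref{3l6} applied with index $N_1$ then produces $I:=N_1+2(n+1)+L$ together with the pointwise bound $M_I^{0(K,L)}(f)\le CT_\varphi^{N_1(K,L)}(f)$. Chaining this with Lemma~\ref{3l4} delivers
\[
\|M_I^{0(K,L)}(f)\|_{\lv}\le C\|T_\varphi^{N_1(K,L)}(f)\|_{\lv}\le C\|M_\varphi^{(K,L)}(f)\|_{\lv}.
\]
The a priori finiteness of $\|M_\varphi^{(K,L)}(f)\|_{\lv}$, which legitimises subsequent manipulations, is supplied by Lemma~\ref{3l7}: for $L$ large enough depending on $K$, one has $M_\varphi^{(K,L)}(f)(x)\lesssim[\max\{1,\rho(x)\}]^{-\lambda}$ with $\lambda$ chosen so that the right-hand side lies in $\lv$.

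To close the circuit and bound $\|M_\varphi^{(K,L)}(f)\|_{\lv}$ by $\|M_\varphi^{0(K,L)}(f)\|_{\lv}$ with constants uniform in $K$, one combines the trivial pointwise inclusion $M_\varphi^{(K,L)}\le T_\varphi^{N_1(K,L)}$ (which, together with Lemma~\ref{3l4}, forces $\|M_\varphi^{(K,L)}(f)\|_{\lv}$, $\|T_\varphi^{N_1(K,L)}(f)\|_{\lv}$ and $\|M_I^{0(K,L)}(f)\|_{\lv}$ to be mutually comparable) with the pointwise bound $M_\varphi^{0(K,L)}\le\|\varphi\|_{\cs_I(\rn)}M_I^{0(K,L)}$, obtained by normalising $\varphi$ into $\cs_I(\rn)$. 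The crucial point is that the constants arising from this circuit depend on $\varphi$ and on $L$ but not on $K$ or on $f$, so one concludes $\|M_\varphi^{(K,L)}(f)\|_{\lv}\lesssim\|M_\varphi^{0(K,L)}(f)\|_{\lv}\le\|M_\varphi^0(f)\|_{\lv}$ uniformly in $K$.

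Finally, we let $K\to\fz$ and $L\to 0^+$. Since the truncation weights increase to $1$ and the sup over $k\le K$ expands to the full range, both $M_N^{0(K,L)}(f)$ and its non-tangential counterpart $M_N^{(K,L)}(f)$ increase pointwise a.e.\ to $M_N^0(f)$ and $M_N(f)$ respectively. Applying Lemma~\ref{3l5} (after the $\theta$-power reduction of Lemma~\ref{3l3} to the Banach range $\vp/\theta\in[1,\fz)^n$ when $\vp\notin[1,\fz)^n$) converts the uniform-in-$K$ truncated bounds into $\|M_N(f)\|_{\lv}\lesssim\|M_\varphi^0(f)\|_{\lv}$, completing the proof. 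The principal technical obstacle is the uniform-in-$K$ closure of the truncated chain: the competing requirements of Lemma~\ref{3l7} (which forces $L$ to be large so as to secure a priori finiteness) and of the index identity $I=N$ (which forces $L$ toward $0$) must be reconciled, and the Fefferman--Stein-type inequality of Lemma~\ref{3l2} must be paired carefully with the $\HL$ boundedness of Lemma~\ref{3l1} on $L^{\vp/\lambda}(\rn)$ with $\lambda<p_-$ so that the intermediate $\lv$-quasi-norms do not pick up any dependence on the truncation parameter $K$.
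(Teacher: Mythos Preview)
Your ``circuit'' in the third paragraph does not close. From the pointwise bound $M_\varphi^{(K,L)}\le T_\varphi^{N_1(K,L)}$ together with Lemma~\ref{3l4} you correctly get $\|M_\varphi^{(K,L)}(f)\|_{\lv}\sim\|T_\varphi^{N_1(K,L)}(f)\|_{\lv}$, and Lemma~\ref{3l6} gives $\|M_I^{0(K,L)}(f)\|_{\lv}\lesssim\|T_\varphi^{N_1(K,L)}(f)\|_{\lv}$. But nothing in the lemmas you cite yields the reverse inequality $\|T_\varphi^{N_1(K,L)}(f)\|_{\lv}\lesssim\|M_I^{0(K,L)}(f)\|_{\lv}$, so the three quantities are \emph{not} known to be mutually comparable. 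Adding the trivial pointwise bound $M_\varphi^{0(K,L)}\lesssim M_I^{0(K,L)}$ therefore only reproduces the obvious inequality $\|M_\varphi^{0(K,L)}(f)\|_{\lv}\lesssim\|M_\varphi^{(K,L)}(f)\|_{\lv}$, not the claimed $\|M_\varphi^{(K,L)}(f)\|_{\lv}\lesssim\|M_\varphi^{0(K,L)}(f)\|_{\lv}$. Lemma~\ref{3l2} cannot rescue this either: it bounds $T_\varphi^{N(K,L)}$ by $M_{\rm HL}$ applied to a power of $M_\varphi^{(K,L)}$, not of $M_\varphi^{0(K,L)}$.

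The paper avoids this trap by separating the two nontrivial implications. For (ii)$\Rightarrow$(i) it works with $L=0$ and only proves $\|M_I^0(f)\|_{\lv}\lesssim\|M_\varphi(f)\|_{\lv}$ (non-tangential on the right, not radial), then quotes Bownik's pointwise equivalence between $M_N^0$ and $M_N$ (\cite[p.\,17, Proposition~3.10]{mb03}) to finish. For (iii)$\Rightarrow$(ii), where one genuinely must pass from non-tangential to radial for the single $\varphi$, the paper uses a good-set argument: with $G_K:=\{x:M_I^{0(K,L)}(f)(x)\le 2C_3\,M_\varphi^{(K,L)}(f)(x)\}$ the complement carries at most half the $\lv$-norm of $M_\varphi^{(K,L)}(f)$, and on $G_K$ one has the pointwise control $[M_\varphi^{(K,L)}(f)]^r\lesssim M_{\rm HL}([M_\varphi^{0(K,L)}(f)]^r)$ for $r\in(0,p_-)$ (this is the argument of \cite[(4.17)]{lyy16}, which uses that the grand radial maximal function dominates $M_\varphi^{(K,L)}$ on $G_K$). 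This good-set step is precisely the missing idea in your proposal; without it the non-tangential-to-radial bound cannot be obtained from the lemmas at hand. Your concern about reconciling large $L$ (from Lemma~\ref{3l7}) with $L=0$ is also resolved by this separation: $L=0$ is used only in (ii)$\Rightarrow$(i), while in (iii)$\Rightarrow$(ii) one fixes the large $L$ supplied by Lemma~\ref{3l7} throughout and lets only $K\to\infty$.
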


\begin{remark}\label{3r2}
Let $\va:=(a_1,\ldots,a_n)\in [1,\fz)^n$, $a_-:=\min\{a_1,\ldots,a_n\}$,
$a_+:=\max\{a_1,\ldots,a_n\}$ and $\nu:=a_1+\cdots+a_n$.
By Remark \ref{2r2}(iv), we know that $\vh$ with $A$ as in \eqref{2e5}
becomes the Hardy space $H_{\va}^{\vp}(\rn)$
from \cite{cgn17,hlyy,hy}. Recall that, in \cite[Theorem 4.10]{hy}, Huang and Yang
established the maximal function characterizations of $H_{\va}^{\vp}(\rn)$ with $N\in\nn\cap[\lfloor\frac1{p_-}\rfloor+2\nu+3,\fz)$.
On another hand, Cleanthous et al. \cite[Theorem 3.4]{cgn17} also
obtained the maximal function characterizations
of $H_{\va}^{\vp}(\rn)$ with $N\in\mathbb{N}\cap[\lfloor\nu\frac{a_+}{a_-}
(\frac{1}{\min\{1,p_-\}}+1)+\nu+2a_+\rfloor+1,\fz),$
which is just a proper subset of
$\nn\cap[\lfloor\frac1{p_-}\rfloor+2\nu+3,\fz).$
In addition, when $\vp=(\overbrace{p,\ldots,p}^{n\ \rm times})$
with some $p\in(0,\fz)$, from Remark \ref{2r2}(iii),
it follows that Theorem \ref{3t1} is just \cite[p.\,42, Theorem 7.1]{mb03}.
\end{remark}

We now prove Theorem \ref{3t1}.

\begin{proof}[Proof of Theorem \ref{3t1}]
Clearly, (i) implies (ii) and (ii) implies (iii). Therefore, to prove Theorem \ref{3t1},
it suffices to show that (ii) implies (i) and that (iii) implies (ii).

We first prove that (ii) implies (i). For this purpose,
let $\vp\in(0,\fz)^n$, $I\in\nn\cap[\lfloor\frac1{p_-}\rfloor+2n+3,\fz)$,
$f\in\cs'(\rn)$, $\varphi$ be as in Lemma \ref{3l6} and $M_{\varphi}(f)\in\lv$.
We next show that $M_I(f)\in\lv$. Indeed,
by Lemma \ref{3l6} with $N:=\lfloor\frac1{p_-}\rfloor+1$ and $L:=0$, we conclude that,
for any given $I\in\nn\cap[\lfloor\frac1{p_-}\rfloor+2n+3,\fz)$
and any $K\in\zz_+$, $f\in\cs'(\rn)$
and $x\in\rn$, $M_I^{0(K,0)}(f)(x)\ls T_\varphi^{N(K,0)}(f)(x)$.
This, combined with Lemma \ref{3l4}, implies that, for any $K\in\mathbb{Z}_+$
and $f\in\cs'(\rn)$,
\begin{align}\label{3e4}
\lf\|M_I^{0(K,0)}(f)\r\|_{\lv}\ls
\lf\|M_\varphi^{(K,0)}(f)\r\|_{\lv}.
\end{align}
Letting $K\to\fz$ in \eqref{3e4}, by Lemma \ref{3l5}, we obtain
$$\lf\|M_I^0(f)\r\|_{\lv}\ls\lf\|M_\varphi(f)\r\|_{\lv}.$$
This, together with \cite[p.\,17, Proposition 3.10]{mb03},
implies that $M_I(f)\in\lv$ and hence (i) holds true.

Now we show that (iii) implies (ii). To this end, let $M_\varphi^0(f)\in \lv$.
By Lemma \ref{3l7} with $\lz\in(n\ln\lz_+/[p_-\ln b],\fz)$ and $K\in\zz_+$,
we know that there exists some $L\in(0,\fz)$ such that \eqref{3e3} holds true.
Therefore, for any $K\in\zz_+$, $M_\varphi^{(K,L)}(f)\in \lv$.
Indeed, when $\lz\in(n\ln\lz_+/[p_-\ln b],\fz)$, from Lemma \ref{3l3} with
$\theta=\underline{p}$, and Lemmas \ref{3l7} and \ref{3l8}, it follows that
\begin{align*}
\lf\|M_\varphi^{(K,L)}(f)\r\|_{\lv}^{\underline{p}}
&\le\lf\|M_\varphi^{(K,L)}(f){\mathbf 1}_{B_1}\r\|_{\lv}^{\underline{p}}
+\sum_{k\in\nn}\lf\|M_\varphi^{(K,L)}(f)
{\mathbf 1}_{B_{k+1}\setminus B_k}\r\|_{\lv}^{\underline{p}}\\
&\ls\lf\|{\mathbf 1}_{B_1}\r\|_{\lv}^{\underline{p}}+\sum_{k\in\nn}
b^{-\lz k\underline{p}}
\lf\|{\mathbf 1}_{B_{k+1}\setminus B_k}\r\|_{\lv}^{\underline{p}}\\
&\ls\lf\|{\mathbf 1}_{B(\vec 0_n,C)}\r\|_{\lv}^{\underline{p}}+\sum_{k\in\nn}
b^{-\lz k\underline{p}}
\lf\|{\mathbf 1}_{B(\vec 0_n,\,Cb^{k\ln\lz_+/\ln b})}\r\|_{\lv}^{\underline{p}}\\
&\ls \sum_{k\in\zz_+} b^{-\lz k\underline{p}}
b^{k\underline{p}n\ln\lz_+/(p_-\ln b)}<\fz,
\end{align*}
where, for any $r\in(0,\fz)$,
$B(\vec 0_n,r):=\{y\in\rn:\ |y|<r\}$ and $C$ is the
same positive constant as in Lemma \ref{3l8} which
is independent of $k$. Thus, $M_\varphi^{(K,L)}(f)\in \lv$.

In addition, by Lemmas \ref{3l6} and \ref{3l4}, we conclude that, for
any given $L\in(0,\fz)$, there exist some $I\in\nn$ and a positive constant $C_3$
such that, for any $K\in\zz_+$ and $f\in\cs'(\rn)$,
$$\lf\|M_I^{0(K,L)}(f)\r\|_{\lv}\le C_3
\lf\|M_\varphi^{(K,L)}(f)\r\|_{\lv}.$$
For any fixed $K\in\zz_+$, let
\begin{align*}
G_K:=\lf\{x\in\rn:\ M_I^{0(K,L)}(f)(x)\le C_4
M_\varphi^{(K,L)}(f)(x)\r\},
\end{align*}
where $C_4:=2C_3$. Then
\begin{align}\label{3e5}
\lf\|M_\varphi^{(K,L)}(f)\r\|_{\lv}\ls
\lf\|M_\varphi^{(K,L)}(f)\r\|_{L^{\vp}(G_K)},
\end{align}
due to the fact that
$$\lf\|M_\varphi^{(K,L)}(f)\r\|_{L^{\vp}(G_K
^\complement)}\le \frac1 {C_4}\lf\|M_I^{0(K,L)}(f)\r\|
_{L^{\vp}(G_K^\complement)}
\le \frac{C_3}{C_4}\lf\|M_\varphi^{(K,L)}(f)\r\|_{\lv}.$$

For any given $L\in(0,\fz)$, repeating the proof of
\cite[(4.17)]{lyy16} with $p$ therein replaced by $p_-$,
we find that, for any $r\in(0,p_-)$, $K\in\zz_+$,
$f\in\cs'(\rn)$ and $x\in G_K$,
\begin{align*}
\lf[M_\varphi^{(K,L)}(f)(x)\r]^r
\ls M_{{\rm HL}}\lf(\lf[M_\varphi^{0(K,L)}(f)\r]^r\r)(x).
\end{align*}

From this, \eqref{3e5} and Lemmas \ref{3l3} and \ref{3l1}, we further deduce that,
for any $K\in\zz_+$ and $f\in\cs'(\rn)$,
\begin{align}\label{3e7}
\lf\|M_\varphi^{(K,L)}(f)\r\|_{\lv}^r
\ls\lf\|M_\varphi^{0(K,L)}(f)\r\|_{\lv}^r.
\end{align}
Letting $K\to\fz$ in \eqref{3e7}, by the fact that
$r\in(0,\underline{p})$ and Lemma \ref{3l5},
we obtain
\begin{align*}
\lf\|M_\varphi (f)\r\|_{\lv}\ls\lf\|M_\varphi^0(f)\r\|_{\lv}.
\end{align*}
This shows that (iii) implies (ii) and
hence finishes the proof of Theorem \ref{3t1}.
\end{proof}

Applying Lemma \ref{3l1} and Theorem \ref{3t1},
we obtain the following conclusion, which plays an
important role in the proof of Theorem \ref{8t2}
below and is also of independent interest.

\begin{proposition}\label{3p1}
Let $\vp\in(1,\fz)^n$. Then $\vh=\lv$
with equivalent norms.
\end{proposition}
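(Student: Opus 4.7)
The plan is to establish the two continuous inclusions $\lv\hookrightarrow\vh$ and $\vh\hookrightarrow\lv$ separately, using the radial maximal function characterization of $\vh$ from Theorem \ref{3t1} together with the $\lv$ boundedness of $\HL$ provided by Lemma \ref{3l1}.

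For $\lv\hookrightarrow\vh$, when $\vp\in(1,\fz)^n$, any $f\in\lv$ is locally integrable and hence defines a tempered distribution. The standard pointwise bound $M_N(f)(x)\ls\HL(f)(x)$ for almost every $x\in\rn$ follows by decomposing $\rn$ into the dyadic annuli $\{B_{k+j+1}\setminus B_{k+j}\}_{j\in\zz_+}$ centered at the relevant convolution point and exploiting the decay $|\varphi(w)|\le[\max\{1,\rho(w)\}]^{-N}$ available for any $\varphi\in\cs_N(\rn)$, with $N$ as in Definition \ref{2d5} already large enough to sum the resulting geometric series (this is the anisotropic analogue of the argument in \cite[p.\,42]{mb03}). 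Taking the $\lv$ quasi-norm of both sides and invoking Lemma \ref{3l1} then yields $\|f\|_{\vh}=\|M_N(f)\|_{\lv}\ls\|\HL(f)\|_{\lv}\ls\|f\|_{\lv}$.

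For $\vh\hookrightarrow\lv$, let $f\in\vh$ and fix $\varphi\in\cs(\rn)$ with $\int_{\rn}\varphi(x)\,dx=1$. By Theorem \ref{3t1}, $\|M_\varphi^0(f)\|_{\lv}\ls\|f\|_{\vh}$. For every $k\in\zz$ the function $f\ast\varphi_k$ is smooth and satisfies $|f\ast\varphi_k(x)|\le M_\varphi^0(f)(x)$ for any $x\in\rn$, so $\{f\ast\varphi_k\}_{k\in\zz}$ is bounded in $\lv$. Since $\vp\in(1,\fz)^n$, the conjugate $\vp'$ also lies in $(1,\fz)^n$, and \cite[p.\,304, Theorem 2]{bp61} yields $(\lvv)^\ast=\lv$, which makes $\lv$ reflexive. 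The Banach--Alaoglu theorem therefore extracts a subsequence $\{f\ast\varphi_{k_j}\}_{j\in\nn}$ converging weakly to some $g\in\lv$ with $\|g\|_{\lv}\le\liminf_{j\to\fz}\|f\ast\varphi_{k_j}\|_{\lv}\le\|M_\varphi^0(f)\|_{\lv}$. Because $\int_{\rn}\varphi=1$, a standard anisotropic approximation-of-identity argument gives $f\ast\varphi_k\to f$ in $\cs'(\rn)$ as $k\to\fz$, and, since $\cs(\rn)\subset\lvv$, weak convergence in $\lv$ also yields convergence when tested against any element of $\cs(\rn)$. Hence $g=f$ in $\cs'(\rn)$, whence $f\in\lv$ with $\|f\|_{\lv}\le\|M_\varphi^0(f)\|_{\lv}\ls\|f\|_{\vh}$.

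The main obstacle lies in the reverse inclusion: one has to identify the abstract distribution $f\in\cs'(\rn)$ with an honest function in $\lv$. The reflexivity of $\lv$, which is precisely what forces the restriction $\vp\in(1,\fz)^n$, together with the Banach--Alaoglu theorem supplies the right tool, but some care is needed to reconcile weak convergence in $\lv$ with weak-$\ast$ convergence in $\cs'(\rn)$ in order to identify the two limits as the same object.
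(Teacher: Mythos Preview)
Your proof is correct and follows essentially the same strategy as the paper's. For the inclusion $\lv\subset\vh$ the paper is slightly more economical: rather than bounding the full grand maximal function $M_N(f)$ pointwise by $\HL(f)$ via a dyadic-annuli argument, it fixes a single $\Phi\in C_c^\infty(\rn)$ with $\supp\Phi\subset B_0$, so that $M_\Phi^0(f)\ls\HL(f)$ is immediate, and then invokes Theorem~\ref{3t1}; for the reverse inclusion both arguments extract a weak (equivalently, weak-$\ast$) limit of $\{f\ast\varphi_k\}$ in $\lv$ and identify it with $f$ via the approximation-of-identity convergence in $\cs'(\rn)$.
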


\begin{proof}
Let $\vp\in(1,\fz)^n$. We first show
\begin{align}\label{3e8}
\lv\subset\vh.
\end{align}
To this end, assume that $f\in\lv$ and
$\Phi$ is some fixed infinitely differentiable function on $\rn$ satisfying
$\supp \Phi\subset B_0$ and $\int_{\rn} \Phi(x)\,dx \neq 0$.
Let $M_0(f):=M_\Phi^0(f)$ with $M_\Phi^0(f)$
as in Definition \ref{3d2}. Then, for any $x\in\rn$, we have
$M_\Phi^0(f)(x)\ls \HL(f)(x).$
From this and Lemma \ref{3l1}, it follows that
$$\lf\|M_\Phi^0(f)\r\|_{\lv}\ls\|\HL(f)\|_{\lv}\ls\|f\|_{\lv},$$
which, combined with Theorem \ref{3t1},
further implies that $\|f\|_{\vh}\ls\|f\|_{\lv}$
and hence completes the proof of \eqref{3e8}.

Conversely, let $\varphi\in\cs(\rn)$ and $f\in\vh$.
Without loss of generality, we may assume that
$\int_{\rn} \varphi(x)\,dx=1$. From the assumption
$f\in\vh$ and Theorem \ref{3t1}, it follows that the sequence
$\{f\ast\varphi_{-k}\}_{k\in\nn}$ is bounded in $\lv$,
which, together with \cite[p.\,304, Theorem 1.a)]{bp61}
and \cite[Theorem 3.17]{ru91}, implies that there exists a
subsequence $\{f\ast\varphi_{-k_i}\}_{i\in\nn}$
converging weak-$\ast$ in $\lv$ and hence also in $\cs'(\rn)$.
By this and \cite[p.\,15, Lemma 3.8]{mb03}, we further
conclude that this limit is $f$. Thus, $f\in\lv$.
This implies that $\vh\subset\lv$ and hence finishes the proof of
Proposition \ref{3p1}.
\end{proof}

\section{Atomic characterizations of $\vh$\label{s4}}

In this section, we establish the atomic characterization of $\vh$.
We begin with introducing the notion of anisotropic mixed-norm
$(\vp,r,s)$-atoms as follows. In what follows, for any $q\in(0,\fz]$,
denote by $L^q(\rn)$ the \emph{space of all measurable functions} $f$ such that
$$\|f\|_{L^q(\rn)}:=\lf\{\int_{\rn}|f(x)|^q\,dx\r\}^{1/q}<\fz$$
with the usual modification made when $q=\fz$.

\begin{definition}\label{4d1}
Let $\vp\in(0,\fz)^n$, $r\in(1,\fz]$ and
\begin{align}\label{4e1}
s\in\lf[\lf\lfloor\lf(\dfrac1{p_-}-1\r)
\dfrac{\ln b}{\ln\lambda_-}\r\rfloor,\fz\r)\cap\zz_+.
\end{align}
A measurable function $a$ on $\rn$ is called an
\emph{anisotropic mixed-norm $(\vp,r,s)$-atom} if
\begin{enumerate}
\item[{\rm (i)}] $\supp a \subset B$, where
$B\in\mathfrak{B}$ and $\mathfrak{B}$ is as in \eqref{2e1};
\item[{\rm (ii)}] $\|a\|_{L^r(\rn)}\le \frac{|B|^{1/r}}{\|{\mathbf 1}_B\|_{\lv}}$;
\item[{\rm (iii)}] for any $\gamma\in\zz_+^n$ with $|\gamma|\le s$,
$\int_{\mathbb R^n}a(x)x^\gamma\,dx=0$.
\end{enumerate}
\end{definition}

In what follows, we call an \emph{anisotropic mixed-norm $(\vp,r,s)$-atom} simply by a
\emph{$(\vp,r,s)$-atom}. Now, via $(\vp,r,s)$-atoms, we introduce the following
notion of anisotropic mixed-norm atomic Hardy spaces $\vah$.

\begin{definition}\label{4d2}
Let $\vp\in(0,\fz)^n$, $r\in(1,\fz]$, $s$ be as in \eqref{4e1} and $A$ a dilation.
The \emph{anisotropic mixed-norm atomic Hardy space} $\vah$ is defined to be the
set of all $f\in\cs'(\rn)$ satisfying that there exist $\{\lz_i\}_{i\in\nn}\subset\mathbb{C}$
and a sequence of $(\vp,r,s)$-atoms, $\{a_i\}_{i\in\nn}$, supported, respectively, in
$\{B^{(i)}\}_{i\in\nn}\subset\mathfrak{B}$ such that
$f=\sum_{i\in\nn}\lz_ia_i$ in $\cs'(\rn)$.
Furthermore, for any $f\in\vah$, let
\begin{align*}
\|f\|_{\vah}:={\inf}\lf\|\lf\{\sum_{i\in\nn}
\lf[\frac{|\lz_i|{\mathbf 1}_{B^{(i)}}}{\|{\mathbf 1}_{B^{(i)}}\|_{\lv}}\r]^
{\underline{p}}\r\}^{1/\underline{p}}\r\|_{\lv},
\end{align*}
where the infimum is taken over all decompositions of $f$ as above.
\end{definition}

To establish the atomic characterization of
$\vh$, we need several technical lemmas as follows.
First, Lemma \ref{4l1} is just \cite[p.\,9, Lemma 2.7]{mb03}.

\begin{lemma}\label{4l1}
Let $\Omega\subset\rn$ be an open set and $|\Omega|<\infty$.
Then, for any $t\in\zz_+$, there exist a sequence of points,
$\{x_k\}_{k\in\mathbb{N}}\subset\Omega$, and a sequence of
integers, $\{l_k\}_{k\in\mathbb{N}}$, such that

\begin{enumerate}
\item[{\rm(i)}] $\Omega=\bigcup_{k\in\mathbb{N}}
(x_k+B_{l_k})$;

\item[{\rm(ii)}] $\lf\{x_k+B_{l_k-\omega}\r\}_{k\in\nn}$ are
pairwise disjoint with $\omega$ as in \eqref{2e2};

\item[{\rm(iii)}] for any $k\in\mathbb{N},\
(x_k+B_{l_k+t})\cap\Omega^\complement
=\emptyset$, but $(x_k+B_{l_k+t+1})\cap
\Omega^\complement\neq\emptyset$;

\item[{\rm(iv)}] for any $i$, $j\in\nn$,
$(x_i+B_{l_i+t-2\omega})\cap
(x_j+B_{l_j+t-2\omega})\neq\emptyset$ implies
$|l_i-l_j|\le\omega$;

\item[{\rm(v)}] there exists a positive constant $R$
such that, for any $j\in\nn$,
$$\sharp\lf\{i\in\nn:\ (x_i+B_
{l_i+t-2\omega})\cap(x_j+B_{l_j+t-2\omega})
\neq\emptyset\r\}\le R.$$
\end{enumerate}
\end{lemma}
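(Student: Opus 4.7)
\emph{Proof proposal.} The plan is to adapt the classical Whitney covering argument to the anisotropic dilation setting, using only the inclusions $B_k\subset rB_k\subset B_{k+1}$ (so that, since $r^\omega\ge 2$, one has $B_{k-\omega}+B_{k-\omega}\subset B_k$) and the quasi-triangle inequality for the homogeneous quasi-norm $\rho$. For each $x\in\Omega$, openness of $\Omega$ together with $|\Omega|<\fz$ and $|B_\ell|=b^\ell\to\fz$ allows me to define
\[
l(x):=\max\lf\{\ell\in\zz:\ x+B_{\ell+t}\subset\Omega\r\},
\]
which by construction already encodes property (iii) at every such point.

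Next, I would apply a Zorn/maximality argument to the family $\{x+B_{l(x)-\omega}:\ x\in\Omega\}$ to extract a maximal pairwise disjoint subfamily $\{x_k+B_{l_k-\omega}\}_{k\in\nn}$, with $l_k:=l(x_k)$; countability is automatic, since disjoint sets of positive measure fit only countably many times inside a set of finite measure. This immediately yields (ii), and (iii) persists at each selected center. For the covering (i), given $y\in\Omega$, maximality forces $y+B_{l(y)-\omega}$ to meet some $x_k+B_{l_k-\omega}$ at a point $z$; writing $y-x_k=(y-z)+(z-x_k)$ and applying the quasi-triangle inequality for $\rho$ together with the iterated inclusion $B_{j-\omega}+B_{j-\omega}\subset B_j$ (and swapping the roles of $y$ and $x_k$ if needed in order to arrange $l(y)\le l_k$, which is legitimate because $y+B_{l(y)-\omega}$ is itself a candidate in the Vitali family) places $y$ inside $x_k+B_{l_k}$.

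For (iv), I would argue by contradiction: if $(x_i+B_{l_i+t-2\omega})\cap(x_j+B_{l_j+t-2\omega})\neq\emptyset$ while $l_j>l_i+\omega$, then iterated application of $B_{k-\omega}+B_{k-\omega}\subset B_k$, combined with the quasi-triangle inequality, forces $x_i+B_{l_i-\omega}\subset x_j+B_{l_j-\omega}$, contradicting (ii). Property (v) then follows from (iv) by a volume count: for every $i$ appearing in the relevant set one has $x_i+B_{l_i-\omega}\subset x_j+B_{l_j+t+C}$ for some universal $C=C(t,\omega,H)$, while by (iv) each $|x_i+B_{l_i-\omega}|\gs b^{l_j-2\omega}$; the disjointness from (ii) then caps the number of such $i$ by a constant depending only on $t$, $\omega$, and $b$.

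The main obstacle I expect is the careful bookkeeping in the covering step (i) and in (iv)--(v): each intersection hypothesis has to be converted into a set inclusion using only the weak containment $B_k\subset rB_k\subset B_{k+1}$ and the quasi-triangle inequality for $\rho$, and the precise arithmetic of how many levels the choice $r^\omega\ge 2$ buys is exactly what makes the chains of inclusions close up. In particular, the auxiliary constant $C$ in (v) must be tracked explicitly in order to obtain a uniform overlap bound $R$ independent of $j$, and the swap argument in the covering step requires checking that $l(y)$ and $l_k$ are genuinely comparable after the first intersection is produced.
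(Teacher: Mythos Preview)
The paper does not give its own proof of this lemma; it simply quotes it as \cite[p.\,9, Lemma~2.7]{mb03}. Your Whitney--Vitali outline is in fact the standard route used there, so the overall strategy is correct. Two steps, however, do not close as written.

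\textbf{Covering step (i).} From an \emph{arbitrary} maximal disjoint subfamily (via Zorn) you only know that $y+B_{l(y)-\omega}$ meets some selected $x_k+B_{l_k-\omega}$; you cannot ``swap roles'' to arrange $l(y)\le l_k$, because it is $y$ that must be covered and $x_k$ is already fixed. If $l(y)>l_k$ you only obtain $y\in x_k+B_{l_k+c}$ for some $c>0$, which is not (i). The standard fix is to make the selection \emph{greedy in decreasing level}: since $|\Omega|<\infty$ forces $l(\cdot)\le M$ for some $M\in\zz$, run over $m=M,M-1,\ldots$ and, at stage $m$, pick a maximal family of points $x$ with $l(x)=m$ whose shrunken balls $x+B_{m-\omega}$ are pairwise disjoint and disjoint from all previously chosen ones. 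Then any $y$ with $l(y)=m$ that is not selected has $y+B_{m-\omega}$ meeting some $x_k+B_{l_k-\omega}$ with $l_k\ge m=l(y)$, whence $y-x_k\in B_{l(y)-\omega}+B_{l_k-\omega}\subset B_{l_k-\omega}+B_{l_k-\omega}\subset B_{l_k}$, and (i) follows exactly.

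\textbf{Step (iv).} Your proposed contradiction via (ii) does not go through: from $(x_i+B_{l_i+t-2\omega})\cap(x_j+B_{l_j+t-2\omega})\neq\emptyset$ one only gets $x_i-x_j\in B_{l_j+t-\omega}$, which is too coarse to force $x_i+B_{l_i-\omega}\subset x_j+B_{l_j-\omega}$. The correct contradiction is with (iii): assuming $l_j\ge l_i+\omega+1$, pick $w\in(x_i+B_{l_i+t+1})\cap\Omega^\complement$ (which exists by (iii) at $x_i$); then $l_i+t+1\le l_j+t-\omega$ gives
\[
w-x_j=(w-x_i)+(x_i-x_j)\in B_{l_i+t+1}+B_{l_j+t-\omega}\subset B_{l_j+t-\omega}+B_{l_j+t-\omega}\subset B_{l_j+t},
\]
so $w\in x_j+B_{l_j+t}\subset\Omega$ by (iii) at $x_j$, a contradiction. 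Your volume argument for (v) is fine once (iv) is available.
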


By Lemma \ref{3l1}, a duality argument of $\lv$ (see \cite[p.\,304, Theorem 2]{bp61}),
a useful result from Sawano \cite[Theorem 1.3]{s05} and a key observation [see \eqref{4e2x} below]
as well as borrowing some ideas from the proof of \cite[Theorem 2.7]{zsy16},
we conclude the following anisotropic
Fefferman--Stein vector-valued inequality on the mixed-norm Lebesgue
space $\lv$.

\begin{lemma}\label{4l3}
Let $\vp\in(1,\fz)^n$ and $u\in(1,\fz]$. Then there exists a positive constant
$C$ such that, for any sequence $\{f_j\}_{j\in\nn}$ of measurable functions,
$$\lf\|\lf\{\sum_{j\in\nn}
\lf[\HL(f_j)\r]^u\r\}^{1/u}\r\|_{\lv}
\le C\lf\|\lf(\sum_{j\in\nn}|f_j|^u\r)^{1/u}\r\|_{\lv}$$
with the usual modification made when $u=\fz$,
where $\HL$ denotes the Hardy--Littlewood maximal operator as in \eqref{3e1}.
\end{lemma}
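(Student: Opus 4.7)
The plan is to reduce the vector-valued inequality to the scalar boundedness of $\HL$ on mixed-norm Lebesgue spaces (Lemma \ref{3l1}) via duality in $\lv$ and a pointwise comparison for the vector-valued maximal operator. First, I would dispose of the case $u=\fz$ easily: since $\HL(f_j)(x)\le \HL(\sup_{k\in\nn}|f_k|)(x)$ for each $j$ and each $x\in\rn$, taking supremum in $j$ yields $\sup_{j\in\nn}\HL(f_j)\le \HL(\sup_{k\in\nn}|f_k|)$, to which Lemma \ref{3l1} applies directly.

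For $u\in(1,\fz)$, I would first apply Lemma \ref{3l3} to rewrite the target inequality, after raising both sides to the $u$-th power, as
\[
\lf\|\sum_{j\in\nn}\lf[\HL(f_j)\r]^u\r\|_{L^{\vp/u}(\rn)}\ls \lf\|\sum_{j\in\nn}|f_j|^u\r\|_{L^{\vp/u}(\rn)}.
\]
When $u\in(1,p_-)$, so that every component of $\vp/u$ lies in $(1,\fz)$, I would invoke the duality of mixed-norm Lebesgue spaces from Benedek--Panzone \cite[p.\,304, Theorem 2]{bp61} and rewrite the left-hand side as a supremum of
\[
\int_{\rn}\sum_{j\in\nn}\lf[\HL(f_j)(x)\r]^u g(x)\,dx
\]
over nonnegative $g$ with $\|g\|_{L^{(\vp/u)'}(\rn)}\le 1$. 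Applying Sawano's Theorem 1.3 from \cite{s05} (which provides a vector-valued pointwise control for the Hardy--Littlewood maximal operator) together with the key observation (to be recorded as (4.2x)), which I expect to rewrite this integral in terms of $\sum_{j\in\nn}|f_j|^u$ paired against a scalar maximal function of $g$, reduces matters to H\"older's inequality in the mixed-norm setting and a further application of Lemma \ref{3l1} to bound $\HL(g)$ in $L^{(\vp/u)'}(\rn)$.

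For $u\in[p_-,\fz)$, direct duality is unavailable because $\vp/u$ can have components strictly below $1$. Here my plan is to fix some $u_0\in(1,p_-)$, obtain the estimate at $u_0$ by the previous subcase, combine it with the $u=\fz$ case, and interpolate within the mixed-norm Lebesgue scale, borrowing the strategy of \cite[Theorem 2.7]{zsy16}. The main obstacle I anticipate is handling this interpolation carefully: the classical Marcinkiewicz or Riesz--Thorin machinery for scalar Lebesgue spaces does not immediately transfer to the mixed-norm setting, so one must instead exploit the coordinatewise product structure of $\|\cdot\|_{\lv}$ and run an iterated argument paralleling the proof of Lemma \ref{3l1}, where the one-dimensional scalar Fefferman--Stein inequality (for each fixed set of the other variables) is applied in each variable in turn, and Bagby's mixed-norm inequality \eqref{3eq4} is used to conclude.
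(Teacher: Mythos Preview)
Your proposal contains a genuine structural gap that the paper's argument avoids by a simple but decisive change of viewpoint. You dualize at the exponent $u$, i.e., you pass to $L^{\vp/u}(\rn)$ and then need $\vp/u\in(1,\fz)^n$, which forces the awkward case split $u<p_-$ versus $u\ge p_-$ and the interpolation you yourself flag as problematic. The paper instead fixes an auxiliary $r\in(1,p_-)$ \emph{independent of $u$}, writes
\[
\lf\|\lf\{\textstyle\sum_{j}\lf[\HL(f_j)\r]^u\r\}^{1/u}\r\|_{\lv}
=\lf\|\lf\{\textstyle\sum_{j}\lf[\HL(f_j)\r]^u\r\}^{r/u}\r\|_{L^{\vp/r}(\rn)}^{1/r}
\]
via Lemma \ref{3l3}, and dualizes in $L^{\vp/r}(\rn)$. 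This works for every $u\in(1,\fz]$ at once. The price is that the integrand against the dual function $g$ is now $(\sum_j[\HL(f_j)]^u)^{r/u}$ rather than $\sum_j[\HL(f_j)]^u$, so one genuinely needs a vector-valued $L^r(\ell^u)$ inequality, and this is exactly what Sawano's theorem \cite[Theorem 1.3]{s05} supplies for the modified maximal operator $\mathcal{M}_G$ after the Rubio de Francia--type construction $G:=\sum_{k}(2D)^{-k}M_{\rm HL}^k(g)$ and the pointwise comparison $\HL(f_j)\ls \mathcal{M}_G(f_j)$ (the ``key observation'' \eqref{4e2x}). H\"older in the mixed norm and $\|G\|_{L^{(\vp/r)'}(\rn)}\ls1$ then close the argument.

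Your fallback for $u\ge p_-$ does not clearly work as written. Interpolation of $\ell^u$-valued sublinear operators on mixed-norm Lebesgue spaces is not available off the shelf, and the iterated one-variable Fefferman--Stein route would require a \emph{vector-valued} analogue of Bagby's inequality \eqref{3eq4}, which is not what \eqref{3eq4} provides. The fix is not to repair the interpolation but to decouple the duality exponent from $u$ as above; then the whole case distinction evaporates.
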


\begin{proof}
Let $r\in(1,p_-)$. Then, from Lemma \ref{3l3} and \cite[p.\,304, Theorem 2]{bp61},
it follows that there exists some non-negative measurable function
$g\in L^{(\vp/r)'}(\rn)$ with norm belonging to the range $(0,1]$ such that
\begin{align}\label{4e2}
\lf\|\lf\{\sum_{j\in\nn}\lf[\HL(f_j)\r]^u\r\}^{1/u}\r\|_{\lv}
&=\lf\|\lf\{\sum_{j\in\nn}\lf[\HL(f_j)\r]^u\r\}^{r/u}\r\|_{L^{\vp/r}(\rn)}^{1/r}\\
&\sim \lf\{\int_{\rn}\lf(\sum_{j\in\nn}\lf[\HL(f_j)(x)\r]^u\r)^{r/u}g(x)\,dx\r\}^{1/r}.\noz
\end{align}
In addition, by Lemma \ref{3l1}, we know that there exists some constant
$D\in [1,\fz)$ such that, for any $\phi\in L^{(\vp/r)'}(\rn)$,
\begin{align}\label{4e5}
\lf\|\HL(\phi)\r\|_{L^{(\vp/r)'}(\rn)}\le D\|\phi\|_{L^{(\vp/r)'}(\rn)}.
\end{align}
For this $D$ and for any $x\in \rn$, let
\begin{align*}
G(x):=\sum_{k\in\nn}\frac{1}{2^kD^k}M_{\rm HL}^k(g)(x),
\end{align*}
where $M_{\rm HL}^k$ denotes the $k$-fold iteration of the Hardy--Littlewood
maximal operator $M_{\rm HL}$. Then, by \eqref{4e2}, it
is easy to see that
\begin{align}\label{4e3}
\lf\|\lf\{\sum_{j\in\nn}\lf[\HL(f_j)\r]^u\r\}^{1/u}\r\|_{\lv}
\ls \lf\{\int_{\rn}\lf(\sum_{j\in\nn}\lf[\HL(f_j)(x)\r]^u\r)^{r/u}G(x)\,dx\r\}^{1/r}
\end{align}
and, for any $x\in\rn$,
\begin{align}\label{4e1x}
M_{\rm HL}(G)(x)\le2DG(x).
\end{align}
Using \eqref{4e1x}, we further conclude that, for any $j\in\nn$ and $x\in \rn$,
\begin{align}\label{4e2x}
\HL(f_j)(x)
&\ls \sup_{t\in(0,\fz)}\frac1{|B_{\rho}(x,t)|}
\int_{B_{\rho}(x,t)}\lf|f_j(y)\r|\frac{G(y)}{M_{\rm HL}(G)(y)}\,dy\\
&\ls \sup_{t\in(0,\fz)}\frac1{|B_{\rho}(x,t)|}
\int_{B_{\rho}(x,t)}\lf|f_j(y)\r|\frac{G(y)}
{|B_{\rho}(x,22t)|^{-1}\int_{B_{\rho}(x,22t)}G(z)\,dz}\,dy
\ls \mathcal{M}_G(f_j)(x),\noz
\end{align}
where
$$\mathcal{M}_G(f_j)(x)
:=\sup_{t\in(0,\fz)}\frac1{\int_{B_{\rho}(x,22t)}G(z)\,dz}
\int_{B_{\rho}(x,t)}\lf|f_j(y)\r|G(y)\,dy$$
and, for any $x\in\rn$ and $t\in(0,\fz)$, $B_{\rho}(x,t):=\{y\in\rn:\ \rho(x-y)<t\}$.
This, combined with \eqref{4e3}, \cite[Theorem 1.3]{s05},
the H\"{o}lder inequality for mixed-norm Lebesgue spaces
(see \cite[Remark 2.8(iv)]{hlyy}), \eqref{4e5}
and the fact that $\lf\|g\r\|_{L^{(\vp/r)'}(\rn)}\le 1$,
implies that
\begin{align*}
\lf\|\lf\{\sum_{j\in\nn}\lf[\HL(f_j)\r]^u\r\}^{1/u}\r\|_{\lv}
\ls \lf\|\lf\{\sum_{j\in\nn}\lf|f_j\r|^u\r\}^{1/u}\r\|_{\lv},
\end{align*}
which completes the proof of Lemma \ref{4l3}.
\end{proof}

From Lemma \ref{3l1} and an argument similar to that used in the proof of
\cite[Lemma 3.15]{hlyy}, we deduce the following conclusion, which plays
an important role in the proof of Theorem \ref{4t1} below and is also of
independent interest; the details are omitted.

\begin{lemma}\label{4l4}
Let $\vp\in(0,\fz)^n$, $k\in\zz$, $r\in[1,\fz]\cap(p_+,\fz]$
with $p_+$ as in \eqref{2e4} and $A$ be a dilation. Assume that
$\{\lz_i\}_{i\in\nn}\subset\mathbb{C}$, $\{B^{(i)}\}_{i\in\nn}
:=\{x_i+B_{\ell_i}\}_{i\in\nn}\subset\mathfrak{B}$
and $\{a_i\}_{i\in\nn}\subset L^r(\rn)$ satisfy that, for any $i\in\nn$,
$\supp a_i\subset x_i+A^{k}B_{\ell_i}$,
$\|a_i\|_{L^r(\rn)}
\le\frac{|B^{(i)}|^{1/r}}{\|{\mathbf 1}_{B^{(i)}}\|_{\lv}}$
and
$$\lf\|\lf\{\sum_{i\in\nn}
\lf[\frac{|\lz_i|{\mathbf 1}_{B^{(i)}}}{\|{\mathbf 1}_{B^{(i)}}\|_{\lv}}\r]^
{\underline{p}}\r\}^{1/\underline{p}}\r\|_{\lv}<\fz.$$
Then
$$\lf\|\lf[\sum_{i\in\nn}\lf|\lz_ia_i\r|^{\underline{p}}\r]
^{1/\underline{p}}\r\|_{\lv}
\le C\lf\|\lf\{\sum_{i\in\nn}
\lf[\frac{|\lz_i|{\mathbf 1}_{B^{(i)}}}{\|{\mathbf 1}_{B^{(i)}}\|_{\lv}}\r]^
{\underline{p}}\r\}^{1/\underline{p}}\r\|_{\lv},$$
where $\underline{p}$ is as in \eqref{2e4} and $C$ a positive
constant independent of $\{\lz_i\}_{i\in\nn}$, $\{B^{(i)}\}_{i\in\nn}$
and $\{a_i\}_{i\in\nn}$.
\end{lemma}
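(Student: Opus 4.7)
The plan is to reduce the desired inequality, via duality on mixed-norm Lebesgue spaces, to the scalar boundedness of $M_{{\rm HL}}$ furnished by Lemma~\ref{3l1}. Using Lemma~\ref{3l3}, the left-hand side of the asserted estimate equals $\|\sum_i|\lz_ia_i|^{\underline{p}}\|_{L^{\vp/\underline{p}}(\rn)}^{1/\underline{p}}$, and the right-hand side admits the analogous rewriting with $\mathbf{1}_{B^{(i)}}$. Since $\underline{p}<\min\{p_-,1\}$ forces $\vp/\underline{p}\in(1,\fz)^n$, the duality in \cite[p.\,304, Theorem~2]{bp61} yields
$$\Bigl\|\sum_i|\lz_ia_i|^{\underline{p}}\Bigr\|_{L^{\vp/\underline{p}}(\rn)}=\sup\int_{\rn}\sum_i|\lz_ia_i(x)|^{\underline{p}}g(x)\,dx,$$
where the supremum ranges over non-negative $g$ in the unit ball of $L^{(\vp/\underline{p})'}(\rn)$.

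For each fixed $i$, I would first apply the classical H\"older inequality with the conjugate pair $(r/\underline{p},(r/\underline{p})')$ (valid because $r>p_+>\underline{p}$), the support condition $\supp a_i\subset x_i+A^kB_{\ell_i}=x_i+B_{\ell_i+k}$, and the size condition $\|a_i\|_{L^r(\rn)}\le|B^{(i)}|^{1/r}/\|\mathbf{1}_{B^{(i)}}\|_{\lv}$ to obtain
$$\int_{\rn}|a_i|^{\underline{p}}g\,dx\le\frac{|B^{(i)}|^{\underline{p}/r}}{\|\mathbf{1}_{B^{(i)}}\|_{\lv}^{\underline{p}}}\,\|g\|_{L^{(r/\underline{p})'}(x_i+B_{\ell_i+k})}.$$
The key geometric observation is that, for every $y\in B^{(i)}=x_i+B_{\ell_i}$, there is a dilated ball centered at $y$ of volume $\sim b^{|k|}|B^{(i)}|$ containing $x_i+B_{\ell_i+k}$; the definition of $M_{{\rm HL}}$ then gives the pointwise lower bound
$$\frac{1}{|B^{(i)}|}\int_{x_i+B_{\ell_i+k}}g^{(r/\underline{p})'}\ls b^{|k|}\,M_{{\rm HL}}(g^{(r/\underline{p})'})(y),\quad y\in B^{(i)},$$
which, after averaging over $y\in B^{(i)}$ and taking a $(r/\underline{p})'$-th root, produces
$$\|g\|_{L^{(r/\underline{p})'}(x_i+B_{\ell_i+k})}\ls b^{|k|/(r/\underline{p})'}|B^{(i)}|^{-\underline{p}/r}\int_{\rn}\mathbf{1}_{B^{(i)}}(y)\bigl[M_{{\rm HL}}(g^{(r/\underline{p})'})(y)\bigr]^{1/(r/\underline{p})'}\,dy.$$

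Summing in $i$, applying the mixed-norm H\"older inequality from \cite[Remark~2.8(iv)]{hlyy} with the conjugate pair $(\vp/\underline{p},(\vp/\underline{p})')$, and finally invoking Lemma~\ref{3l3} together with Lemma~\ref{3l1} on $L^{(\vp/\underline{p})'/(r/\underline{p})'}(\rn)$ reduces the $g$-dependent factor to $\|g\|_{L^{(\vp/\underline{p})'}(\rn)}^{(r/\underline{p})'}\le 1$. Here $(\vp/\underline{p})'/(r/\underline{p})'\in(1,\fz)^n$ exactly because $r>p_+$ gives $(p_j/\underline{p})'>(r/\underline{p})'$ for every $j$, so Lemma~\ref{3l1} is applicable. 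Taking the supremum over $g$ and then the $\underline{p}$-th root via Lemma~\ref{3l3} yields the claim, with a constant depending on $k$, $\vp$, $r$, and $A$, but not on $\{\lz_i\}_i$, $\{B^{(i)}\}_i$, or $\{a_i\}_i$.

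The main obstacle will be the geometric step comparing $\supp a_i=x_i+B_{\ell_i+k}$ with $B^{(i)}=x_i+B_{\ell_i}$ through $M_{{\rm HL}}$, since the cases $k\ge0$ and $k<0$ must both be treated so that the enclosing dilated ball always has volume $\sim b^{|k|}|B^{(i)}|$ and the resulting $k$-dependent factor remains harmless. A secondary care point is the verification that $\vp/\underline{p}$, $(\vp/\underline{p})'$, and $(\vp/\underline{p})'/(r/\underline{p})'$ all lie in $(1,\fz)^n$, which rests precisely on the interplay between the hypotheses $\underline{p}<\min\{p_-,1\}$ and $r>p_+$.
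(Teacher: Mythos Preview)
Your proposal is correct and follows essentially the same route as the paper's intended argument (which the paper omits, referring to \cite[Lemma~3.15]{hlyy}): dualize $L^{\vp/\underline{p}}(\rn)$, apply H\"older with exponent $r/\underline{p}$ on each support, replace the local $L^{(r/\underline{p})'}$ average of the dual function by a maximal function value on $B^{(i)}$, and close with mixed-norm H\"older together with Lemma~\ref{3l1} on $L^{(\vp/\underline{p})'/(r/\underline{p})'}(\rn)$. Two small expository points: the phrase ``averaging over $y\in B^{(i)}$ and taking a $(r/\underline{p})'$-th root'' should be in the opposite order to yield your displayed formula, and the final $g$-dependent factor after the outer $1/(r/\underline{p})'$ power is $\|g\|_{L^{(\vp/\underline{p})'}(\rn)}$ rather than $\|g\|_{L^{(\vp/\underline{p})'}(\rn)}^{(r/\underline{p})'}$---both harmless since $\|g\|\le 1$.
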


Via borrowing some ideas from the proof of \cite[Lemma 3.14]{hlyy},
we obtain the following dense subspace of $\vh$.

\begin{lemma}\label{4l5}
Let $\vp\in(0,\fz)^n$ and $N\in\nn\cap[\lfloor(\frac1{\min\{1,p_-\}}-1)
\frac{\ln b}{\ln\lambda_-}\rfloor+2,\fz)$ with $p_-$ as in \eqref{2e4}.
Then $\vh\cap L^{\vp/p_-}(\rn)$ is dense in $\vh$.
\end{lemma}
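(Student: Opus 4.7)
The plan is to follow the classical Calder\'on--Zygmund decomposition strategy used in \cite[p.\,32, Lemma 5.9]{mb03} and \cite[Lemma 3.14]{hlyy}: given $f\in\vh$, for each $k\in\nn$ I would decompose $f=g^k+h^k$ in $\cs'(\rn)$ at level $\lz_k:=2^k$ via the anisotropic Calder\'on--Zygmund construction from \cite[p.\,23]{mb03}, applied to the level set $\Omega^k:=\{x\in\rn:\ M_N(f)(x)>\lz_k\}$, with Lemma \ref{4l1} supplying the Whitney-type covering. Here $h^k=\sum_{i\in\nn}b_i^k$, with each $b_i^k$ supported in a controlled dilate of the $i$-th Whitney ball and having vanishing moments up to the appropriate order, while $g^k$ is a measurable function satisfying $|g^k|\ls\lz_k$ almost everywhere. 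The $g^k$ will serve as the candidate approximants to $f$ in $\vh\cap L^{\vp/p_-}(\rn)$.

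To verify $g^k\in L^{\vp/p_-}(\rn)$, I would first note that $g^k=f-h^k\in\vh$ because $\|M_N(h^k)\|_{\lv}<\fz$ follows from the standard CZ estimates; since $g^k$ is locally integrable, $|g^k|\le M_N(g^k)$ a.e., and so $g^k\in\lv$. In the principal case $p_-\le 1$ (so $1/p_-\ge 1$), the pointwise inequality $|g^k|^{1/p_-}\le (C\lz_k)^{1/p_- -1}|g^k|$ combined with Lemma \ref{3l3} at $r:=1/p_-$ yields
\begin{align*}
\|g^k\|_{L^{\vp/p_-}(\rn)}=\lf\||g^k|^{1/p_-}\r\|_{\lv}^{p_-}\ls \lz_k^{1-p_-}\|g^k\|_{\lv}^{p_-}<\fz.
\end{align*}
When $p_->1$, Proposition \ref{3p1} already identifies $\vh$ with $\lv$, and the density reduces to the well-known fact that bounded compactly supported functions are dense in $\lv$ and lie in $L^{\vp/p_-}(\rn)$.

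The convergence $\|h^k\|_{\vh}=\|M_N(h^k)\|_{\lv}\to 0$ will be deduced from a pointwise dominating estimate of the form $M_N(h^k)(x)\ls M_N(f)(x)\mathbf{1}_{\widetilde\Omega^k}(x)$ almost everywhere, where $\widetilde\Omega^k$ is a fixed anisotropic dilation of $\Omega^k$; this is the mixed-norm analogue of the key bound used in \cite[Lemma 5.9]{mb03}. Since $M_N(f)<\fz$ almost everywhere, $\mathbf{1}_{\widetilde\Omega^k}\to 0$ pointwise a.e. as $k\to\fz$, and since $M_N(f)\in\lv$, I would apply the dominated convergence theorem in $\lv$ (iterating the one-dimensional statement along the coordinate directions) to conclude $\|M_N(f)\mathbf{1}_{\widetilde\Omega^k}\|_{\lv}\to 0$, whence $\|f-g^k\|_{\vh}\to 0$; combined with the first step this yields the density.

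The main technical obstacle is establishing the above pointwise control $M_N(h^k)\ls M_N(f)\mathbf{1}_{\widetilde\Omega^k}$ in the mixed-norm framework. In Bownik's isotropic proof this relies on explicit $L^p$-size information for characteristic functions of anisotropic balls, which is not available for $\|\mathbf{1}_{x+B_k}\|_{\lv}$. The workaround is to combine the $\underline{p}$-quasi-triangle inequality from Lemma \ref{3l3}, the comparison between the homogeneous quasi-norm and the Euclidean norm from Lemma \ref{3l8}, and the finite-overlap properties of Whitney balls from Lemma \ref{4l1}, thereby transporting each step of Bownik's argument to the anisotropic mixed-norm setting; once this bound is secured, the remainder of the proof is routine bookkeeping.
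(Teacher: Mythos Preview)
Your overall plan---apply the anisotropic Calder\'on--Zygmund decomposition, show the good part lies in $L^{\vp/p_-}(\rn)$, and prove the bad part tends to zero in $\vh$---is exactly the paper's strategy. However, the key pointwise estimate you rely on,
\[
M_N(h^k)(x)\ls M_N(f)(x)\,\mathbf{1}_{\widetilde\Omega^k}(x),
\]
is not true, and this is not a mixed-norm artifact: it already fails in Bownik's isotropic setting. Even though each $b_i^k$ is supported in a Whitney ball inside $\Omega^k$, the non-tangential grand maximal function $M_N(b_i^k)$ has a polynomially decaying tail supported on all of $\rn$. The correct estimate (see Step~1 of the proof of \cite[Theorem 4.8]{lwyy17}, quoted verbatim in the paper) is
\[
M_N\!\lf(\sum_{i\in\nn}b_i^k\r)(x)\ls M_N(f)(x)\,\mathbf{1}_{\Omega^k}(x)+\lz_k\sum_{i\in\nn}\lf[\HL\!\lf(\mathbf{1}_{x_i+B_{l_i}}\r)(x)\r]^{N\ln\lz_-/\ln b},
\]
and the second sum is nonzero on $(\widetilde\Omega^k)^\complement$ no matter how you dilate $\Omega^k$.

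The tool that closes this gap is Lemma~\ref{4l3}, the anisotropic Fefferman--Stein vector-valued inequality on $\lv$, which you never invoke. Applying it (with $u=\gamma:=N\ln\lz_-/\ln b>1/p_-$, after passing to $L^{\gamma\vp}(\rn)$ via Lemma~\ref{3l3}) and then using the bounded overlap of the Whitney balls gives
\[
\lf\|\lz_k\sum_{i\in\nn}\lf[\HL(\mathbf{1}_{x_i+B_{l_i}})\r]^{\gamma}\r\|_{\lv}\ls\lz_k\lf\|\mathbf{1}_{\Omega^k}\r\|_{\lv}\le\lf\|M_N(f)\,\mathbf{1}_{\Omega^k}\r\|_{\lv},
\]
whence $\|f-g^k\|_{\vh}\ls\|M_N(f)\,\mathbf{1}_{\Omega^k}\|_{\lv}\to0$. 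The same tail issue infects your argument for $g^k\in L^{\vp/p_-}(\rn)$: to conclude $g^k\in\lv$ you first need $M_N(h^k)\in\lv$, which again requires Lemma~\ref{4l3}. (Once that is in hand, your interpolation shortcut $|g^k|^{1/p_-}\ls\lz_k^{1/p_--1}|g^k|$ is a perfectly valid---and slightly slicker---alternative to the paper's route through $H_A^{\vp/p_-}(\rn)$ and Proposition~\ref{3p1}.) The ingredients you list for your workaround (Lemmas~\ref{3l3}, \ref{3l8}, \ref{4l1}) do not, by themselves, control an infinite sum of Hardy--Littlewood maximal functions in $\lv$; you need the vector-valued maximal inequality.
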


\begin{proof}
By Remark \ref{2r2}(i), without loss of generality, we may assume
\begin{align}\label{4e1'}
N\in\nn\cap\lf(\max\lf\{\frac{\ln b}{\ln \lz_-},\,\frac{\ln b}{p_-\ln \lz_-},\,
\lf\lfloor\lf(\frac1{\min\{1,p_-\}}-1\r)
\frac{\ln b}{\ln\lambda_-}\r\rfloor+2\r\},\fz\r).
\end{align}
For any $\lambda\in(0,\fz)$ and $f\in\vh$, let
$\Omega_{\lz}:=\lf\{x\in\rn:\,M_N(f)(x)>\lz\r\},$
where $M_N$ is as in Definition \ref{2d4}. Then $|\Omega_{\lz}|<\fz$ and,
applying Lemma \ref{4l1} to $\Omega_\lz$ with $t=4\omega$,
we know that there exist $\{x_i\}_{i\in\nn}\subset\Omega_\lz$ and
$\{l_i\}_{i\in\nn}\subset\zz$ such that
\begin{align}\label{4e4}
\Omega_\lz=\bigcup_{i\in\nn}(x_i+B_{l_i})
\end{align}
and, for any $i\in\nn$,
\begin{align}\label{4e6}
(x_i+B_{l_i+4\omega})\cap\Omega_\lz^\complement
=\emptyset,\hspace{0.2cm} (x_i+B_{l_i+4\omega+1})\cap
\Omega_\lz^\com\neq\emptyset\quad{\rm and}
\end{align}
\begin{align}\label{4e7}
\sharp\lf\{j\in\nn:\
(x_i+B_{l_i+2\omega})\cap(x_j+B_{l_j+
2\omega})\neq\emptyset\r\}\le R,
\end{align}
where $\omega$ is as in \eqref{2e2} and $R$ as in Lemma \ref{4l1}(v).
Moreover, by an argument similar to that used in \cite[pp.\,23-24]{mb03},
we find that there exist
distributions $g^{\lz}$ and $\{b_i^{\lz}\}_{i\in\nn}$ such that
$f=g^{\lz}+\sum_{i\in \nn} b_i^{\lz}$ in $\cs'(\rn).$
From this, \cite[p.\,32, Lemma 5.9]{mb03} with $s=N-1$, Lemma \ref{3l3},
\eqref{4e6}, \eqref{4e7} and \eqref{2e3},
we deduce that
\begin{align*}
&\lf\|M_N\lf(g^{\lz}\r)\r\|_{L^{\vp/p_-}(\rn)}\\
&\hs\ls \lz^{1-p_-}\lf\|M_N(f)\r\|_{\lv}^{p_-}
+\lf\|\lz\sum_{i\in\nn}\sum_{t\in\zz_+}\frac{|x_i+B_{l_i}|^
{\frac{N\ln\lz_-}{\ln b}}}{[\rho(\cdot-x_i)]
^{\frac{N\ln\lz_-}{\ln b}}}
{\mathbf 1}_{x_i+B_{l_i+2\omega+t+1}\setminus B_{l_i+2\omega+t}}\r\|_{L^{\vp/p_-}(\rn)}.
\end{align*}
This, together with Lemmas \ref{3l3} and \ref{4l3},
\eqref{4e1'} and \eqref{4e4}, further implies that
\begin{align*}
\lf\|M_N\lf(g^{\lz}\r)\r\|_{L^{\vp/p_-}(\rn)}
\ls \lz^{1-p_-}\lf\|M_N(f)\r\|_{\lv}^{p_-}<\fz.
\end{align*}
Therefore, $g^{\lz}\in H_A^{\vp/p_-}(\rn)$. By this and
Proposition \ref{3p1}, we further conclude that
$g^{\lz}\in L^{\vp/p_-}(\rn)$.

On another hand, by some arguments similar to those
used in Step 1 of the proof of
\cite[Theorem 4.8]{lwyy17}, we know that,
for any $\lz\in(0,\fz)$ and $x\in\rn$,
$$M_N\lf(\sum_{i\in \nn} b_i^{\lz}\r)(x)
\ls M_N(f)(x){\mathbf 1}_{\Omega_{\lz}}(x)
+\lz\sum_{i\in\nn}\lf[\HL\lf({\mathbf 1}_{x_i+B_{l_i}}\r)(x)\r]
 ^{N\frac{\ln\lz_-}{\ln b}}.$$
From this, Lemmas \ref{3l3} and \ref{4l3} again
as well as \eqref{4e1'}, it follows that
\begin{align}\label{4e11}
\lf\|f-g^{\lz}\r\|_{\vh}
\ls \lf\|M_N(f){\mathbf 1}_{\Omega_{\lz}}\r\|_{\lv}\to 0
\end{align}
as $\lz\to \fz$, which, combined with the assumption $f\in\vh$, implies that
$g^{\lz}\in\vh$ and hence completes the proof of Lemma \ref{4l5}.
\end{proof}

The main result of this section is stated as follows.

\begin{theorem}\label{4t1}
Let $\vp\in(0,\fz)^n$, $r\in(\max\{p_+,1\},\fz]$ with
$p_+$ as in \eqref{2e4}, $s$ be as in \eqref{4e1} and
$N\in\nn\cap[\lfloor(\frac1{\min\{1,p_-\}}-1)\frac{\ln b}{\ln\lambda_-}\rfloor+2,\fz)$
with $p_-$ as in \eqref{2e4}. Then $\vh=\vah$
with equivalent quasi-norms.
\end{theorem}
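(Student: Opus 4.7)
The plan is to prove the two continuous embeddings $\vah \hookrightarrow \vh$ and $\vh \hookrightarrow \vah$ separately, following the general framework of \cite[Theorem 4.8]{lwyy17} and \cite[Theorem 3.16]{hlyy}, but carefully adapted to the present anisotropic mixed-norm setting via the new tools Lemmas \ref{4l3} and \ref{4l4}.

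For $\vah \hookrightarrow \vh$, I would begin from an atomic representation $f = \sum_{i\in\nn} \lz_i a_i$ in $\cs'(\rn)$, where each $a_i$ is a $(\vp, r, s)$-atom supported in a dilated ball $B^{(i)} = x_i + B_{\ell_i}$. I would estimate $M_N(f)$ pointwise by splitting the integration into a ``near'' region of the form $x_i + A^{k_0}B_{\ell_i}$ and its complement. On the near region, using $M_N(\lz_i a_i)(x) \ls |\lz_i| [\HL(|a_i|^r)(x)]^{1/r}$, the Fefferman--Stein vector-valued inequality of Lemma \ref{4l3} on $L^{\vp/r}(\rn)$, together with the $L^r$-size condition on atoms, gives the desired $\lv$ bound after appealing to Lemma \ref{4l4}. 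On the distant region, a Taylor expansion together with the vanishing moments of order $s$ yields a decay factor in the direction of $x$ whose exponent, by \eqref{4e1}, exceeds $\frac{1}{\underline{p}}\frac{\ln b}{\ln \lz_-}$; this allows one to dominate $M_N(\lz_i a_i)(x)$ by a power of $\HL({\mathbf 1}_{B^{(i)}})(x)$ greater than $1/\underline{p}$, so Lemma \ref{4l3} closes the estimate. Summing the two contributions and using the $\underline{p}$-sub-additivity of $\|\cdot\|_{\lv}^{\underline{p}}$ from Lemma \ref{3l3} gives $\|f\|_{\vh} \ls \|f\|_{\vah}$.

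For the reverse embedding $\vh \hookrightarrow \vah$, by Lemma \ref{4l5} it suffices to treat $f \in \vh \cap L^{\vp/p_-}(\rn)$. For such $f$ one applies the anisotropic Calder\'on--Zygmund decomposition of \cite[p.\,23]{mb03} at each level $2^k$, $k \in \zz$, to the sets $\Omega_k := \{M_N(f) > 2^k\}$. This yields, for each $k$, a Whitney-type cover $\Omega_k = \bigcup_i (\xik + B_{\ellik})$ with $\{\Bik\}_{k,i} \subset \mathfrak{B}$, a good part $g^k$ and bad parts $\{b_i^k\}_i$ with vanishing moments up to order $s$. The standard telescoping manipulation produces functions $\aik$ supported in $\Bik$ with $\|\aik\|_{\lfz} \le C\, 2^k / \|{\mathbf 1}_{\Bik}\|_{\lv}$, i.e.\ constant multiples of $(\vp,\fz,s)$-atoms, together with coefficients $\lik \sim 2^k \|{\mathbf 1}_{\Bik}\|_{\lv}$ and an $\cs'(\rn)$-convergent decomposition
\begin{align*}
f = \sum_{k\in\zz}\sum_{i\in\nn} \lik \aik.
\end{align*}
The atomic quasi-norm is then estimated, via the bounded overlap of the Whitney balls and a layer-cake argument, by
\begin{align*}
\lf\|\lf\{\sum_{k,i} \lf[\frac{|\lik|\,{\mathbf 1}_{\Bik}}{\|{\mathbf 1}_{\Bik}\|_{\lv}}\r]^{\underline{p}}\r\}^{1/\underline{p}}\r\|_{\lv}
\ls \lf\|\lf(\sum_{k\in\zz} 2^{k\underline{p}}\,{\mathbf 1}_{\Omega_k}\r)^{1/\underline{p}}\r\|_{\lv}
\ls \|M_N(f)\|_{\lv},
\end{align*}
which yields $\|f\|_{\vahfz} \ls \|f\|_{\vh}$. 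Since every continuous $(\vp,\fz,s)$-atom is a $(\vp,r,s)$-atom for any $r \in (1,\fz)$, this also gives $\|f\|_{\vah} \ls \|f\|_{\vh}$, completing the proof.

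The main obstacle lies in the first direction. Unlike in \cite{hlyy}, where the diagonal anisotropy allows $\|{\mathbf 1}_B\|_{\lv}$ to be computed explicitly, no such formula is available for general dilated balls here; in particular one cannot control $\lv$-norms of atomic sums by direct changes of variables. Instead, the entire passage from pointwise atomic estimates to $\lv$-bounds has to be routed through Lemma \ref{4l4}, which itself rests on the reduction to $L^r$ arguments enabled by the Fefferman--Stein inequality of Lemma \ref{4l3}. The careful matching of the vanishing-moment order $s$ fixed in \eqref{4e1} with the exponent needed to apply Lemma \ref{4l3} to the distant-part estimate is the most delicate point of the whole argument.
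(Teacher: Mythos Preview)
Your overall strategy matches the paper's proof: the inclusion $\vah\subset\vh$ is obtained by splitting $M_N(a_i)$ into a near part controlled via Lemma~\ref{4l4} and a far part controlled via the pointwise decay estimate and Lemma~\ref{4l3}, while the reverse inclusion goes through the Calder\'on--Zygmund decomposition at levels $2^k$, the telescoping construction of $(\vp,\fz,s)$-atoms, and the density Lemma~\ref{4l5}.

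One step in your near-part sketch would fail as written. You invoke ``the Fefferman--Stein vector-valued inequality of Lemma~\ref{4l3} on $L^{\vp/r}(\rn)$'', but since $r>p_+$ one has $\vp/r\in(0,1)^n$, and Lemma~\ref{4l3} requires the exponent vector to lie in $(1,\fz)^n$; so that application is not available. The paper avoids this by not using a pointwise Hardy--Littlewood bound on the near region at all: instead it uses the $L^r$-boundedness of $M_N$ (valid for $r>1$) to check that the functions $M_N(a_i){\mathbf 1}_{x_i+B_{l_i+\omega}}$ satisfy the $L^r$-size hypothesis of Lemma~\ref{4l4}, and then applies Lemma~\ref{4l4} directly. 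Lemma~\ref{4l4} already packages the passage from $L^r$ control of the individual pieces to the $\lv$ bound on the $\underline{p}$-sum, so no separate call to Lemma~\ref{4l3} is needed on the near part. With this correction in place, your argument coincides with the paper's.
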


\begin{remark}\label{4r1}
By Proposition \ref{6p1} below, we know that Theorem \ref{4t1}
with $A$ as in \eqref{2e5} is just \cite[Theorem 3.16]{hlyy}.
Moreover, from Remark
\ref{2r2}(iii), it follows that, when
$\vp=(\overbrace{p,\ldots,p}^{n\ \rm times})$ with some $p\in(0,1]$,
Theorem \ref{4t1} was proved by Bownik in \cite[p.\,39, Theorem 6.5]{mb03}.
\end{remark}

When $\vp\in(1,\fz)^n$,
from the fact that $\vh=\lv$ with
equivalent norms (see Proposition \ref{3p1}) and Theorem \ref{4t1},
we immediately deduce the following conclusion.

\begin{corollary}
Let $\vp\in(1,\fz)^n$, $r\in(p_+,\fz]$ with
$p_+$ as in \eqref{2e4} and $s$ and $N$ be as in Theorem \ref{4t1}.
Then $\lv=\vah$ with equivalent norms.
\end{corollary}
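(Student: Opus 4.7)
The plan is to derive this corollary as an immediate consequence of chaining together Proposition \ref{3p1} and Theorem \ref{4t1}. Since both of these results are already established in the paper, the argument reduces to verifying that the parameter hypotheses are compatible and then composing the two identifications of function spaces.

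First, I would observe that, under the hypothesis $\vp\in(1,\fz)^n$, Proposition \ref{3p1} yields $\vh=\lv$ with equivalent norms. Next, for the same $\vp$ together with $r\in(p_+,\fz]$, $s$ as in \eqref{4e1} and $N$ as in Theorem \ref{4t1}, I would apply Theorem \ref{4t1} to obtain $\vh=\vah$ with equivalent quasi-norms. Composing the two identifications then produces $\lv=\vah$ with equivalent (quasi-)norms; moreover, since $\lv$ is a genuine Banach space for $\vp\in(1,\fz)^n$ (see Remark \ref{2r1}), the resulting quasi-norm on $\vah$ is in fact equivalent to a norm, so the conclusion can be stated with equivalent norms rather than merely equivalent quasi-norms.

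The only small bookkeeping point worth checking is that the range of $r$ in the corollary matches the range required by Theorem \ref{4t1}. Theorem \ref{4t1} requires $r\in(\max\{p_+,1\},\fz]$, and the assumption $\vp\in(1,\fz)^n$ forces $p_+>1$, so $\max\{p_+,1\}=p_+$ and the hypothesis $r\in(p_+,\fz]$ of the corollary is precisely what Theorem \ref{4t1} demands. Similarly, the hypothesis on $s$ and on $N$ carries over without change, since $p_-\in(1,\fz)$ only makes the admissibility conditions \eqref{4e1} and $N\in\nn\cap[\lfloor(\frac1{\min\{1,p_-\}}-1)\frac{\ln b}{\ln\lambda_-}\rfloor+2,\fz)$ easier to satisfy.

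There is no genuine obstacle here; the corollary is purely a composition of two previously proved equivalences, and the only substantive content lies in the two theorems being invoked. Accordingly, my proposed write-up would consist of at most a couple of lines essentially citing Proposition \ref{3p1} and Theorem \ref{4t1} and asserting the chain $\lv=\vh=\vah$ with equivalent norms.
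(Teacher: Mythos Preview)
Your proposal is correct and matches the paper's approach exactly: the paper states just before the corollary that it follows immediately from Proposition \ref{3p1} (giving $\vh=\lv$ for $\vp\in(1,\fz)^n$) together with Theorem \ref{4t1}. Your additional bookkeeping remark that $\max\{p_+,1\}=p_+$ when $\vp\in(1,\fz)^n$ is a helpful explicit check, but otherwise there is nothing to add.
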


In what follows, for any given $i\in \mathbb{Z}_+$, we use
the \emph{symbol $\mathbb{P}_i(\rn)$} to
denote the linear space of all polynomials
on $\rn$ with degree not greater than $i$.

We now prove Theorem \ref{4t1}.

\begin{proof}[Proof of Theorem \ref{4t1}]
Let $\vp\in(0,\fz)^n$, $r\in(\max\{p_+,1\},\fz]$
and $s$ be as in \eqref{4e1}.
First, we show that
\begin{align}\label{4e8}
\vah\subset\vh.
\end{align}
To this end, for any $f\in\vah$, by Definition \ref{4d2}, we find that
there exist $\{\lz_i\}_{i\in\nn}\subset\mathbb{C}$
and a sequence of $(\vp,r,s)$-atoms, $\{a_i\}_{i\in\nn}$,
supported, respectively, in
$\{B^{(i)}\}_{i\in\nn}\subset\mathfrak{B}$ such that
$f=\sum_{i\in\nn}\lz_ia_i$ in $\cs'(\rn)$ and
\begin{align*}
\|f\|_{\vah}\sim
\lf\|\lf\{\sum_{i\in\nn}
\lf[\frac{|\lz_i|{\mathbf 1}_{B^{(i)}}}{\|{\mathbf 1}_{B^{(i)}}\|_{\lv}}\r]^
{\underline{p}}\r\}^{1/\underline{p}}\r\|_{\lv}.
\end{align*}
Thus, to prove \eqref{4e8}, we only need to show that
\begin{align}\label{4e19}
\|M_N(f)\|_{\lv}\ls\lf\|\lf\{\sum_{i\in\nn}
\lf[\frac{|\lz_i|{\mathbf 1}_{B^{(i)}}}{\|{\mathbf 1}_{B^{(i)}}\|_{\lv}}\r]^
{\underline{p}}\r\}^{1/\underline{p}}\r\|_{\lv}.
\end{align}
To this end, observe that, for any $i\in\nn$,
there exist $l_i\in\zz$ and $x_i\in\rn$
such that $x_i+B_{l_i}=B^{(i)}$.
From an argument similar to that used in the proof of
\cite[(3.27)]{lyy16}, it follows that,
for any $i\in\nn$ and $x\in (x_i+B_{l_i+\omega})^\com$,
$$M_N(a_i)(x)
\ls\frac1{\|{\mathbf 1}_{B^{(i)}}\|_{\lv}}
\lf[\HL\lf({\mathbf 1}_{B^{(i)}}\r)(x)\r]^\gamma,$$
where $\gamma:=(\frac{\ln b}{\ln \lambda_-}+s+1)
\frac{\ln \lambda_-}{\ln b}.$
By this, we easily know that, for any
$N\in\mathbb{N}\cap[\lfloor(\frac1{\min\{1,p_-\}}-1)
\frac{\ln b}{\ln\lambda_-}\rfloor+2,\fz)$ and $x\in\rn$,
\begin{align}\label{3x7}
M_N(f)(x)&\le\sum_{i\in\nn}|\lz_i|M_N(a_i)(x){\mathbf 1}_{x_i+B_{l_i+\omega}}(x)
+\sum_{i\in\nn}|\lz_i|M_N(a_i)(x){\mathbf 1}_{\lf(x_i+B_{l_i+\omega}\r)^\com}(x)\\
&\ls\lf\{\sum_{i\in\nn}\lf[|\lz_i|M_N(a_i)(x){\mathbf 1}_{x_i+B_{l_i+\omega}}(x)\r]
^{\underline{p}}\r\}^{1/\underline{p}}
+\sum_{i\in\nn}\frac{|\lz_i|}{\|{\mathbf 1}_{B^{(i)}}\|_{\lv}}
\lf[\HL\lf({\mathbf 1}_{B^{(i)}}\r)(x)\r]^\gamma\noz\\
&=:{\rm I}+{\rm II}.\noz
\end{align}

For {\rm I}, by the boundedness of $M_N$ on $L^q(\rn)$
with $q\in(1,\fz]$ (see \cite[Remark 2.10]{lyy16}) and
the fact that $r\in(\max\{p_+,1\},\fz]$, we conclude that
\begin{align*}
\lf\|M_N\lf(a_i\r){\mathbf 1}_{x_i+B_{l_i+\omega}}\r\|_{L^r(\rn)}
\ls \lf\|a_i\r\|_{L^r(\rn)}
\ls\frac{|B^{(i)}|^{1/r}}{\|{\mathbf 1}_{B^{(i)}}\|_{\lv}},
\end{align*}
which, together with Lemma \ref{4l4}, further implies that
\begin{align}\label{3x6}
\|{\rm I}\|_{\lv}\ls\|f\|_{\vah}.
\end{align}
For the term ${\rm II}$, applying Lemma \ref{3l3},
the fact that $\gamma>\frac1{p_-}$ and
Lemma \ref{4l3}, we find that
\begin{align*}
\|{\rm II}\|_{\lv}
\sim\lf\|\lf\{\sum_{i\in\nn}\frac{|\lz_i|}
{\|{\mathbf 1}_{B^{(i)}}\|_{\lv}}
\lf[\HL\lf({\mathbf 1}_{B^{(i)}}\r)\r]
^\gamma\r\}^{1/\gamma}\r\|^{\gamma}_{L^{\gamma\vp}(\rn)}
\ls\|f\|_{\vah}.
\end{align*}
This, combined with \eqref{3x7} and \eqref{3x6},
further implies that \eqref{4e19} holds true
and hence finishes the proof of \eqref{4e8}.

We now prove that $\vh\subset\vah$. For this purpose, it suffices
to show that
\begin{align}\label{4e9}
\vh\subset H_A^{\vp,\fz,s}(\rn),
\end{align}
due to the fact that each $(\vp,\infty,s)$-atom is also a $(\vp,r,s)$-atom
and hence $H_A^{\vp,\fz,s}(\rn)\subset\vah$.

Next we prove \eqref{4e9} by two steps.

\emph{Step 1)} In this step, we show that,
for any $f\in\vh\cap L^{\vp/{p_-}}(\rn)$,
\begin{align}\label{4e10}
\|f\|_{H_A^{\vp,\fz,s}(\rn)}\ls\|f\|_{\vh}.
\end{align}

To this end, for any $k\in\zz$,
$N\in\nn\cap[\lfloor(\frac1{\min\{1,p_-\}}-1)\frac{\ln b}{\ln
\lambda_-}\rfloor+2,\fz)$ and $f\in\vh\cap L^{\vp/{p_-}}(\rn)$,
let
$
\Omega_k:=\{x\in\rn:\ M_N(f)(x)>2^k\}.
$
Then $|\Omega_{k}|<\fz$ and, applying Lemma \ref{4l1} to $\Omega_k$ with $t=6\omega$,
it is easy to see that there exist
$\{x_i^k\}_{i\in\nn}\subset\Omega_k$ and $\{\ell_i^k\}_{i\in\nn}\subset\zz$
such that
\begin{align}\label{4e16}
\Omega_k=\bigcup_{i\in\nn}(x_i^k+B_{\ell_i^k});
\end{align}
\begin{align*}
(x_i^k+B_{\ell_i^k-\omega})\cap(x_j^k+B_{\ell_j^k-\omega})
=\emptyset\hspace{0.2cm} {\rm for\ any}\  i,\ j\in\nn\ {\rm with}\ i\neq j;
\end{align*}
\begin{align}\label{4e18}
(x_i^k+B_{\ell_i^k+6\omega})\cap\Omega_k^\complement
=\emptyset,\hspace{0.2cm} (x_i^k+B_{\ell_i^k+6\omega+1})\cap
\Omega_k^\complement\neq\emptyset\ \ \ {\rm for\ any}\
i\in\mathbb{N};
\end{align}
\begin{align*}
(x_i^k+B_{\ell_i^k+4\omega})\cap(x_j^k+B_{\ell_j^k+
4\omega})\neq\emptyset\hspace{0.25cm} {\rm implies}
\hspace{0.25cm} |\ell_i^k-\ell_j^k|\le\omega;
\end{align*}
\begin{align}\label{4e17}
\sharp\lf\{j\in\nn:\
(x_i^k+B_{\ell_i^k+4\omega})\cap(x_j^k+B_{\ell_j^k+
4\omega})\neq\emptyset\r\}\le R\hspace{0.2cm} {\rm for\ any}\  i\in\nn,
\end{align}
where $\omega$ is as in \eqref{2e2} and $R$ as in Lemma \ref{4l1}(v).

Let $\xi\in C^{\fz}(\rn)$ satisfy that $\supp\xi\subset B_\omega$,
$0\le\xi\le1$ and $\xi\equiv1$ on $B_0$. For
any $i\in\nn$, $k\in\zz$ and $x\in\rn$, let
$\xi_i^k(x):=\xi(A^{-\ell_i^k}(x-x_i^k))$
and
\begin{align*}
\eta_i^k(x):=
\frac {\xi_i^k(x)}{\sum_{j\in\nn}\xi_j^k(x)}.
\end{align*}
Then one may easily verify that, for any given $k\in\zz$,
$\{\eta_i^k\}_{i\in\mathbb{N}}$
forms a smooth partition of unity of $\Omega_k$,
namely, for any $i\in\nn$, $\eta_i^k\in C^{\fz}(\rn)$,
$\supp \eta_i^k\subset x_i^k+B_{\ell_i^k+\omega}$,
$0\le\eta_i^k\le1$, $\eta_i^k\equiv1$
on $x_i^k+B_{\ell_i^k-\omega}$ and
${\mathbf 1}_{\Omega_k}=\sum_{i\in\mathbb{N}}\eta_i^k.$

For any $i\in\nn$, $k\in\zz$ and $P\in\mathbb{P}_s(\rn)$,
define the norm in the space $\mathbb{P}_s(\rn)$ by setting
\begin{align}\label{3x8}
\|P\|_{i,k}:=
\lf[\frac1{\int_\rn\eta_i^k(x)\,dx}\int_\rn
|P(x)|^2\eta_i^k(x)\,dx\r]^{1/2},
\end{align}
which makes a finite dimensional Hilbert space
$(\mathbb{P}_s(\rn),\|\cdot\|_{i,k})$. For any $i\in\nn$
and $k\in\zz$, via
$$H\mapsto\frac1{\int_\rn\eta_i^k(x)\,dx}\lf\langle
f,H\eta_i^k\r\rangle,\hspace{0.2cm} H\in\mathbb{P}_s(\rn),$$
the function $f$ induces a linear bounded functional on
$\mathbb{P}_s(\rn)$. Then, applying the Riesz lemma, we find
that there exists a unique polynomial $P_i^k\in\mathbb{P}_s(\rn)$
such that, for any $q\in\mathbb{P}_s(\rn)$,
\begin{align*}
\frac1{\int_\rn\eta_i^k(x)\,dx}
\lf\langle f,q\eta_i^k\r\rangle
&=\frac1{\int_\rn\eta_i^k(x)\,dx}
\lf\langle P_i^{k},q\eta_i^k\r\rangle
=\frac 1{\int_\rn\eta_i^k(x)\,dx}
\int_\rn P_i^{k}(x)q(x)\eta_i^k(x)\,dx.
\end{align*}
For any $i\in\mathbb{N}$ and $k\in\mathbb{Z}$, let
$b_i^{k}:=[f-P_i^{k}]\eta_i^k.$
Notice that, for any $k\in\zz$ and $x\in\rn$,
$\sum_{i\in\nn}
{\mathbf 1}_{x_i^k+B_{\ell_i^k+4\omega}}(x)\le R$
and
$\supp b_i^k\subset x_i^k+B_{\ell_i^k+4\omega}$,
it follows that $\{\sum_{i=1}^w b_i^k\}_{w\in\nn}$
converges in $\cs'(\rn)$. Thus, for any $k\in\zz$,
we define a distribution
\begin{align}\label{4e21}
g_{k}:=f-\sum_{i\in\nn}b_i^{k}
=f-\sum_{i\in\nn}\lf[f-P_i^{k}\r]\eta_i^k
=f{\mathbf 1}_{\Omega_k^\com}+\sum_{i\in\nn}P_i^{k}\eta_i^k.
\end{align}
Then, similarly to \eqref{4e11}, we conclude that
\begin{align}\label{4e22}
\lf\|f-g_k\r\|_{\vh}=\lf\|\sum_{i\in\nn}b_i^k\r\|_{\vh}
\ls\lf\|M_N(f){\mathbf 1}_{\Omega_k}\r\|_{\lv}\to0
\end{align}
as $k\to \fz$.

On another hand, by \eqref{4e21}, we know that
$\|g_{k}\|_{L^{\infty}(\rn)}\ls2^k$ and
$\|g_{k}\|_{L^{\infty}(\rn)}\to0$
as $k\to-\infty$ (see also \cite[p.\,1679]{lyy16}).
By this, \eqref{4e22} and Lemma \ref{4l2},
we find that
$f=\sum_{k\in\zz}[g_{k+1}-g_k]$ in $\cs'(\rn)$.
Furthermore, from an argument similar to that used in \cite[p.\,38]{mb03}
(see also \cite[pp.\,1680-1681]{lyy16}), we deduce that
\begin{align*}
f=\sum_{k\in\zz}\lf[g_{k+1}-g_k\r]
=\sum_{k\in\zz}\sum_{i\in\nn}\lf[b_i^{k}-\sum_{j\in\nn}
\lf(b_j^{k+1}\eta_i^k-P_{i,j}^{k+1}\eta_j^{k+1}\r)\r]
=:\sum_{k\in\zz}\sum_{i\in\nn}h_i^{k}
\hspace{0.5cm} {\rm in}\hspace{0.2cm} \cs'(\rn),
\end{align*}
where, for any $i$, $j\in\nn$ and $k\in\zz$,
$P_{i,j}^{k+1}$ is the orthogonal projection of
$[f-P_j^{k+1}]\eta_i^k$ on $\mathbb{P}_{s}(\rn)$
with respect to the norm defined as in \eqref{3x8}
and $h_i^k$ is a multiple of a
$(\vp,\fz,s)$-atom satisfying
\begin{align}\label{4e12}
\int_\rn h_i^{k}(x)q(x)\,dx=0\hspace{0.4cm}
{\rm for\ any}\ q\in\mathbb{P}_{s}(\rn),
\end{align}
\begin{align}\label{4e14}
\supp h_i^{k}\subset (x_i^k+B_{\ell_i^k+4\omega})
\qquad {\rm and}\qquad
\lf\|h_i^{k}\r\|_{L^{\infty}(\rn)}\le\wz{C}2^k
\end{align}
with $\wz{C}$ being a positive
constant independent of $k$ and $i$.

For any $k\in\zz$ and $i\in\nn$, let
\begin{align}\label{4e15}
\lz_i^k:=\wz{C}2^k\lf\|{\mathbf 1}_{\xik+B_{\ell_i^k+4\omega}}\r\|_{\lv}
\qquad {\rm and}\qquad\aik:=\lf[\lik\r]^{-1}h_i^k,
\end{align}
where $\wz{C}$ is as in \eqref{4e14}. Then, by \eqref{4e12}
and \eqref{4e14}, we know that,
for any $k\in\zz$ and $i\in\nn$, $\aik$ is a $(\vp,\fz,s)$-atom.
Moreover, we have
$f=\sum_{k\in\zz}\sum_{i\in\nn}\lik\aik$ in $\cs'(\rn)$.
In addition, by \eqref{4e15} and \eqref{4e16} through \eqref{4e17},
we conclude that
\begin{align*}
\lf\|\lf\{\sum_{k\in\zz}\sum_{i\in\nn}
\lf[\frac{|\lik|{\mathbf 1}_{\xik+B_{\ell_i^k+4\omega}}}
{\|{\mathbf 1}_{\xik+B_{\ell_i^k+4\omega}}\|_{\lv}}\r]^
{\underline{p}}\r\}^{1/\underline{p}}\r\|_{\lv}
\ls\|f\|_{\vh},
\end{align*}
which implies that \eqref{4e10} holds true.

\emph{Step 2)} In this step, we prove that, for any $f\in\vh$,
\eqref{4e10} also holds true.

For this purpose, let $f\in\vh$. Then, by Lemma \ref{4l5},
we find that there exists a sequence
$\{f_j\}_{j\in\nn}\subset\vh\cap L^{\vp/{p_-}}(\rn)$
such that $f=\sum_{j\in\nn}f_j$ in $\vh$ and, for any $j\in\nn$,
$$\lf\|f_j\r\|_{\vh}\le2^{2-j}\|f\|_{\vh}.$$
Moreover, for any $j\in\nn$, by the conclusion obtained in Step 1,
we conclude that there exist $\{\lz_i^{j,k}\}_{k\in\zz,i\in\nn}
\subset \mathbb{C}$ and a sequence $\{a_i^{j,k}\}_{k\in\zz,i\in\nn}$
of $(\vp,\fz,s)$-atoms such that
$f_j=\sum_{k\in\zz}\sum_{i\in\nn}\lz_i^{j,k}a_i^{j,k}$
in $\cs'(\rn)$. Therefore,
$f=\sum_{j\in\nn}\sum_{k\in\zz}\sum_{i\in\nn}\lz_i^{j,k}a_i^{j,k}$
in $\cs'(\rn)$
and
$$\|f\|_{H_A^{\vp,\fz,s}(\rn)}
\le\lf[\sum_{j\in\nn}\lf\|f_j\r\|_{\vh}^{\underline{p}}\r]^{1/\underline{p}}
\ls\|f\|_{\vh}.$$
This finishes the proof of \eqref{4e10} for any $f\in\vh$ and
hence of Theorem \ref{4t1}.
\end{proof}

From the proof of Theorem \ref{4t1}, we deduce the following
conclusion, which plays an important role in the proof of Theorem
\ref{8t2} below; the details are omitted.

\begin{proposition}\label{4p1}
Let $\vp,\,r,\,s$ and $N$ be as in Theorem \ref{4t1},
$q\in(1,\fz]$ and $f\in\vh\cap L^q(\rn)$.
Then there exist a sequence of $(\vp,r,s)$-atoms, $\{\aik\}_{i\in\nn,k\in\zz}$,
and $\{\lik\}_{i\in\nn,k\in\zz}\subset \mathbb{C}$ such that
$f=\sum_{k\in\zz}\sum_{i\in\nn}\lik\aik$,
where the series converge in $L^q(\rn)$ and almost everywhere.
\end{proposition}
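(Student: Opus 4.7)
My plan is to directly inherit the Calder\'on--Zygmund construction carried out in Step 1 of the proof of Theorem \ref{4t1}, but applied to $f$ itself (rather than to the $\vh\cap L^{\vp/p_-}(\rn)$-approximation supplied by Lemma \ref{4l5}, as was needed in Step 2). Concretely, for each $k\in\zz$ I set $\Omega_k:=\{x\in\rn:\ M_N(f)(x)>2^k\}$ and reproduce the Whitney-type partition $\{\xik,\ellik,\eta_i^k\}_{i\in\nn}$ of $\Omega_k$ via Lemma \ref{4l1}, the projections $\{P_i^k\}$, and hence the atoms $\{\aik\}_{i\in\nn,k\in\zz}$ and coefficients $\{\lik\}_{i\in\nn,k\in\zz}$ given by \eqref{4e15}. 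Letting $g_k:=f\mathbf{1}_{\Omega_k^\com}+\sum_{i\in\nn}P_i^k\eta_i^k$ as in \eqref{4e21}, the telescoping identity yields
$$S_K:=\sum_{|k|\le K}\sum_{i\in\nn}\lik\aik=g_K-g_{-K}\quad\text{in}\ \cs'(\rn),$$
so everything reduces to showing $g_K\to f$ and $g_{-K}\to 0$, both in $L^q(\rn)$ and almost everywhere.

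The $L^q(\rn)$ step rests on the fact that $M_N(f)\in L^q(\rn)$; for $q\in(1,\fz)$ this follows from the standard pointwise domination $M_N(f)\ls\HL(f)$ together with Lemma \ref{3l1} applied to the constant exponent vector $\vp:=(\overbrace{q,\ldots,q}^{n\ {\rm times}})$, while the case $q=\fz$ is immediate. Using the bounded-overlap estimate $|\sum_{i\in\nn}P_i^K\eta_i^K|\ls 2^K\mathbf{1}_{\Omega_K}$ (which in turn follows from the $L^2$-projection construction of $P_i^K$, exactly as in the proof of Theorem \ref{4t1}) together with $|f|\le M_N(f)$ a.e., I split
$$\|g_K-f\|_{L^q(\rn)}\ls\lf\|M_N(f)\mathbf{1}_{\Omega_K}\r\|_{L^q(\rn)}+2^K|\Omega_K|^{1/q}.$$
The first term vanishes as $K\to\fz$ by the dominated convergence theorem, since $|\Omega_K|\to 0$ and $M_N(f)\in L^q(\rn)$. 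For the second, write $\mu(t):=|\{M_N(f)>t\}|$; then $\mu$ is non-increasing and $\int_0^\fz t^{q-1}\mu(t)\,dt=\|M_N(f)\|_{L^q(\rn)}^q/q<\fz$, so the elementary comparison
$$t^q\mu(t)\le\frac{q}{1-2^{-q}}\int_{t/2}^{t}s^{q-1}\mu(s)\,ds$$
together with absolute continuity of the Lebesgue integral forces $t^q\mu(t)\to 0$ as $t\to\fz$, and hence $2^{Kq}|\Omega_K|\to 0$. A parallel use of the same comparison as $t\to 0^+$, combined with the dominated convergence argument $\|f\mathbf{1}_{\Omega_{-K}^\com}\|_{L^q(\rn)}\to 0$ (valid because $\bigcap_K(\Omega_{-K}^\com\cap\{|f|>0\})\subset\{M_N(f)=0\}\cap\{|f|>0\}$ is null), yields $\|g_{-K}\|_{L^q(\rn)}\to 0$. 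For $q=\fz$ everything simplifies: $\Omega_K=\emptyset$ once $2^K>\|M_N(f)\|_{L^\fz(\rn)}$, while $\|g_{-K}\|_{L^\fz(\rn)}\ls 2^{-K}$.

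The almost everywhere convergence of $S_K$ to $f$ is then essentially automatic: since $M_N(f)<\fz$ a.e., for a.e.\ $x$ we have $x\notin\Omega_K$ for all sufficiently large $K$, so $g_K(x)=f(x)$ eventually, and $|g_{-K}(x)|\ls 2^{-K}$ uniformly in $x$. The main obstacle in the plan is the little layer-cake lemma forcing $t^q\mu(t)\to 0$ at both endpoints $0$ and $\fz$: mere integrability of $M_N(f)$ in $L^q$ is not enough to absorb the $L^\fz$ factors $2^{\pm K}$ produced by the Calder\'on--Zygmund bound on $g_{\pm K}$, and this extra decay is precisely what upgrades the $\cs'(\rn)$-convergence of Theorem \ref{4t1} to honest $L^q(\rn)$-convergence.
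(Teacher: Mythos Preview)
Your proof is correct and follows exactly the route the paper intends (the paper itself omits all details, saying only that the proposition follows from the proof of Theorem~\ref{4t1}). Two minor points: first, the telescoping actually yields $S_K=g_{K+1}-g_{-K}$ rather than $g_K-g_{-K}$, an inconsequential index shift; second, the layer-cake detour, while correct, is unnecessary --- on $\Omega_K$ one has $M_N(f)>2^K$, so $2^K|\Omega_K|^{1/q}\le\|M_N(f)\mathbf{1}_{\Omega_K}\|_{L^q(\rn)}$, and for the other endpoint the uniform pointwise bound $|g_{-K}|\ls\min\{2^{-K},M_N(f)\}$ lets you invoke dominated convergence directly with dominator $M_N(f)\in L^q(\rn)$.
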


\section{Finite atomic characterizations of $\vh$\label{s5}}

In this section, we obtain the finite atomic characterizations of $\vh$.
To be precise, for any given finite linear combination of $(\vp,r,s)$-atoms
with $r\in(\max\{p_+,1\},\fz)$ [or continuous $(\vp,\fz,s)$-atoms], we show that
its quasi-norm in $\vh$ can be achieved via all its finite combinations of atoms
of the same type. We begin with introducing the notion of anisotropic
mixed-norm finite atomic Hardy spaces $\vfah$ as follows.

\begin{definition}\label{5d1}
Let $\vp\in(0,\fz)^n$, $r\in(1,\fz]$, $s$ be
as in \eqref{4e1} and $A$ a dilation.
The \emph{anisotropic mixed-norm finite atomic Hardy space}
$\vfah$ is defined to be the set of all
$f\in\cs'(\rn)$ satisfying that there exist $I\in\nn$,
$\{\lz_i\}_{i\in[1,I]\cap\nn}\subset\mathbb{C}$ and
a finite sequence of $(\vp,r,s)$-atoms,
$\{a_i\}_{i\in[1,I]\cap\nn}$, supported, respectively, in
$\{B^{(i)}\}_{i\in[1,I]\cap\nn}\subset\mathfrak{B}$
such that $f=\sum_{i=1}^I\lambda_ia_i$ in $\cs'(\rn)$.
Moreover, for any $f\in\vfah$, let
\begin{align*}
\|f\|_{\vfah}:=
{\inf}\lf\|\lf\{\sum_{i=1}^{I}
\lf[\frac{|\lz_i|{\mathbf 1}_{B^{(i)}}}{\|{\mathbf 1}_{B^{(i)}}\|_{\lv}}\r]^
{\underline{p}}\r\}^{1/\underline{p}}\r\|_{\lv},
\end{align*}
where $\underline{p}$ is as in \eqref{2e4} and the
infimum is taken over all decompositions of $f$ as above.
\end{definition}

The following conclusion is from Theorem \ref{4t1} and its proof,
which is needed in the proof of Theorem \ref{5t1} below.

\begin{lemma}\label{5l1}
Let $\vp,\,r$ and $s$ be as in Definition \ref{5d1} and $\omega$ as in \eqref{2e2}. Then,
for any $f\in \vh\cap L^r(\rn)$, there exist
$\{\lz_i^k\}_{k\in\zz,\,i\in\nn}\subset\mathbb{C}$, dilated balls
$\{x_i^k+B_{\ell_i^k}\}_{k\in\zz,\,i\in\nn}\subset\mathfrak{B}$ and
$(\vp,\fz,s)$-atoms $\{\aik\}_{k\in\zz,\,i\in\nn}$ such that
$f=\sum_{k\in\zz}\sum_{i\in\nn}\lz_i^ka_i^k$ in $\cs'(\rn)$,
\begin{align*}
\supp a_i^k\subset \xik+B_{\ell_i^k+4\omega},\hspace{0.2cm}
\Omega_k=\bigcup_{j\in\mathbb{N}}(x_j^k+B_{\ell_j^k+4\omega})\hspace{0.2cm}
for\ any\ k\in\zz\ and\ i\in\nn,
\end{align*}
here $\Omega_k:=\{x\in\rn:\ M_N(f)(x)>2^k\}$ with $N$ as in Definition \ref{2d5},
\begin{align*}
(x_i^k+B_{\ell_i^k-\omega})\cap(x_j^k+B_{\ell_j^k-\omega})
=\emptyset\hspace{0.3cm} for\ any\ k\in\zz\ and\ i,\,j\in\nn\ with\ i\neq j,
\end{align*}
and
\begin{align*}
\sharp\lf\{j\in\mathbb{N}:\
(x_i^k+B_{\ell_i^k+4\omega})\cap(x_j^k+B_{\ell_j^k+
4\omega})\neq\emptyset\r\}\le R\hspace{0.2cm} for\ any\
k\in\zz\  and\  i\in\nn,
\end{align*}
with $R$ being a
positive constant independent of $k,\,i$ and $f$.
Moreover, there exists a positive constant $C$, independent of $f$, such that,
for any $k\in\zz$, $i\in\nn$ and for almost every $x\in\rn$,
$|\lz_i^ka_i^k(x)|\le C2^k$
and
\begin{align}\label{5e5}
\lf\|\lf\{\sum_{k\in\zz}\sum_{i\in\nn}
\lf[\frac{|\lik|{\mathbf 1}_{\xik+B_{\ell_i^k+4\omega}}}
{\|{\mathbf 1}_{\xik+B_{\ell_i^k+4\omega}}\|_{\lv}}\r]^
{\underline{p}}\r\}^{1/\underline{p}}\r\|_{\lv}
\le C\|f\|_{\vh}
\end{align}
with $\underline{p}$ as in \eqref{2e4}.
\end{lemma}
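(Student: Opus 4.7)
The plan is to retrace verbatim Step 1 of the proof of Theorem \ref{4t1} and extract from that construction the additional pointwise bound $|\lik\aik(x)|\le C2^k$ together with the quasi-norm inequality \eqref{5e5}. Since $f\in\vh$, each level set $\Omega_k:=\{x\in\rn:\ M_N(f)(x)>2^k\}$ has finite Lebesgue measure, so Lemma \ref{4l1} applied with $t=6\omega$ produces sequences $\{\xik\}_{i\in\nn}\subset\Omega_k$ and $\{\ellik\}_{i\in\nn}\subset\zz$ whose conclusions (i)--(v) furnish exactly the covering of $\Omega_k$, the pairwise disjointness of $\{\xik+B_{\ellik-\omega}\}_{i\in\nn}$, and the bounded overlap of $\{\xik+B_{\ellik+4\omega}\}_{i\in\nn}$ demanded in the statement.

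Next I would construct the smooth partition of unity $\{\eta_i^k\}_{i\in\nn}$ subordinate to $\{\xik+B_{\ellik+\omega}\}_{i\in\nn}$ via the cutoff $\xi\in C^\fz(\rn)$, form the Riesz-representation polynomials $P_i^k\in\mathbb{P}_s(\rn)$ with respect to the Hilbert norm \eqref{3x8}, and set $b_i^k:=(f-P_i^k)\eta_i^k$ together with $g_k:=f-\sum_i b_i^k$. Telescoping as in \cite[p.\,38]{mb03} then yields
\begin{align*}
f=\sum_{k\in\zz}[g_{k+1}-g_k]=\sum_{k\in\zz}\sum_{i\in\nn}h_i^k\quad\text{in }\cs'(\rn),
\end{align*}
where the distributions $h_i^k$ satisfy the support condition $\supp h_i^k\subset\xik+B_{\ellik+4\omega}$, vanishing moments up to order $s$, and the uniform bound $\|h_i^k\|_{\lfz}\le\wz{C}2^k$ collected in \eqref{4e12}--\eqref{4e14}. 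Setting $\lik:=\wz{C}2^k\|{\mathbf 1}_{\xik+B_{\ellik+4\omega}}\|_{\lv}$ and $\aik:=(\lik)^{-1}h_i^k$ makes each $\aik$ a $(\vp,\fz,s)$-atom, and the pointwise estimate $|\lik\aik(x)|=|h_i^k(x)|\le\wz{C}2^k$ is immediate from \eqref{4e14}.

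For the quasi-norm bound \eqref{5e5}, I would invoke Lemma \ref{3l3} together with the bounded-overlap property and the inclusion $\bigcup_{i\in\nn}(\xik+B_{\ellik+4\omega})\supset\Omega_k$ to obtain
\begin{align*}
\sum_{k\in\zz}\sum_{i\in\nn}\lf[\frac{|\lik|{\mathbf 1}_{\xik+B_{\ellik+4\omega}}(x)}{\|{\mathbf 1}_{\xik+B_{\ellik+4\omega}}\|_{\lv}}\r]^{\underline{p}}\ls\sum_{k\in\zz}2^{k\underline{p}}{\mathbf 1}_{\Omega_k}(x).
\end{align*}
Since the $\Omega_k$ are nested and $M_N(f)(x)>2^k$ whenever $x\in\Omega_k$, a geometric-series estimate bounds the right-hand side pointwise by a constant multiple of $[M_N(f)(x)]^{\underline{p}}$; raising to the power $1/\underline{p}$ and taking $\|\cdot\|_{\lv}$ then yields \eqref{5e5}. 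The only point requiring care is that the hypothesis $f\in L^r(\rn)$ is \emph{not} used in producing the decomposition itself (which only needs $f\in\vh$); it will instead be exploited later in Theorem \ref{5t1} to justify passing to a finite truncation, and at this stage one need only note that Lemma \ref{4l2} already guarantees $\cs'(\rn)$-convergence of $\sum_{k,i}\lik\aik$ to $f$ regardless of the $L^r$ integrability.
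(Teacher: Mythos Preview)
Your proposal is correct and follows essentially the same approach as the paper, which simply states that Lemma \ref{5l1} is extracted from Theorem \ref{4t1} and its proof; you have accurately reconstructed the relevant portions of Step 1 of that proof, including the application of Lemma \ref{4l1} with $t=6\omega$, the telescoping construction of the $h_i^k$, and the geometric-series argument for \eqref{5e5}. Your observation that the $L^r$ hypothesis plays no role in the decomposition itself is also correct and matches how the paper uses it only later in Theorem \ref{5t1}.
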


In what follows, denote by $C(\rn)$
the \emph{set of all continuous functions}.
Then we obtain the following finite atomic characterizations
of $\vh$, which actually extends \cite[Theorem 3.1 and Remark 3.3]{msv08}
and \cite[Theorem 5.9]{hlyy} to the present
setting of anisotropic mixed-norm Hardy spaces.

\begin{theorem}\label{5t1}
Let $\vp\in(0,\fz)^n$ and $s$ be as in \eqref{4e1}.
\begin{enumerate}
\item[{\rm (i)}]
If $r\in(\max\{p_+,1\},\fz)$ with $p_+$ as in
\eqref{2e4}, then $\|\cdot\|_{\vfah}$
and $\|\cdot\|_{\vh}$ are equivalent quasi-norms on $\vfah$;
\item[{\rm (ii)}]
$\|\cdot\|_{\vfahfz}$
and $\|\cdot\|_{\vh}$ are equivalent quasi-norms on
$\vfahfz\cap C(\rn)$.
\end{enumerate}
\end{theorem}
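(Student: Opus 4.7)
The easy direction, $\|f\|_{\vh}\ls\|f\|_{\vfah}$ and $\|f\|_{\vh}\ls\|f\|_{\vfahfz}$, follows immediately from Theorem \ref{4t1}: every finite atomic decomposition is, a fortiori, an $\vah$-decomposition, so $\|f\|_{\vh}\sim\|f\|_{\vah}\le\|f\|_{\vfah}$.

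The reverse inequality is the substantive content. I follow the scheme of Meda--Sj\"ogren--Vallarino \cite{msv08}, as adapted to variable anisotropic Hardy spaces in \cite{lwyy17} and to anisotropic mixed-norm Hardy spaces in \cite{hlyy}. Normalise so $\|f\|_{\vh}=1$. Being a finite linear combination of atoms, $f$ is compactly supported, lies in $L^r(\rn)$ in case (i) or in $C(\rn)\cap L^{\fz}(\rn)$ in case (ii), and itself satisfies $\int_{\rn}f(x)x^\gamma\,dx=0$ for every $|\gamma|\le s$. Applying Lemma \ref{5l1} produces $f=\sum_{k\in\zz}\sum_{i\in\nn}\lik\aik$ in $\cs'(\rn)$ into $(\vp,\fz,s)$-atoms, with bounded-overlap supports, pointwise bound $|\lik\aik|\le C\cdot 2^k$, and the quasi-norm estimate \eqref{5e5} bounding the atomic cost by $\|f\|_{\vh}$.

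The plan is to split $f=f_{\rm near}+f_{\rm far}$ with $f_{\rm near}$ a genuine finite linear combination of atoms from the decomposition and $f_{\rm far}$ a single constant multiple of an atom. Fix a large integer $K$. In case (ii), the boundedness of $f$ (and hence of $M_N(f)$) forces $\Omega_k=\emptyset$ for $k$ exceeding some $K_0$, trimming the sum at the top; in case (i), the contribution from $k>K$ is estimated by the $L^r$-boundedness of $M_N$, which holds because $r>1$. Take $f_{\rm far}$ to consist of the low-frequency tail $\sum_{k<-K}\sum_i\lik\aik$ together with any high-frequency remainder in case (i). Bounded overlap and the pointwise estimate yield $\|f_{\rm far}\|_{L^{\fz}(\rn)}\ls 2^{-K}$; since $f$ is compactly supported, the decay of $M_N(f)$ at infinity confines $\Omega_{-K}$ to a single dilated ball $B^{(K)}$, so $\supp f_{\rm far}\subset B^{(K)}$. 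Vanishing moments of $f_{\rm far}$ of order $\le s$ are inherited from those of $f$ and of each atom siphoned into $f_{\rm near}$, justified by testing against a Schwartz function that equals any prescribed polynomial of degree $\le s$ on $B^{(K)}$. Consequently $f_{\rm far}=\mu_K a_K$ for a single $(\vp,r,s)$-atom [resp., continuous $(\vp,\fz,s)$-atom] $a_K$ supported in $B^{(K)}$, with $|\mu_K|\ls 2^{-K}\|{\mathbf 1}_{B^{(K)}}\|_{\lv}$.

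Combining $f_{\rm near}$ (a finite sum of atoms whose associated quasi-norm is bounded via \eqref{5e5} by $\|f\|_{\vh}$) with the single extra atom $\mu_K a_K$ yields a finite atomic decomposition of $f$. Its quasi-norm in $\vfah$ (resp., $\vfahfz$) is dominated by the atomic cost from \eqref{5e5} plus the single-atom contribution $|\mu_K|$, both controlled uniformly in $K$ by $\|f\|_{\vh}$. The principal obstacle is the interplay between the pointwise size $2^{-K}$ of $f_{\rm far}$ and the quasi-norm $\|{\mathbf 1}_{B^{(K)}}\|_{\lv}$ of the enclosing ball: in the general anisotropic mixed-norm setting the latter is not a pure power of $|B^{(K)}|$ and admits no explicit formula, unlike in \cite{hlyy}. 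Controlling $|\mu_K|$ therefore requires the subtle estimates on $\|{\mathbf 1}_{B}\|_{\lv}$ flagged in the introduction, obtained through Lemma \ref{3l8} comparing the homogeneous quasi-norm to the Euclidean norm, together with the homogeneity of $\|\cdot\|_{\lv}$.
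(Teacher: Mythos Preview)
Your outline has the right provenance (Meda--Sj\"ogren--Vallarino, adapted as in \cite{hlyy,lwyy17}), but the splitting is organised the wrong way around, and this creates the very ``principal obstacle'' you cannot close.

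In the paper the cutoff for the \emph{low} part is not a large negative level $-K$ but a \emph{fixed} level $\widetilde{k}$ chosen from the support of $f$: since $\supp f\subset B_{k_0}$, one has $M_N(f)(x)\le C_5\|{\mathbf 1}_{B_{k_0}}\|_{\lv}^{-1}$ for $x\notin B_{k_0+4\omega}$, and $\widetilde{k}$ is the largest integer with $2^{\widetilde{k}}<C_5\|{\mathbf 1}_{B_{k_0}}\|_{\lv}^{-1}$. Then $\ell:=\sum_{k>\widetilde{k}}\sum_i\lik\aik$ is supported in $\Omega_{\widetilde{k}+1}\subset B_{k_0+4\omega}$, and the decisive point is that $h:=f-\ell$ is therefore also supported in $B_{k_0+4\omega}$, \emph{even though the individual terms $\lik\aik$ for $k\le\widetilde{k}$ are not}. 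Together with $\|h\|_{L^\infty}\lesssim 2^{\widetilde{k}}\sim\|{\mathbf 1}_{B_{k_0}}\|_{\lv}^{-1}$, this makes $h$ a bounded multiple of a single $(\vp,\infty,s)$-atom on the \emph{fixed} ball $B_{k_0+4\omega}$, with coefficient $\sim 1$ by construction. No estimate on $\|{\mathbf 1}_{B}\|_{\lv}$ beyond this tautology is needed.

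By contrast, your $f_{\rm far}=\sum_{k<-K}\sum_i\lik\aik$ is, as a sum of atoms, supported in $\bigcup_{k<-K}\Omega_k$, which \emph{grows} with $K$ (indeed $\Omega_k\uparrow\{M_N(f)>0\}$ as $k\to-\infty$). Writing $f_{\rm far}=f-f_{\rm near}$ only gives $\supp f_{\rm far}\subset B_{k_0}\cup\Omega_{-K}$, and you are forced to take $B^{(K)}\supset\Omega_{-K}$, which enlarges with $K$. Whether $2^{-K}\|{\mathbf 1}_{B^{(K)}}\|_{\lv}$ stays bounded is then a genuine question that your appeal to Lemma~\ref{3l8} and ``homogeneity'' does not settle; the paper's proof simply never faces it. Separately, in case~(i) you lump the high-frequency remainder $\ell-\ell_{(K)}$ into $f_{\rm far}$ and assert $\|f_{\rm far}\|_{L^\infty}\lesssim 2^{-K}$, but the high remainder is small only in $L^r$, not in $L^\infty$; in the paper it is treated as a \emph{separate} single $(\vp,r,s)$-atom on $B_{k_0+4\omega}$ with coefficient $\eta$ that can be made arbitrarily small. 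Reorganise the decomposition as $f=h+\ell_{(K)}+[\ell-\ell_{(K)}]$ with $h$ the single atom on the fixed ball, and the obstacle disappears.
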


\begin{remark}\label{5r1}
By Proposition \ref{6p1} below, we find that, when $A$ is as in
\eqref{2e5}, Theorem \ref{5t1} is just \cite[Theorem 5.9]{hlyy}.
In addition, recall that Bownik et al. in \cite[Theorem 6.2]{blyz08}
established the finite atomic characterization of the weighted
anisotropic Hardy space $H_w^p(\rn;A)$ with $w$
being an anisotropic $\mathbb{A}_{\fz}$ Muckenhoupt weight (see \cite[Definition 2.5]{blyz08}).
As was mentioned in \cite[p.\,3077]{blyz08}, if $w=1$,
then $H_w^p(\rn;A)$ becomes the anisotropic Hardy space $\vAh$ of Bownik \cite{mb03}.
By this and Remark \ref{2r2}(iii), we know that,
when $\vp:=(\overbrace{p,\ldots,p}^{n\ \rm times})$
with $p\in(0,1]$, Theorem \ref{5t1} is just \cite[Theorem 6.2]{blyz08}
with the weight $w=1$.
\end{remark}

By Proposition \ref{3p1} and Theorem \ref{5t1}, we have the following
conclusion with the details omitted.

\begin{corollary}
Let $\vp\in(1,\fz)^n$ and $s$ be as in Theorem \ref{5t1}.
\begin{enumerate}
\item[{\rm (i)}]
If $r\in(p_+,\fz)$ with $p_+$ as in
\eqref{2e4}, then $\|\cdot\|_{\vfah}$ and $\|\cdot\|_{\lv}$
are equivalent quasi-norms on $\vfah$;
\item[{\rm (ii)}]
$\|\cdot\|_{\vfahfz}$
and $\|\cdot\|_{\lv}$ are equivalent quasi-norms on
$\vfahfz\cap C(\rn)$.
\end{enumerate}
\end{corollary}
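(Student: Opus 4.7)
One direction is immediate from Theorem~\ref{4t1}: every finite atomic decomposition is a particular infinite one, so $\|f\|_{\vh}\le C\|f\|_{\vfah}$ in case~(i) and $\|f\|_{\vh}\le C\|f\|_{\vfahfz}$ in case~(ii). The substance of the theorem is the reverse inequality. By homogeneity I normalize $\|f\|_{\vh}=1$ and aim to produce a finite atomic decomposition of $f$ with quasi-norm bounded by an absolute constant. Because $f$ is by hypothesis a finite linear combination of atoms of the given type, $f$ has compact support in some dilated ball $B^{(0)}\in\mathfrak{B}$ and lies in $L^r(\rn)$ [case (i)] or in $C(\rn)\cap\lfz$ [case (ii)]. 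Using Lemma~\ref{3l8} together with the compact support, $M_N(f)$ decays at infinity, so $\Omega_k:=\{x\in\rn:\ M_N(f)(x)>2^k\}$ is bounded for every $k\in\zz$; in case~(ii), the continuity and boundedness of $f$ further give $M_N(f)\in\lfz$, hence $\Omega_k=\emptyset$ for all sufficiently large $k$.

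Apply Lemma~\ref{5l1} to obtain $f=\sum_{k\in\zz}\sum_{i\in\nn}\lik\aik$ in $\cs'(\rn)$ with the uniform bound $|\lik\aik|\le C2^k$ a.e.\ and the key estimate \eqref{5e5}; by Proposition~\ref{4p1}, the series additionally converges in $L^r(\rn)$ in case~(i) and in $\lfz$ in case~(ii). Choose large integers $K_1$ and $K_2$ and split
\[
f=g_{-K_1+1}+\sum_{-K_1<k\le K_2}\sum_{i\in\nn}\lik\aik+\lf(f-g_{K_2+1}\r),
\]
where $g_k$ are the partial-sum functions from the proof of Theorem~\ref{4t1}, satisfying $\|g_k\|_{\lfz}\ls2^k$ and $g_k\to0$ in $\lfz$ as $k\to-\fz$. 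The first piece $g_{-K_1+1}$ inherits vanishing moments of order at most $s$ from the $\aik$, is supported in a dilated ball $\widetilde{B}_1\in\mathfrak{B}$ containing $B^{(0)}\cup\Omega_{-K_1+1}$, and obeys $\|g_{-K_1+1}\|_{\lfz}\ls 2^{-K_1}$; hence $g_{-K_1+1}/\mu_1$ is a $(\vp,\fz,s)$-atom with $\mu_1\ls 2^{-K_1}\|{\mathbf 1}_{\widetilde{B}_1}\|_{\lv}$. The third piece $f-g_{K_2+1}=\sum_{k>K_2}\sum_i\lik\aik$ has vanishing moments of order at most $s$, is supported in $\Omega_{K_2+1}\subset\widetilde{B}_2\in\mathfrak{B}$, and vanishes for $K_2$ large enough in case~(ii); in case~(i), its $L^r$-norm is controlled via the bounded overlap of $\{\xik+B_{\ellik+4\omega}\}_i$, the $L^r$-convergence from Proposition~\ref{4p1}, and the uniform bound $|\lik\aik|\le C2^k$, so that $(f-g_{K_2+1})/\mu_2$ is a $(\vp,r,s)$-atom with $\mu_2\ls\|f\|_{\vh}$. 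The middle is finite in $k$, and for each $k\in(-K_1,K_2]$ the inner series $\sum_i\lik\aik=g_{k+1}-g_k$ converges in $L^r(\rn)$ [resp., $\lfz$] with support in the bounded set $\widetilde{B}_1\cup\widetilde{B}_2$; truncating to $i\le N_k$ with sufficiently small $L^r$ (resp., $L^\fz$) tail and collecting the cumulative tails into one further correction $(\vp,r,s)$-atom of negligible coefficient delivers the desired finite decomposition.

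The finite-atomic quasi-norm is then assembled via the $\underline{p}$-quasi-triangle inequality (Lemma~\ref{3l3}): the contribution of the retained atoms is dominated by \eqref{5e5}, while the correction atoms each contribute $\ls\|f\|_{\vh}$. The main obstacle is to keep the lowest-level correction coefficient $\mu_1$ controlled: $\widetilde{B}_1$ must contain $\Omega_{-K_1+1}$, which enlarges as $K_1\to\fz$, and therefore $\|{\mathbf 1}_{\widetilde{B}_1}\|_{\lv}$ grows with $K_1$; however, using Lemma~\ref{3l8} together with the homogeneity of $\|\cdot\|_{\lv}$, this growth is only polynomial in $b^{K_1}$ (with exponent determined by $\vp$), and is absorbed by the decaying factor $2^{-K_1}$ once $K_1$ is chosen sufficiently large. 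The coefficient $\mu_2$ is handled analogously by noting that $\widetilde{B}_2$ stabilizes to a fixed dilated ball once $K_2$ is large (since $\Omega_{K_2+1}\subset\Omega_0$ and $M_N(f)$ is already integrable enough for $|\Omega_{K_2+1}|\to0$ as $K_2\to\fz$). In case~(ii), continuity of the finite decomposition is preserved because every $\aik$ produced by Lemma~\ref{5l1} is smooth (built from the smooth partition of unity $\{\eta_i^k\}\subset C^\fz(\rn)$ and polynomial projections onto $\mathbb{P}_s(\rn)$) and finite partial sums of continuous functions remain continuous.
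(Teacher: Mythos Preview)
The corollary you are proving follows in one line from results already established: since $\vp\in(1,\fz)^n$, Proposition~\ref{3p1} gives $\vh=\lv$ with equivalent norms, and Theorem~\ref{5t1} gives the equivalence of $\|\cdot\|_{\vfah}$ (resp.\ $\|\cdot\|_{\vfahfz}$ on continuous functions) with $\|\cdot\|_{\vh}$. Chaining these two equivalences is the entire proof. You have instead embarked on a full re-derivation of Theorem~\ref{5t1}, and throughout you work with $\|\cdot\|_{\vh}$ rather than $\|\cdot\|_{\lv}$ --- so even after all the labor, the statement of the corollary is not reached until Proposition~\ref{3p1} is invoked, which you never do.

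Your re-derivation also departs from the paper's argument for Theorem~\ref{5t1} in a way that introduces a genuine gap. The paper fixes a specific threshold $\widetilde{k}$ (chosen via \eqref{5e1} so that $\Omega_{\widetilde{k}+1}\subset B_{k_0+4\omega}$) and observes that $h:=\sum_{k\le\widetilde{k}}\sum_i\lik\aik=f-\ell$ is supported in the \emph{fixed} ball $B_{k_0+4\omega}$ with $\|h\|_{\lfz}\ls 2^{\widetilde{k}}\sim\|{\mathbf 1}_{B_{k_0}}\|_{\lv}^{-1}$; this immediately makes $h/C_6$ a single atom with absolute-constant coefficient. You instead send $K_1\to\fz$ and try to make $g_{-K_1+1}$ an atom on a growing ball $\widetilde{B}_1\supset\Omega_{-K_1+1}$, asserting that $2^{-K_1}\|{\mathbf 1}_{\widetilde{B}_1}\|_{\lv}$ stays bounded because the growth is ``polynomial in $b^{K_1}$ with exponent determined by $\vp$.'' But the size of $\Omega_{-K_1+1}$ is governed by the pointwise decay rate of $M_N(f)$ at infinity, which depends on the vanishing-moment order $s$ and on $f$ itself, not merely on $\vp$; without a quantitative decay estimate this balance is unjustified, and for the small values of $s$ permitted by \eqref{4e1} when $p_->1$ it can fail. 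The paper's fixed-threshold choice sidesteps the issue entirely.
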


Now we show Theorem \ref{5t1}.

\begin{proof}[Proof of Theorem \ref{5t1}]
Assume that $\vp\in(0,\fz)^n$, $r\in(\max\{p_+,1\},\fz]$
with $p_+$ as in \eqref{2e4} and $s$ is as in \eqref{4e1}.
Then, from Theorem \ref{4t1}, it follows that
$\vfah\subset \vh$ and, for any $f\in \vfah$,
$\|f\|_{\vh}\ls\|f\|_{\vfah}$.
Thus, to prove Theorem \ref{5t1}, it suffices to show that,
for any $f\in \vfah$ when $r\in(\max\{p_+,1\},\fz)$,
and for any $f\in [\vfahfz\cap C(\rn)]$ when $r=\fz$,
$$\|f\|_{\vfah}\ls\|f\|_{\vh}.$$
We next prove this by three steps.

\emph{Step 1)}
Let $r\in(\max\{p_+,1\},\fz]$ and $f\in \vfah$. Then,
without loss of generality, we may assume that $\|f\|_{\vh}=1$.
From the fact that $f$ has compact support, we deduce that there
exists some $k_0\in\zz$ such that $\supp f\subset B_{k_0}$.
For any $k\in\zz$, let
$\Omega_k:=\{x\in\rn:\ M_N(f)(x)>2^k\},$
here and thereafter in this section, we always let
$N:=\lfloor(1/\min\{1,p_-\}-1)\ln b/\ln\lambda_-\rfloor+2$
with $p_-$ as in \eqref{2e4}.

Notice that
$f\in \vh\cap L^{\widetilde{r}}(\rn)$, where
$\widetilde{r}:=r$ when $r\in(\max\{p_+,1\},\fz)$ and
$\widetilde{r}:=2$ when $r=\fz$. Then,
by Lemma \ref{5l1}, we conclude that there exist $\{\lz_i^k\}_{k\in\zz,\,i\in\nn}
\subset\mathbb{C}$ and a sequence $\{a_i^k\}_{k\in\zz,\,i\in\nn}$
of $(\vp,\fz,s)$-atoms such that
\begin{align}\label{5e9}
f=\sum_{k\in\zz}\sum_{i\in\nn}\lz_i^k a_i^k\quad{\rm in}\quad \cs'(\rn).
\end{align}
From this and an argument similar to that used in the
proof of \cite[(5.13)]{lyy16},
it follows that there exists a positive constant $C_5$ such that,
for any $x\in(B_{k_0+4\omega})^\com$,
\begin{align}\label{5x1}
M_N(f)(x)\le C_5\lf\|{\mathbf 1}_{B_{k_0}}\r\|_{\lv}^{-1}.
\end{align}
Let
\begin{align}\label{5e1}
\wz{k}:=\sup\lf\{k\in\zz:\ 2^k<C_5\lf\|{\mathbf 1}_{B_{k_0}}\r\|_{\lv}^{-1}\r\}
\end{align}
with $C_5$ as in \eqref{5x1}. Now we rewrite \eqref{5e9} as
\begin{align*}
f=\sum_{k=-\fz}^{\wz{k}}\sum_{i\in\nn}\lz_i^k a_i^k+
\sum_{k=\wz{k}+1}^\fz\sum_{i\in\nn}\lz_i^k a_i^k=:h+\ell
\quad{\rm in}\quad \cs'(\rn).
\end{align*}

By this and an argument similar to that used in Step 1 of
the proof of \cite[Theorem 5.9]{hlyy} (see also \cite{lyy16}),
we further conclude that there exists a positive constant $C_6$,
independent of $f$, such that $h/C_6$ is a $(\vp,\fz,s)$-atom
and also a $(\vp,r,s)$-atom for any $\vp\in (0,\fz)^n$,
$r\in(\max\{p_+,1\},\fz]$ and $s$ as in \eqref{3e1}.

\emph{Step 2)} In this step, we show (i). To this end,
for any $K\in(\widetilde{k},\fz)\cap\zz$ and
$k\in[\widetilde{k}+1,K]\cap\zz$, where $\wz{k}$
is as in \eqref{5e1}, let
\begin{align*}
I_{(K,k)}:=\lf\{i\in\nn:\ |i|+|k|\le K\r\}
\hspace{0.3cm} {\rm and}\hspace{0.3cm} \ell_{(K)}:=\sum_{k=\widetilde{k}+1}^{K}
\sum_{i\in I_{(K,k)}}\lz_i^ka_i^k.
\end{align*}
On another hand, for any $r\in(\max\{p_+,1\},\fz)$,
from an argument similar to that used in Step 2 of
the proof of \cite[Theorem 5.9]{hlyy} (see also \cite{lwyy17,lyy16}),
it follows that $\ell_{(K)}$
converges to $\ell$ in $L^r(\rn)$ as $K\to\fz$.
This further implies that, for any given
$\eta\in(0,1)$, there exists a $K\in[\widetilde{k}+1,\fz)\cap\zz$ large enough,
depending on $\eta$, such that $(\ell-\ell_{(K)})/\eta$ is a
$(\vp,r,s)$-atom and hence $f=h+\ell_{(K)}+[\ell-\ell_{(K)}]$
is a finite linear combination of $(\vp,r,s)$-atoms.
From this, Step 1) and \eqref{5e5}, it follows that
$$\|f\|_{\vfah}
\ls C_6+
\lf\|\lf\{\sum_{k=\wz{k}+1}^{K}\sum_{i\in I_{(K,k)}}
\lf[\frac{|\lik|{\mathbf 1}_{\xik+B_{\ell_i^k+4\omega}}}
{\|{\mathbf 1}_{\xik+B_{\ell_i^k+4\omega}}\|_{\lv}}\r]^
{\underline{p}}\r\}^{1/\underline{p}}\r\|_{\lv}
+\eta\ls1,$$
which completes the proof of (i).

\emph{Step 3)}
This step is aimed to prove (ii). To this end, let
$f\in \vfahfz\cap C(\rn)$. From Step 1) above, we deduce
that $f=h+\ell$ in $\cs'(\rn)$, where $h/C_6$ is a $(\vp,\fz,s)$-atom
and $$\ell:=\sum_{k=\wz{k}+1}^\fz\sum_{i\in\nn}\lz_i^k a_i^k.$$
By this and an argument similar to that used in Step 2 of
the proof of \cite[Theorem 5.7]{lyy16} (see also \cite{hlyy,lwyy17}),
we know that, for any given $\epsilon\in(0,\fz)$, $\ell$ can be split
into two parts $\ell_1^{\epsilon}$ and $\ell_2^{\epsilon}$,
where $\ell_1^\epsilon$ is a finite linear combination of continuous
$(\vp,\fz,s)$-atoms and $\ell_2^\epsilon$ satisfies that
$\|\ell_2^\epsilon\|_{\vfahfz}\le \epsilon.$
Therefore,
$$\|f\|_{\vfahfz}
\ls\|h\|_{\vfahfz}
+\lf\|\ell_1^\epsilon\r\|_{\vfahfz}
+\lf\|\ell_2^\epsilon\r\|_{\vfahfz}
\ls1.$$
This finishes the proof of (ii) and hence of Theorem \ref{5t1}.
\end{proof}

\section{Littlewood--Paley function characterizations of $\vh$\label{s6}}

The aim of this section is to characterize $\vh$, respectively,
by means of Lusin area functions, Littlewood--Paley
$g$-functions and Littlewood--Paley $g_\lambda^\ast$-functions.
To this end, we first recall the following Calder\'{o}n reproducing
formula given in \cite[Proposition 2.14]{blyz10}.
In what follows, $C_c^{\fz}(\rn)$ denotes the set of all
\emph{infinitely differentiable functions with compact supports} on $\rn$ and, for any $\varphi\in\cs(\rn)$, $\widehat{\varphi}$ denotes
its \emph{Fourier transform}, namely, for any $\xi\in\rn$,
$\widehat \varphi(\xi) := \int_{\rn} \varphi(x) e^{-2\pi\imath x \cdot \xi} \, dx,$
where $\imath:=\sqrt{-1}$ and, for any $x=(x_1,\ldots,x_n)$,
$\xi=(\xi_1,\ldots,\xi_n)\in\rn$, $x\cdot \xi := \sum_{i=1}^n x_i \xi_i$.

Recall also that $f\in\cs'(\rn)$ is said to
\emph{vanish weakly at infinity} if, for any $\psi\in\cs(\rn)$,
$f\ast\psi_{j}\to0$ in $\cs'(\rn)$ as $j\to-\fz$.
Denote by $\cs'_0(\rn)$ the set of all $f\in\cs'(\rn)$
vanishing weakly at infinity.

\begin{lemma}\label{6l1}
Let $s\in\mathbb{Z_+}$ and $A:=(a_{i,j})_{1\le i,j\le n}$ be a dilation.
For any $\phi\in C_c^{\fz}(\rn)$ satisfying $\supp\phi\subset B_0$,
$\int_{\rn}x^\gamma\phi(x)\,dx=0$ for any $\gamma\in\zz_+^n$ with
$|\gamma|\le s$, and $|\widehat{\phi}(\xi)|\ge C$
for any $\xi\in\{x\in\rn:\ (2\|A\|)^{-1}\le\rho(x)\le 1\}$,
where $C\in(0,\fz)$ is a constant and $\|A\|:=(\sum_{i,j=1}^n|a_{i,j}|^2)^{1/2}$,
there exists a $\psi\in\cs(\rn)$ such that
\begin{enumerate}
\item[{\rm(i)}] $\supp \widehat{\psi}$
is compact and away from the origin;
\item[{\rm(ii)}] for any $\xi\in\rn\setminus\{\vec{0}_n\}$,
$\sum_{k\in\mathbb{Z}}
\widehat{\psi}((A^\ast)^k\xi)\widehat{\phi}((A^\ast)^k\xi)=1$,
where $A^\ast$ denotes the adjoint matrix of $A$.
\end{enumerate}

Moreover, for any $f\in\cs'_0(\rn),\,f=
\sum_{k\in\mathbb{Z}}f\ast\psi_k\ast\phi_k$ in $\cs'(\rn)$.
\end{lemma}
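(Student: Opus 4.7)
The plan is to construct $\psi$ on the Fourier side via an $(A^*)$-dilated smooth partition of unity supported on an annulus where $\widehat\phi$ is bounded below, then verify the reproducing formula first for Schwartz functions and finally lift it to $\cs'_0(\rn)$. Concretely, let $\mathcal{A}:=\{\xi\in\rn:\ (2\|A\|)^{-1}\le\rho(\xi)\le 1\}$ denote the annulus on which the hypothesis provides $|\widehat\phi|\ge C$. First I would pick a non-negative $\zeta\in C_c^{\fz}(\rn)$ with $\supp\zeta\subset\mathcal{A}$ and $\zeta\not\equiv 0$, chosen so that the $A^*$-orbit of every $\xi\neq\vec0_n$ meets $\supp\zeta$; this is possible because $A^*$ is also expansive and hence sweeps every orbit through all scales. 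Setting
$$\Sigma(\xi):=\sum_{k\in\zz}\zeta\lf((A^*)^k\xi\r)\hs\text{and}\hs\eta(\xi):=\frac{\zeta(\xi)}{\Sigma(\xi)}\hs\text{for}\ \xi\neq\vec0_n,$$
the local finiteness of the sum (from compact support of $\zeta$) makes $\Sigma$ smooth and strictly positive on $\rn\setminus\{\vec0_n\}$, and the $A^*$-invariance $\Sigma(A^*\xi)=\Sigma(\xi)$ yields $\sum_{k\in\zz}\eta((A^*)^k\xi)=1$ there, with $\eta\in C_c^{\fz}(\rn)$ and $\supp\eta\subset\mathcal{A}$. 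I would then define $\widehat\psi(\xi):=\eta(\xi)/\widehat\phi(\xi)$ on $\supp\eta$ and zero elsewhere; since $|\widehat\phi|\ge C$ on $\supp\eta\subset\mathcal{A}$ and $\widehat\phi\in C^{\fz}(\rn)$, this gives $\widehat\psi\in C_c^{\fz}(\rn)$ with $\supp\widehat\psi\subset\mathcal{A}$, so $\psi\in\cs(\rn)$.

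Property (i) is then immediate from the construction of $\widehat\psi$. Property (ii) follows from the identity $\widehat\psi(\xi)\widehat\phi(\xi)=\eta(\xi)$, so that
$$\sum_{k\in\zz}\widehat\psi\lf((A^*)^k\xi\r)\widehat\phi\lf((A^*)^k\xi\r)=\sum_{k\in\zz}\eta\lf((A^*)^k\xi\r)=1\hs\text{for}\ \xi\neq\vec0_n.$$
For the reproducing formula on $g\in\cs(\rn)$, the scaling rule $\widehat{\phi_k}(\xi)=\widehat\phi((A^*)^{-k}\xi)$ (and its analogue for $\psi$) combined with Plancherel and the above identity gives
$$\lf[\sum_{|k|\le N}g\ast\psi_k\ast\phi_k\r]^{\wedge}\!(\xi)=\widehat g(\xi)\sum_{|k|\le N}\eta\lf((A^*)^{-k}\xi\r)\longrightarrow\widehat g(\xi)$$
pointwise on $\rn\setminus\{\vec0_n\}$ as $N\to\fz$, with uniform bounds; Fourier inversion and dominated convergence then yield $\sum_{|k|\le N}g\ast\psi_k\ast\phi_k\to g$ in $\cs(\rn)$.

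To extend to $f\in\cs'_0(\rn)$, I would pair the partial sums $S_N(f):=\sum_{|k|\le N}f\ast\psi_k\ast\phi_k$ against an arbitrary $g\in\cs(\rn)$, transpose the convolutions to the test side, and split the sum into its high- and low-frequency tails. The high-frequency tail ($k\to+\fz$) is controlled by the Schwartz-function convergence above together with the moment conditions on $\phi$, which give rapid decay of $\phi_k$ at small scales. The low-frequency tail ($k\to-\fz$), which corresponds to large scales, is precisely where the hypothesis $f\in\cs'_0(\rn)$ is needed: the relation $f\ast\psi_k\to 0$ in $\cs'(\rn)$ as $k\to-\fz$, combined with the compact support of $\widehat\psi$ away from the origin and the Schwartz decay of $\phi_k$, forces the low-frequency contribution to vanish. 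The main obstacle is making this last step quantitatively rigorous, that is, passing from the weak-$\ast$ vanishing of $f\ast\psi_k$ to the actual convergence $S_N(f)\to f$ in $\cs'(\rn)$; this is precisely the argument carried out in \cite[Proposition~2.14]{blyz10}, which I would invoke to conclude.
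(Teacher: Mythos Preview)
Your construction is correct and is essentially the standard argument behind this Calder\'on reproducing formula. Note, however, that the paper does not actually prove Lemma~\ref{6l1}: it is stated as a direct recall of \cite[Proposition~2.14]{blyz10} with no proof given. In that sense your proposal is strictly more detailed than what the paper does, since you outline the Fourier-side partition-of-unity construction of $\psi$ and the passage from $\cs(\rn)$ to $\cs'_0(\rn)$, whereas the paper simply cites the result. One small imprecision: your remark that the high-frequency tail is controlled by ``the moment conditions on $\phi$, which give rapid decay of $\phi_k$ at small scales'' is not quite the right mechanism; the vanishing moments of $\phi$ correspond to vanishing of $\widehat\phi$ at the origin and hence govern the \emph{low}-frequency behaviour, while the high-frequency convergence for Schwartz test functions comes from their smoothness (rapid decay of $\widehat g$). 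This does not affect the correctness of the outline, and in any case you ultimately defer to \cite[Proposition~2.14]{blyz10} for the rigorous $\cs'_0(\rn)$ argument, which is exactly what the paper does.
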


Assume that $\varphi\in\cs(\rn)$ satisfies the same assumptions
as $\phi$ in Lemma \ref{6l1} with $s$ as in \eqref{4e1}.
Recall that, for any $f\in\cs'(\rn)$ and
$\lambda\in(0,\fz)$, the \emph{anisotropic Lusin area function} $S(f)$,
the \emph{anisotropic Littlewood--Paley} $g$-\emph{function} $g(f)$ and
the \emph{anisotropic Littlewood--Paley} $g_\lambda^\ast$-\emph{function} $g_\lambda^\ast(f)$
are defined, respectively, by setting, for any $x\in\rn$,
\begin{align*}
S(f)(x):=\lf[\sum_{k\in\mathbb{Z}}b^{-k}\int_{x+B_k}
\lf|f\ast\varphi_{-k}(y)\r|^2\,dy\r]^{1/2},
\quad
g(f)(x):=\lf[\sum_{k\in\mathbb{Z}}
\lf|f\ast\varphi_{k}(x)\r|^2\r]^{1/2}
\end{align*}
and
\begin{align*}
g_\lambda^\ast(f)(x):=
\lf\{\sum_{k\in\mathbb{Z}}b^{-k}\int_{\rn}
\lf[\frac{b^k}{b^k+\rho(x-y)}\r]^\lambda
\lf|f\ast\varphi_{-k}(y)\r|^2\,dy\r\}^{1/2}
\end{align*}
(see \cite{lfy15}).

The main results of this section are the
succeeding three theorems.

\begin{theorem}\label{6t1}
Let $\vp\in(0,\fz)^n$.
Then $f\in\vh$ if and only if
$f\in\cs'_0(\rn)$ and $S(f)\in\lv$. Moreover,
there exists a constant $C\in[1,\fz)$ such that,
for any $f\in\vh$,
$C^{-1}\|S(f)\|_{\lv}\le\|f\|_{\vh}\le C\|S(f)\|_{\lv}.$
\end{theorem}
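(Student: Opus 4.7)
The plan is to prove both directions of Theorem~\ref{6t1} via the atomic characterization of $\vh$ from Theorem~\ref{4t1}, the Calder\'on reproducing formula (Lemma~\ref{6l1}), the key inequality Lemma~\ref{6l1'}, and the Fefferman--Stein vector-valued inequality on $\lv$ (Lemma~\ref{4l3}).

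\textbf{Necessity.} To show that $f\in\vh$ implies $S(f)\in\lv$, I would first apply Theorem~\ref{4t1} to write $f=\sum_{i\in\nn}\lz_i a_i$ in $\cs'(\rn)$, where each $a_i$ is a $(\vp,r,s)$-atom supported in a dilated ball $B^{(i)}:=x_i+B_{\ell_i}\in\mathfrak{B}$ with $r\in(\max\{p_+,1\},\fz)$. The $L^2$-boundedness of $S$ (from Plancherel's theorem and the Calder\'on condition on $\varphi$) controls $S(a_i)$ on a fixed dilate $x_i+B_{\ell_i+\omega}$ of the supporting ball, while, on its complement, the vanishing moments of $a_i$ together with the smoothness of $\varphi$ yield a pointwise estimate of the form
$$S(a_i)(x)\ls\frac{1}{\|{\mathbf 1}_{B^{(i)}}\|_{\lv}}\lf[\HL({\mathbf 1}_{B^{(i)}})(x)\r]^{\gamma}$$
for some $\gamma>1/p_-$. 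Summing then follows the pattern of the proof of \eqref{4e19}: the local part via Lemma~\ref{4l4} and the far part via Lemma~\ref{4l3}, producing $\|S(f)\|_{\lv}\ls\|f\|_{\vh}$.

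\textbf{Sufficiency.} Assume $f\in\cs'_0(\rn)$ with $S(f)\in\lv$. Choose $\phi$ as in Lemma~\ref{6l1} with its paired $\psi$ so that $f=\sum_{k\in\zz}f\ast\psi_k\ast\phi_k$ in $\cs'(\rn)$. For each $k\in\zz$, form the level set $\Omega_k:=\{x\in\rn:S(f)(x)>2^k\}$, apply Lemma~\ref{4l1} with $t=4\omega$ to obtain a Whitney-type covering $\Omega_k=\bigcup_{i\in\nn}(\xik+B_{\ellik})$, and construct tent regions $T_i^k$ above each Whitney ball by restricting the scales $j$ that belong to the Whitney decomposition of $\Omega_k\setminus\Omega_{k+1}$. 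I would then set
$$h_i^k:=\sum_j\lf[\lf(f\ast\psi_j\r){\mathbf 1}_{T_i^k}\r]\ast\phi_j,\qquad \lik:=\wz{C}\,2^k\|{\mathbf 1}_{\xik+B_{\ellik+4\omega}}\|_{\lv},\qquad \aik:=\lf(\lik\r)^{-1}h_i^k.$$
The support and vanishing moment conditions for $\aik$ follow from the tent construction and the cancellation of $\phi$, while Lemma~\ref{6l1'} supplies the $L^2$-size bound needed to make each $\aik$ a $(\vp,2,s)$-atom up to a constant.

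\textbf{Main obstacle.} The crux is then the quasi-norm estimate
$$\lf\|\lf\{\sum_{k\in\zz}\sum_{i\in\nn}\lf[\frac{|\lik|{\mathbf 1}_{\xik+B_{\ellik+4\omega}}}{\|{\mathbf 1}_{\xik+B_{\ellik+4\omega}}\|_{\lv}}\r]^{\underline{p}}\r\}^{1/\underline{p}}\r\|_{\lv}\ls\|S(f)\|_{\lv},$$
which, through Theorem~\ref{4t1}, delivers $\|f\|_{\vh}\ls\|S(f)\|_{\lv}$. Because the quasi-norm of the indicator of an anisotropic ball in $\lv$ cannot be computed exactly in our general anisotropic setting (unlike the diagonal-matrix case treated in \cite{hlyy}), I would combine the bounded-overlap property of the Whitney balls with the pointwise majorization $\sum_{k,i}2^{k\underline{p}}{\mathbf 1}_{\xik+B_{\ellik}}(x)\ls[S(f)(x)]^{\underline{p}}$ and feed the result into Lemma~\ref{4l3} applied to maximal functions of the indicators, thereby transferring the mixed-norm bound onto $S(f)$ itself. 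Reconciling this transfer argument with the asymmetric mixed-norm structure of $\lv$, while simultaneously respecting the anisotropic homogeneity built into $\rho$, is where the main new difficulty lies.
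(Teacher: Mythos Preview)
Your necessity argument matches the paper's: atomic decomposition via Theorem~\ref{4t1}, local control by the $L^r$-boundedness of $S$, far control by the pointwise maximal-function bound, and then Lemmas~\ref{4l4} and~\ref{4l3}. You omit one small step the paper makes explicit: $f\in\vh$ implies $f\in\cs'_0(\rn)$ via Lemma~\ref{6l3}.

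For the sufficiency, however, you take a genuinely different route from the paper and you misplace Lemma~\ref{6l1'}. The paper does \emph{not} use the Whitney covering Lemma~\ref{4l1}; it uses Christ's dyadic cubes (Lemma~\ref{6l2}) together with the stopping criterion $|Q\cap\Theta_k|>|Q|/2$, $|Q\cap\Theta_{k+1}|\le|Q|/2$, and builds atoms over the maximal cubes $Q_i^k$ in $\mathcal{Q}_k$. In that framework the $L^r$-size bound for the atoms comes from Lemma~\ref{6l2'} (the $\liminf$ estimate) and the argument of \cite[Theorem 4.6]{hy}, not from Lemma~\ref{6l1'}. Lemma~\ref{6l1'} is a statement about ratios $\|{\mathbf 1}_F\|_{\lv}/\|{\mathbf 1}_E\|_{\lv}$ for $E\subset F$ with comparable measure; the paper invokes it only at the very end, to pass from $\|{\mathbf 1}_{B_i^k}\|_{\lv}$ to $\|{\mathbf 1}_{Q_i^k\cap\Theta_k}\|_{\lv}$ in the quasi-norm estimate. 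The paper also first proves Proposition~\ref{6p1'} so that it may work with $S_\psi(f)$ (for the $\psi$ paired with $\phi$ in the reproducing formula) rather than the given $S(f)=S_\varphi(f)$; you do not address this independence.

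Your Whitney/tent scheme is a legitimate alternative, and your final quasi-norm argument---bounded overlap plus the pointwise bound $\sum_{k,i}2^{k\underline{p}}{\mathbf 1}_{\xik+B_{\ellik+4\omega}}\ls [S(f)]^{\underline{p}}$ (valid because $\xik+B_{\ellik+4\omega}\subset\Omega_k$ by Lemma~\ref{4l1}(iii))---is actually cleaner than the paper's route through Lemma~\ref{6l1'}. But the step you label as handled by Lemma~\ref{6l1'}, namely the $L^2$-size bound $\|h_i^k\|_{L^2}\ls 2^k|B_i^k|^{1/2}$, is the real work in the tent approach: it requires a careful definition of the tent regions $T_i^k$ so that they partition the index set and so that a duality/Cauchy--Schwarz argument against $S_\psi(f)$ at a nearby point of $\Omega_{k+1}^\complement$ goes through. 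Your description of $T_i^k$ (``restricting the scales $j$ that belong to the Whitney decomposition of $\Omega_k\setminus\Omega_{k+1}$'') is too vague to carry this, and Lemma~\ref{6l1'} does not help here.
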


\begin{theorem}\label{6t2}
Let $\vp\in(0,\fz)^n$.
Then $f\in\vh$ if and only if
$f\in\cs'_0(\rn)$ and $g(f)\in\lv$. Moreover,
there exists a constant $C\in[1,\fz)$ such that,
for any $f\in\vh$,
$C^{-1}\|g(f)\|_{\lv}\le\|f\|_{\vh}\le C\|g(f)\|_{\lv}.$
\end{theorem}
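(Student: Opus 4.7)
The plan is to build on the Lusin area function characterization of Theorem~\ref{6t1} via the Ullrich--Peetre framework together with the anisotropic Fefferman--Stein vector-valued inequality (Lemma~\ref{4l3}). The central auxiliary object is the anisotropic Peetre-type maximal function
\[
(\varphi_k^{\ast}f)_a(x):=\sup_{y\in\rn}\frac{|f\ast\varphi_k(y)|}{[1+b^k\rho(x-y)]^a},\quad x\in\rn,\ k\in\zz,\ a\in(0,\fz),
\]
for which I would first establish the anisotropic Ullrich-type pointwise inequality
\[
[(\varphi_k^{\ast}f)_a(x)]^{r}\ls \HL\lf(|f\ast\varphi_k|^{r}\r)(x),
\]
valid for any $r\in(0,p_-)$ and any $a>\ln b/(r\ln\lz_-)$, by reproducing $f\ast\varphi_k$ via the Calder\'on formula of Lemma~\ref{6l1} and exploiting the band-limited structure of $\widehat{\varphi}$.

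For the harder direction, suppose $f\in\cs'_0(\rn)$ with $g(f)\in\lv$. By Theorem~\ref{6t1}, it suffices to prove $\|S(f)\|_{\lv}\ls\|g(f)\|_{\lv}$. Since $y\in x+B_{-k}$ forces $b^k\rho(x-y)\le 1$, one has $|f\ast\varphi_k(y)|\le 2^{a}(\varphi_k^{\ast}f)_a(x)$; averaging over $y\in x+B_{-k}$ and summing in $k$ produces
\[
[S(f)(x)]^{2}\ls \sum_{k\in\zz}[(\varphi_k^{\ast}f)_a(x)]^{2}\ls \sum_{k\in\zz}\lf[\HL\lf(|f\ast\varphi_k|^{r}\r)(x)\r]^{2/r}.
\]
Fix $r\in(0,p_-)$ so that $\vp/r\in(1,\fz)^n$ and set $u:=2/r>1$; then, raising to the power $r/2$, passing to the $L^{\vp/r}(\rn)$-norm, invoking Lemma~\ref{4l3} with exponent $u$ on the underlying mixed-norm space $L^{u\vp/2}(\rn)$, and unraveling via Lemma~\ref{3l3} yield $\|S(f)\|_{\lv}\ls\|g(f)\|_{\lv}$, whence Theorem~\ref{6t1} delivers $\|f\|_{\vh}\ls\|g(f)\|_{\lv}$.

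For the easier direction, assume $f\in\vh$. The atomic decomposition from Theorem~\ref{4t1} expresses $f=\sum_{k,i}\lz_{i}^{k}a_{i}^{k}$ in $\cs'(\rn)$ with $(\vp,r,s)$-atoms $a_i^k$ of compact support and vanishing moments; a standard convergence argument then forces $f\in\cs'_0(\rn)$. For the quasi-norm bound, I would parallel the atomic strategy used for the non-tangential grand maximal function in \eqref{3x7}: for an atom $a$ supported in $B=x_0+B_{l_0}$, the $L^{r}$-boundedness of $g$ controls $g(a)$ on the dilate $x_0+B_{l_0+\omega}$, while the cancellation condition Definition~\ref{4d1}(iii) together with the smoothness of $\varphi$ yields the pointwise decay $g(a)(x)\ls \|{\mathbf 1}_B\|_{\lv}^{-1}[\HL({\mathbf 1}_B)(x)]^{\gamma}$ on the complement, for $\gamma=(\ln b/\ln\lz_-+s+1)\ln\lz_-/\ln b$. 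Assembling these pieces via Lemma~\ref{4l4} on the near part and Lemma~\ref{4l3} on the far part then gives $\|g(f)\|_{\lv}\ls\|f\|_{\vh}$.

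The main obstacle is the Ullrich-type pointwise estimate in its full anisotropic generality: its proof demands a careful chain of estimates linking $\rho$ and the Euclidean norm via Lemma~\ref{3l8}, together with simultaneous calibration of $r$ and $a$ so that Fefferman--Stein applies ($\vp/r\in(1,\fz)^n$) and the Ullrich inequality remains valid ($a>\ln b/(r\ln\lz_-)$). A secondary technical point is the atomic pointwise decay estimate for $g(a)$ off the enlarged support; it mirrors the estimate for $M_N(a)$ used in Theorem~\ref{4t1} but must additionally manage the $\ell^{2}$-summation in the scale parameter $k$.
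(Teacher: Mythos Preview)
Your overall strategy matches the paper's: both reduce the sufficiency to $\|S(f)\|_{\lv}\ls\|g(f)\|_{\lv}$ via a Peetre-type maximal function followed by the Fefferman--Stein inequality (Lemma~\ref{4l3}), and both handle the necessity by the atomic method exactly as in the proof of Theorem~\ref{6t1}. There is, however, a genuine gap in the central pointwise estimate you propose.

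You claim the single-scale bound $[(\varphi_k^{\ast}f)_a(x)]^{r}\ls \HL(|f\ast\varphi_k|^{r})(x)$, justified by ``exploiting the band-limited structure of $\widehat{\varphi}$.'' But in the paper's setup the kernel $\varphi$ defining $g(f)$ satisfies the hypotheses of $\phi$ in Lemma~\ref{6l1}: it lies in $C_c^{\infty}(\rn)$ with $\supp\varphi\subset B_0$, so $\widehat{\varphi}$ is \emph{not} compactly supported and $f\ast\varphi_k$ is not band-limited. The single-scale Plancherel--Polya/Peetre inequality you invoke is tied precisely to band-limitedness and is not available for this $\varphi$. What the Ullrich framework actually delivers here is the \emph{multi-scale} estimate recorded as Lemma~\ref{6l4},
\[
\lf[(\varphi_\ell^{\ast}f)_t(x)\r]^{r}\le C_{(N_0,r)}\sum_{k\in\zz_+}b^{-kN_0r}b^{k+\ell}\int_{\rn}\frac{|(\varphi_{k+\ell}\ast f)(y)|^{r}}{[1+b^{\ell}\rho(x-y)]^{tr}}\,dy,
\]
which couples the Peetre maximal function at level $\ell$ to \emph{all} finer scales $\ell+k$, $k\ge0$. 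The paper then carries this additional sum over $k\in\zz_+$ through by the Minkowski inequality and an annular decomposition of the integral before Lemma~\ref{4l3} can be applied; this is where your argument must be amended. An alternative salvage would be to first prove (parallel to Proposition~\ref{6p1'}) that $\|g_\phi(f)\|_{\lv}\sim\|g_\psi(f)\|_{\lv}$ and then switch to the genuinely band-limited $\psi$ of Lemma~\ref{6l1}, for which your single-scale inequality does hold; but that kernel-independence for the $g$-function is an extra step you have not addressed.
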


\begin{theorem}\label{6t3}
Let $\vp\in(0,\fz)^n$ and
$\lambda\in(1+\frac{2}{{\min\{p_-,2\}}}, \fz)$
with $p_-$ as in \eqref{2e4}.
Then $f\in\vh$ if and only if $f\in\cs'_0(\rn)$ and
$g_\lambda^{\ast}(f)\in\lv$. Moreover,
there exists a constant $C\in[1,\fz)$ such that,
for any $f\in\vh$,
$C^{-1}\lf\|g_\lz^\ast(f)\r\|_{\lv}\le\|f\|_{\vh}
\le C\lf\|g_\lz^\ast(f)\r\|_{\lv}.$
\end{theorem}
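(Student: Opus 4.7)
\emph{Easy direction.} I would first observe that for any $x\in\rn$ and $y\in x+B_k$ one has $\rho(x-y)\le b^{k-1}<b^k$, so $[b^k/(b^k+\rho(x-y))]^{\lambda}\ge 2^{-\lambda}$; restricting the integral defining $g_\lambda^\ast(f)(x)$ to $y\in x+B_k$ therefore gives the pointwise bound $S(f)(x)\le 2^{\lambda/2}\,g_\lambda^\ast(f)(x)$. Combined with Theorem \ref{6t1}, this yields $f\in\vh$ with $\|f\|_{\vh}\lesssim\|g_\lambda^\ast(f)\|_{\lv}$ whenever $f\in\cs'_0(\rn)$ and $g_\lambda^\ast(f)\in\lv$.

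\emph{Hard direction.} For the converse, assume $f\in\vh$; by Theorem \ref{6t1}, $f\in\cs'_0(\rn)$ and $\|S(f)\|_{\lv}\sim\|f\|_{\vh}$, so it will suffice to prove $\|g_\lambda^\ast(f)\|_{\lv}\lesssim\|S(f)\|_{\lv}$. My first step will be to split the integration in $g_\lambda^\ast(f)(x)^2$ along the cone $x+B_k$ and the annuli $(x+B_{k+j})\setminus(x+B_{k+j-1})$ for $j\ge 1$. Since $\rho(x-y)\sim b^{k+j-1}$ on the $j$-th annulus, the weight $[b^k/(b^k+\rho(x-y))]^\lambda$ is dominated by $b^{-j\lambda}$, so after collecting volume factors one arrives at
\begin{align*}
g_\lambda^\ast(f)(x)^2\lesssim S(f)(x)^2+\sum_{j=1}^\fz b^{j(1-\lambda)}\,\wz S_j(f)(x)^2,
\end{align*}
where $\wz S_j(f)(x)^2:=\sum_{k\in\zz}b^{-(k+j)}\int_{x+B_{k+j}}|f\ast\varphi_{-k}(y)|^2\,dy$ is the anisotropic Lusin area function with $j$ extra levels of aperture. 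I would then choose $q\in(2/(\lambda-1),\min\{p_-,2\})$, a nonempty interval precisely because the hypothesis says $\lambda>1+2/\min\{p_-,2\}$. Adapting the approach initiated by Ullrich \cite{u12} and refined in \cite{lsuyy, lyy17I}, the core technical step will be to establish the pointwise estimate
\begin{align*}
\wz S_j(f)(x)\lesssim b^{j/q}\lf[\HL(S(f)^q)(x)\r]^{1/q}.
\end{align*}
This is to be obtained by combining the reverse-H\"older/subaveraging property $|f\ast\varphi_{-k}(y)|^q\lesssim |B_k|^{-1}\int_{y+B_{k+m}}|f\ast\varphi_{-k}(w)|^q\,dw$ for the band-limited $f\ast\varphi_{-k}$---available since $\widehat\varphi$ has compact support away from the origin (cf.\ Lemma \ref{6l1})---with a Fubini-type swap identifying wide-cone averages of $|f\ast\varphi_{-k}|^q$ with dilated integrals of $S(f)^q$ over $x+B_{k+j+c}$; the geometric factor $b^{j/q}$ records the $q$-th-power volume ratio of apertures $b^{k+j}$ versus $b^k$.

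\emph{Conclusion and main obstacle.} Inserting the pointwise estimate into the decomposition yields
\begin{align*}
g_\lambda^\ast(f)(x)^2\lesssim S(f)(x)^2+\lf(\sum_{j=1}^\fz b^{j(1-\lambda+2/q)}\r)\lf[\HL(S(f)^q)(x)\r]^{2/q},
\end{align*}
and the geometric series converges because $q>2/(\lambda-1)$. Hence $g_\lambda^\ast(f)(x)\lesssim S(f)(x)+[\HL(S(f)^q)(x)]^{1/q}$. Since $q<p_-$, the vector $\vp/q\in(1,\fz)^n$, so Lemmas \ref{3l1} and \ref{3l3} yield
\begin{align*}
\lf\|[\HL(S(f)^q)]^{1/q}\r\|_{\lv}\sim\|\HL(S(f)^q)\|_{L^{\vp/q}(\rn)}^{1/q}\lesssim\|S(f)\|_{\lv},
\end{align*}
and this combined with Theorem \ref{6t1} completes the proof. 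The hard part will be the sharpness of the $b^{j/q}$ factor in the pointwise bound for $\wz S_j$: it is exactly this sharpness that makes the summability condition $q>2/(\lambda-1)$ achievable for some $q<\min\{p_-,2\}$ precisely under the threshold hypothesis $\lambda>1+2/\min\{p_-,2\}$.
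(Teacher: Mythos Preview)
Your sufficiency argument (the ``easy direction'') is correct and matches the paper. The necessity argument, however, has a genuine gap at its core technical step.

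You claim the pointwise aperture estimate $\wz S_j(f)(x)\lesssim b^{j/q}[\HL(S(f)^q)(x)]^{1/q}$ and justify it by saying $f\ast\varphi_{-k}$ is band-limited ``since $\widehat\varphi$ has compact support away from the origin (cf.\ Lemma~\ref{6l1}).'' This is a misreading of Lemma~\ref{6l1}: there, it is $\psi$ whose Fourier transform is compactly supported away from the origin, while $\varphi$ (playing the role of $\phi$) satisfies $\supp\varphi\subset B_0$, i.e.\ has compact \emph{spatial} support plus vanishing moments. So $f\ast\varphi_{-k}$ is \emph{not} band-limited and the Plancherel--P\'olya subaveraging you invoke is unavailable. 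Without it, your Fubini swap does not yield the pointwise inequality with the sharp exponent $b^{j/q}$ that you need; in fact, a direct Fubini only mixes a single scale of $S(f)$ at a time and does not reassemble into $\HL(S(f)^q)$.

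The paper avoids this obstacle entirely by routing through the Peetre maximal function rather than wide-aperture area functions. Concretely, one checks the elementary pointwise bound $g_\lambda^\ast(f)(x)\lesssim g_{t,\ast}(f)(x)$ whenever $\lambda>1+2t$, and since $\lambda>1+2/\min\{p_-,2\}$ one may choose such a $t$ with $t>1/\min\{p_-,2\}$. Then the estimate $\|g_{t,\ast}(f)\|_{\lv}\lesssim\|g(f)\|_{\lv}$, already established as \eqref{6e9} in the proof of Theorem~\ref{6t2} via Lemma~\ref{6l4} and the vector-valued maximal inequality (Lemma~\ref{4l3}), together with Theorem~\ref{6t2} itself closes the loop. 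The substitute for your subaveraging is precisely Lemma~\ref{6l4}, which \emph{does} hold for the non-band-limited $\varphi$ but is a multi-scale Peetre-type inequality rather than a single-scale mean-value property.
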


\begin{remark}\label{6r1}
\begin{enumerate}
\item[{\rm (i)}]
We should point out that
the range of $\lz$ in Theorem \ref{6t3} does not coincide with the best known one,
namely, $\lz\in(2/p,\fz)$ with $p\in(0,1]$, of the $g_\lz^\ast$-function characterization
of the classical Hardy space $H^p(\rn)$ and it is still unclear whether or not the
$g_\lz^\ast$-function, when
$\lz\in(\frac{2}{\min\{p_-,2\}},1+\frac{2}{\min\{p_-,2\}}]$,
can characterize $\vh$ because the method used in the proof of
Theorem \ref{6t3} does not work in this case.
\item[{\rm (ii)}]
Recall that, via the Lusin area function,
the Littlewood--Paley $g$-function or $g_\lambda^\ast$-function,
Li et al. in \cite[Theorems 2.8, 3.1 and 3.9]{lfy15} characterized the
anisotropic Musielak--Orlicz Hardy space $H_A^\varphi(\rn)$ with
$\varphi:\ \rn\times[0,\fz)\to[0,\fz)$ being an anisotropic growth
function (see \cite[Definition 2.3]{lfy15}). As was mentioned in
\cite[p.\,285]{lfy15}, if, for any given $p\in(0,1]$
and for any $x\in\rn$ and $t\in(0,\fz)$,
\begin{align}\label{6e4}
\varphi(x,t):=t^p,
\end{align}
then $H_A^\varphi(\rn)=\vAh$, where $\vAh$ denotes the anisotropic
Hardy space of Bownik \cite{mb03}. From this and Remark \ref{2r2}(iii),
we deduce that, when $\vp:=(\overbrace{p,\ldots,p}^{n\ \rm times})$
with $p\in(0,1]$, Theorems \ref{6t1}, \ref{6t2} and \ref{6t3}
are just \cite[Theorems 2.8, 3.1 and 3.9]{lfy15},
respectively, with $\varphi$ as in \eqref{6e4}.
\end{enumerate}
\end{remark}

The following proposition establishes the relation between $\vh$
and ${H_{\va}^{\vp}(\rn)}$, where ${H_{\va}^{\vp}(\rn)}$ denotes the
anisotropic mixed-norm Hardy space introduced by Cleanthous et al. in
\cite[Definition 3.3]{cgn17}.

\begin{proposition}\label{6p1}
Let $\vp\in(0,\fz)^n$, $\va\in[1,\fz)^n$ and $A$ be as in \eqref{2e5}.
Then $\vh$ and the anisotropic mixed-norm Hardy space ${H_{\va}^{\vp}(\rn)}$
coincide with equivalent quasi-norms.
\end{proposition}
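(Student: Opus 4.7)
The plan is to exploit the Littlewood--Paley $g$-function characterization of $\vh$ established in Theorem \ref{6t2}, together with the corresponding $g$-function characterization of $H_{\va}^{\vp}(\rn)$ obtained by Huang et al. \cite{hlyy}, to reduce the equivalence of the two Hardy quasi-norms to the pointwise equality of two concretely defined square functions.

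First I would unpack the geometry of the matrix $A$ in \eqref{2e5}. Writing $\nu:=a_1+\cdots+a_n$, one has $b=|\det A|=2^{\nu}$ and, for any $k\in\zz$ and $x=(x_1,\ldots,x_n)\in\rn$,
$$A^k x=\lf(2^{ka_1}x_1,\ldots,2^{ka_n}x_n\r).$$
Thus $A^k$ coincides exactly with the continuous anisotropic dilation $\delta_{2^k}$ of \cite{cgn17} restricted to dyadic scales. A short computation using \eqref{2e3} and the scaling relation $\rho(A^k x)=b^k\rho(x)$ shows that $\rho$ is pointwise equivalent to $|\cdot|_{\va}^{\nu}$ on $\rn$; consequently the dilated balls $\{B_k\}_{k\in\zz}$ are comparable to the anisotropic balls used in \cite{cgn17,hlyy}, and the Schwartz subclasses $\cs_N(\rn)$ defined via $\rho$ coincide, up to a universal multiplicative constant, with their counterparts in that setting.

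Next I would apply Theorem \ref{6t2} to obtain, for any $f\in\cs_0'(\rn)$,
$$\|f\|_{\vh}\sim\|g(f)\|_{\lv},$$
where $g(f)$ is built from the convolutions $\varphi_k=b^k\varphi(A^k\cdot)$ with $\varphi\in\cs(\rn)$ satisfying the hypotheses of Lemma \ref{6l1}. Similarly, the Littlewood--Paley $g$-function characterization of $H_{\va}^{\vp}(\rn)$ proved in \cite{hlyy} yields
$$\|f\|_{H_{\va}^{\vp}(\rn)}\sim\|g_{\va}(f)\|_{\lv},$$
where $g_{\va}(f)$ is the analogous square function built from the dyadically dilated $\varphi_k$ in the sense of the continuous $\va$-dilation. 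For $A$ as in \eqref{2e5} these two dilations coincide at every dyadic level, so the admissible $\varphi$ in one setting is admissible in the other and $g(f)=g_{\va}(f)$ pointwise. Combining the two equivalences then gives $\|f\|_{\vh}\sim\|f\|_{H_{\va}^{\vp}(\rn)}$, and since both spaces consist precisely of those $f\in\cs'(\rn)$ for which the respective (equivalent) quasi-norm is finite, the two spaces coincide with equivalent quasi-norms.

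The main obstacle I anticipate is the bookkeeping around the admissible classes of test functions: one must verify that any $\varphi$ satisfying the hypotheses of Lemma \ref{6l1} associated with $\rho$ also satisfies the corresponding hypotheses in the framework of \cite{hlyy} (and vice versa), and that the value of $\|g(f)\|_{\lv}$ is, as in Theorem \ref{6t2}, essentially independent of the choice of such a $\varphi$. This reduces to the equivalence of $\rho$ and $|\cdot|_{\va}^{\nu}$ noted above, together with the standard observation that the Fourier-side annulus $\{(2\|A\|)^{-1}\le\rho(\xi)\le1\}$ used to normalize $\widehat{\varphi}$ is comparable to the annulus used in the $\va$-setting. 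Once this is in hand, the proposition follows routinely.
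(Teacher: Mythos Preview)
Your approach is essentially identical to the paper's: both invoke Theorem \ref{6t2} for $\vh$ and the Littlewood--Paley $g$-function characterization of $H_{\va}^{\vp}(\rn)$ from \cite{hlyy}, then observe that for the diagonal matrix $A$ of \eqref{2e5} the dilations $A^k\cdot$ and $2^{k\va}\cdot$ coincide so that the two $g$-functions agree pointwise. Your additional remarks on the equivalence $\rho\sim|\cdot|_{\va}^{\nu}$ and the compatibility of the admissible $\varphi$ are correct and in fact more careful than the paper, which simply asserts the equality of the square functions without spelling this out.
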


\begin{proof}
For any $\vp\in(0,\fz)^n$, $\va:=(a_1,\ldots,a_n)\in[1,\fz)^n$
and $A$ as in \eqref{2e5}, by Theorem \ref{6t2},
we conclude that
$f\in\vh$ if and only if $f\in\cs'_0(\rn)$ and
$$\lf[\sum_{k\in\mathbb{Z}}
\lf|2^{k\nu}f\ast\varphi(2^{k\va}\cdot)\r|^2\r]^{1/2}
=\lf[\sum_{k\in\mathbb{Z}}
\lf||\det A|^{k}f\ast\varphi(A^{k}\cdot)\r|^2\r]^{1/2}
=g(f)\in \lv,$$
where $\nu:=a_1+\cdots +a_n$,
$2^{k\va} x:=(2^{k a_1}x_1,\ldots,2^{k a_n}x_n)$
for any $k\in\zz$ and $x=(x_1,\ldots,x_n)\in \rn$, and
$\varphi$ satisfies the same assumptions
as $\phi$ in Lemma \ref{6l1} with $s$ as in \eqref{4e1}.
This, combined with \cite[Theorem 4.2]{hlyy},
further implies that $f\in\vh$ if and only if $f\in H_{\va}^{\vp}(\rn)$.
Thus, in this case, $\vh=H_{\va}^{\vp}(\rn)$ with equivalent quasi-norms.
This finishes the proof of Proposition \ref{6p1}.
\end{proof}

\begin{remark}\label{6r2}
Very recently, Huang et al. \cite{hlyy} characterized
$H_{\va}^{\vp}(\rn)$ with $\va\in[1,\fz)^n$ and $\vp\in(0,\fz)^n$
via the Lusin area function,
the Littlewood--Paley $g$-function or $g_\lambda^\ast$-function.
We should point out that, by Proposition \ref{6p1},
when $A$ is as in \eqref{2e5},
Theorems \ref{6t1} through \ref{6t3}
are just, respectively, \cite[Theorems 4.1 through 4.3]{hlyy}.
\end{remark}

To prove Theorem \ref{6t1}, we need several technical lemmas.
The following lemma is just \cite[Lemma 2.3]{blyz10}, which
originates from \cite[Theorem 11]{c90}.

\begin{lemma}\label{6l2}
Let $A$ be a dilation. Then there exists a set
$$\mathcal{Q}:=\lf\{Q_\alpha^k\subset\rn:\ k\in\mathbb{Z},
\,\alpha\in E_k\r\}$$
of open subsets, where $E_k$ is an index set, such that
\begin{enumerate}
\item[{\rm (i)}]for each $k\in\zz$,
$|\rn\setminus\bigcup_{\alpha}Q_\alpha^k|=0$
and, when $\alpha\neq\beta$,
$Q_\alpha^k\cap Q_\beta^k=\emptyset$;
\item[{\rm(ii)}] for any $\alpha,\,\beta,\,k,\,\ell$ with $\ell\ge k$,
either $Q_\alpha^k\cap Q_\beta^\ell=\emptyset$ or
$Q_\alpha^\ell\subset Q_\beta^k$;
\item[{\rm(iii)}] for each $(\ell,\beta)$ and each $k<\ell$,
there exists a unique $\alpha$ such that
$Q_\beta^\ell\subset Q_\alpha^k$;
\item[{\rm(iv)}] there exist some $v\in\zz\setminus\zz_+$
and $u\in\nn$ such that, for any $Q_\alpha^k$
with $k\in\mathbb{Z}$ and $\alpha\in E_k$,
there exists $x_{Q_\alpha^k}\in Q_\alpha^k$
such that, for any $x\in Q_\alpha^k$,
$x_{Q_\alpha^k}+B_{vk-u}
\subset Q_\alpha^k\subset x+B_{vk+u}.$
\end{enumerate}
\end{lemma}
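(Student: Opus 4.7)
The plan is to follow Christ's classical dyadic-cube construction on spaces of homogeneous type (\cite[Theorem 11]{c90}), adapted to the anisotropic setting. The key observation enabling this is that $(\rn,\rho,dx)$, with $\rho$ the step homogeneous quasi-norm from \eqref{2e3} and $dx$ the Lebesgue measure, is a space of homogeneous type: the doubling property follows from $|B_{k+1}|=b|B_k|$ together with the quasi-triangle inequality of Definition \ref{2d2}(iii). So in principle one can simply invoke \cite{c90}; nevertheless, I would sketch the construction to record the precise choice of parameters $v$ and $u$ tuned to the dilation $A$.

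First, I would fix integers $v\in\zz\setminus\zz_+$ with $|v|$ large and $u\in\nn$ large, both depending only on $A$ and the quasi-triangle constant $H$ from Definition \ref{2d2}. For each $k\in\zz$, Zorn's lemma produces a maximal collection of points $\{x_\alpha^k\}_{\alpha\in E_k}\subset\rn$ such that the dilated balls $\{x_\alpha^k+B_{vk-u}\}_{\alpha\in E_k}$ are pairwise disjoint. By maximality and the quasi-triangle inequality, the enlarged balls $\{x_\alpha^k+B_{vk+u}\}_{\alpha\in E_k}$ then cover $\rn$, which is the source of the outer containment in (iv). Next, I would define a parent map $\pi_k\colon E_k\to E_{k-1}$ by picking, for each $\alpha\in E_k$, some $\beta\in E_{k-1}$ with $x_\alpha^k\in x_\beta^{k-1}+B_{v(k-1)+u}$ (existence via the covering property, uniqueness by a fixed selection rule). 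This induces a tree on $\bigcup_k E_k$, along which I would first define preliminary sets $\widetilde Q_\alpha^k:=\bigcup\{x_\beta^\ell+B_{v\ell-u}:\ \ell\ge k,\ \pi_{k+1}\circ\cdots\circ\pi_\ell(\beta)=\alpha\}$ and then complete each $\widetilde Q_\alpha^k$ to an open set $Q_\alpha^k\subset x_\alpha^k+B_{vk+u}$ using a greedy assignment of the residual region in a way consistent with the parent tree.

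The verifications of (i)--(iii) then follow almost formally from the construction: (i) holds because the $\widetilde Q_\alpha^k$ are disjoint (inheriting disjointness from level $k$) and their union has full measure in $\rn$ (inheriting covering from deep enough descendants, then a Borel–Cantelli type argument removing a null set); (ii) and (iii) are immediate translations of the fact that the tree induced by $\pi_k$ is a rooted forest, so ancestry is unique. The outer containment $Q_\alpha^k\subset x+B_{vk+u}$ for any $x\in Q_\alpha^k$ in (iv) requires the quasi-triangle inequality together with a judicious choice of $u$, while the inner containment $x_{Q_\alpha^k}+B_{vk-u}\subset Q_\alpha^k$ follows by taking $x_{Q_\alpha^k}:=x_\alpha^k$ and recalling that this ball is one of the building blocks of $\widetilde Q_\alpha^k$.

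The main obstacle is arranging (ii) and (iv) \emph{simultaneously}: the nesting across scales demands that $|v|$ be large enough that level-$(k+1)$ balls of radius $\sim b^{v(k+1)}$ sit well inside level-$k$ balls of radius $\sim b^{vk}$ after absorbing the quasi-triangle constant $H$, while the two-sided containment in (iv) forces a coordinated choice of $u$. Once $v$ and $u$ are fixed by computing the worst case in $H$ and $b$, the greedy completion of $\widetilde Q_\alpha^k$ within $x_\alpha^k+B_{vk+u}$ respects the tree by construction, and the remaining technicalities are routine. Since no new phenomena arise in passing from a general space of homogeneous type to the present anisotropic setting, the statement follows by applying \cite[Theorem 11]{c90}, as was done in \cite[Lemma 2.3]{blyz10}.
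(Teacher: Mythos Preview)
Your proposal is correct and matches the paper's approach: the paper simply cites this lemma as \cite[Lemma 2.3]{blyz10}, originating from Christ's dyadic-cube construction \cite[Theorem 11]{c90}, without giving any proof. Your additional sketch of the construction is sound and more detailed than what the paper provides, but the essential content---reducing to Christ's theorem on spaces of homogeneous type---is identical.
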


Henceforth, we call
$\mathcal{Q}:=
\{Q_\alpha^k\}_{k\in\mathbb{Z},\,\alpha\in E_k}$
from Lemma \ref{6l2} \emph{dyadic cubes} and
$k$ the \emph{level}, denoted by $\ell(Q_\alpha^k)$,
of the dyadic cube $Q_\alpha^k$
for any $k\in\mathbb{Z}$ and $\alpha\in E_k$.

\begin{remark}\label{6r3}
In Definition \ref{4d1},
if we replace dilated balls $\mathfrak{B}$ by
dyadic cubes, then, by Lemma \ref{6l2}, we know that
the corresponding anisotropic mixed-norm atomic Hardy space
and the original one (see Definition \ref{4d2}) coincide with
equivalent quasi-norms.
\end{remark}

The following lemma is necessary in the proof of Theorem \ref{6t1},
whose proof is similar to that of \cite[Lemma 4.7]{hy};
the details are omitted.

\begin{lemma}\label{6l2'}
Let $\vp\in(0,\fz)^n$, $m\in\zz$, $\vaz\in(0,\fz)$,
$\kappa\in(0,\fz)$, $r\in[1,\fz]\cap(p_+,\fz]$
with $p_+$ as in \eqref{2e4} and $A$ be a dilation. Assume that
$\{\lz_i\}_{i\in\nn}\subset\mathbb{C}$, $\{B^{(i)}\}_{i\in\nn}
:=\{x_i+B_{\ell_i}\}_{i\in\nn}\subset\mathfrak{B}$
and $\{a_i^{\vaz,\kappa}\}_{i\in\nn}\subset L^r(\rn)$ satisfy that, for any $i\in\nn$,
$\supp a_i^{\vaz,\kappa}\subset x_i+A^{m}B_{\ell_i}$,
$\|a_i^{\vaz,\kappa}\|_{L^r(\rn)}
\le\frac{|B^{(i)}|^{1/r}}{\|{\mathbf 1}_{B^{(i)}}\|_{\lv}}$
and
$$\lf\|\lf\{\sum_{i\in\nn}
\lf[\frac{|\lz_i|{\mathbf 1}_{B^{(i)}}}{\|{\mathbf 1}_{B^{(i)}}\|_{\lv}}\r]^
{\underline{p}}\r\}^{1/\underline{p}}\r\|_{\lv}<\fz.$$
Then
$$\lf\|\liminf_{\vaz\to 0^+}\lf[\sum_{i\in\nn}\lf|\lz_ia_i^{\vaz,\kappa}\r|
^{\underline{p}}\r]^{1/\underline{p}}\r\|_{\lv}
\le C\lf\|\lf\{\sum_{i\in\nn}
\lf[\frac{|\lz_i|{\mathbf 1}_{B^{(i)}}}{\|{\mathbf 1}_{B^{(i)}}\|_{\lv}}\r]^
{\underline{p}}\r\}^{1/\underline{p}}\r\|_{\lv},$$
where $\underline{p}$ is as in \eqref{2e4} and $C$ a positive
constant independent of $\{\lz_i\}_{i\in\nn}$, $\{B^{(i)}\}_{i\in\nn}$
$\{a_i^{\vaz,\kappa}\}_{i\in\nn}$, $\vaz$ and $\kappa$.
\end{lemma}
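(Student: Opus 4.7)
The plan is to derive Lemma \ref{6l2'} from Lemma \ref{4l4} via a Fatou-type inequality on the mixed-norm Lebesgue space $\lv$. The key observation is that for each fixed $\vaz,\kappa\in(0,\fz)$, the functions $\{a_i^{\vaz,\kappa}\}_{i\in\nn}$ satisfy exactly the hypotheses of Lemma \ref{4l4} with $k:=m$: the support condition $\supp a_i^{\vaz,\kappa}\subset x_i+A^m B_{\ell_i}$, the $L^r$-bound $\|a_i^{\vaz,\kappa}\|_{L^r(\rn)}\le |B^{(i)}|^{1/r}/\|\mathbf{1}_{B^{(i)}}\|_{\lv}$, and the summability on the right-hand side are all assumed uniformly in $\vaz$ and $\kappa$. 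Therefore, Lemma \ref{4l4} yields
\begin{align*}
\lf\|\lf[\sum_{i\in\nn}\lf|\lz_i a_i^{\vaz,\kappa}\r|^{\underline{p}}\r]^{1/\underline{p}}\r\|_{\lv}
\le C\lf\|\lf\{\sum_{i\in\nn}\lf[\frac{|\lz_i|{\mathbf 1}_{B^{(i)}}}{\|{\mathbf 1}_{B^{(i)}}\|_{\lv}}\r]^{\underline{p}}\r\}^{1/\underline{p}}\r\|_{\lv}
\end{align*}
with $C$ independent of $\vaz$ and $\kappa$.

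Next, I would establish a Fatou-type inequality for $\lv$: for any sequence $\{g_j\}_{j\in\nn}$ of non-negative measurable functions on $\rn$,
\begin{align*}
\lf\|\liminf_{j\to\fz}g_j\r\|_{\lv}\le\liminf_{j\to\fz}\|g_j\|_{\lv}.
\end{align*}
This follows by iterating the classical scalar Fatou lemma across the $n$ one-dimensional integrations that define $\|\cdot\|_{\lv}$, starting from the innermost variable and proceeding outward; at each step one uses that $t\mapsto t^{p_{i+1}/p_i}$ is continuous and monotone on $[0,\fz)$, so Fatou propagates through powers. Since the liminf over the continuous parameter $\vaz\to 0^+$ can be realized by a countable decreasing sequence $\{\vaz_j\}_{j\in\nn}$, this sequential Fatou is sufficient.

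Finally, I would apply this Fatou inequality to the sequence
\begin{align*}
F_{\vaz_j}(x):=\lf[\sum_{i\in\nn}\lf|\lz_i a_i^{\vaz_j,\kappa}(x)\r|^{\underline{p}}\r]^{1/\underline{p}},
\end{align*}
combined with the uniform bound above, to conclude
\begin{align*}
\lf\|\liminf_{\vaz\to 0^+}F_\vaz\r\|_{\lv}
\le\liminf_{\vaz\to 0^+}\|F_\vaz\|_{\lv}
\le C\lf\|\lf\{\sum_{i\in\nn}\lf[\frac{|\lz_i|{\mathbf 1}_{B^{(i)}}}{\|{\mathbf 1}_{B^{(i)}}\|_{\lv}}\r]^{\underline{p}}\r\}^{1/\underline{p}}\r\|_{\lv},
\end{align*}
which is the desired estimate.

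The main technical obstacle is verifying the Fatou inequality on $\lv$ carefully when some $p_i<1$, since $\lv$ is then only a quasi-Banach space and one cannot quote a standard Banach-lattice Fatou result directly. However, since the non-negativity of the functions and the monotonicity of each one-variable $L^{p_i}$-quasi-norm under pointwise monotone limits are enough, the iterated argument goes through without issue; no further delicate estimates are required, making this proof essentially a packaging of Lemma \ref{4l4} with Fatou.
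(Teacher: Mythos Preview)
Your proposal is correct and follows the natural route: the paper omits the proof of Lemma~\ref{6l2'}, referring only to \cite[Lemma~4.7]{hy}, and the argument you give --- apply Lemma~\ref{4l4} uniformly in $(\vaz,\kappa)$ and then pass to the $\liminf$ via an iterated Fatou inequality on $\lv$ --- is exactly the expected one. One small wording issue: saying the continuous $\liminf_{\vaz\to0^+}$ ``can be realized by a countable decreasing sequence'' is imprecise, but what you actually need (and what suffices) is the pointwise inequality $\liminf_{\vaz\to0^+}F_\vaz\le\liminf_{j}F_{\vaz_j}$ along \emph{any} sequence $\vaz_j\to0^+$, followed by sequential Fatou and the uniform bound from Lemma~\ref{4l4}; this is what your final chain of inequalities uses, so the argument is sound.
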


We also need the following conclusion, whose proof is similar
to that of \cite[Lemma 4.8]{hlyy} (see also \cite[Lemma 6.5]{yyyz16});
the details are omitted.

\begin{lemma}\label{6l3}
Let $\vp\in(0,\fz)^n$. Then $\vh\subset\cs'_0(\rn)$.
\end{lemma}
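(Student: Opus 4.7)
The objective is to show that for every $f\in\vh$ and every $\psi\in\cs(\rn)$, one has $f\ast\psi_j\to 0$ in $\cs'(\rn)$ as $j\to-\fz$, equivalently, $\langle f\ast\psi_j,\phi\rangle\to 0$ for each test function $\phi\in\cs(\rn)$. The plan is to argue by density via Lemma \ref{4l5}, and to close the approximation using the maximal function bound from Theorem \ref{3t1}.

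First I would verify the auxiliary embedding $L^{\vec{q}}(\rn)\subset\cs'_0(\rn)$ for any $\vec{q}\in[1,\fz)^n$. The identity $\langle g\ast\psi_j,\phi\rangle=\int_\rn g(y)\,(\tilde\psi_j\ast\phi)(y)\,dy$, together with the mixed-norm H\"older inequality from \cite[p.\,304, Theorem~2]{bp61}, reduces the matter to showing $\|\tilde\psi_j\ast\phi\|_{L^{\vec{q}'}(\rn)}\to 0$ as $j\to-\fz$. This in turn follows by interpolating the uniform estimate $\|\tilde\psi_j\ast\phi\|_{L^1(\rn)}\le\|\psi\|_{L^1(\rn)}\|\phi\|_{L^1(\rn)}$ with the decaying estimate $\|\tilde\psi_j\ast\phi\|_{L^\fz(\rn)}\le b^j\|\psi\|_{L^\fz(\rn)}\|\phi\|_{L^1(\rn)}\to 0$ coming from $\|\psi_j\|_{L^\fz(\rn)}=b^j\|\psi\|_{L^\fz(\rn)}$. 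Since $\vp/p_-\in[1,\fz)^n$, this already yields $\vh\cap L^{\vp/p_-}(\rn)\subset\cs'_0(\rn)$.

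Given a general $f\in\vh$, Lemma \ref{4l5} provides a sequence $\{f_m\}_{m\in\nn}\subset\vh\cap L^{\vp/p_-}(\rn)$ with $f_m\to f$ in $\vh$. For any $\phi\in\cs(\rn)$, I would decompose
\begin{align*}
\lf|\langle f\ast\psi_j,\phi\rangle\r|\le\lf|\langle f_m\ast\psi_j,\phi\rangle\r|+\lf|\langle(f-f_m)\ast\psi_j,\phi\rangle\r|.
\end{align*}
The first term vanishes as $j\to-\fz$ for each fixed $m$ by the previous paragraph. For the second, after renormalizing $\psi$ to lie in $\cs_N(\rn)$, the pointwise bound $|(f-f_m)\ast\psi_j(x)|\le\|\psi\|_{\cs_N(\rn)}M_N(f-f_m)(x)$ built into Definition \ref{2d4} controls the second term by $\|\psi\|_{\cs_N(\rn)}\int_\rn M_N(f-f_m)(x)|\phi(x)|\,dx$, a bound independent of $j$.

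The principal obstacle is to bound $\int_\rn M_N(f-f_m)|\phi|\,dx$ by $C_\phi\|f-f_m\|_{\vh}$, because when some $p_i<1$ a direct duality pairing against $\phi$ is unavailable in $\lv$. To circumvent this I would slice $\rn$ into the dyadic annuli $B_{k+1}\setminus B_k$ from \eqref{2e3}, apply the $\underline{p}$-subadditivity of $\|\cdot\|_{\lv}^{\underline{p}}$ in Lemma \ref{3l3} to sum the pieces, and exploit both the rapid decay of $\phi\in\cs(\rn)$ and the estimates on $\|{\mathbf 1}_{B_k}\|_{\lv}$ furnished by the relation between $\rho$ and the Euclidean norm in Lemma \ref{3l8}---precisely the substitutes for the exact formula for $\|{\mathbf 1}_B\|_{\lv}$ that is available in \cite{cgn17,hlyy,hlyy18} but missing in the present generality. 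Once this quantitative continuity is in place, taking $m$ large makes the second term uniformly small in $j$, and then letting $j\to-\fz$ makes the first term small, which concludes the proof.
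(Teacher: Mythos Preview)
Your density strategy via Lemma~\ref{4l5} is natural, and the auxiliary embedding $L^{\vp/p_-}(\rn)\subset\cs'_0(\rn)$ can be justified, though for mixed norms the $L^1$--$L^\infty$ ``interpolation'' you invoke is not literally available; a clean substitute is the uniform pointwise domination $|\tilde\psi_j\ast\phi|\le C\,\HL(|\phi|)\in L^{(\vp/p_-)'}(\rn)$ followed by dominated convergence. The genuine gap is at your ``principal obstacle'': the bound
\[
\int_{\rn}M_N(g)(x)\,|\phi(x)|\,dx\le C_\phi\,\|g\|_{\vh}
\]
that your argument requires is \emph{false} when $p_-<1$, and the dyadic-annuli device cannot repair it. The obstruction is local, not in the tails: for $p_-<1$ the grand maximal function $M_N(g)$ of $g\in\vh$ need not lie in $L^1_{\mathrm{loc}}(\rn)$. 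Already for $n=1$, $A=2$, $p=\tfrac12$, a single $(\tfrac12,\infty,s)$-atom $a$ supported in $B_{-k}$ satisfies $\|a\|_{\vh}\le1$ while $M_N(a)\gtrsim|a|\sim2^{2k}$ on a set of measure $\sim2^{-k}$, giving $\int_{B_0}M_N(a)\,|\phi|\gtrsim2^k|\phi(0)|$; no constant $C_\phi$ can work, and suitable superpositions yield $f\in\vh$ with $\int_{\rn}M_N(f)|\phi|=\infty$. Your slicing via Lemma~\ref{3l8} only addresses the decay of $\phi$ at infinity, and the $\underline p$-subadditivity in Lemma~\ref{3l3} gives an inequality in the wrong direction for upper-bounding an $L^1$ pairing by an $L^{\vp}$ quasi-norm. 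Hence the uniform-in-$j$ control of $|\langle(f-f_m)\ast\psi_j,\phi\rangle|$ that your double-limit argument needs is unavailable along this route.

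The proofs to which the paper defers (\cite[Lemma~4.8]{hlyy}, \cite[Lemma~6.5]{yyyz16}) bypass the pairing of $M_N$ against $|\phi|$ altogether. A key algebraic point you are missing is the rescaling $\tilde\psi_j\ast\phi=(\Theta^{(j)})_j$ with $\Theta^{(j)}:=\tilde\psi\ast\phi_{-j}\to\bigl(\int_{\rn}\phi\bigr)\tilde\psi$ in $\cs(\rn)$ as $j\to-\infty$, so that $\sup_{j\le0}\|\Theta^{(j)}\|_{\cs_N}<\infty$. This places the test function $\tilde\psi_j\ast\phi$ at the natural scale $B_j$ and lets one use the continuous embedding of Lemma~\ref{4l2} (or the non-tangential definition of $M_N$) at that scale, rather than the crude radial bound $|f\ast\psi_j|\le C\,M_N(f)$ integrated against $|\phi|$.
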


In addition, we have the following lemma, which plays
a key role in the proof of Theorem \ref{6t1} and is also of independent interest.

\begin{lemma}\label{6l1'}
Assume that $E\subset\rn$, $F\in\mathcal{Q}$ with $\mathcal{Q}$ as in Lemma \ref{6l2},
$E\subset F$ and there exists a constant $c_0\in(0,1]$ such that $|E|\ge c_0|F|$.
Then, for any $\vp\in(0,\fz)^n$, there exists a positive constant $C$,
independent of $E$ and $F$, such that
$\frac{\|{\mathbf 1}_F\|_{\lv}}{\|{\mathbf 1}_E\|_{\lv}}\le C.$
\end{lemma}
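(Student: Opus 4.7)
The plan is to deduce the lemma from a pointwise domination of the form
\[
\mathbf{1}_F(x)\le C_1\,\HL(\mathbf{1}_E)(x),\qquad x\in\rn,
\]
followed by the boundedness of $\HL$ on an auxiliary mixed-norm Lebesgue space of Lemma \ref{3l1}. For the pointwise bound, first I would invoke Lemma \ref{6l2}(iv) to fix $x_F\in F$ and universal integers $v\in\zz\setminus\zz_+$, $u\in\nn$ (depending only on $A$) such that, with $k:=\ell(F)$,
\[
x_F+B_{vk-u}\subset F\subset x+B_{vk+u}\qquad\text{for every }x\in F.
\]
In particular $|F|\ge|B_{vk-u}|=b^{vk-u}$. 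For any $x\in F$, the set $x+B_{vk+u}$ lies in $\mathfrak{B}$ and contains $F\supset E$, so
\[
\HL(\mathbf{1}_E)(x)\ge\frac{1}{|x+B_{vk+u}|}\int_{x+B_{vk+u}}\mathbf{1}_E(y)\,dy=\frac{|E|}{b^{vk+u}}\ge\frac{c_0|F|}{b^{vk+u}}\ge c_0\,b^{-2u},
\]
which gives $\mathbf{1}_F\le C_1\HL(\mathbf{1}_E)$ pointwise with $C_1:=b^{2u}/c_0$.

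Next, since Lemma \ref{3l1} requires $\vp\in(1,\fz)^n$ while we are allowed general $\vp\in(0,\fz)^n$, I would rescale via Lemma \ref{3l3}. Pick any $s\in(0,p_-)$ and set $\vec q:=\vp/s$, so that $\vec q\in(1,\fz)^n$. Applying Lemma \ref{3l3} with the parameter $r=1/s$ to the functions $\mathbf{1}_F$ and $\mathbf{1}_E$, and using $\mathbf{1}_F^{1/s}=\mathbf{1}_F$ and $\mathbf{1}_E^{1/s}=\mathbf{1}_E$, yields
\[
\|\mathbf{1}_F\|_{\lv}=\|\mathbf{1}_F\|_{L^{\vec q}(\rn)}^{1/s}\quad\text{and}\quad\|\mathbf{1}_E\|_{\lv}=\|\mathbf{1}_E\|_{L^{\vec q}(\rn)}^{1/s}.
\]
Combining the pointwise estimate above with Lemma \ref{3l1} applied to the exponent $\vec q\in(1,\fz)^n$, I obtain
\[
\|\mathbf{1}_F\|_{L^{\vec q}(\rn)}\le C_1\,\|\HL(\mathbf{1}_E)\|_{L^{\vec q}(\rn)}\le C_1C_2\|\mathbf{1}_E\|_{L^{\vec q}(\rn)},
\]
and raising this inequality to the power $1/s$ completes the proof with $C:=(C_1C_2)^{1/s}$, a constant depending only on $\vp$, $c_0$ and the dilation $A$.

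The substantive obstacle—and the reason the argument proceeds via the maximal function rather than a direct computation—is exactly the one flagged in the introduction: for a general expansive $A$ the quasi-norm $\|\mathbf{1}_{x+B_k}\|_{\lv}$ admits no explicit product formula (in contrast to the diagonal setting treated in \cite{hlyy}), so one cannot hope to compare $\|\mathbf{1}_F\|_{\lv}$ with $\|\mathbf{1}_E\|_{\lv}$ by reducing to a ratio of measures. Replacing $\mathbf{1}_F$ by a pointwise dominator that is controlled in $L^{\vec q}(\rn)$ for every $\vec q\in(1,\fz)^n$ sidesteps this difficulty, and the reduction from $\vp$ to $\vec q=\vp/s$ is precisely the homogeneity device that the introduction advertises as a standing substitute for explicit computation.
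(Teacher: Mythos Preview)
Your proof is correct and follows essentially the same approach as the paper's: both establish the pointwise bound $\mathbf{1}_F\lesssim\HL(\mathbf{1}_E)$ via Lemma~\ref{6l2}(iv), then rescale the exponent (the paper chooses $s=p_-/2$, you allow any $s\in(0,p_-)$) so that Lemma~\ref{3l1} applies and use Lemma~\ref{3l3} to pass back to $\lv$. The arguments differ only in cosmetic details of the constants.
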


\begin{proof}
By the fact that $F\in\mathcal{Q}$ and Lemma \ref{6l2}(iv),
we conclude that there exists a dilated ball $B_F\in \mathfrak{B}$
such that, for any $x\in F$, $F\subset x+B_F$
and $|F|\sim|B_F|$. This, together with \eqref{3e1},
implies that, for any $x\in F$,
\begin{align*}
\HL\lf({\mathbf 1}_E\r)(x)&=\sup_{x\in B\in\mathfrak{B}}
\frac1{|B|}\int_{B\cap E}{\mathbf 1}_E(y)\,dy
\ge \frac1{|B_F|}\int_{(x+B_F)\cap E}{\mathbf 1}_E(y)\,dy\\
&\gs \frac1{|F|}\int_{F\cap E}{\mathbf 1}_E(y)\,dy
\gs\frac{|E|}{|F|}\gs c_0.
\end{align*}
From this and Lemma \ref{3l1}, it follows that, for any $\vp\in(0,\fz)^n$
with $p_-$ as in \eqref{2e4},
$$c_0\|{\mathbf 1}_F\|_{L^{2\vp/p_-}(\rn)}\ls \lf\|\HL\lf({\mathbf 1}_E\r)\r\|_{L^{2\vp/p_-}(\rn)}
\ls \|{\mathbf 1}_E\|_{L^{2\vp/p_-}(\rn)}.$$
Thus,
$$\frac{\|{\mathbf 1}_F\|_{\lv}}{\|{\mathbf 1}_E\|_{\lv}}
=\frac{\|{\mathbf 1}_F\|_{L^{2\vp/p_-}(\rn)}^{2/p_-}}
{\|{\mathbf 1}_E\|_{L^{2\vp/p_-}(\rn)}^{2/p_-}}\ls c_0^{-2/p_-},$$
which completes the proof of Lemma \ref{6l1'}.
\end{proof}

To show Theorem \ref{6t1}, we first prove that the $\lv$
quasi-norms of the anisotropic Lusin
area function $S(f)$ are independent of the choices of $\phi$
and $\psi$ as in Lemma \ref{6l1}. For this purpose, we denote by
$S_\phi(f)$ and $S_\psi(f)$ the anisotropic Lusin area functions
defined, respectively, by $\phi$ and $\psi$.

\begin{proposition}\label{6p1'}
Let $\vp\in(0,\fz)^n$, $\phi$ and $\psi$ be as in Lemma \ref{6l1}
with $s$ as in \eqref{4e1}.
Then there exists a positive constant $C$ such that, for any $f\in\cs'_0(\rn)$,
$C^{-1}\|S_\phi(f)\|_{\lv}\le\|S_\psi(f)\|_{\lv}\le C\|S_\phi(f)\|_{\lv}.$
\end{proposition}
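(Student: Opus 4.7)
The plan is to combine the anisotropic Calder\'on reproducing formula (Lemma \ref{6l1}) with an almost-orthogonality estimate on convolutions of the form $\wz\phi_j\ast\psi_k$ and the Fefferman--Stein vector-valued maximal inequality (Lemma \ref{4l3}), the latter applied after rescaling via Lemma \ref{3l3}. By symmetry, it suffices to show $\|S_\psi(f)\|_{\lv}\ls\|S_\phi(f)\|_{\lv}$ for any $f\in\cs'_0(\rn)$.

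First I would apply Lemma \ref{6l1} to $\phi$ to obtain $\wz\phi\in\cs(\rn)$ with $\supp\widehat{\wz\phi}$ compact and away from $\vec 0_n$ and satisfying $\sum_{j\in\zz}\widehat{\wz\phi}((A^\ast)^j\xi)\widehat\phi((A^\ast)^j\xi)=1$ on $\rn\setminus\{\vec 0_n\}$. This yields the convolution identity $\sum_{j\in\zz}\wz\phi_j\ast\phi_j=\delta$ acting on $\cs'_0(\rn)$, and hence, for each $k\in\zz$,
$$f\ast\psi_{-k}=\sum_{j\in\zz}(f\ast\phi_{-j})\ast(\wz\phi_{-j}\ast\psi_{-k})\quad\text{in } \cs'(\rn).$$
The central analytic step is an anisotropic almost-orthogonality bound: for every $M\in\nn$, there exists $C_M>0$ such that, for all $j,k\in\zz$ and $z\in\rn$,
$$|\wz\phi_{-j}\ast\psi_{-k}(z)|\le C_M\,b^{\max\{j,k\}}\,b^{-|j-k|M}\bigl[1+b^{\max\{j,k\}}\rho(z)\bigr]^{-M}.$$
This is the standard anisotropic analogue of a Coifman--Meyer kernel estimate and follows from the Fourier-side compact support of $\widehat{\wz\phi}$ and $\widehat\psi$ together with the smoothness and vanishing-moment conditions imposed in Lemma \ref{6l1}, treating separately the two regimes $j\ge k$ and $j<k$ so that one always moves differentiations onto the kernel supported away from the origin. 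Combining this with a Peetre-type argument, for any $r\in(0,\underline p)$ with $\underline p$ as in \eqref{2e4}, any $x\in\rn$ and any $y\in x+B_k$, I would derive
$$|f\ast\psi_{-k}(y)|^r\le C\sum_{j\in\zz}b^{-|j-k|rM/2}\,\HL\bigl(|f\ast\phi_{-j}|^r\bigr)(x),$$
provided $M$ is chosen large enough to absorb the loss $b^{|j-k|}$ incurred when converting between the scales $b^{\max\{j,k\}}$ and $b^{\min\{j,k\}}$ as well as the scale discrepancy from the Peetre step (involving $\ln b/\ln\lambda_-$).

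Plugging this into $[S_\psi(f)(x)]^2=\sum_k b^{-k}\int_{x+B_k}|f\ast\psi_{-k}(y)|^2\,dy$, applying Young's inequality to the discrete convolution $b^{-|j-k|rM/2}$ in $\ell^{2/r}(\zz)$ with $M$ taken large enough, I would reduce the proof to estimating
$$\biggl\|\biggl\{\sum_{j\in\zz}\bigl[\HL(|f\ast\phi_{-j}|^r)\bigr]^{2/r}\biggr\}^{r/2}\biggr\|_{L^{\vp/r}(\rn)}.$$
Since $r<\underline p\le p_-$ forces $\vp/r\in(1,\fz]^n$, Lemma \ref{4l3} with exponent $u=2/r$, combined with the rescaling identity of Lemma \ref{3l3}, controls this quantity by $\|S_\phi(f)\|_{\lv}^r$, completing the chain. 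The main obstacle is the simultaneous balancing of parameters: $r$ must lie below $\underline p$ so that Lemma \ref{4l3} applies to $\vp/r$, while $M$ must be large enough both to give geometric decay after the Peetre loss and to make the $\ell^{2/r}$-convolution finite; verifying that all these constraints are compatible, together with a careful proof of the almost-orthogonality estimate in the present general anisotropic setting, is the most delicate part of the argument.
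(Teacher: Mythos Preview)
Your overall strategy---Calder\'on reproducing formula, almost-orthogonality, Peetre-type pointwise bound, then Lemma~\ref{4l3} after rescaling---is the right one and matches the paper's in spirit. However, the appeal to ``symmetry'' hides the real difficulty. The functions $\phi$ and $\psi$ of Lemma~\ref{6l1} are \emph{not} interchangeable: $\phi\in C_c^\infty(\rn)$ has only finitely many vanishing moments (up to order~$s$), whereas $\widehat\psi$ is compactly supported away from the origin. In the direction you wrote out, the kernel $\wz\phi_{-j}\ast\psi_{-k}$ indeed involves two functions whose Fourier transforms are supported in annuli, so your claimed decay with arbitrary~$M$ is correct and the argument is easy. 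But for the reverse inequality $\|S_\phi(f)\|_{\lv}\ls\|S_\psi(f)\|_{\lv}$ you cannot apply Lemma~\ref{6l1} to $\psi$ (it is not compactly supported in space); the only available reproducing formula gives $f\ast\phi_{-i}=\sum_k(f\ast\psi_{-k})\ast(\phi_{-k}\ast\phi_{-i})$, and the kernel $\phi_{-k}\ast\phi_{-i}$ carries only the limited decay $b^{-|k-i|(s+1)\ln\lambda_-/\ln b}$ coming from the $s$ vanishing moments of $\phi$. This forces the Peetre exponent $r$ to lie in the interval $(\frac{\ln b}{\ln b+(s+1)\ln\lambda_-},\min\{p_-,1\})$, which is nonempty precisely because of condition~\eqref{4e1} on~$s$. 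Your write-up never uses~\eqref{4e1}, which is a sign that the harder direction has been skipped.

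The paper proves exactly this harder direction (and then invokes symmetry, legitimately, since the remaining direction is the easy one you handled). Its mechanics differ slightly from yours: instead of a direct Peetre argument it discretizes the convolution over the dyadic cubes of Lemma~\ref{6l2} and applies the technical Lemma~\ref{6l3'} to pass to $M_{\rm HL}$, but your Peetre route would also work for that direction once you replace ``arbitrary $M$'' by the $s$-dependent exponent and verify the compatibility of $r$ with $\underline p$ via~\eqref{4e1}.
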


To show Proposition \ref{6p1'}, the following lemma is necessary,
whose proof is similar to that of \cite[Lemma 4.6]{hy};
the details are omitted.
In what follows, for any $t\in\mathbb{R}$,
we denote by $\lceil t\rceil$ the \emph{least integer not less
than $t$}.

\begin{lemma}\label{6l3'}
Let $s$ be as in \eqref{4e1}, $v$ and $u$ be as in Lemma \ref{6l2}(iv)
and $r\in(\frac{\ln b}{\ln b+(s+1)\ln \lz_-},1]$. Then
there exists a positive constant $C$ such that, for any
$k,$ $i\in\zz$, $\{c_{Q}\}_{Q\in\mathcal{Q}}\subset [0,\fz)$
with $\mathcal{Q}$ as in Lemma \ref{6l2} and $x\in\rn$,
\begin{align*}
&\sum_{\gfz{\ell(Q)=\lceil\frac{k-u}{v}\rceil\,if\,k\ge u}
{\ell(Q)=\lfloor\frac{k-u}{v}\rfloor\,if\,k< u}}|Q|
\frac{b^{(k\vee i)(s+1) \frac{\ln \lz_-}{\ln b}}}{\lf[b^{(k\vee i)}
+\rho(x-z_{Q})\r]^{(s+1)\frac{\ln \lz_-}{\ln b}+1}}c_{Q}\\
&\hs\hs \le C b^{-[k-(k\vee i)](1/r-1)}\lf\{M_{\rm HL}
\lf(\sum_{\gfz{\ell(Q)=\lceil\frac{k-u}{v}\rceil\,if\,k\ge u}
{\ell(Q)=\lfloor\frac{k-u}{v}\rfloor\,if\,k< u}}
\lf[c_{Q}\r]^r\mathbf{1}_{Q}\r)(x)\r\}^{1/r},
\end{align*}
where $\ell(Q)$ denotes the level of $Q\in\mathcal{Q}$, $z_{Q}\in Q$ and,
for any $k$, $i\in\zz$, $k\vee i:=\max\{k,i\}$.
\end{lemma}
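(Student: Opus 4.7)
The plan is to reduce the weighted sum to a single Hardy--Littlewood maximal average by means of an annular decomposition of the dyadic cubes around $x$, exploiting $r\le 1$ via the power-mean inequality. Set $\alpha:=(s+1)\ln\lambda_-/\ln b$, so the hypothesis $r>\ln b/[\ln b+(s+1)\ln\lambda_-]$ is equivalent to $r(\alpha+1)>1$, which is exactly the convergence condition for the geometric series that will appear at the end. Let $\ell_0$ denote the common level specified in the sum (either $\lceil (k-u)/v\rceil$ or $\lfloor(k-u)/v\rfloor$). By Lemma \ref{6l2}(iv) applied uniformly at level $\ell_0$, every cube $Q$ with $\ell(Q)=\ell_0$ satisfies $|Q|\sim b^{v\ell_0}\sim b^{k-u}\sim b^k$, with constants depending only on $u$ and $v$.

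First, I would decompose the set $\{Q\in\mathcal{Q}:\ell(Q)=\ell_0\}$ into annular pieces according to the distance from $x$: set $A_0:=\{Q:\rho(x-z_Q)<b^{k\vee i}\}$ and, for each $j\in\nn$, $A_j:=\{Q:b^{k\vee i+j-1}\le \rho(x-z_Q)<b^{k\vee i+j}\}$. A standard application of the quasi-triangle inequality for $\rho$, together with $|Q|\sim b^k\le b^{k\vee i+j}$, shows that there is a dilated ball $B^{(j)}\ni x$ with $|B^{(j)}|\sim b^{k\vee i+j}$ such that $\bigcup_{Q\in A_j}Q\subset B^{(j)}$. On $A_j$ the kernel is controlled by
\[
\frac{b^{(k\vee i)\alpha}}{[b^{k\vee i}+\rho(x-z_Q)]^{\alpha+1}}\lesssim b^{-(k\vee i)}b^{-j(\alpha+1)}.
\]

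Next, using $r\in(0,1]$ and the elementary inequality $\sum a_Q\le(\sum a_Q^r)^{1/r}$ for non-negative $a_Q$, I would bound
\[
\sum_{Q\in A_j}c_Q\le\Bigl(\sum_{Q\in A_j}c_Q^r\Bigr)^{1/r}.
\]
Since the cubes in $A_j$ are pairwise disjoint and lie inside $B^{(j)}$,
\[
\sum_{Q\in A_j}c_Q^r\,|Q|=\int_{B^{(j)}}\sum_{Q\in A_j}c_Q^r\mathbf{1}_Q(y)\,dy\le|B^{(j)}|\,M_{\rm HL}\Bigl(\sum_{\ell(Q)=\ell_0}c_Q^r\mathbf{1}_Q\Bigr)(x),
\]
so, dividing by $|Q|\sim b^k$, we obtain $\sum_{Q\in A_j}c_Q^r\lesssim b^{(k\vee i)+j-k}\,M(x)$, where $M(x):=M_{\rm HL}(\sum_{\ell(Q)=\ell_0}c_Q^r\mathbf{1}_Q)(x)$.

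Assembling these estimates and using $|Q|\sim b^k$,
\begin{align*}
\sum_{\ell(Q)=\ell_0}|Q|\frac{b^{(k\vee i)\alpha}c_Q}{[b^{k\vee i}+\rho(x-z_Q)]^{\alpha+1}}
&\lesssim\sum_{j\in\zz_+}b^{k-(k\vee i)}b^{-j(\alpha+1)}\Bigl(b^{(k\vee i)+j-k}M(x)\Bigr)^{1/r}\\
&=b^{[k-(k\vee i)](1-1/r)}[M(x)]^{1/r}\sum_{j\in\zz_+}b^{-j(\alpha+1-1/r)}.
\end{align*}
The hypothesis $r(\alpha+1)>1$ makes the last series converge to a finite constant depending only on $r$, $s$, $b$, and $\lambda_-$, and since $-[k-(k\vee i)](1/r-1)=[k-(k\vee i)](1-1/r)$, this is exactly the claimed bound.

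The main obstacle is the bookkeeping at the interface between the dyadic level $\ell_0$ and the anisotropic scale $k$: one must use the ceiling/floor dichotomy from Lemma \ref{6l2}(iv) together with $v\in\zz\setminus\zz_+$ to ensure $|Q|\sim b^k$ with constants independent of $k$ (this is what absorbs the $u$-offset uniformly in the two cases $k\ge u$ and $k<u$). The remaining technicalities—choosing the annuli so that they truly cover, and verifying that the union of $Q\in A_j$ fits inside a single dilated ball of volume $\sim b^{k\vee i+j}$—are routine consequences of Definition \ref{2d2} once the size $|Q|\sim b^k$ is in hand.
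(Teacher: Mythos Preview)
Your argument is correct and follows the standard annular-decomposition route that the paper implicitly invokes (it omits the proof, referring to \cite[Lemma~4.6]{hy}): split the cubes into shells according to $\rho(x-z_Q)$, use the $\ell^r\hookrightarrow\ell^1$ embedding for $r\le 1$, bound each shell by a single dilated-ball average, and sum the resulting geometric series using $r(\alpha+1)>1$. Your verification that $|Q|\sim b^{v\ell_0}\sim b^{k}$ uniformly in the two cases $k\ge u$ and $k<u$ (via $v\in\zz\setminus\zz_+$ and the ceiling/floor choice) is exactly the bookkeeping the omitted proof needs, and the containment $\bigcup_{Q\in A_j}Q\subset B^{(j)}$ with $|B^{(j)}|\sim b^{(k\vee i)+j}$ follows from Lemma~\ref{6l2}(iv) and the quasi-triangle inequality as you indicate.
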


Now we show Proposition \ref{6p1'}.

\begin{proof}[Proof of Proposition \ref{6p1'}]
By symmetry, to prove this proposition,
it suffices to show that, for any $f\in\cs'_0(\rn)$,
\begin{align}\label{6e4'}
\|S_\phi(f)\|_{\lv}\ls\|S_\psi(f)\|_{\lv}.
\end{align}
To this end, for any $i\in\zz$, $x\in\rn$ and $y\in x+B_i$, let
$E_{\phi_i}(f)(y):=f\ast \phi_{-i}(y)$
and, for any $k\in\zz$ and $z\in\rn$,
$$J_{\psi_k}(f)(z):=\lf[b^{-k}\int_{z+B_k}
\lf|f\ast\psi_{-k}(y)\r|^2\,dy\r]^{1/2}.$$
Moreover, for any $Q\in \mathcal{Q}$ with $\mathcal{Q}$
as in Lemma \ref{6l2}, let
$\ell(Q)$ denote the level of $Q$, and $v$ and
$u$ be the constants as in Lemma \ref{6l2}(iv).
Then, from Lemma
\ref{6l1} and the Lebesgue dominated convergence theorem,
it follows that, for any $i\in\zz$, $x\in\rn$ and $y\in x+B_i$,
\begin{align}\label{6e5'}
E_{\phi_i}(f)(y)&=\sum_{k\in\zz}f\ast\psi_{-k}\ast\phi_{-k}\ast\phi_{-i}(y)
=\sum_{k\in\zz}\int_{\rn}f\ast\psi_{-k}(z)\phi_{-k}\ast\phi_{-i}(y-z)\,dz\\
&=\sum_{k\in\zz}\sum_{\gfz{\ell(Q)=\lceil\frac{k-u}{v}\rceil\,if\,k\ge u}
{\ell(Q)=\lfloor\frac{k-u}{v}\rfloor\,if\,k< u}}
\int_{Q}f\ast\psi_{-k}(z)\phi_{-k}\ast\phi_{-i}(y-z)\,dz\noz
\end{align}
in $\cs'(\rn)$. For any $\varphi\in\cs(\rn)$ and $k\in\zz$,
let $\wz{\varphi}_k:=\varphi_{-k}$. By \cite[Lemma 5.4]{blyz10},
we know that, for any $k$, $i\in\zz$ and $x\in\rn$,
$$\lf|\wz{\phi}_k\ast\wz{\phi}_i(x)\r|
\ls b^{-(s+1)|k-i|\frac{\ln \lz_-}{\ln b}}
\frac{b^{(k\vee i)(s+1)\frac{\ln \lz_-}{\ln b}}}{[b^{(k\vee i)}
+\rho(x)]^{(s+1)\frac{\ln \lz_-}{\ln b}+1}}.$$
From this and the fact that $y\in x+B_i$, it follows that
there exists $z_Q\in Q$ such that, for any $k,\ i\in \zz$ and $z\in Q$,
\begin{align}\label{6e6'}
\lf|\wz{\phi}_k\ast\wz{\phi}_i(y-z)\r|
\ls b^{-(s+1)|k-i|\frac{\ln \lz_-}{\ln b}}
\frac{b^{(k\vee i)(s+1)\frac{\ln \lz_-}{\ln b}}}{[b^{(k\vee i)}
+\rho(x-z_{Q})]^{(s+1)\frac{\ln \lz_-}{\ln b}+1}}.
\end{align}
On another hand, note that, for any $k\in\zz$, when $k\ge u$,
$\ell(Q)=\lceil\frac{k-u}{v}\rceil$ and when $k<u$,
$\ell(Q)=\lfloor\frac{k-u}{v}\rfloor$.
Therefore, $B_{v\ell(Q)+u}\subset B_k$.
By this, the H\"{o}lder inequality and Lemma \ref{6l2}(iv),
we find that
$$\lf|\frac1{|Q|}\int_{Q}f\ast\psi_{-k}(y)\,dy\r|
\ls \inf_{z\in Q} J_{\psi_k}(f)(z).$$
From this, \eqref{6e5'}, \eqref{6e6'} and Lemma \ref{6l3'},
we deduce that, for any given $r\in(\frac{\ln b}{\ln b+(s+1)\ln \lz_-},1]$
and any $i\in\zz$, $x\in\rn$ and $y\in x+B_i$,
\begin{align}\label{6e7'}
\lf|E_{\phi_i}(f)(y)\r|
&\ls \sum_{k\in\zz}b^{-(s+1)|k-i|\frac{\ln \lz_-}{\ln b}}
b^{-[k-(k\vee i)](1/r-1)}\\
&\quad\times\lf\{M_{\rm HL}
\lf(\sum_{\gfz{\ell(Q)=\lceil\frac{k-u}{v}\rceil\,if\,k\ge u}
{\ell(Q)=\lfloor\frac{k-u}{v}\rfloor\,if\,k< u}}\inf_{z\in Q}\lf[
J_{\psi_k}(f)(z)\r]^r\mathbf{1}_{Q}\r)(x)\r\}^{1/r}.\noz
\end{align}
Recall that $s$ is as in \eqref{4e1}, we can choose
$r\in(\frac{\ln b}{\ln b+(s+1)\ln \lz_-},\min\{p_-,1\})$
with $p_-$ as in \eqref{2e4}. Thus, from \eqref{6e7'},
we deduce that, for any $x\in\rn$,
\begin{align*}
\lf[S_{\phi}(f)(x)\r]^2
&\ls \sum_{i\in\zz}\Bigg[\sum_{k\in\zz}b^{-(s+1)|k-i|\frac{\ln \lz_-}{\ln b}}
b^{-[k-(k\vee i)](1/r-1)}\\
&\quad\lf.\times\lf\{M_{\rm HL}
\lf(\sum_{\gfz{\ell(Q)=\lceil\frac{k-u}{v}\rceil\,if\,k\ge u}
{\ell(Q)=\lfloor\frac{k-u}{v}\rfloor\,if\,k< u}}\inf_{z\in Q}\lf[
J_{\psi_k}(f)(z)\r]^r\mathbf{1}_{Q}\r)(x)\r\}^{1/r}\r]^2,
\end{align*}
which, combined with the H\"{o}lder inequality
and the fact that $r>\frac{\ln b}{\ln b+(s+1)\ln \lz_-}$,
implies that
\begin{align*}
\lf[S_{\phi}(f)(x)\r]^2
\ls \sum_{k\in\zz}\lf\{M_{\rm HL}
\lf(\lf[J_{\psi_k}(f)\r]^r\r)(x)\r\}^{2/r}.
\end{align*}
Then, by the fact that $r<p_-$
and Lemma \ref{4l3}, we find that
\begin{align*}
\lf\|S_{\phi}(f)\r\|_{\lv}
\ls \lf\|\lf(\sum_{k\in\zz}\lf[J_{\psi_k}(f)\r]^2\r)^{1/2}\r\|_{\lv}
\sim \lf\|S_{\psi}(f)\r\|_{\lv},
\end{align*}
which implies \eqref{6e4'} holds true and
hence completes the proof of Proposition \ref{6p1'}.
\end{proof}

We now prove Theorem \ref{6t1}.

\begin{proof}[Proof of Theorem \ref{6t1}]
Let $\vp\in(0,\fz)^n$, $r\in(\max\{p_+,1\},\fz]$
and $s$ be as in \eqref{4e1}. We first show the necessity of this theorem.
To this end, let $f\in\vh$. Then, from Lemma \ref{6l3},
it follows that $f\in\cs'_0(\rn)$.
On another hand, by Theorem \ref{4t1}, we find that
there exist $\{\lz_i\}_{i\in\nn}\subset\mathbb{C}$
and a sequence of $(\vp,r,s)$-atoms, $\{a_i\}_{i\in\nn}$,
supported, respectively, in
$\{B^{(i)}\}_{i\in\nn}\subset\mathfrak{B}$ such that
$f=\sum_{i\in\nn}\lz_ia_i$ in $\cs'(\rn)$
and
\begin{align*}
\|f\|_{\vh}\sim
\lf\|\lf\{\sum_{i\in\nn}
\lf[\frac{|\lz_i|{\mathbf 1}_{B^{(i)}}}{\|{\mathbf 1}_{B^{(i)}}\|_{\lv}}\r]^
{\underline{p}}\r\}^{1/\underline{p}}\r\|_{\lv}.
\end{align*}
Obviously, for any $i\in\nn$, there exist $l_i\in\zz$ and $x_i\in\rn$
such that $x_i+B_{l_i}=B^{(i)}$.
Moreover, it was proved, in \cite[(6.5)]{lwyy17}
(see also \cite[(5.10)]{lyy17hl}), that, for any $x\in\rn$,
\begin{align*}
S(f)(x)&\le
\sum_{i\in\nn}|\lz_i|S(a_i)(x){\mathbf 1}_{x_i+B_{l_i+q}}(x)
+\sum_{i\in\nn}|\lz_i|S(a_i)(x){\mathbf 1}_{(x_i+B_{l_i+q})^\com}(x)\\
&\ls\lf\{\sum_{i\in\nn}\lf[|\lz_i|S(a_i)(x){\mathbf 1}_{x_i+B_{l_i+q}}(x)\r]
^{\underline{p}}\r\}^{1/\underline{p}}
+\sum_{i\in\nn}\frac{|\lz_i|}{\|{\mathbf 1}_{B^{(i)}}\|_{\lv}}
\lf[\HL({\mathbf 1}_{B^{(i)}})(x)\r]^{\gamma},\noz
\end{align*}
where $q:=u-v+2\omega$ with $u$ and $v$ as in Lemma \ref{6l2},
$\underline{p}$ is as in \eqref{2e4},
$\gamma:=(\frac{\ln b}{\ln \lambda_-}+s+1)
\frac{\ln \lambda_-}{\ln b}$
and $\HL$ denotes the Hardy--Littlewood maximal operator
as in \eqref{3e1}.
From this, the boundedness of $S$ on $L^t(\rn)$ with
$t\in(1,\fz)$ (see \cite[Theorem 3.2]{blyz10}) and
an argument similar to that used in the proof of Theorem \ref{4t1},
we deduce that
$\|S(f)\|_{\lv}\ls\|f\|_{\vh},$
which completes the proof of the necessity of Theorem \ref{6t1}.

Next we prove the sufficiency of this theorem. For this purpose,
let $\psi$ be as in Lemma \ref{6l1}, $f\in\cs'_0(\rn)$ and $S(f)\in\lv$.
Then, by Proposition \ref{6p1'},
we know that $S_{\psi}(f)\in\lv$. Thus, to show the sufficiency of this theorem,
it suffices to show that $f\in\vh$ and
\begin{align}\label{6e6}
\|f\|_{\vh}\ls\|S_{\psi}(f)\|_{\lv}.
\end{align}
To do this, for any $k\in\mathbb{Z}$, let
$\Theta_k:=\{x\in\rn:\ S_{\psi}(f)(x)>2^k\}$ and
$$\mathcal{Q}_k:=\lf\{Q\in\mathcal{Q}:\
|Q\cap\Theta_k|>\frac{|Q|}2\quad{\rm and}\quad
|Q\cap\Theta_{k+1}|\le\frac{|Q|}2\r\}.$$
Obviously, for any $Q\in\mathcal{Q}$,
there exists a unique $k\in\zz$
such that $Q\in\mathcal{Q}_k$.
Let $\{Q_i^k\}_i$ be the set of all \emph{maximal dyadic cubes}
in $\mathcal{Q}_k$,
namely, there exists no $Q\in\mathcal{Q}_k$
such that $Q_i^k\subsetneqq Q$ for any $i$.

Moreover, by Lemmas \ref{6l1}, \ref{6l2} and \ref{6l2'}
and a proof similar to that of the sufficiency of
\cite[Theorem 4.6]{hy} (see also the sufficiency of
\cite[Theorem 4.1]{hlyy} and \cite[Theorem 5.2]{lyy17hl}), we have
$f=\sum_{k\in\zz}\sum_i
\lambda_i^k a_i^k$ in $\cs'(\rn)$,
where, for any $k\in\zz$ and $i$,
$\lik\sim2^k\|{\mathbf 1}_{B_i^k}\|_\lv$ with the
positive equivalence constants independent of $k$ and $i$, and
$a_i^k$ is a $(\vp,r,s)$-atom satisfying,
for any $r\in (\max\{p_+,1\},\fz)$ and
$\gamma\in\zz_+^n$ with $|\gamma|\le s$,
\begin{align*}
\supp a_i^k
\subset B_i^k:=x_{Q_i^k}+B_{v[\ell(Q_i^k)-1]+u+3\omega}\
{\rm with}\ v\ {\rm and}\ u\ {\rm as\ in\ Lemma\ \ref{6l2}(iv)},
\end{align*}
\begin{align*}
\lf\|a_i^k\r\|_{L^r(\rn)}
\le\lf\|{\mathbf 1}_{B_i^k}\r\|_\lv^{-1}\lf|B_i^k\r|^{1/q}
\quad{\rm and}\quad
\int_{\rn}a_i^k(x)x^\gamma\,dx=0.
\end{align*}
By this, Theorem \ref{4t1},
the disjointness of $\{Q_i^k\}_{k\in\mathbb{Z},\,i}$,
Lemma \ref{6l2}(iv), the fact that $|\Qik\cap\Theta_k|\ge\frac{|\Qik|}{2}$
and Lemma \ref{6l1'}, we further conclude that \eqref{6e6} holds true.
This finishes the proof of the sufficiency
and hence of Theorem \ref{6t1}.
\end{proof}

Recall that, for any given dilation $A$,
$\varphi\in\cs(\rn)$, $t\in(0,\fz)$,
$j\in\zz$ and for any $f\in\cs'(\rn)$,
the\emph{ anisotropic Peetre maximal function}
$(\varphi_j^*f)_t$ is defined by setting,
for any $x\in\rn$,
\begin{align*}
\lf(\varphi_j^*f\r)_t(x)
:=\esup_{y\in\rn}\frac{|(\varphi_j\ast f)(x+y)|}
{[1+b^j\rho(y)]^t}
\end{align*}
and the \emph{discrete $g$-function associated with $(\varphi_j^*f)_t$}
is defined by setting, for any $x\in\rn$,
\begin{align*}
g_{t,\ast}(f)(x)
:=\lf\{\sum_{j\in\zz}\lf[\lf(\varphi_j^*f\r)_t(x)\r]^2\r\}^{1/2},
\end{align*}
where, for any $j\in\zz$, $\varphi_j(\cdot):=b^j\varphi(A^j\cdot)$.

The following estimate is just \cite[Lemma 6.9]{lwyy17},
which plays a key role in the proof of Theorem \ref{6t2}
and originates from \cite{lyy17I,u12}.

\begin{lemma}\label{6l4}
Let $\varphi\in\cs(\rn)$ satisfy the same assumptions
as $\phi$ in Lemma \ref{6l1} with $s$ as in \eqref{4e1}.
Then, for any given $N_0\in\nn$ and $r\in(0,\fz)$, there exists a positive
constant $C_{(N_0,r)}$, which depends on $N_0$ and $r$, such that,
for any $t\in(0,N_0)$, $\ell\in\zz$, $f\in\cs'(\rn)$ and $x\in\rn$,
\begin{align*}
\lf[\lf(\varphi^*_\ell f\r)_t(x)\r]^r
\le C_{(N_0,r)}\sum_{k\in\zz_+}b^{-kN_0r}b^{k+\ell}
\int_\rn\frac{|(\varphi_{k+\ell}\ast f)(y)|^r}
{[1+b^\ell\rho(x-y)]^{tr}}\,dy.
\end{align*}
\end{lemma}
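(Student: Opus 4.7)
The plan is to adapt the Ullrich--Rychkov scheme from \cite{u12} (and its anisotropic variants in \cite{lyy17I,lwyy17}), combining the Calder\'on reproducing formula of Lemma \ref{6l1} with Bownik's convolution kernel decay bounds and the classical sup-trick that promotes the first power of $|\varphi_{k+\ell}\ast f|$ inside the integrand to an $r$-th power. First I invoke Lemma \ref{6l1} with $\phi:=\varphi$ to obtain $\psi\in\cs(\rn)$ with $\supp\widehat\psi$ compact and bounded away from $\vec0_n$ satisfying $\sum_{j\in\zz}\widehat\psi((A^\ast)^j\xi)\widehat\varphi((A^\ast)^j\xi)=1$ for every $\xi\neq\vec0_n$. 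Multiplying this identity by $\widehat{\varphi_\ell}(\xi)$ and inverting the Fourier transform produces in $\cs'(\rn)$ the representation
\begin{align*}
\varphi_\ell\ast f(z)=\sum_{k\in\zz}K_{k,\ell}\ast\lf(\varphi_{k+\ell}\ast f\r)(z),\hs K_{k,\ell}:=\varphi_\ell\ast\psi_{k+\ell}.
\end{align*}

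The second step is a kernel estimate. Using that $\psi$ has all moments vanishing (since $\widehat\psi$ vanishes near $\vec0_n$) and that $\varphi$ has $s$ vanishing moments with $s$ as in \eqref{4e1}, I apply the anisotropic convolution decay bound of \cite[Lemma 5.4]{blyz10} (already invoked in the proof of Proposition \ref{6p1'} above) to get, for any $M\in(0,\fz)$, some $C_M\in(0,\fz)$ such that
\begin{align*}
\lf|K_{k,\ell}(u)\r|\le C_M\,b^{-|k|(s+1)\frac{\ln\lz_-}{\ln b}}\,b^{\ell+(k\vee 0)}\lf[1+b^{\ell+(k\vee 0)}\rho(u)\r]^{-M}.
\end{align*}
For $k\in\zz_+$ this specializes to the desired scaling $b^{-k(s+1)\ln\lz_-/\ln b}\,b^{k+\ell}\lf[1+b^{k+\ell}\rho(u)\r]^{-M}$, and the inequality $[1+b^{k+\ell}\rho(u)]^{-M}\le[1+b^\ell\rho(u)]^{-M}$ lets me replace $b^{k+\ell}$ by $b^\ell$ in the weight whenever convenient.

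Third, I substitute the kernel estimate into the reproducing formula at $z=x+y$ and apply the Rychkov--Ullrich sup-trick: write $|\varphi_{k+\ell}\ast f(z)|=|\varphi_{k+\ell}\ast f(z)|^r\cdot|\varphi_{k+\ell}\ast f(z)|^{1-r}$ and bound the $(1-r)$-power by $C\,[(\varphi_\ell^\ast f)_t(x)]^{1-r}[1+b^\ell\rho(x-z)]^{t(1-r)}$ via the quasi-triangle inequality for $\rho$ (shifting from $z$ to $x$ inside the Peetre supremum). Plugging into the integral, dividing by $[1+b^\ell\rho(y)]^t$, choosing $M$ with $M-t(1-r)\ge tr$, and taking the supremum over $y\in\rn$, I arrive at
\begin{align*}
\lf(\varphi_\ell^\ast f\r)_t(x)\ls\lf[\lf(\varphi_\ell^\ast f\r)_t(x)\r]^{1-r}\sum_{k\in\zz_+}b^{-kN_0}b^{k+\ell}\int_\rn\frac{|\varphi_{k+\ell}\ast f(y)|^r}{[1+b^\ell\rho(x-y)]^{tr}}\,dy,
\end{align*}
where the $k<0$ contributions from the homogeneous sum have been absorbed into the $k=0$ integral using the geometric decay $b^{-|k|(s+1)\ln\lz_-/\ln b}$ together with a scale comparison between Peetre maximal functions at scales $k+\ell$ and $\ell$ (here \eqref{4e1} guarantees that $s$ is large enough, and the bound can be applied with any larger $s$ at the cost of a larger constant). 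Raising both sides to the power $r$ and absorbing $[(\varphi_\ell^\ast f)_t(x)]^{(1-r)r}$ to the left yields the claimed inequality, with $C_{(N_0,r)}$ depending on $N_0,r,s,\lz_-,b$.

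The principal obstacle is this absorption step, which is legitimate only if $(\varphi_\ell^\ast f)_t(x)$ is a priori finite. This is handled by the standard Rychkov truncation: first work with the truncated Peetre maximal function $\esup\{|\varphi_\ell\ast f(x+y)|/[1+b^\ell\rho(y)]^t:\ \rho(y)\le R\}$, which is finite because $\varphi_\ell\ast f\in C^\fz(\rn)$ has at most polynomial growth (by the tempered distribution pairing, see Lemma \ref{4l2}); run the absorption argument at level $R<\fz$ where the finiteness justifies moving the term across; and finally pass $R\to\fz$ by monotone convergence. A secondary technical point is the bookkeeping for the $k<0$ terms so that their contribution is genuinely absorbed, together with the choice of $M$ large enough so that $M-t(1-r)\ge tr$; both are possible because $\psi$ and $\varphi$ are Schwartz, so $K_{k,\ell}$ has arbitrarily rapid spatial decay and the geometric factor in $|k|$ dominates any polynomial scale mismatch.
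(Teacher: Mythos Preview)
The paper does not prove this lemma; it quotes it verbatim as \cite[Lemma 6.9]{lwyy17}, noting that the argument originates from \cite{u12} and \cite{lyy17I}. Your overall strategy---the Ullrich--Rychkov scheme---is indeed the one used in those references, so you are on the right track in spirit. However, the sketch has two genuine gaps in execution.

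First, the sup-trick as you state it is incorrect. You claim
\[
\lf|\varphi_{k+\ell}\ast f(z)\r|^{1-r}\le C\,\lf[\lf(\varphi_\ell^\ast f\r)_t(x)\r]^{1-r}\lf[1+b^\ell\rho(x-z)\r]^{t(1-r)},
\]
but for $k\neq0$ the left side involves $\varphi_{k+\ell}\ast f$, whereas the Peetre maximal function on the right is built from $\varphi_\ell\ast f$; these are different convolutions and cannot be compared by the quasi-triangle inequality alone. The valid bound is
\[
\lf|\varphi_{k+\ell}\ast f(z)\r|^{1-r}\le\lf[\lf(\varphi_{k+\ell}^\ast f\r)_t(x)\r]^{1-r}\lf[1+b^{k+\ell}\rho(x-z)\r]^{t(1-r)},
\]
which leaves $(\varphi_{k+\ell}^\ast f)_t(x)$ on the right, not $(\varphi_\ell^\ast f)_t(x)$. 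Consequently the absorption into the left-hand side does not close at a single scale $\ell$. The actual argument in \cite{u12,lyy17I,lwyy17} works instead with the weighted supremum
$P(x):=\sup_{m\in\zz_+}b^{-mN_0}(\varphi_{m+\ell}^\ast f)_t(x)$,
proves a self-referential inequality $P(x)\le C\,P(x)^{1-r}\cdot(\text{target RHS})$, and only then absorbs; since $(\varphi_\ell^\ast f)_t(x)\le P(x)$, the lemma follows.

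Second, your use of the two-sided reproducing formula from Lemma~\ref{6l1} forces you to deal with $k<0$, and the claim that these terms ``absorb into the $k=0$ integral'' via ``a scale comparison between Peetre maximal functions at scales $k+\ell$ and $\ell$'' is not justified: no such direct comparison exists (same obstruction as above). The cited references avoid this entirely by constructing a \emph{one-sided} identity from the Tauberian hypothesis $|\widehat\varphi(\xi)|\ge C$ on the annulus: one builds Schwartz functions $\mu_k$ with seminorms $O(b^{-kN_0})$ such that $\varphi_\ell\ast f=\sum_{k\in\zz_+}\mu_{k,\ell}\ast(\varphi_{k+\ell}\ast f)$, so that only $k\ge0$ appears from the outset. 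A related minor point: the arbitrary decay $b^{-kN_0}$ for $k\ge0$ does not come from $\varphi$'s fixed $s$ moments (one cannot ``apply the bound with any larger $s$''), but either from this one-sided construction or, in your two-sided setup, from the fact that $\psi$ has \emph{all} moments vanishing because $\widehat\psi$ is supported away from the origin.
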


We now prove Theorem \ref{6t2}.

\begin{proof}[Proof of Theorem \ref{6t2}]
Let $f\in\vh$. Then, by Lemma \ref{6l3}, we find that
$f\in\cs'_0(\rn)$. Moreover, repeating the proof
of the necessity of Theorem \ref{6t1} with some slight
modifications, we easily know that $g(f)\in \lv$ and
$\lf\|g(f)\r\|_{\lv}\ls\|f\|_{\vh}$.
Therefore, to show Theorem \ref{6t2},
by Theorem \ref{6t1}, it suffices to prove that, for any $f\in\cs'_0(\rn)$
with $g(f)\in\lv$,
\begin{align}\label{6e8}
\|S(f)\|_{\lv}\ls\|g(f)\|_{\lv}.
\end{align}
Indeed, the fact that, for any $f\in\cs'_0(\rn)$, $t\in(0,\fz)$ and
for almost every $x\in\rn$,
$S(f)(x)\ls g_{t,\ast}(f)(x)$, implies that, to show \eqref{6e8}, we only need to prove that,
for any $f\in\cs'_0(\rn)$ and some $t\in(\frac1{\min\{p_-,2\}},\fz)$,
\begin{align}\label{6e9}
\lf\|g_{t,*}(f)\r\|_{\lv}\ls\lf\|g(f)\r\|_{\lv}.
\end{align}

Next we show \eqref{6e9}. Indeed, let $\varphi\in\cs(\rn)$
satisfy the same assumptions
as $\phi$ in Lemma \ref{6l1} with $s$ as in \eqref{4e1}.
Notice that $t\in(\frac1{\min\{p_-,2\}},\fz)$.
Thus, there exists an $r_0\in\lf(0,{\min\{p_-,2\}}\r)$
such that $t\in(\frac1{r_0},\fz)$. Fix $N_0\in(\frac1{r_0},\fz)$.
From this, Lemma \ref{6l4} and the Minkowski inequality,
we deduce that, for any $x\in\rn$,
\begin{align*}
g_{t,*}(f)(x)
\ls\lf\{\sum_{j\in\zz_+}b^{-j(N_0r_0-1)}\lf[\sum_{k\in\zz}b^{{2k}/r_0}
\lf\{\int_\rn\frac{|(\varphi_{j+k}\ast f)(y)|^{r_0}}
{[1+b^k\rho(x-y)]^{tr_0}}\,dy\r\}^{2/r_0}\r]^{r_0/2}\r\}^{1/r_0}.
\end{align*}
This, combined with Lemma \ref{3l3}, implies that
\begin{align*}
&\lf\|g_{t,*}(f)\r\|_{\lv}^{r_0\underline{p}}\\
&\hs\ls\sum_{j\in\zz_+}b^{-j(N_0r_0-1)\underline{p}}
\lf\|\lf\{\sum_{k\in\zz}b^{2k/r_0}
\lf[\sum_{i\in\zz_+}b^{-itr_0}\int_{E_{k,i}(\cdot)}
\lf|(\varphi_{j+k}\ast f)(y)\r|^{r_0}\,dy\r]^{2/r_0}\r\}^{r_0/2}\r\|
_{L^{\vp/r_0}(\rn)}^{\underline{p}},
\end{align*}
where $E_{k,i}(\cdot)$ denotes the set
$\{z\in\rn:\ \rho(\cdot-z)<b^{-k}\}$ when $i=0$,
or the set $\{z\in\rn:\ b^{i-k-1}\le\rho(\cdot-z)<b^{i-k}\}$ when $i\in\nn$.
Therefore, by the Minkowski inequality again and Lemma \ref{4l3},
we find that \eqref{6e9} holds true and hence
complete the proof of Theorem \ref{6t2}.
\end{proof}

Applying Theorems \ref{6t1} and \ref{6t2}, we next show Theorem \ref{6t3}.

\begin{proof}[Proof of Theorem \ref{6t3}]
To prove this theorem, it suffices to show the necessity.
Indeed, the sufficiency of Theorem \ref{6t3} follows from Theorem \ref{6t1}
and the fact that, for any $f\in\cs'(\rn)$ and $x\in\rn$,
$S(f)(x)\ls g_\lz^{\ast}(f)(x)$.

To prove the necessity, let $f\in\vh$. Then, by Lemma \ref{6l3},
we know that $f\in\cs'_0(\rn)$. From the assumption that
$\lambda\in(1+\frac{2}{{\min\{p_-,2\}}}, \fz)$,
it follows that there exists some
$t\in(\frac1{{\min\{p_-,2\}}},\fz)$ such that $\lz\in(1+2t,\fz)$.
Assume that $\varphi\in\cs(\rn)$ satisfies the same assumptions
as $\phi$ in Lemma \ref{6l1} with $s$ as in \eqref{4e1}. Then, for any $x\in\rn$,
we have $g_\lambda^\ast(f)(x)\ls g_{t,*}(f)(x),$
which, together with \eqref{6e9} and Theorem \ref{6t2},
implies that $g_\lambda^\ast(f)\in\lv$ and
$\|g_\lambda^\ast(f)\|_{\lv}
\ls\|f\|_{\vh}.$
This finishes the proof of Theorem \ref{6t3}.
\end{proof}

\section{Dual spaces\label{s7}}

In this section, we give the dual space of $\vh$.
More precisely, as an application of the atomic and finite atomic
characterizations of $\vh$ obtained, respectively, in Theorems \ref{4t1} and \ref{5t1},
we prove that the dual space of $\vh$ is the anisotropic mixed-norm
Campanato space $\mathcal{L}^{A}_{\vec{p},\,r,\,s}(\mathbb{R}^n)$ with
$r\in[1,\fz)$ and $s$ as in \eqref{4e1}.

For this purpose, we first introduce the anisotropic mixed-norm
Campanato space $\lq$.

\begin{definition}\label{7d1}
Let $A$ be a given dilation, $\vp\in(0,\fz]^n,\,q\in[1,\fz]$ and $s\in\zz_+$.
The \emph{anisotropic mixed-norm Campanato space} $\lq$ is defined to be
the set of all measurable functions $f$ such that, when $q\in[1,\fz)$,
$$\|f\|_{\lq}:=\sup_{B\in\mathfrak{B}}\inf_{P\in\cp_s(\rn)}
\frac{|B|}{\|{\mathbf 1}_B\|_{\lv}}\lf[\frac1{|B|}
\int_B\lf|f(x)-P(x)\r|^q\,dx\r]^{1/q}<\fz$$
and
$$\|f\|_{\mathcal{L}^{A}_{\vec{p},\,\fz,\,s}(\rn)}
:=\sup_{B\in\mathfrak{B}}\inf_{P\in\cp_s(\rn)}
\frac{|B|}{\|{\mathbf 1}_B\|_{\lv}}\lf\|f-P\r\|_{L^{\fz}(B)}<\fz,$$
where $\mathfrak{B}$ is as in \eqref{2e1}.
\end{definition}

\begin{remark}\label{7r1}
\begin{enumerate}
\item[{\rm (i)}]
Obviously, $\|\cdot\|_{\lq}$ is a seminorm and
$\cp_s(\rn)\subset \lq$. Indeed, $\|f\|_{\lq}=0$ if and only if $f\in\cp_s(\rn)$.
Therefore, if we identify $f_1$ with $f_2$ when $f_1-f_2\in\cp_s(\rn)$, then
$\lq$ becomes a Banach space. Throughout this article, we always identify
$f\in\lq$ with $\{f+P:\, P\in\cp_s(\rn)\}$.
\item[{\rm (ii)}]
Notice that, when $\vp:=(\overbrace{p,\ldots,p}^{n\ \rm times})$
with some $p\in(0,1]$, then, for any $B\in\mathfrak{B}$,
$\|{\mathbf 1}_B\|_{\lv}=|B|^{1/p}$. We should point out that,
in this case, the Campanato space $\lq$,
in Definition \ref{7d1}, is just the anisotropic Campanato space
$C^{1/p-1}_{q,s}(\rn)$ introduced by Bownik in
\cite[p.\,50, Definition 8.1]{mb03}.
\item[{\rm (iii)}]
Let $A$ be as in \eqref{2e5}. Then the
space $\lq$ is just $\mathcal{L}_{\vp,\,q,\,s}^{\va}(\rn)$ introduced
by Huang et al. in \cite[Definition 3.1]{hlyy18},
which includes the classical isotropic
Campanato space $L_{\frac1p-1,\,q,\,s}(\rn)$
of Campanato \cite{c64} and the
space $\mathop{\mathrm{BMO}}(\rn)$ of John and Nirenberg \cite{jn61}
as special cases (see \cite[Remark 3.2(ii)]{hlyy18}).
\end{enumerate}
\end{remark}

To give the dual space of $\vh$, we also need the following
technical lemma with the details omitted.

\begin{lemma}\label{7l1}
Let $\vp\in (0,1]^n$. Then, for any $\{\lz_i\}_{i\in\nn}\subset\mathbb{C}$
and $\{B^{(i)}\}_{i\in\nn}\subset\mathfrak{B}$,
$$\sum_{i\in\nn}|\lz_i|\le \lf\|\lf\{\sum_{i\in\nn}
\lf[\frac{|\lz_i|{\mathbf 1}_{B^{(i)}}}{\|{\mathbf 1}_{B^{(i)}}\|_{\lv}}\r]^
{\underline{p}}\r\}^{1/{\underline{p}}}\r\|_{\lv},$$
where $\underline{p}$ is as in \eqref{2e4}.
\end{lemma}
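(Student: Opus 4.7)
The plan is to establish the inequality via a pointwise reduction followed by a reverse Minkowski inequality on $\lv$. For each $i\in\nn$, set
$$F_i:=\frac{|\lz_i|{\mathbf 1}_{B^{(i)}}}{\|{\mathbf 1}_{B^{(i)}}\|_{\lv}},$$
so that the homogeneity of $\|\cdot\|_{\lv}$ (Lemma \ref{3l3}) gives $\|F_i\|_{\lv}=|\lz_i|$, and the right-hand side of the asserted inequality equals $\|(\sum_{i\in\nn}F_i^{\underline{p}})^{1/\underline{p}}\|_{\lv}$. Since $\vp\in(0,1]^n$ forces $\underline{p}\in(0,\min\{p_-,1\})=(0,p_-)\subset(0,1)$, subadditivity of $t\mapsto t^{\underline{p}}$ on $[0,\fz)$ yields the pointwise bound
$$\sum_{i\in\nn}F_i(x)\le \lf(\sum_{i\in\nn}[F_i(x)]^{\underline{p}}\r)^{1/\underline{p}}$$
for almost every $x\in\rn$. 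By the monotonicity of $\|\cdot\|_{\lv}$ on non-negative functions, it therefore suffices to prove
$$\sum_{i\in\nn}|\lz_i|=\sum_{i\in\nn}\|F_i\|_{\lv}\le \lf\|\sum_{i\in\nn}F_i\r\|_{\lv}.$$

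The main step is the following reverse Minkowski inequality on $\lv$: for $\vp\in(0,1]^n$ and any family $\{g_i\}_{i\in\nn}$ of non-negative measurable functions on $\rn$,
\begin{align}\label{plan:revmink}
\sum_{i\in\nn}\|g_i\|_{\lv}\le\lf\|\sum_{i\in\nn}g_i\r\|_{\lv}.
\end{align}
I would prove \eqref{plan:revmink} by induction on the ambient dimension $n$. The base case $n=1$ is the classical reverse Minkowski inequality for $L^p$ with $p\in(0,1]$ on non-negative functions, a standard fact going back to Hardy--Littlewood--P\'olya. For the inductive step, writing
$$\lf\|\sum_{i\in\nn}g_i\r\|_{\lv}^{p_n}=\int_{\mathbb R}\lf\|\sum_{i\in\nn}g_i(\cdot,x_n)\r\|_{L^{(p_1,\ldots,p_{n-1})}(\rr^{n-1})}^{p_n}\,dx_n,$$
I would apply the inductive hypothesis inside the integrand for each fixed $x_n\in\rr$, and then invoke the one-dimensional reverse Minkowski inequality in the outer $x_n$-variable (valid because $p_n\in(0,1]$). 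Taking the $p_n$-th root, which is order-preserving on $[0,\fz)$, closes the induction.

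The main obstacle is establishing \eqref{plan:revmink}, but the induction is formally parallel to the usual derivation of Minkowski's inequality in the Banach range $\vp\in[1,\fz]^n$; only the direction of every one-dimensional sub-step is reversed, which is exactly what the hypothesis $\vp\in(0,1]^n$ permits. Chaining the pointwise bound, the monotonicity of $\|\cdot\|_{\lv}$, the reverse Minkowski inequality \eqref{plan:revmink}, and the normalization $\|F_i\|_{\lv}=|\lz_i|$ then yields the stated inequality.
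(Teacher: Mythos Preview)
Your argument is correct. The paper omits the proof of this lemma entirely (``with the details omitted''), so there is nothing to compare your approach against; your route via the pointwise $\ell^{\underline{p}}$--$\ell^1$ inequality followed by the reverse Minkowski inequality on $\lv$ for $\vp\in(0,1]^n$ is a natural and complete way to fill in those details.
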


\begin{lemma}\label{7l2}
Let $\vp\in (0,1]^n$, $r\in(1,\fz]$ and $s$ be as in \eqref{4e1}.
Then, for any continuous linear functional $L$ on $\vh=\vah$,
\begin{align}\label{7x2}
\lf\|L\r\|_{(\vah)^*}:=\sup\lf\{|L(f)|:\ \|f\|_{\vah}\le 1\r\}
=\sup\lf\{|L(a)|:\ a\ is\ any\ (\vp,r,s){\text-}atom\r\},
\end{align}
here and thereafter, $(\vah)^*$ denotes the dual space of $\vah$.
\end{lemma}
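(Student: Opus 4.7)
The plan is to establish the claimed equality by proving each inequality separately. For the direction $\sup\{|L(a)|:a\text{ a }(\vp,r,s)\text{-atom}\}\le\|L\|_{(\vah)^*}$, the trivial one-term decomposition $a=1\cdot a$ of any $(\vp,r,s)$-atom $a$ supported in $B\in\mathfrak{B}$, substituted into Definition \ref{4d2}, yields
$$\|a\|_{\vah}\le\lf\|\frac{\mathbf{1}_B}{\|\mathbf{1}_B\|_{\lv}}\r\|_{\lv}=1,$$
so $|L(a)|\le\|L\|_{(\vah)^*}\|a\|_{\vah}\le\|L\|_{(\vah)^*}$ uniformly over atoms.

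For the reverse inequality, I would fix $f\in\vah$ and $\vaz\in(0,\fz)$, and invoke Theorem \ref{4t1} to pick an atomic decomposition $f=\sum_{i\in\nn}\lz_ia_i$ in $\cs'(\rn)$ whose defining quasi-norm (as in Definition \ref{4d2}) is at most $\|f\|_{\vah}+\vaz$. Since $\vp\in(0,1]^n$, Lemma \ref{7l1} then guarantees the convergence of the scalar series $\sum_{i\in\nn}|\lz_i|\le\|f\|_{\vah}+\vaz$. Once one knows that $L(f)=\sum_{i\in\nn}\lz_iL(a_i)$, the estimate
$$|L(f)|\le\lf(\sum_{i\in\nn}|\lz_i|\r)\sup_a|L(a)|\le(\|f\|_{\vah}+\vaz)\sup_a|L(a)|$$
closes the argument upon letting $\vaz\to 0$ and then passing to the supremum over $f$ with $\|f\|_{\vah}\le 1$.

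The main obstacle is to upgrade the decomposition from convergence in $\cs'(\rn)$ to convergence in the quasi-norm of $\vah$, so that the continuity of $L$ may be applied termwise. To do this I would bound the partial-sum tails, for any $M>N\ge 1$, via Definition \ref{4d2} by
$$\lf\|\sum_{i=N+1}^{M}\lz_ia_i\r\|_{\vah}\le\lf\|\lf\{\sum_{i=N+1}^{M}\lf[\frac{|\lz_i|\mathbf{1}_{B^{(i)}}}{\|\mathbf{1}_{B^{(i)}}\|_{\lv}}\r]^{\underline{p}}\r\}^{1/\underline{p}}\r\|_{\lv},$$
and then invoke Lebesgue's dominated convergence theorem on $\lv$: the tail functions decrease pointwise to $0$ and are dominated by the full series, which lies in $\lv$ by the choice of decomposition. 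Hence the partial sums form a Cauchy sequence in $\vh=\vah$; by Proposition \ref{2p1} (completeness of $\vh$) and Lemma \ref{4l2} (the continuous embedding $\vh\hookrightarrow\cs'(\rn)$), their limit in $\vah$ must coincide with the $\cs'(\rn)$-limit $f$. This legitimizes $L(f)=\sum_{i\in\nn}\lz_iL(a_i)$ and completes the proof; the only subtle ingredient is the tail estimate above, which is powered by the $\underline{p}$-quasi-sublinearity hidden in Definition \ref{4d2} together with dominated convergence on the quasi-Banach space $\lv$.
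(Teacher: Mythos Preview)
Your proof is correct and follows essentially the same strategy as the paper: both directions are handled identically, with the easy inequality coming from $\|a\|_{\vah}\le 1$ and the reverse from Lemma \ref{7l1} applied to a near-optimal atomic decomposition. The only difference is that the paper asserts the decomposition converges in $\vh$ by reference to ``an argument similar to that used in the proof of Theorem \ref{4t1}'', whereas you make this step explicit via the tail estimate and dominated convergence in $\lv$, together with Proposition \ref{2p1} and Lemma \ref{4l2}; your treatment of this point is in fact cleaner than the paper's.
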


\begin{proof}
For any $(\vp,r,s)$-atom $a$, one may easily show that
$\|a\|_{\vah}\le 1$.
Therefore,
\begin{align}\label{7x1}
\sup\lf\{|L(a)|:\,a\ {\rm is\ any}\ (\vp,r,s){\text-}{\rm atom}\r\}
\le \sup\lf\{|L(f)|:\,\|f\|_{\vah}\le 1\r\}.
\end{align}

Conversely, let $f\in\vh$ and $\|f\|_{\vah}\le 1$.
Then, for any $\varepsilon\in(0,\fz)$, from an argument similar to that used
in the proof of Theorem \ref{4t1},
it follows that there exist $\{\lz_i\}_{i\in\nn}\subset\mathbb{C}$
and a sequence of $(\vp,r,s)$-atoms, $\{a_i\}_{i\in\nn}$,
supported, respectively, in
$\{B^{(i)}\}_{i\in\nn}\subset\mathfrak{B}$ such that
$$
f=\sum_{i\in\nn}\lz_ia_i
~~{\rm in}~~\vh
\quad {\rm and}\quad
\lf\|\lf\{\sum_{i\in\nn}
\lf[\frac{|\lz_i|{\mathbf 1}_{B^{(i)}}}{\|{\mathbf 1}_{B^{(i)}}\|_{\lv}}\r]^
{\underline{p}}\r\}^{1/{\underline{p}}}\r\|_{\lv}\le 1+\varepsilon.
$$
By this, the continuity of $L$ and Lemma \ref{7l1},
we further conclude that
\begin{align*}
|L(f)|&\le \sum_{i\in\nn}|\lz_i||L(a_i)|
\le\lf[\sum_{i\in\nn}|\lz_i|\r]
\sup\lf\{|L(a)|:\,a\ {\rm is\ any}\ (\vp,r,s){\text-}{\rm atom}\r\}\\
&\le(1+\varepsilon)\sup\lf\{|L(a)|:\,a\ {\rm is\ any}\ (\vp,r,s){\text-}{\rm atom}\r\}.
\end{align*}
This, together with the arbitrariness of $\varepsilon\in(0,\fz)$
and \eqref{7x1}, implies that \eqref{7x2} holds  true
and hence finishes the proof of Lemma \ref{7l2}.
\end{proof}

The following conclusion is the main result of this section,
which gives the dual space of $\vh$.

\begin{theorem}\label{7t1}
Let $\vp,\,r$ and $s$ be as in Lemma \ref{7l2}.
Then the dual space of $\vh$, denoted by $(\vh)^*$,
is $\lr$ in the following sense:
\begin{enumerate}
\item[{\rm (i)}] Let $f\in\lr$. Then the linear functional
\begin{align}\label{7v1}
L_f:\,g\longmapsto L_f(g):=\int_{\rn}f(x)g(x)\,dx,
\end{align}
initially defined for any $g\in\vfah$, has a bounded extension to $\vh$.

\item[{\rm (ii)}] Conversely, any continuous linear functional on $\vh$
arises as in \eqref{7v1} with a unique $f\in\lr$.
\end{enumerate}
Moreover, $\|f\|_{\lr}\sim \|L_f\|_{(\vh)^*}$,
where the positive equivalence constants are independent of $f$.
\end{theorem}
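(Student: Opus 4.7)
The plan is to establish parts (i) and (ii) separately, leveraging the atomic characterization (Theorem \ref{4t1}), the finite atomic characterization (Theorem \ref{5t1}) and the preparatory Lemmas \ref{7l1} and \ref{7l2}. The overall strategy parallels Bownik's classical Campanato duality argument for $H^p_A(\rn)$ in \cite[p.\,51, Theorem 8.3]{mb03}, but the mixed-norm setting forces us to systematically replace $|B|^{1/p}$-normalizations by $\|\mathbf{1}_B\|_{\lv}$-normalizations throughout.

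For part (i), I would fix $f\in\lr$ and define the pairing $L_f(g):=\int_{\rn}f(x)g(x)\,dx$ initially on the dense subspace $\vfah$. The key atom-level estimate is: for every $(\vp,r,s)$-atom $a$ supported in some $B\in\mathfrak{B}$ and every $P\in\cp_s(\rn)$, the vanishing moments of $a$ combined with H\"older's inequality yield
\begin{align*}
|L_f(a)|=\lf|\int_B a(x)[f(x)-P(x)]\,dx\r|\le\|a\|_{L^r(\rn)}\,\|f-P\|_{L^{r'}(B)}\ls\|f\|_{\lr},
\end{align*}
using the atom size bound $\|a\|_{L^r(\rn)}\le|B|^{1/r}/\|\mathbf{1}_B\|_{\lv}$, the Campanato norm of $f$, and the identity $|B|^{1/r}|B|^{1/r'}=|B|$. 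Passing to finite combinations via Lemma \ref{7l1}, then invoking Theorem \ref{5t1}, gives $|L_f(g)|\ls\|f\|_{\lr}\|g\|_{\vh}$ on $\vfah$, which extends by density (truncating atomic decompositions from Theorem \ref{4t1}) to all of $\vh$.

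For part (ii), I would build the representing function $f$ ball by ball. Given $L\in(\vh)^*$ and $B\in\mathfrak{B}$, let $L^r_s(B)$ denote the subspace of $L^r$-functions supported in $B$ with vanishing moments up to order $s$; by Lemma \ref{7l2}, each such $g$, after rescaling by $\|g\|_{L^r(B)}\|\mathbf{1}_B\|_{\lv}/|B|^{1/r}$, is a $(\vp,r,s)$-atom, so $L$ restricts to a bounded functional on $L^r_s(B)$ with operator norm at most $\|L\|_{(\vh)^*}\|\mathbf{1}_B\|_{\lv}/|B|^{1/r}$. Hahn--Banach extends this restriction to all of $L^r(B)$ preserving the norm, and Riesz representation produces $f_B\in L^{r'}(B)$ with $\|f_B\|_{L^{r'}(B)}\ls\|L\|_{(\vh)^*}\|\mathbf{1}_B\|_{\lv}/|B|^{1/r}$. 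A routine compatibility check, based on the fact that $\cp_s(\rn)$ is precisely the annihilator of $L^r_s(B)$, shows $(f_{B'}-f_B)|_{B}\in\cp_s(\rn)$ whenever $B\subset B'$.

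The main obstacle will be globalizing the family $\{f_B\}_{B\in\mathfrak{B}}$ into a single $f\in\lr$ and verifying $L=L_f$. I would fix an exhausting nested sequence $B^{(1)}\subset B^{(2)}\subset\cdots$ of dilated balls covering $\rn$ and inductively modify $f_{B^{(k+1)}}$ by elements of $\cp_s(\rn)$ so that successive representatives agree on the previous ball; the common extension yields a global $f$, well-defined modulo $\cp_s(\rn)$. The Campanato estimate $\|f\|_{\lr}\ls\|L\|_{(\vh)^*}$ then follows immediately by multiplying the uniform $L^{r'}$-bound on each $f_B$ by $|B|/\|\mathbf{1}_B\|_{\lv}$. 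To verify $L=L_f$, I would check the identity on the dense subspace $\vfah$: for $g\in\vfah$ one may choose a single $B\in\mathfrak{B}$ containing all supports of its finitely many constituent atoms, whence $g\in L^r_s(B)$ and $\int_{\rn}fg=\int_B f_B g=L(g)$, where the first equality uses that $g$ annihilates the polynomial discrepancy $f-f_B$ on $B$. Uniqueness of $f$ modulo $\cp_s(\rn)$ is automatic from part (i): if $L_f\equiv0$, then $f$ annihilates $L^r_s(B)$ for every $B\in\mathfrak{B}$, forcing $f\in\cp_s(\rn)$.
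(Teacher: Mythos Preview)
Your proposal is correct and follows essentially the same route as the paper's proof: the atom-level H\"older estimate combined with Lemma~\ref{7l1} and Theorem~\ref{5t1} for part (i), and for part (ii) the restriction of $L$ to the moment-zero subspace $L^r_0(B)$ (your $L^r_s(B)$), Hahn--Banach plus Riesz representation to obtain local representatives $f_B\in L^{r'}(B)/\cp_s(\rn)$, and globalization along the exhausting chain $\{B_k\}_{k\in\nn}$. The paper's treatment differs only cosmetically, invoking \cite[Theorem 3.10]{hlyy18} and \cite[p.\,52, (8.12)]{mb03} for the details you spell out directly.
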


\begin{remark}\label{7r2}
\begin{enumerate}
\item[{\rm (i)}]
When $\vp$ is as in Remark \ref{7r1}(ii), by Remarks
\ref{2r2}(ii) and \ref{7r1}(ii), we find that Theorem \ref{7t1}
goes back to \cite[p.\,51, Theorem 8.3]{mb03}.
\item[{\rm (ii)}]
By Proposition \ref{6p1} and Remark \ref{7r1}(iii),
we know that, when $A$ is as in \eqref{2e5},
the spaces $\vh$ and $\lr$ are just, respectively,
${H_{\va}^{\vp}(\rn)}$ and
$\mathcal{L}_{\vp,\,r',\,s}^{\va}(\rn)$.
Thus, in this case,
Theorem \ref{7t1} is just \cite[Theorem 3.10]{hlyy18}, which includes
the conclusion that $(H^p(\rn))^*=L_{\frac1p-1,\,r',\,s}(\rn)$, with $p\in(0,1]$,
of Taibleson and Weiss \cite{tw80} and the distinguished duality result
of Fefferman and Stein \cite{fs72}, namely,
$(H^1(\rn))^*={\mathop{\mathrm{BMO}}}(\rn)$, as special cases
(see also \cite[Remark 3.11(i)]{hlyy18}).
\item[{\rm (iii)}]
When $\vp\in(1,\fz)^n$, from Proposition \ref{3p1}, it follows that
$\vh=\lv$ with equivalent norms, which, combined with
\cite[p.\,304, Theorem 1.a)]{bp61}, further implies that,
for any $\vp\in(1,\fz)^n$, $L^{\vp'}(\rn)$ is the dual space of $\vh$.
However, when
$\vp:=(p_1,\ldots,p_n)\in(0,\fz)^n$ with $p_{i_0}\in(0,1]$ and $p_{j_0}\in(1,\fz)$
for some $i_0,j_0\in\{1,\ldots,n\}$, the dual space of $\vh$ is still unclear so far.
\end{enumerate}
\end{remark}

The following equivalence of the spaces $\lq$ is an immediate
consequence of Theorem \ref{7t1} with the details omitted.

\begin{corollary}\label{7c1}
Let $\vp$ and $s$ be as in Theorem \ref{7t1} and $q\in[1,\fz)$.
Then $\lq=\mathcal{L}^{A}_{\vec{p},\,1,\,s_0}(\rn)$ with equivalent quasi-norms,
where $s_0:=\lfloor(\frac1{p_-}-1)\frac{\ln b}{\ln\lambda_-}\rfloor$ with $p_-$
as in \eqref{2e4}.
\end{corollary}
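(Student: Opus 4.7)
The plan is to derive this corollary as an essentially immediate consequence of Theorem~\ref{7t1}, by applying that duality result twice with two different admissible choices of parameters and then comparing through the common dual space $(\vh)^*$.

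First, I would apply Theorem~\ref{7t1} with the choice $r:=q'\in(1,\fz]$ (so that $r'=q$) and with the given $s$, which by hypothesis lies in the range \eqref{4e1}. This identifies $\lq=\mathcal{L}^{A}_{\vec{p},\,q,\,s}(\rn)$ with the dual space $(\vh)^*$ via the integration pairing $f\mapsto L_f$ of \eqref{7v1}, yielding
\begin{equation*}
\|f\|_{\lq}\sim\|L_f\|_{(\vh)^*}
\end{equation*}
for every $f\in\lq$.

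Next, I would apply Theorem~\ref{7t1} a second time, now with $r:=\fz$ (so that $r'=1$) and $s:=s_0$. The choice $s_0$ is admissible in \eqref{4e1} because it is by definition the lower endpoint of the range, and $s_0\in\zz_+$ holds since $\vp\in(0,1]^n$ forces $p_-\le1$, hence $(\tfrac{1}{p_-}-1)\tfrac{\ln b}{\ln\lambda_-}\ge0$. This second application gives
\begin{equation*}
\|f\|_{\mathcal{L}^{A}_{\vec{p},\,1,\,s_0}(\rn)}\sim\|L_f\|_{(\vh)^*}.
\end{equation*}

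Chaining the two norm equivalences through the common quantity $\|L_f\|_{(\vh)^*}$, together with the bijective identification of Theorem~\ref{7t1}(ii), would yield $\lq=\mathcal{L}^{A}_{\vec{p},\,1,\,s_0}(\rn)$ with equivalent quasi-norms. The only mild subtlety to verify is that both identifications proceed through the same integration pairing against elements of $\vfah$, so that the resulting equivalence is compatible with the quotient by polynomials; this is automatic because $s\ge s_0$ implies $\cp_{s_0}(\rn)\subset\cp_s(\rn)$, and every $(\vp,\fz,s)$-atom is also a $(\vp,\fz,s_0)$-atom (having more vanishing moments is a strictly stronger condition). There is no genuine obstacle in this argument, which is why the author presents the result as an immediate corollary with the details omitted.
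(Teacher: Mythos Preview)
Your proposal is correct and matches the paper's intended argument: the paper presents Corollary~\ref{7c1} as an immediate consequence of Theorem~\ref{7t1} with the details omitted, and those details are precisely the two applications of Theorem~\ref{7t1} (with $(r,s)=(q',s)$ and $(r,s)=(\fz,s_0)$) that you chain through the common dual $(\vh)^*$. Your handling of the polynomial-quotient compatibility via $s\ge s_0$ is the right observation to close the argument.
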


Now we prove Theorem \ref{7t1}.

\begin{proof}[Proof of Theorem \ref{7t1}]
Using Theorem \ref{4t1}, Lemma \ref{7l1}, Theorem \ref{5t1},
and repeating the proof of (i) of \cite[Theorem 3.10]{hlyy18}
with some slight modifications, we know that
$\lr\subset (\vh)^*$ and (i) holds true.

We next prove $(\vh)^*\subset\lr$. To this end, by Theorem \ref{4t1},
it suffices to show $(\vah)^* \subset \lr.$
To prove this, for any $B\in\mathfrak{B}$,
where $\mathfrak{B}$ is as in \eqref{2e1}, let
$\pi_B:\ L^1(B)\longrightarrow \cp_s(\rn)$
be the natural projection satisfying,
for any $g\in L^1(B)$ and $q\in\cp_s(\rn)$,
\begin{align*}
\int_B\pi_B(g)(x)q(x)\,dx=\int_Bg(x)q(x)\,dx.
\end{align*}
For any $r\in(1,\fz]$ and $B\in\mathfrak{B}$, let
$$L^r_0(B):=\lf\{g\in L^r(B):\ \pi_B(g)=0\ {\rm and}\ g\
{\rm is\ not\ zero\ almost\ everywhere}\r\},$$
where we identify $L^r(B)$ with all the
$L^r(\rn)$ functions vanishing outside $B$.
Thus, for any $g\in L^r_0(B)$,
$a:=\frac{|B|^{1/r}}{\|{\mathbf 1}_B\|_{\lv}}\|g\|_{L^r(B)}^{-1}g$
is a $(\vp,r,s)$-atom. From this and Lemma \ref{4t1},
we deduce that, for any $L\in(\vh)^*=(\vah)^*$ and $g\in L^r_0(B)$,
\begin{align}\label{7e2}
|L(g)|\le \frac{\|{\mathbf 1}_B\|_{\lv}}{|B|^{1/r}}\|L\|_{(\vah)^*}\|g\|_{L^r(B)},
\end{align}
which implies that $L$ is a bounded linear functional on $L^r_0(B)$.
By the Hahn-Banach theorem (see, for instance, \cite[Theorem 3.6]{ru91})
and an argument similar to that used in the proof of
\cite[Theorem 3.10]{hlyy18} with the anisotropic ball
therein replaced by the dilated ball as in \eqref{2e1},
we further conclude that, for any $r\in(1,\fz]$ and $g\in L^{r}_0(B)$,
there exists a unique $\eta\in L^{r'}(B)/{\cp_s(B)}$ such that
\begin{align*}
L(g)=\int_B g(x)\eta(x)\,dx.
\end{align*}

On another hand, for any $k\in\nn$ and $g\in L^r_0(B_k)$ with $r\in(1,\fz]$,
let $f_k\in L^{r'}(B_k)/{\cp_s(B_k)}$
be the unique element such that
$L(g)=\int_{B_{k}} g(x)f_k(x)\,dx.$
Then we easily know that, for any $i,\,k\in\nn$ with $i<k$, $f_k|_{B_i}=f_i$.
By this and the fact that, for any $g\in\vfah$, there exists some $k_0\in\nn$
such that $g\in L^r_0(B_{k_0})$, we find that, for any $g\in\vfah$,
\begin{align}\label{7e3}
L(g)=\int_{\rn}g(x)f(x)\,dx,
\end{align}
where $f(x)=f_{k_0}(x)$ for any $x\in B_{k_0}$ with $k_0\in\nn$.

Now we only need to prove that $f\in\lr$. Indeed,
from \eqref{7e3} and \eqref{7e2},
it follows that, for any $r\in(1,\fz]$ and $B\in\mathfrak{B}$,
\begin{align}\label{7e1'}
\|f\|_{(L^{r}_0(B))^*}
\le \frac{\|{\mathbf 1}_B\|_{\lv}}{|B|^{1/r}}\|L\|_{(\vah)^*}.
\end{align}
Moreover, by \cite[p.\,52, (8.12)]{mb03}, we have
$\|f\|_{(L^{r}_0(B))^*}=\inf_{P\in\cp_s(\rn)}\|f-P\|_{L^{r'}(B)}$.
This, combined with Definition \ref{7d1} and \eqref{7e1'}, further implies that,
for any $r\in(1,\fz]$,
\begin{align*}
\|f\|_{\lr}
=\sup_{B\in\mathfrak{B}}\frac{|B|^{1/r}}{\|{\mathbf 1}_B\|_{\lv}}\|f\|_{(L^{r}_0(B))^*}
\le \|L\|_{(\vah)^*}<\fz,
\end{align*}
which completes the proof of (ii) and hence of Theorem \ref{7t1}.
\end{proof}

\section{Applications to boundedness of operators \label{s8}}

Let $\gamma\in(0,1]$ and $\mathcal{B}_{\gamma}$ be
a $\gamma$-quasi-Banach space. In this section, as applications,
we first establish a criterion on the
boundedness of $\mathcal{B}_{\gamma}$-sublinear operators from
$\vh$ into a quasi-Banach space.
Then, applying this criterion, we further obtain
the boundedness of anisotropic Calder\'on--Zygmund operators
from $\vh$ to itself [or to $\lv$].

Recall that a complete vector space $\mathcal{B}$, equipped with a quasi-norm
$\|\cdot\|_{\mathcal{B}}$, is called a \emph{quasi-Banach space} if
\begin{enumerate}
\item[{\rm (i)}] $\|\varphi\|_{\mathcal{B}}=0$
if and only if $\varphi$ is the zero element of $\mathcal{B}$;
\item[{\rm (ii)}] there exists a positive constant $C\in[1,\fz)$ such that, for any
$\varphi,\,\phi\in\mathcal{B}$,
$\|\varphi+\phi\|_{\mathcal{B}}
\le C(\|\varphi\|_{\mathcal{B}}+\|\phi\|_{\mathcal{B}}).$
\end{enumerate}

On another hand, for any given $\gamma\in(0,1]$,
a \emph{$\gamma$-quasi-Banach space} ${\mathcal{B}_{\gamma}}$
is a quasi-Banach space equipped with a quasi-norm
$\|\cdot\|_{\mathcal{B}_{\gamma}}$ satisfying that
there exists a constant $C\in[1,\fz)$ such that,
for any $K\in \nn$ and $\{\varphi_i\}_{i=1}^{K}\subset\mathcal{B}_{\gamma}$,
$\|\sum_{i=1}^K \varphi_i\|_{\mathcal{B}_{\gamma}}^{\gamma}\le
C \sum_{i=1}^K \|\varphi_i\|_{\mathcal{B}_{\gamma}}^{\gamma}$
(see \cite{ky14, ylk17, zy08}).
Let $\mathcal{B}_{\gamma}$ be a $\gamma$-quasi-Banach space with
$\gamma\in(0,1]$ and $\mathcal{Y}$ a linear space. An operator
$T$ from $\mathcal{Y}$ to $\mathcal{B}_{\gamma}$ is said to be
$\mathcal{B}_{\gamma}$-\emph{sublinear} if
there exists a positive constant $\wz C$ such that, for any $K\in \nn$,
$\{\mu_{i}\}_{i=1}^K\subset \mathbb{C}$ and $\{\varphi_{i}\}_{i=1}^K\subset\mathcal{Y}$,
\begin{align*}
\lf\|T\lf(\sum_{i=1}^K \mu_i \varphi_{i}\r)\r\|_{\mathcal{B}_{\gamma}}^{\gamma}\le
\wz C\sum_{i=1}^K |\mu_i|^{\gamma}\lf\|T(\varphi_{i})\r\|_{\mathcal{B}_{\gamma}}^{\gamma}
\end{align*}
and, for any $\varphi,\,\phi\in \mathcal{Y}$,
$\|T(\varphi)-T(\phi)\|_{\mathcal{B}_{\gamma}}
\le \wz C\|T(\varphi-\phi)\|_{\mathcal{B}_{\gamma}}$
(see \cite{ky14, ylk17, zy08}).

\begin{remark}\label{8r1}
By Lemma \ref{3l3}, we conclude that, for any $\vp\in(0,\fz)^n$,
both $(\lv,\|\cdot\|_{\lv})$ and $(\vh,\|\cdot\|_{\vh})$ are the
$\underline{p}$-quasi-Banach spaces with $\underline{p}$ as in \eqref{2e4}.
\end{remark}

Applying the finite atomic characterizations of $\vh$ obtained
in Section \ref{s5} (see Theorem \ref{5t1}),
we immediately obtain the following
criterion on the boundedness of sublinear operators from $\vh$
into a quasi-Banach space $\mathcal{B}_{\gamma}$,
whose proof is similar to that of
\cite[Theorem 6.2]{hlyy}; the details are omitted.

\begin{theorem}\label{8t1}
Let $\vp\in (0,\fz)^n$, $r\in(\max\{p_+,1\},\fz]$
with $p_+$ as in \eqref{2e4}, $\gamma\in (0,1]$, $s$ be
as in \eqref{4e1} and $\mathcal{B}_{\gamma}$ a $\gamma$-quasi-Banach space.
If either of the following two statements holds true:
\begin{enumerate}
\item[{\rm (i)}] $r\in(\max\{p_+,1\},\fz)$ and
$T:\ \vfah\to\mathcal{B}_{\gamma}$
is a $\mathcal{B}_{\gamma}$-sublinear operator satisfying that
there exists a positive constant $C$ such that,
for any $f\in \vfah$,
\begin{equation*}
\|T(f)\|_{\mathcal{B}_{\gamma}}\le C\|f\|_{\vfah};
\end{equation*}
\item[{\rm (ii)}]
$T:\ \vfahfz\cap C(\rn)\to\mathcal{B}_{\gamma}$
is a $\mathcal{B}_{\gamma}$-sublinear operator satisfying that
there exists a positive constant $C$ such that,
for any $f\in \vfahfz\cap C(\rn)$,
$\|T(f)\|_{\mathcal{B}_{\gamma}}\le C\|f\|_{\vfahfz},$
\end{enumerate}
then $T$ uniquely extends to a bounded $\mathcal{B}_{\gamma}$-sublinear
operator from $\vh$ into $\mathcal{B}_{\gamma}$.
\end{theorem}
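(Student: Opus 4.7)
The plan is to combine the finite atomic characterization of $\vh$ from Theorem \ref{5t1} with a standard density-and-extension argument. The only substantive input, beyond Theorem \ref{5t1}, is the density of the finite atomic subspace in $\vh$; everything else is routine once the $\gamma$-quasi-Banach and sublinear structure is exploited. Throughout, I would use that, by Remark \ref{8r1}, $\vh$ is itself a $\underline{p}$-quasi-Banach space, so tails of atomic decompositions are controlled via the $\underline{p}$-triangle inequality.

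For case (i), I would argue as follows. Let $f\in\vh$ and apply Theorem \ref{4t1} to produce $\{\lz_i\}_{i\in\nn}\subset\mathbb C$ and $(\vp,r,s)$-atoms $\{a_i\}_{i\in\nn}$ supported in $\{B^{(i)}\}_{i\in\nn}\subset\mathfrak{B}$ with
$$f=\sum_{i\in\nn}\lz_ia_i\quad\text{in }\cs'(\rn)\qquad\text{and}\qquad\lf\|\lf\{\sum_{i\in\nn}\lf[\frac{|\lz_i|{\mathbf 1}_{B^{(i)}}}{\|{\mathbf 1}_{B^{(i)}}\|_{\lv}}\r]^{\underline{p}}\r\}^{1/\underline{p}}\r\|_{\lv}\ls\|f\|_{\vh}.$$
Writing $f_N:=\sum_{i=1}^N\lz_ia_i\in\vfah$ and repeating the estimate \eqref{4e19} from the proof of Theorem \ref{4t1} on the tails $f-f_N=\sum_{i>N}\lz_ia_i$, the dominated-convergence property of $\lv$ yields $f_N\to f$ in $\vh$. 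Next, Theorem \ref{5t1}(i) gives $\|f_N-f_M\|_{\vfah}\sim\|f_N-f_M\|_{\vh}$ for all $N,M\in\nn$, so the hypothesis and the $\mathcal{B}_\gamma$-sublinearity of $T$ yield
$$\|T(f_N)-T(f_M)\|_{\mathcal{B}_\gamma}\ls\|T(f_N-f_M)\|_{\mathcal{B}_\gamma}\ls\|f_N-f_M\|_{\vfah}\sim\|f_N-f_M\|_{\vh}\to 0.$$
Consequently $\{T(f_N)\}_N$ is Cauchy in $\mathcal{B}_\gamma$, and I would define $T(f):=\lim_{N\to\fz}T(f_N)$. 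Independence of this limit from the choice of atomic decomposition follows by applying the same sublinearity estimate to the difference of any two approximating sequences, both of which tend to $f$ in $\vh$; uniqueness of the extension is then immediate from density and continuity, and the bound $\|T(f)\|_{\mathcal{B}_\gamma}\ls\|f\|_{\vh}$ follows by taking $N\to\fz$ in $\|T(f_N)\|_{\mathcal{B}_\gamma}\ls\|f_N\|_{\vfah}\sim\|f_N\|_{\vh}$.

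For case (ii) the scheme is identical once one knows that $\vfahfz\cap C(\rn)$ is dense in $\vh$; the hypothesis together with Theorem \ref{5t1}(ii) then yields $\|T(g)\|_{\mathcal{B}_\gamma}\ls\|g\|_{\vh}$ on this dense subspace, and the same Cauchy-limit procedure delivers the extension. The main obstacle I anticipate lies in this density step: starting from an atomic decomposition of $f$ into $(\vp,\fz,s)$-atoms given by Theorem \ref{4t1} with $r=\fz$, one must mollify each bounded atom, re-normalize it and correct the lost vanishing-moment conditions by subtracting a small linear combination of further continuous atoms supported in a slightly enlarged dilated ball, while simultaneously keeping track of (a) the $L^\fz$ size bound of Definition \ref{4d1}(ii) after renormalization and (b) the $\lv$ quasi-norm of the square function built from the enlarged supports, so that Lemma \ref{6l1'} controls the loss. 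All remaining bookkeeping—well-definedness, sublinearity of the extension, and the final norm bound—is then routine and parallels the proof of \cite[Theorem~6.2]{hlyy}.
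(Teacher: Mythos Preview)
Your approach is correct and is precisely the standard density-and-extension argument the paper has in mind when it defers to \cite[Theorem~6.2]{hlyy}: Theorem~\ref{5t1} converts the hypothesis into a uniform bound in the $\vh$ quasi-norm on the dense subspace, and completeness of $\mathcal{B}_\gamma$ plus the sublinear inequality $\|T(f)-T(g)\|_{\mathcal{B}_\gamma}\ls\|T(f-g)\|_{\mathcal{B}_\gamma}$ does the rest.

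One minor over-complication: in case~(ii) you anticipate having to ``correct the lost vanishing-moment conditions'' after mollifying a $(\vp,\fz,s)$-atom $a$. In fact no correction is needed. If $\varphi\in C_c^\fz(\rn)$ with $\int_\rn\varphi=1$ and $|\gamma|\le s$, then
\[
\int_\rn (a\ast\varphi_k)(x)\,x^\gamma\,dx
=\int_\rn a(y)\int_\rn\varphi_k(x-y)\,x^\gamma\,dx\,dy
=\sum_{\alpha+\beta=\gamma}\binom{\gamma}{\alpha}\lf[\int_\rn\varphi_k(z)z^\alpha\,dz\r]\lf[\int_\rn a(y)y^\beta\,dy\r]=0,
\]
since each factor $\int a(y)y^\beta\,dy$ with $|\beta|\le|\gamma|\le s$ vanishes. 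Thus $a\ast\varphi_k$ is continuous, is supported in the dilated ball $B^{(i)}+B_k$ (which sits inside a fixed $A^m$-dilate of $B^{(i)}$ once $k$ is small enough relative to $\ell_i$), satisfies $\|a\ast\varphi_k\|_{L^\fz}\le\|a\|_{L^\fz}$, and retains all vanishing moments; after the harmless renormalization coming from Lemma~\ref{6l1'} applied to the enlarged ball, it is a continuous $(\vp,\fz,s)$-atom. Dominated convergence in $\lv$ (applied exactly as you do for the tails in case~(i)) then gives the required density of $\vfahfz\cap C(\rn)$ in $\vh$, and the remainder of your argument goes through unchanged.
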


\begin{remark}\label{8r2}
By Remark \ref{2r2}(iv) and Proposition \ref{6p1},
we easily know that Theorem \ref{8t1} with $A$ as
in \eqref{2e5} is just \cite[Theorem 6.2]{hlyy}.
\end{remark}

The following Corollary \ref{8c1}, as a consequence of Theorem \ref{8t1}, extends the
corresponding results of Meda et al. \cite[Corollary 3.4]{msv08},
Grafakos et al. \cite[Theorem 5.9]{gly08} and
Ky \cite[Theorem 3.5]{ky14} (see also \cite[Theorem 1.6.9]{ylk17})
as well as Huang et al. \cite[Corollary 6.3]{hlyy} to the present
setting; the details are omitted.

\begin{corollary}\label{8c1}
Let $\vp\in(0,1]^n$, $r\in(1,\fz]$ and $\gamma$, $s$ and $\mathcal{B}_{\gamma}$ be as
in Theorem \ref{8t1}. If either of the following two statements holds true:
\begin{enumerate}
\item[{\rm (i)}] $r\in(1,\fz)$ and $T$ is a $\mathcal{B}_{\gamma}$-sublinear
operator from $\vfah$ to $\mathcal{B}_{\gamma}$ satisfying
\begin{equation*}
\sup\lf\{\|T(a)\|_{\mathcal{B}_{\gamma}}:\
a\ is\ any\ (\vp,r,s){\text-}atom\r\}<\fz;
\end{equation*}
\item[{\rm(ii)}] $T$ is a $\mathcal{B}_{\gamma}$-sublinear
operator defined on all continuous $(\vp,\fz,s)$-atoms satisfying
$$\sup\lf\{\|T(a)\|_{\mathcal{B}_{\gamma}}:\
a\ is\ any\ continuous\ (\vp,\fz,s){\text-}atom\r\}<\fz,$$
\end{enumerate}
then $T$ uniquely extends to a bounded $\mathcal{B}_{\gamma}$-sublinear
operator from $\vh$ into $\mathcal{B}_{\gamma}$.
\end{corollary}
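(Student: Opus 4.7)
The plan is to reduce Corollary \ref{8c1} to Theorem \ref{8t1} by verifying the hypothesis of the latter, namely, that $T$ is uniformly bounded on $\vfah$ in case (i) or on $\vfahfz\cap C(\rn)$ in case (ii). I describe case (i) in detail; case (ii) proceeds by the same scheme with continuous $(\vp,\fz,s)$-atoms in place of $(\vp,r,s)$-atoms and Theorem \ref{8t1}(ii) invoked in place of Theorem \ref{8t1}(i).

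First, let $f\in\vfah$ admit a finite decomposition $f=\sum_{i=1}^I\lz_ia_i$ into $(\vp,r,s)$-atoms $a_i$ supported in $\{B^{(i)}\}_{i=1}^I\subset\mathfrak{B}$, and set $M:=\sup\{\|T(a)\|_{\mathcal{B}_\gamma}:\, a\text{ is a }(\vp,r,s)\text{-atom}\}$, which is finite by hypothesis. The $\mathcal{B}_\gamma$-sublinearity of $T$ directly yields
\begin{align*}
\|T(f)\|_{\mathcal{B}_\gamma}^\gamma
\le \wz C\sum_{i=1}^I|\lz_i|^\gamma\|T(a_i)\|_{\mathcal{B}_\gamma}^\gamma
\le \wz CM^\gamma\sum_{i=1}^I|\lz_i|^\gamma.
\end{align*}

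Second, exploiting $\vp\in(0,1]^n$, I would invoke Lemma \ref{7l1}, which provides the first-power estimate $\sum_i|\lz_i|\le\|\{\sum_i[|\lz_i|{\mathbf 1}_{B^{(i)}}/\|{\mathbf 1}_{B^{(i)}}\|_{\lv}]^{\underline{p}}\}^{1/\underline{p}}\|_{\lv}$, and upgrade it to a $\gamma$-power bound. Since $\underline{p}\in(0,\min\{p_-,1\})$ is flexible, I would take $\underline{p}\le\gamma$ and use the $\ell^p$-monotonicity $(\sum_i|\lz_i|^\gamma)^{1/\gamma}\le(\sum_i|\lz_i|^{\underline{p}})^{1/\underline{p}}$, together with the reverse-type triangle inequality on $\lv$ for disjointly supported functions that is valid when $\vp\in(0,1]^n$, to obtain $\sum_i|\lz_i|^\gamma\ls\|f\|_{\vfah}^\gamma$ upon passing to a decomposition realising $\|f\|_{\vfah}$ up to a factor $1+\epsilon$. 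Combining this with the previous display produces $\|T(f)\|_{\mathcal{B}_\gamma}\ls M\|f\|_{\vfah}$ for every $f\in\vfah$.

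Finally, Theorem \ref{8t1}(i) applied to this inequality yields the unique bounded $\mathcal{B}_\gamma$-sublinear extension of $T$ from $\vh$ to $\mathcal{B}_\gamma$. The main obstacle I expect is the second step: although Lemma \ref{7l1} supplies the first-power control $\sum_i|\lz_i|\ls\|f\|_{\vfah}$, upgrading it to the $\gamma$-power control $\sum_i|\lz_i|^\gamma\ls\|f\|_{\vfah}^\gamma$ demands a careful coupling between the $\underline{p}$-quasi-Banach structure of $\vh$ (with $\underline{p}\le\gamma$), the special features of $\lv$ under $\vp\in(0,1]^n$, and a judicious choice of a near-optimal finite atomic decomposition. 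The remainder of the argument is routine, relying only on Theorem \ref{8t1} and the $\gamma$-sublinearity of $T$.
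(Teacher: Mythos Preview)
Your reduction to Theorem \ref{8t1} is exactly the intended route, and Step 1 (applying $\mathcal{B}_\gamma$-sublinearity to get $\|T(f)\|_{\mathcal{B}_\gamma}^\gamma\le \wz C M^\gamma\sum_{i=1}^I|\lz_i|^\gamma$) is correct. The gap is in Step 2, and it is more serious than you indicate.

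First, the balls $\{B^{(i)}\}$ supporting the atoms in a finite decomposition are \emph{not} assumed disjoint, so any appeal to a ``reverse-type triangle inequality for disjointly supported functions'' is unfounded. Second, your proposed detour through $(\sum_i|\lz_i|^{\underline p})^{1/\underline p}$ does not work: to bound this quantity by $\|\{\sum_i g_i^{\underline p}\}^{1/\underline p}\|_{\lv}$ (with $g_i:=|\lz_i|\mathbf 1_{B^{(i)}}/\|\mathbf 1_{B^{(i)}}\|_{\lv}$) one would need reverse Minkowski in $L^{\vp/\underline p}(\rn)$, but every component $p_k/\underline p>1$, so that inequality is unavailable.

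The clean way to carry out Step 2 mirrors the proof of Lemma \ref{7l1} at the exponent $\gamma$ rather than $1$. Choose $\underline p\in(0,\gamma]$; then pointwise
\[
\Bigl(\textstyle\sum_i g_i^{\underline p}\Bigr)^{1/\underline p}\ge \Bigl(\textstyle\sum_i g_i^{\gamma}\Bigr)^{1/\gamma},
\]
so, using Lemma \ref{3l3},
\[
\Bigl\|\Bigl(\textstyle\sum_i g_i^{\underline p}\Bigr)^{1/\underline p}\Bigr\|_{\lv}^{\gamma}
\ge \Bigl\|\textstyle\sum_i g_i^{\gamma}\Bigr\|_{L^{\vp/\gamma}(\rn)}.
\]
Now $\vp\in(0,1]^n$ and $\gamma\ge p_+$ force $\vp/\gamma\in(0,1]^n$, whence the genuine reverse Minkowski inequality for non-negative functions (no disjointness needed) yields
\[
\Bigl\|\textstyle\sum_i g_i^{\gamma}\Bigr\|_{L^{\vp/\gamma}(\rn)}
\ge \sum_i\|g_i^{\gamma}\|_{L^{\vp/\gamma}(\rn)}
=\sum_i|\lz_i|^{\gamma}.
\]
Taking the infimum over decompositions and combining with Step 1 gives $\|T(f)\|_{\mathcal B_\gamma}\ls M\,\|f\|_{\vfah}$, and Theorem \ref{8t1}(i) finishes the argument. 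Note that this requires $\gamma\ge p_+$; this is the natural compatibility condition between the target quasi-Banach exponent and $\vp$ that is present (explicitly or implicitly) in the antecedent results \cite{ky14,ylk17,hlyy}, and without it the estimate $\sum_i|\lz_i|^\gamma\ls\|f\|_{\vfah}^\gamma$ can fail even for near-optimal decompositions (take many disjointly supported atoms with equal coefficients).
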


Next, using the obtained boundedness criterion, Theorem \ref{8t1},
we establish the boundedness of anisotropic Calder\'{o}n--Zygmund operators
from $\vh$ to itself [or to $\lv$]. We begin with recalling the
notion of anisotropic convolutional
$\delta$-type Calder\'{o}n--Zygmund operators from \cite{lyy16}.

\begin{definition}\label{8d1}
For any $\delta\in (0,\frac{\ln \lz_+}{\ln b}]$,
a linear operator $T$ is called an \emph{anisotropic convolutional
$\delta$-type Calder\'{o}n--Zygmund operator} if $T$
is bounded on $L^2(\rn)$ with kernel
$k\in \cs'(\rn)$ coinciding with a locally integrable
function on $\rn\setminus\{\vec{0}_n\}$ and satisfying that
there exists a positive constant $C$ such that,
for any $x,\,y\in \rn$ with $\rho(x)>b^{2\omega}\rho(y)$,
\begin{align}\label{8e5}
|k(x-y)-k(x)|\le C\frac{[\rho(y)]^{\delta}}{[\rho(x)]^{1+\delta}}
\end{align}
and, for any $f\in L^2(\rn)$, $T(f)(x):={\rm p.\,v.}\,k\ast f(x)$.
\end{definition}

We then introduce the anisotropic non-convolutional
$\beta$-order Calder\'{o}n--Zygmund operators as follows.
In what follows, for any $\Omega\subset \rn\times\rn$
and $r\in\zz_+$, denote by $C^r(\Omega)$
the set of all functions on $\Omega$ whose derivatives with order
not greater than $r$ are continuous.

\begin{definition}\label{8d2}
Let $\beta\in (0,\fz)$. A linear operator
$T$ is called an \emph{anisotropic non-convolutional
$\beta$-order Calder\'{o}n--Zygmund operator}
if $T$ is bounded on $L^2(\rn)$ and its kernel
$$\mathcal{K}:\ \Theta:=\{(x,y)\in \rn\times\rn:\ x\neq y\}\to \mathbb{C}$$
satisfies that $\mathcal{K}\in C^{\lceil \bz\rceil-1}(\Theta)$ and
there exists a positive constant $C$ such that, for any $\az\in\zz_+^n$ with
$|\alpha|= \lceil \bz\rceil-1$ and $x,\,y,\,z\in \rn$
with $\rho(x-y)>b^{2\omega}\rho(y-z)\neq 0$,
\begin{align}\label{8e4}
&\lf|\lf[\partial^{\az}\mathcal{K}(x,\cdot)\r](y)
-\lf[\partial^{\az}\mathcal{K}(x,\cdot)\r](z)\r|\\
&\hs\le C\frac{[\rho(y-z)]^{\frac{\ln \lz_+}{\ln b}\bz}}
{[\rho(x-y)]^{1+\frac{\ln \lz_+}{\ln b}\bz}}
\min\lf\{[\rho(y-z)]^{-\frac{\ln \lz_+}{\ln b}(\lceil \bz\rceil-1)},\,
[\rho(y-z)]^{-\frac{\ln \lz_-}{\ln b}(\lceil \bz\rceil-1)}\r\}\noz
\end{align}
and, for any $f\in L^2(\rn)$ with compact support and $x\notin\supp f$,
$$T(f)(x)=\int_{\supp f}\mathcal{K}(x,y)f(y)\,dy.$$
\end{definition}

Recall that, for any $m\in\nn$, an operator $T$ is
said to have the \emph{vanishing moments
up to order $m$} if, for any $f\in L^2(\rn)$ with compact support and
satisfying that, for any $\alpha\in\zz_+^n$ with $|\alpha|\le m$,
$\int_{\rn}x^{\alpha}f(x)\,dx=0$, it holds true that $\int_{\rn}x^{\alpha}T(f)(x)\,dx=0$.

One of the main results of this section is stated as follows.

\begin{theorem}\label{8t2}
Let $\delta\in(0,\frac{\ln \lz_+}{\ln b}]$, $\vp\in (0,\fz)^n$
and $p_-\in(\frac1{1+\delta},\fz)$
with $p_-$ as in $\eqref{2e4}$. Assume that $T$ is an anisotropic convolutional
$\dz$-type Calder\'on--Zygmund operator as in Definition \ref{8d1}. Then there
exists a positive constant $C$ such that, for any $f\in \vh$,
\begin{enumerate}
\item[{\rm (i)}]
$\|T(f)\|_{\lv}\le C\|f\|_{\vh}$;
\item[{\rm (ii)}]
$\|T(f)\|_{\vh}\le C\|f\|_{\vh}$.
\end{enumerate}
\end{theorem}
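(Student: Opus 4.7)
The plan is to apply the boundedness criterion Theorem \ref{8t1}, taking the target space $\mathcal{B}_{\underline{p}}:=\lv$ for part (i) and $\mathcal{B}_{\underline{p}}:=\vh$ for part (ii); both are $\underline{p}$-quasi-Banach spaces by Remark \ref{8r1}, and the linear operator $T$ is trivially $\underline{p}$-sublinear. Fix $r\in(\max\{p_+,1\},\fz)$; the standard anisotropic Calder\'on--Zygmund theory, combining the $L^2$-boundedness of $T$ with the kernel regularity \eqref{8e5} (cf.\ \cite{mb03}), yields the $L^r(\rn)$-boundedness of $T$, so $T$ is well defined on $\vfah$. It therefore suffices to verify, for every finite combination $f=\sum_{i=1}^I\lz_ia_i$ of $(\vp,r,s)$-atoms with $a_i$ supported in $B^{(i)}:=x_i+B_{\ell_i}\in\mathfrak{B}$, the estimate $\|T(f)\|_{\mathcal{B}_{\underline{p}}}\ls\|f\|_\vfah$ with constant independent of the decomposition.

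Fix a large integer $\tau\in\nn$ (depending only on the triangle constant $H$ of Definition \ref{2d2}) so that, setting $B^\ast_i:=x_i+B_{\ell_i+\tau}$, the kernel estimate \eqref{8e5} is applicable whenever $x\in(B^\ast_i)^\com$ and $y\in B^{(i)}$. On the inner region $B^\ast_i$, the $L^r$-boundedness of $T$, together with the $L^r$-boundedness of the grand maximal operator $M_N$ for $r>1$ (see \cite[Remark 2.10]{lyy16}) and the atom size condition, gives
\begin{align*}
\lf\|T(a_i)\mathbf{1}_{B^\ast_i}\r\|_{L^r(\rn)}+\lf\|M_N(T(a_i))\mathbf{1}_{B^\ast_i}\r\|_{L^r(\rn)}\ls\frac{|B^{(i)}|^{1/r}}{\|\mathbf{1}_{B^{(i)}}\|_\lv},
\end{align*}
so, after enlarging by a fixed power of $A$, the families $\{T(a_i)\mathbf{1}_{B^\ast_i}\}_i$ and $\{M_N(T(a_i))\mathbf{1}_{B^\ast_i}\}_i$ (up to a uniform constant) satisfy the hypotheses of Lemma \ref{4l4}. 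On the exterior region $(B^\ast_i)^\com$, the cancellation $\int_\rn a_i(y)\,dy=0$ combined with the kernel regularity \eqref{8e5} and the H\"older-derived size bound $\|a_i\|_{L^1(\rn)}\le|B^{(i)}|/\|\mathbf{1}_{B^{(i)}}\|_\lv$ yield, via a direct cancellation argument in the spirit of the proofs of \cite[Theorems 6.4 and 6.5]{hlyy},
\begin{align*}
|T(a_i)(x)|+M_N(T(a_i))(x)\ls\frac{1}{\|\mathbf{1}_{B^{(i)}}\|_\lv}\lf[\HL(\mathbf{1}_{B^{(i)}})(x)\r]^{1+\dz};
\end{align*}
for the $M_N$-part one uses the identity $\varphi_k\ast T(a_i)(y)=\int_\rn a_i(w)(\varphi_k\ast\kappa)(y-w)\,dw$ and reproduces the same cancellation argument for the smoothed kernel $\varphi_k\ast\kappa$, which inherits \eqref{8e5} uniformly in $\varphi\in\cs_N(\rn)$ and $k\in\zz$.

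Writing $\Phi:=|\cdot|$ for part (i) and $\Phi:=M_N$ for part (ii), combining both regional estimates, and using both the sublinearity of $\Phi$ and the elementary inequality $\sum_ic_i\le(\sum_ic_i^{\underline{p}})^{1/\underline{p}}$ (valid for non-negative $c_i$ and $\underline{p}\le1$), one obtains, via the $\underline{p}$-quasi-Banach structure of $\mathcal{B}_{\underline{p}}$,
\begin{align*}
\lf\|T(f)\r\|_{\mathcal{B}_{\underline{p}}}\ls\lf\|\lf[\sum_{i=1}^I\lf|\lz_i\Phi(T(a_i))\mathbf{1}_{B^\ast_i}\r|^{\underline{p}}\r]^{1/\underline{p}}\r\|_\lv+\lf\|\sum_{i=1}^I\frac{|\lz_i|}{\|\mathbf{1}_{B^{(i)}}\|_\lv}\lf[\HL(\mathbf{1}_{B^{(i)}})\r]^{1+\dz}\r\|_\lv.
\end{align*}
Lemma \ref{4l4} bounds the first summand by a multiple of $\|f\|_\vfah$. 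For the second summand, set $f_i:=(|\lz_i|/\|\mathbf{1}_{B^{(i)}}\|_\lv)^{1/(1+\dz)}\mathbf{1}_{B^{(i)}}$; the hypothesis $p_->1/(1+\dz)$ gives $(1+\dz)p_->1$, so that the Fefferman--Stein vector-valued inequality Lemma \ref{4l3} applied to $\{f_i\}$ on $L^{(1+\dz)\vp}$ with exponent $u:=1+\dz$, combined with the rescaling Lemma \ref{3l3}, reduces this summand to $\|\sum_i(|\lz_i|/\|\mathbf{1}_{B^{(i)}}\|_\lv)\mathbf{1}_{B^{(i)}}\|_\lv$, which is in turn dominated by $\|f\|_\vfah$ via the same $\ell^{\underline{p}}$-embedding as above. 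An application of Theorem \ref{8t1} then yields the unique bounded extension of $T$ from $\vh$ into $\mathcal{B}_{\underline{p}}$, simultaneously proving (i) and (ii).

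The principal technical obstacle is the exterior pointwise estimate for the grand maximal function $M_N(T(a_i))$: one has to verify carefully that the convolutional smoothing $\varphi_k\ast\kappa$ of the singular kernel preserves the $\dz$-type regularity \eqref{8e5} uniformly in $\varphi\in\cs_N(\rn)$ and $k\in\zz$, so that the decay rate $(1+\dz)$ survives the passage from $T(a_i)$ to $M_N(T(a_i))$ --- precisely the rate needed to close the Fefferman--Stein step under the sharp threshold $p_->1/(1+\dz)$. Once this verification is complete, the rest of the proof is a careful assembly of Lemma \ref{4l4}, Lemma \ref{4l3}, and Theorem \ref{8t1}.
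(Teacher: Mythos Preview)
Your approach is correct and follows the same architecture as the paper's: reduce via Theorem \ref{8t1}, split each $T(a_i)$ (and its maximal function) into an inner region handled by Lemma \ref{4l4} together with the $L^r$-boundedness of $T$, and an outer region handled by the kernel regularity \eqref{8e5} and the Fefferman--Stein inequality (Lemma \ref{4l3}). Two differences from the paper are worth noting. First, you invoke the $L^r$-boundedness of $T$ for arbitrary $r\in(\max\{p_+,1\},\fz)$ directly from \cite{mb03}, whereas the paper bootstraps: it first proves the Hardy-space bound only for $\vp\in(0,2)^n$ using the \emph{given} $L^2$-boundedness, deduces $L^p$-boundedness for $p\in(1,2)$ via Proposition \ref{3p1}, extends to $p\in(2,\fz)$ by observing that the adjoint kernel $\overline{k(-\cdot)}$ also satisfies \eqref{8e5}, and only then re-runs Step 1 with general $r$; your shortcut is legitimate but less self-contained. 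Second, on the exterior region you work with the grand maximal function $M_N$ and must verify that $\varphi_k\ast k$ inherits \eqref{8e5} uniformly over the \emph{entire} class $\varphi\in\cs_N(\rn)$, where the test functions need not be compactly supported; the paper instead uses the radial maximal function $M_0=M_\Phi^0$ for a single fixed $\Phi\in C_c^\fz(\rn)$ with $\supp\Phi\subset B_0$ (equivalent in $\lv$-norm by Theorem \ref{3t1}), for which the smoothed-kernel regularity is a direct transcription of \cite[p.\,117, Lemma]{s93}. Switching from $M_N$ to $M_0$ would remove precisely the technical obstacle you flag in your final paragraph.
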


\begin{remark}\label{8r3}
\begin{enumerate}
\item[{\rm (i)}]
If $A:=d\,{\rm I}_{n\times n}$ for some $d\in\mathbb R$ with $|d|\in(1,\fz)$,
then $\frac{\ln\lambda_+}{\ln b}=\frac1n$ and $\vh$ becomes the
isotropic mixed-norm Hardy spaces $H^{\vp}(\rn)$.
By Theorem \ref{8t2}, we know that, in this case, if
$\delta\in(0,1]$, $\vp\in(0,\fz)^n$ with $p_-\in(\frac n{n+\delta},\fz)$
and $T$ is the classical isotropic Calder\'on--Zygmund operator,
namely, $T$ satisfying
all the conditions of Definition \ref{8d1} with \eqref{8e5} replaced by
\begin{align*}
|k(x-y)-k(x)|\le C \frac{|y|^\dz}{|x|^{n+\dz}},
\hspace{0.3cm} \forall\,|x|> 2|y|\neq0,
\end{align*}
where $C$ is a positive constant independent of
$x$ and $y$, then $T$ is bounded from $H^{\vp}(\rn)$ to itself [or to $\lv$].
We point out that, even in this case, the results obtained in Theorem \ref{8t2}
are also new. Moreover, when $\vp:=(\overbrace{p,\ldots,p}^{n\ \rm times})$
with $p\in (\frac n{n+\delta},\fz)$, by Theorem \ref{8t2}, we know that
$T$ is bounded from $H^p(\rn)$ to itself [or to $L^p(\rn)$],
which is a well-known result (see, for instance, \cite{s93}).

\item[{\rm (ii)}]
Let $\va:=(a_1,\ldots,a_n)\in[1,\fz)^n$. If $A$ is as in \eqref{2e5},
then $\frac{\ln\lambda_+}{\ln b}=\frac{a_+}{\nu}$ with
$a_+:=\max\{a_1,\ldots,a_n\}$ and $\nu:=a_1+\cdots +a_n$
and, by Proposition \ref{6p1}, $\vh$ becomes the Hardy space $H_{\va}^{\vp}(\rn)$
from \cite{cgn17,hlyy}. In this case, Theorem \ref{8t2} implies that,
if $\delta\in(0,a_+]$, $\vp\in(0,\fz)^n$ with
$p_-\in(\frac\nu{\nu+\delta},\fz)$ and
$T$ is an anisotropic convolutional $\dz$-type
Calder\'on--Zygmund operator as in Definition \ref{8d1}
with \eqref{8e5} replaced by
$$|k(x-y)-k(x)|\le C\frac{|y|_{\va}^{\delta}}{|x|_{\va}^{\nu+\delta}}
\quad{\rm when}\quad |x|_{\va}>2|y|_{\va}\neq0,$$
where $|\cdot|_{\va}$ denotes the anisotropic quasi-homogeneous norm as in
\cite[Definition 2.1]{hlyy} and $C$ a positive constant independent of $x$ and $y$,
then $T$ is bounded from $H_{\va}^{\vp}(\rn)$ to itself [or to $\lv$],
which is just the conclusions obtained in \cite[Theorems 6.4 and 6.5]{hlyy}.
\end{enumerate}
\end{remark}

In addition, as a direct corollary of
Theorem \ref{8t2} and Proposition \ref{3p1}, we have the
following boundedness of anisotropic convolutional $\dz$-type
Calder\'on--Zygmund operators on the mixed-norm Lebesgue space $\lv$ with
$\vp\in(1,\fz)^n$.

\begin{corollary}\label{8c2}
Let $\vp\in(1,\fz)^n$, $\delta\in(0,\frac{\ln \lz_+}{\ln b}]$
and $T$ be an anisotropic convolutional $\dz$-type
Calder\'on--Zygmund operator as in Definition \ref{8d1}.
Then $T$ is bounded on $\lv$.
\end{corollary}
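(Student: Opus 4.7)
The plan is to derive this as an immediate consequence of Theorem \ref{8t2} combined with the identification $\vh=\lv$ supplied by Proposition \ref{3p1}. First I would verify that the hypotheses of Theorem \ref{8t2} are met under the weaker assumption $\vp\in(1,\fz)^n$: by the definition \eqref{2e4}, $p_-=\min\{p_1,\ldots,p_n\}>1$, and since $\delta\in(0,\frac{\ln \lz_+}{\ln b}]\subset(0,\fz)$, we have $\frac{1}{1+\delta}<1<p_-$, so the required range $p_-\in(\frac{1}{1+\delta},\fz)$ holds automatically.

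With the hypotheses verified, I would apply Theorem \ref{8t2}(i) to conclude that, for any $f\in\vh$,
\begin{equation*}
\|T(f)\|_{\lv}\le C\|f\|_{\vh}.
\end{equation*}
Then, invoking Proposition \ref{3p1}, which asserts $\vh=\lv$ with equivalent norms whenever $\vp\in(1,\fz)^n$, I would replace $\|f\|_{\vh}$ by a constant multiple of $\|f\|_{\lv}$ to obtain $\|T(f)\|_{\lv}\ls\|f\|_{\lv}$ for every $f\in\lv$, which is exactly the desired boundedness of $T$ on $\lv$.

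There is essentially no obstacle here: the corollary is purely a translation of Theorem \ref{8t2}(i) through the norm equivalence of Proposition \ref{3p1}. The only thing to be careful about is that $T$, initially defined on $L^2(\rn)$, is being applied to a general $f\in\lv$; this is legitimate because $\vh$ was characterized (for $\vp\in(1,\fz)^n$) as $\lv$ itself via Proposition \ref{3p1}, and the extension of $T$ from $L^2(\rn)\cap\lv$ to all of $\lv$ via the density argument implicit in Theorem \ref{8t2}(i) yields the unique bounded extension on $\lv$.
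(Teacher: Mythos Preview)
Your proposal is correct and follows exactly the approach the paper indicates: the corollary is stated there as ``a direct corollary of Theorem \ref{8t2} and Proposition \ref{3p1},'' with no further proof given. Your verification that $p_->1>\tfrac{1}{1+\delta}$ and your use of Theorem \ref{8t2}(i) together with the norm equivalence from Proposition \ref{3p1} is precisely what is intended.
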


\begin{remark}
We should point out that,
when $\vp:=(\overbrace{p,\ldots,p}^{n\ \rm times})$ with $p\in (1,\fz)$,
the mixed-norm Lebesgue space $\lv$ becomes the classical
Lebesgue space $L^p(\rn)$. Then, by Corollary \ref{8c2},
we know that the anisotropic convolutional $\dz$-type
Calder\'on--Zygmund operator $T$ as in Definition \ref{8d1}
is bounded on $L^p(\rn)$ for any given $p\in(1,\fz)$,
which is also obtained in \cite[p.\,60]{mb03}.
\end{remark}

Now we prove Theorem \ref{8t2}.

\begin{proof}[Proof of Theorem \ref{8t2}]
By similarity, we only show (ii) by two steps.

\emph{Step 1)} In this step, we show that (ii) holds true for any
$\vp\in(0,2)^n$ with $p_-\in(\frac1{1+\delta},2)$.
To this end, let $s$ be as in \eqref{4e1},
$f\in H^{\vp,2,s}_{A,\rm {fin}}(\rn)$  and $T$ be an anisotropic
convolutional $\dz$-type Calder\'on--Zygmund operator as in Definition \ref{8d1}.
Then, by Theorem \ref{5t1}(i), without loss of generality, we may assume that $\|f\|_{\vh}=1$.
Thus, to prove (ii), by Remark \ref{8r1} and Theorem \ref{8t1}(i), it suffices to show
\begin{align}\label{8e6}
\|T(f)\|_{\vh}\ls 1.
\end{align}
Notice that $f\in \vh\cap L^2(\rn)$.
By Proposition \ref{4p1},
we find that there exist a sequence of $(\vp,2,s)$-atoms,
$\{a_{i}\}_{i\in\mathbb{N}}$, supported, respectively, in
$\{x_i+B_{l_i}\}_{i\in\nn}\subset \mathfrak{B}$ and
$\{\lz_{i}\}_{i\in\mathbb{N}}\subset\mathbb{C}$ such that
\begin{align}\label{8e7}
f=\sum_{i\in\mathbb{N}}\lambda_{i}a_{i}\quad{\rm in}\quad L^2(\rn)
\end{align}
and
\begin{align*}
\lf\|\lf\{\sum_{i\in\nn}
\lf[\frac{|\lz_{i}|{\mathbf 1}_{x_i+B_{l_i}}}
{\|{\mathbf 1}_{x_i+B_{l_i}}\|_{\lv}}\r]^
{\underline{p}}\r\}^{1/\underline{p}}\r\|_{\lv}
\lesssim\|f\|_{\vh}\ls 1
\end{align*}
with $\underline{p}$ as in \eqref{2e4}.
By \eqref{8e7} and the boundedness of $T$ on $L^2(\rn)$,
we conclude that, for any $f\in H^{\vp,2,s}_{\va,\rm {fin}}(\rn)$,
$T(f)=\sum_{i\in\mathbb{N}}\lambda_{i}T(a_{i})$ in $ L^2(\rn)$
and hence in $\cs'(\rn)$. From this and Theorem \ref{3t1}, it follows that
\begin{align}\label{8e9}
\|T(f)\|_{\vh}&\sim\|M_0(T(f))\|_{\lv}\\
&\ls \lf\|\sum_{i\in\mathbb{N}}|\lambda_{i}|M_0(T(a_{i}))
{\mathbf 1}_{x_i+B_{l_i+\omega}}\r\|_{\lv}+\lf\|
\sum_{i\in\mathbb{N}}|\lambda_{i}|M_0(T(a_{i}))
{\mathbf 1}_{(x_i+B_{l_i+\omega})^\com}\r\|_{\lv}\noz\\
&=:\textrm{I}_1+\textrm{I}_2,\noz
\end{align}
here and thereafter, for any $f\in\cs'(\rn)$, $M_0(f):=M_\Phi^0(f)$ with $M_\Phi^0(f)$
as in Definition \ref{3d2}, where
$\Phi$ is some fixed $C^\fz(\rn)$ function satisfying
$\supp \Phi\subset B_0$ and $\int_{\rn} \Phi(x)\,dx \neq 0$.

For the term ${\rm I}_1$, from Lemma \ref{3l1} and
the fact that $T$ is bounded on $L^2(\rn)$,
we deduce that, for any $i\in\nn$,
\begin{align*}
\lf\|M_0\lf(T(a_i)\r){\mathbf 1}_{x_i+B_{l_i+\omega}}\r\|_{L^2(\rn)}
&\ls \lf\|M_{\rm {HL}}(T(a_i)){\mathbf 1}_{x_i+B_{l_i+\omega}}\r\|_{L^2(\rn)}
\ls\frac{|x_i+B_{l_i}|^{1/2}}{\|{\mathbf 1}_{x_i+B_{l_i}}\|_{\lv}},
\end{align*}
where $M_{\rm {HL}}$ is as in \eqref{3e1}. By this,
the fact that $p_+\in(0,2)$
and Lemma \ref{4l4}, we further conclude that
\begin{align}\label{8e10}
{\rm I}_1
\ls \lf\|\lf\{\sum_{i\in\mathbb{N}}\lf[|\lambda_{i}|M_0(T(a_{i}))
{\mathbf 1}_{x_i+B_{l_i+\omega}}\r]^{\underline{p}}\r\}^{1/\underline{p}}\r\|_{\lv}
\ls\lf\|\lf\{\sum_{i\in\nn}
\lf[\frac{|\lz_{i}|{\mathbf 1}_{x_i+B_{l_i}}}
{\|{\mathbf 1}_{x_i+B_{l_i}}\|_{\lv}}\r]^
{\underline{p}}\r\}^{1/\underline{p}}\r\|_{\lv}
\ls 1.
\end{align}

To deal with ${\rm I}_2$, for any $j\in\zz$,
let $k^{(j)}:=k\ast\Phi_j$, where $k$ is the kernel of $T$.
Then $k^{(j)}$ satisfies the same conditions as $k$.
Indeed, for any $j\in\zz$ and $f\in L^2(\rn)$,
by the boundedness of $T$ on $L^2(\rn)$ and the Minkowski
inequality of integrals, we know that, for any $j\in\zz$,
\begin{align*}
\lf\|k^{(j)}*f\r\|_{L^2(\rn)}&=\lf\|k*\Phi_j*f\r\|_{L^2(\rn)}
=\lf\|k*(\Phi_j*f)\r\|_{L^2(\rn)}
\ls \lf\|\Phi_j*f\r\|_{L^2(\rn)}\ls\|f\|_{L^2(\rn)}.
\end{align*}
Moreover, via an argument similar to that
used in the proof of \cite[p.\,117, Lemma]{s93},
we find that, for any $x,\,y\in \rn$ with
$\rho(x)>b^{2\omega}\rho(y)$,
$$\lf|k^{(j)}(x-y)-k^{(j)}(x)\r|\ls \frac{[\rho(y)]^{\delta}}{[\rho(x)]^{1+\delta}}.$$
Therefore, from the vanishing moment of $a_i$ and the H\"{o}lder
inequality, it follows that, for any $x\in(x_i+B_{l_i+\omega})^\com$,
$$M_0(T(a_{i}))(x)
\ls\frac1{\|{\mathbf 1}_{x_i+B_{l_i}}\|_{\lv}}
\lf[M_{\rm HL}\lf({\mathbf 1}_{x_i+B_{l_i}}\r)(x)\r]
^{1+\delta}.$$
By this, Lemma \ref{3l3}, the fact that $p_-\in(\frac1{1+\delta},2)$
and Lemma \ref{4l3}, we conclude that
\begin{align*}
\textrm{I}_2
\ls \lf\|\sum_{i\in\mathbb{N}}\frac{|\lambda_{i}|}
{\|{\mathbf 1}_{x_i+B_{l_i}}\|_{\lv}}
\lf[M_{\rm HL}\lf({\mathbf 1}_{x_i+B_{l_i}}\r)\r]^{1+\delta}\r\|_{\lv}
\ls \lf\|\lf\{\sum_{i\in\nn}
\lf[\frac{|\lz_{i}|{\mathbf 1}_{x_i+B_{l_i}}}
{\|{\mathbf 1}_{x_i+B_{l_i}}\|_{\lv}}\r]^
{\underline{p}}\r\}^{1/\underline{p}}\r\|_{\lv}
\ls 1,
\end{align*}
which, together with \eqref{8e9} and \eqref{8e10}, further implies that
\eqref{8e6} holds true and hence completes the proof of Step 1).

\emph{Step 2)} In this step, we prove that (ii) holds true for any
$\vp\in(0,\fz)^n$ and $p_-\in(\frac1{1+\delta},\fz)$.
From the conclusion obtained in Step 1), we deduce that, for any given $\vp\in(0,2)^n$
with $p_-\in(\frac1{1+\delta},2)$ and for any $f\in\vh$, $\|T(f)\|_{\vh}\ls\|f\|_{\vh}$,
which, combined with Proposition \ref{3p1}, further implies that,
for any given $p\in(1,2)$ and for any $f\in L^p(\rn)$,
\begin{align}\label{8e20}
\|T(f)\|_{L^p(\rn)}\ls\|f\|_{L^p(\rn)}.
\end{align}
On another hand, note that the adjoint operator of $T$, denoted by $T^*$,
has the kernel $k^*(\cdot)=\overline{k(-\cdot)}$ which also satisfies Definition
\ref{8d1}. By this, the H\"{o}lder inequality and \eqref{8e20},
we conclude that, for any given $p\in(2,\fz)$ and for any $f\in L^p(\rn)$,
\begin{align*}
\|T(f)\|_{L^p(\rn)}&=\sup_{\|g\|_{L^{p'}(\rn)}\le 1}\lf|\int_{\rn}T(f)(x)g(x)\,dx\r|
=\sup_{\|g\|_{L^{p'}(\rn)}\le 1}\lf|\langle T(f),\,g\rangle\r|
=\sup_{\|g\|_{L^{p'}(\rn)}\le 1}\lf|\langle f,\,T^*(g)\rangle\r|\\
&\le \sup_{\|g\|_{L^{p'}(\rn)}\le 1}\lf\| f\r\|_{L^p(\rn)}
\lf\| T^*(g)\r\|_{L^{p'}(\rn)}
\ls\sup_{\|g\|_{L^{p'}(\rn)}\le 1}\lf\| f\r\|_{L^p(\rn)}
\lf\| g\r\|_{L^{p'}(\rn)}
\ls \lf\| f\r\|_{L^p(\rn)},
\end{align*}
which, together with \eqref{8e20},
implies that, for any given $p\in(1,\fz)$ and for any $f\in L^p(\rn)$,
\begin{align*}
\|T(f)\|_{L^p(\rn)}\ls\|f\|_{L^p(\rn)}.
\end{align*}
Using this and repeating the proof of Step 1) with some slight
modifications, we further find that, for any given $\vp\in(0,\fz)^n$ with
$p_-\in(\frac1{1+\dz},\fz)$, (ii) holds true.
This finishes the proof of Step 2) and hence of Theorem \ref{8t2}.
\end{proof}

Another main result of this section is stated as follows.

\begin{theorem}\label{8t3}

Let $\bz\in(0,\fz)$ and $\vp\in(0,2)^n$ with
$p_-\in(\frac{\ln b}{\ln b+\bz\ln\lz_-},\frac{\ln b}
{\ln b+(\lceil \bz\rceil-1) \ln \lz_-}]$,
where $p_-$ is as in $\eqref{2e4}$.
\begin{enumerate}
\item[{\rm (i)}]
If $T$ is an anisotropic non-convolutional $\beta$-order
Calder\'{o}n--Zygmund operator, then there exists a positive constant $C$
such that, for any $f\in \vh$,
$$\|T(f)\|_{\lv}\le C\|f\|_{\vh}.$$
\item[{\rm (ii)}]
If $T$ is an anisotropic non-convolutional
$\beta$-order Calder\'{o}n--Zygmund operator having the
vanishing moments up to order $\lceil\bz\rceil-1$,
then there exists a positive constant $C$ such that, for any $f\in \vh$,
$$\|T(f)\|_{\vh}\le C\|f\|_{\vh}.$$
\end{enumerate}
\end{theorem}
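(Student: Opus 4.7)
\textbf{Proof plan for Theorem \ref{8t3}.} Both parts follow the blueprint of Theorem \ref{8t2}: combine the criterion Theorem \ref{8t1} with the finite atomic characterization to reduce to estimates on atoms, split each atomic contribution into a local and a far-field piece, and exploit the vanishing moments of atoms against the kernel regularity \eqref{8e4}. A direct check using the hypothesis $p_-\in\bigl(\frac{\ln b}{\ln b+\bz\ln\lz_-},\frac{\ln b}{\ln b+(\lceil\bz\rceil-1)\ln\lz_-}\bigr]$ forces $\lfloor(1/p_--1)\ln b/\ln\lz_-\rfloor=\lceil\bz\rceil-1=:s$, so $(\vp,2,s)$-atoms come with exactly the moment cancellation the kernel regularity can absorb. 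Since $\vp\in(0,2)^n$, both $\lv$ and $\vh$ are $\underline{p}$-quasi-Banach (Remark \ref{8r1}), which puts us in position to apply Theorem \ref{8t1}(i) with $r=2$.

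Fix $f\in\vfah$ with $\|f\|_{\vh}=1$. By Proposition \ref{4p1}, one may write $f=\sum_{k,i}\lik\aik$ with $(\vp,2,s)$-atoms $\aik$ supported in dilated balls $\xik+B_{\ellik}$ and coefficients $\{\lik\}\subset\mathbb C$ satisfying the usual $\underline p$-mass estimate bounded by $\|f\|_{\vah}$; since $T$ is bounded on $L^2(\rn)$, $T(f)=\sum_{k,i}\lik T(\aik)$ in $L^2(\rn)$. Split pointwise as
\begin{align*}
|T(f)|\le\sum_{k,i}|\lik T(\aik)|\mathbf 1_{\xik+B_{\ellik+\omega}}+\sum_{k,i}|\lik T(\aik)|\mathbf 1_{(\xik+B_{\ellik+\omega})^\com}.
\end{align*}
The local piece is handled by the $L^2$-estimate $\|T(\aik)\|_{L^2(\rn)}\ls\|\aik\|_{L^2(\rn)}\le|\xik+B_{\ellik}|^{1/2}/\|\mathbf 1_{\xik+B_{\ellik}}\|_{\lv}$ together with Lemma \ref{4l4} (applicable since $p_+<2$), summed up by a constant multiple of $\|f\|_{\vah}$.

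The main obstacle is the far-field piece. Using the vanishing moments of $\aik$ of order $s=\lceil\bz\rceil-1$, Taylor-expand $\mathcal K(x,\cdot)$ around $\xik$:
\begin{align*}
T(\aik)(x)=\int_{\xik+B_{\ellik}}\lf[\mathcal K(x,y)-\sum_{|\az|\le s}\frac{(y-\xik)^\az}{\az!}\lf[\partial_y^\az\mathcal K(x,\cdot)\r](\xik)\r]\aik(y)\,dy
\end{align*}
for $x\notin\xik+B_{\ellik+\omega}$. The mean-value form of the remainder, combined with \eqref{8e4} applied with $|\az|=\lceil\bz\rceil-1$ and Lemma \ref{3l8} to convert between $|\cdot|$ and $\rho$, yields the pointwise bound
\begin{align*}
|T(\aik)(x)|\ls\frac{1}{\|\mathbf 1_{\xik+B_{\ellik}}\|_{\lv}}\lf[\HL\lf(\mathbf 1_{\xik+B_{\ellik}}\r)(x)\r]^{\kappa},
\end{align*}
where $\kappa:=1+\bz\ln\lz_-/\ln b$. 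The hypothesis $p_->\ln b/(\ln b+\bz\ln\lz_-)$ is precisely the condition $\kappa p_->1$, placing us in the regime where the anisotropic Fefferman--Stein inequality (Lemma \ref{4l3}), applied on $L^{\kappa\vp}(\rn)$ to $\{\mathbf 1_{\xik+B_{\ellik}}\}_{k,i}$, together with Lemma \ref{3l3}, sums the far-field contribution by a constant multiple of $\|f\|_{\vah}$. Combined with the local bound and Theorem \ref{8t1}(i), this proves (i).

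For (ii), the additional vanishing-moment hypothesis on $T$ guarantees that each $T(\aik)$ also has vanishing moments up to order $s$. By Theorem \ref{3t1}, $\|T(f)\|_{\vh}\sim\|M_0(T(f))\|_{\lv}$, where $M_0=M_\Phi^0$ is attached to some fixed $\Phi\in C^\infty(\rn)$ with $\supp\Phi\subset B_0$ and $\int_\rn\Phi\neq 0$. The local part of $M_0(T(\aik))$ is controlled by $\HL(T(\aik))$ and thus by the $L^2$-boundedness of $T$ combined with Lemma \ref{3l1}, exactly as in (i). For the far part, the vanishing moments of $T(\aik)$ allow a Taylor expansion of $\Phi_j$ against $T(\aik)$ which, coupled with \eqref{8e4} and Lemma \ref{3l8} once more, delivers the same pointwise maximal bound (with the same exponent $\kappa$) uniformly in $j\in\zz$. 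Lemmas \ref{4l3} and \ref{4l4} then close the argument as in (i), and Theorem \ref{8t1}(i) concludes the proof of (ii).
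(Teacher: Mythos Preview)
Your outline matches the paper's argument in all essential respects: reduction via Theorem \ref{8t1}(i) and Proposition \ref{4p1} to atomic estimates, the local/far split handled respectively by Lemma \ref{4l4} and by Lemma \ref{4l3} with exponent $\kappa=1+\bz\ln\lz_-/\ln b$, and in (ii) the passage through $M_0$ using the vanishing moments of $T(\aik)$. Your computation that the hypothesis on $p_-$ forces $\lceil\bz\rceil-1\le s$ is exactly what the paper uses.

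One point deserves more care than your sketch indicates. In (ii), $T(\aik)$ is \emph{not} compactly supported, so when you write $\Phi_j\ast T(\aik)(x)$ as an integral of the Taylor remainder of $\Phi_j$ against $T(\aik)(y)$, the integration runs over all of $\rn$, and the behavior of $|T(\aik)(y)|$ depends strongly on where $y$ sits relative to $\xik$ and to $x$. The paper splits this integral into three regions according to whether $\rho(y-\xik)$ is (a) smaller than roughly $b^{\ellik}$, (b) between $b^{\ellik}$ and $b^{-2\omega}\rho(x-\xik)$, or (c) larger than $b^{-2\omega}\rho(x-\xik)$. In region (a) one controls $T(\aik)$ in $L^2$ and uses the Taylor remainder of $\Phi$; in (b) one must re-expand $\mathcal K(y,\cdot)$ using the vanishing moments of $\aik$ and invoke \eqref{8e4} a second time to get decay of $|T(\aik)(y)|$; in (c) the Taylor remainder of $\Phi$ no longer gains anything, so one separates $\Phi_j(x-y)$ from the Taylor polynomial and estimates each piece, again using \eqref{8e4} for $|T(\aik)(y)|$. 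Your phrase ``coupled with \eqref{8e4} once more'' gestures at the second use of the kernel regularity, but the three-region split and the distinct treatment of region (c) are not automatic and should be spelled out.
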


\begin{remark}\label{8r4}
\begin{enumerate}
\item[{\rm (i)}]
Notice that, differently from Theorem \ref{8t2}, in Theorem \ref{8t3},
we have an additional restriction on the range of $\vp$, namely, $\vp\in(0,2)^n$,
which is caused by the fact that, for any given $r\in(1,\fz)\setminus\{2\}$,
we do not know whether or not the boundedness of the anisotropic non-convolutional
$\beta$-order Calder\'{o}n--Zygmund operator $T$ on $L^r(\rn)$ as in Definition \ref{8d2} holds true.
If $T$ is bounded on $L^r(\rn)$ for any given $r\in(1,\fz)$, then, by an argument
similar to that used in Step 1) of the proof of Theorem \ref{8t2},
we can improve this restriction into the same restriction $\vp\in(0,\fz)^n$
as in Theorem \ref{8t2}.

\item[{\rm (ii)}]
Let $\bz:=\delta\in(0,1)$.
Then, in Definition \ref{8d2}, $\az=(\overbrace{0,\ldots,0}^{n\ \mathrm{times}})$
and the operator $T$ becomes an anisotropic non-convolutional
$\delta$-type Calder\'{o}n--Zygmund operator. Thus, from Theorem \ref{8t3},
it follows that, for any given $\delta\in(0,1)$ and $\vp\in(0,2)^n$ with
$p_-\in(\frac{\ln b}{\ln b+\delta\ln\lz_-},1]$,
the anisotropic non-convolutional $\delta$-type Calder\'{o}n--Zygmund
operator is bounded from $\vh$ to itself [or to $\lv$].

\item[{\rm (iii)}]
Let $\va:=(a_1,\ldots,a_n)\in[1,\fz)^n$. If $A$ is as in \eqref{2e5},
then, by Proposition \ref{6p1}, we find that $\vh$ becomes the Hardy space
$H_{\va}^{\vp}(\rn)$ from \cite{cgn17,hlyy}, $\frac{\ln\lambda_+}{\ln b}=\frac{a_+}{\nu}$ and
$\frac{\ln\lambda_-}{\ln b}=\frac{a_-}{\nu}$ with
$a_+:=\max\{a_1,\ldots,a_n\}$, $a_-:=\min\{a_1,\ldots,a_n\}$ and $\nu:=a_1+\cdots +a_n$,
and $T$ becomes an anisotropic non-convolutional
$\beta$-order Calder\'on--Zygmund operator as in Definition \ref{8d2}
with \eqref{8e4} replaced by that, for any $\az\in\zz_+^n$ with
$|\alpha|=\lceil \bz\rceil-1$ and $x,\,y,\,z\in \rn$ with
$|x-y|_{\va}>2|y-z|_{\va}\neq0$,
$$
\lf|\lf[\partial^{\az}\mathcal{K}(x,\cdot)\r](y)
-\lf[\partial^{\az}\mathcal{K}(x,\cdot)\r](z)\r|\le
C\frac{|y-z|_{\va}^{\bz a_+}}{|x-y|_{\va}^{\nu+\bz a_+}}
\min\lf\{|y-z|_{\va}^{-(\lceil \bz\rceil-1)a_+}
,\,|y-z|_{\va}^{-(\lceil \bz\rceil-1)a_-}\r\},
$$
where $|\cdot|_{\va}$ is as in Remark \ref{8r3}(ii)
and $C$ a positive constant independent of $x$, $y$ and $z$.
In this case, by Theorem \ref{8t3}, we know that, for any
given $\va\in[1,\fz)^n$,
$\bz\in(0,\fz)$ and $\vp\in(0,2)^n$ with
$p_-\in(\frac{\nu}{\nu+\bz a_-},\frac{\nu}{\nu+(\lceil \bz\rceil-1) a_-}]$,
$T$ is bounded from $H_{\va}^{\vp}(\rn)$ to itself [or to $\lv$],
which includes \cite[Theorems 6.8 and 6.9]{hlyy} as a special case.

\item[{\rm (iv)}]
We should also point out that the boundedness of anisotropic non-convolutional
$\bz$-order Calder\'{o}n--Zygmund operators on the mixed-norm Lebesgue
space $\lv$ with any given $\vp\in(1,\fz)^n$ is still unknown so far.
\end{enumerate}
\end{remark}

We now prove Theorem \ref{8t3}.

\begin{proof}[Proof of Theorem \ref{8t3}]
By similarity, we only prove (ii).
Let $\{\lz_i\}_{i\in\nn}$ and $\{a_i\}_{i\in\nn}$
be the same as in the proof of Theorem \ref{8t2}.
From an argument similar to that used in Step 1) of the proof of Theorem \ref{8t2},
we deduce that, to prove (ii), we only need to show
\begin{align}\label{8e11}
\lf\|\sum_{i\in\mathbb{N}}|\lambda_{i}|M_0(T(a_{i}))\r\|_{\lv}\ls 1,
\end{align}
where $M_0$ is as in the proof of Theorem \ref{8t2}.
To this end, first, it is easy to see that
\begin{align}\label{8e19}
\lf\|\sum_{i\in\mathbb{N}}|\lambda_{i}|M_0(T(a_{i}))\r\|_{\lv}
&\ls \lf\|\sum_{i\in\mathbb{N}}|\lambda_{i}|M_0(T(a_{i}))
{\mathbf 1}_{x_i+B_{l_i+4\omega}}\r\|_{\lv}\\
&\hs+\lf\|\sum_{i\in\mathbb{N}}|\lambda_{i}|M_0(T(a_{i}))
{\mathbf 1}_{(x_i+B_{l_i+4\omega})^{\com}}\r\|_{\lv}
=:\Sigma_1+\Sigma_2,\noz
\end{align}
where, for any $i\in\nn$, $x_i+B_{l_i}$
is the same as in the proof of Theorem \ref{8t2}
and $\omega$ as in \eqref{2e2}.

Similarly to \eqref{8e10}, we have $\Sigma_1\ls 1$.
To deal with $\Sigma_2$,
by the vanishing moments of $T$ and the fact that
$\lceil \bz\rceil-1\le(\frac1{p_-}-1)\frac{\ln b}{\ln\lambda_-}$,
which implies $\lceil \bz\rceil-1\le s$, we find that, for any
$i\in\nn,\,k\in\zz$ and $x\in (x_i+B_{l_i+4\omega})^\com$,
\begin{align}\label{8e12}
&\lf|\Phi_k*T(a_i)(x)\r|\\
&\hs\le b^k\lf[\int_{\{y\in\rn:\ \rho(y-x_i)<b^{l_i+2\omega}\}}+
\int_{\{y\in\rn:\ b^{l_i+2\omega}\le\rho(y-x_i)<b^{-2\omega}\rho(x-x_i)\}}+
\int_{\{y\in\rn:\ \rho(y-x_i)\ge b^{-2\omega}\rho(x-x_i)\}}\r]\noz\\
&\hs\hs\times \lf|\Phi\lf(A^k(x-y)\r)-\sum_{|\az|\le \lceil \bz\rceil-1}
\frac{\partial^{\az}\Phi(A^k(x-x_i))}{\az!}\lf(A^k(y-x_i)\r)^{\az}\r|\lf|T(a_i)(y)\r|\,dy\noz\\
&\hs=:\textrm{J}_1+\textrm{J}_2+\textrm{J}_3,\noz
\end{align}
where $\Phi$ is as in the proof of Theorem \ref{8t2}.

For $\textrm{J}_1$, from Taylor's remainder theorem,
\eqref{2e1'} and \eqref{2e2'},
we deduce that, for any $i\in\nn$, $m\in\zz_+$, $k\in\zz$,
$x\in (x_i+B_{l_i+4\omega})^\com$ and $y\in\rn$ with $\rho(y-x_i)<b^{l_i+2\omega}$,
there exists $\eta_1(y)\in x_i+B_{l_i+2\omega}$ such that
\begin{align}\label{8e13}
\textrm{J}_1
&\ls b^k\int_{\{y\in\rn:\ \rho(y-x_i)<b^{l_i+2\omega}\}}
\frac{1}{[\rho(A^k(x-x_i))]^m}\\
&\hs\times\max\lf\{\lf[\rho(A^k(y-x_i))\r]^{\lceil \bz\rceil
\frac{\ln\lambda_+}{\ln b}},\,
\lf[\rho(A^k(y-x_i))\r]^{\lceil \bz\rceil\frac{\ln\lambda_-}{\ln b}}\r\}
\lf|T(a_i)(y)\r|\,dy\noz,
\end{align}
where the implicit positive constant depends on $m$.
When $\rho(A^k(x-x_i))\ge 1$, let
\begin{align*}
m:=\lf\{
\begin{array}{cl}
\vspace{0.1cm}
&\lf\lfloor1+\lceil \bz\rceil\dfrac{\ln\lambda_+}{\ln b}\r\rfloor+1
\hspace{0.5cm} {\rm when}\hspace{0.5cm} \rho(A^k(y-x_i))\in [1,\fz),\\
&\lf\lfloor1+\lceil \bz\rceil\dfrac{\ln\lambda_-}{\ln b}\r\rfloor+1
\hspace{0.5cm}{\rm when}\hspace{0.5cm} \rho(A^k(y-x_i))\in [0,1)
\end{array}\r.
\end{align*}
in \eqref{8e13}. By this, Definition \ref{2d2}(ii), the H\"{o}lder
inequality, the fact that $T$ is bounded on $L^2(\rn)$ and the size
condition of $a_i$, we know that,
for any $i\in\nn$ and $x\in (x_i+B_{l_i+4\omega})^\com$,
\begin{align}\label{8e14}
\textrm{J}_1
\ls&\int_{\{y\in\rn:\ \rho(y-x_i)<b^{l_i+2\omega}\}}
\max\lf\{\frac{[\rho(y-x_i)]^{\lceil \bz\rceil\frac{\ln\lambda_+}{\ln b}}}
{[\rho(x-x_i)]^{1+\lceil \bz\rceil\frac{\ln\lambda_+}{\ln b}}},\,
\frac{[\rho(y-x_i)]^{\lceil \bz\rceil\frac{\ln\lambda_-}{\ln b}}}
{[\rho(x-x_i)]^{1+\lceil \bz\rceil\frac{\ln\lambda_-}{\ln b}}}\r\}\\
&\times\lf|T(a_i)(y)\r|\,dy\noz\\
\ls&\frac1{\|{\mathbf 1}_{x_i+B_{l_i}}\|_{\lv}}
\lf[M_{\rm HL}\lf({\mathbf 1}_{x_i+B_{l_i}}\r)(x)\r]
^{1+\bz\frac{\ln\lambda_-}{\ln b}}\noz.
\end{align}
When $\rho(A^k(x-x_i))< 1$, let $m:=\lfloor1+\lceil \bz\rceil
\frac{\ln\lambda_-}{\ln b}\rfloor$
in \eqref{8e13}. Then we easily find that \eqref{8e14} also holds true.

To deal with $\textrm{J}_2$, for any $i\in\nn$ and
$x\in (x_i+B_{l_i+4\omega})^\com$, let
$$E_{i}^x:=\lf\{y\in\rn:\ b^{l_i+2\omega}
\le\rho(y-x_i)<b^{-2\omega}\rho(x-x_i)\r\}.$$
Then, similarly to \eqref{8e14}, from the vanishing moments of $a_i$,
the fact that $\lceil \bz\rceil-1\le s$ and Taylor's remainder theorem,
it follows that, for any $i\in\nn$ and $z\in x_i+B_{l_i}$,
there exists $\eta_2(z)\in x_i+B_{l_i}$
such that, for any $x\in (x_i+B_{l_i+4\omega})^\com$,
\begin{align*}
\textrm{J}_2
&\ls \int_{E_{i}^x}
\frac{[\rho(y-x_i)]^{\lceil \bz\rceil\frac{\ln\lambda_-}{\ln b}}}
{[\rho(x-x_i)]^{1+\lceil \bz\rceil\frac{\ln\lambda_-}{\ln b}}}\\
&\hs\hs\times \int_{x_i+B_{l_i}}|a_i(z)|\lf|\sum_{|\az|=\lceil \bz\rceil-1}
\frac{[\partial^{\az}\mathcal{K}(y,\cdot)](x_i)
-[\partial^{\az}\mathcal{K}(y,\cdot)](\eta_2(z))}{\az!}(z-x_i)^{\az}\r|\,dz\,dy.
\end{align*}
By this, \eqref{8e4}, \eqref{2e1'} and \eqref{2e2'},
we find that, for any $x\in (x_i+B_{l_i+4\omega})^\com$,
\begin{align}\label{8.1}
\textrm{J}_2
\ls \int_{E_{i}^x}
\frac{[\rho(y-x_i)]^{\lceil \bz\rceil\frac{\ln\lambda_-}{\ln b}}}
{[\rho(x-x_i)]^{1+\lceil \bz\rceil\frac{\ln\lambda_-}{\ln b}}}
\int_{x_i+B_{l_i}}|a_i(z)|
\frac{|x_i+B_{l_i}|^{\bz\frac{\ln\lambda_-}{\ln b}}}
{[\rho(y-x_i)]^{1+\bz\frac{\ln\lambda_-}{\ln b}}}\,dz\,dy.
\end{align}
This, combined with the H\"{o}lder
inequality and the size condition of $a_i$, further implies that
\begin{align*}
\textrm{J}_2
\ls \frac1{\|{\mathbf 1}_{x_i+B_{l_i}}\|_{\lv}}
\lf[\frac{|x_i+B_{l_i}|}
{\rho(x-x_i)}\r]^{1+\bz\frac{\ln\lambda_-}{\ln b}}.
\end{align*}
Thus, for any $x\in (x_i+B_{l_i+4\omega})^\com$, we have
\begin{align}\label{8e15}
\textrm{J}_2
\ls\frac1{\|{\mathbf 1}_{x_i+B_{l_i}}\|_{\lv}}
\lf[M_{\rm HL}\lf({\mathbf 1}_{x_i+B_{l_i}}\r)(x)\r]
^{1+\bz\frac{\ln\lambda_-}{\ln b}}.
\end{align}

For $\textrm{J}_3$, by the vanishing moments of $a_i$,
the fact that $\lceil \bz\rceil-1\le s$ and
Taylor's remainder theorem, we conclude that,
for any $i\in\nn$ and $z\in x_i+B_{l_i}$,
there exists $\eta_3(z)\in x_i+B_{l_i}$
such that, for any $k\in\zz$ and
$x\in (x_i+B_{l_i+4\omega})^\com$,
\begin{align*}
\textrm{J}_3
&\ls b^k\int_{\{y\in\rn:\ \rho(y-x_i)\ge b^{-2\omega}\rho(x-x_i)\}}
\lf|\Phi\lf(A^k(x-y)\r)-\sum_{|\az|\le \lceil \bz\rceil-1}
\frac{\partial^{\az}\Phi(A^k(x-x_i))}{\az!}\lf(A^k(x_i-y)\r)^{\az}\r|\\
&\hs\times \int_{x_i+B_{l_i}}|a_i(z)|\lf|\sum_{|\gamma|= \lceil \bz\rceil-1}
\frac{[\partial^{\gamma}\mathcal{K}(y,\cdot)](x_i)-[\partial^{\gamma}\mathcal{K}(y,\cdot)]
(\eta_3(z))}{\gamma!}(z-x_i)^{\gamma}\r|\,dz\,dy.
\end{align*}
From this, \eqref{8e4}, \eqref{2e1'}, \eqref{2e2'} and
the H\"{o}lder inequality, we deduce that,
for any $m\in\zz_+$, $k\in\zz$ and $x\in (x_i+B_{l_i+4\omega})^\com$,
\begin{align}\label{8e16}
\textrm{J}_3
&\ls \int_{\{y\in\rn:\ \rho(y-x_i)\ge b^{-2\omega}\rho(x-x_i)\}}
\lf|\Phi_k(x-y)\r|\int_{x_i+B_{l_i}}|a_i(z)|
\frac{|x_i+B_{l_i}|^{\bz\frac{\ln\lambda_+}{\ln b}}}
{[\rho(y-x_i)]^{1+\bz\frac{\ln\lambda_+}{\ln b}}}\,dz\,dy\\
&\hs\hs+\int_{\{y\in\rn:\ \rho(y-x_i)\ge b^{-2\omega}\rho(x-x_i)\}}
\lf|b^k\sum_{|\az|\le \lceil \bz\rceil-1}
\frac{\partial^{\az}\Phi(A^k(x-x_i))}{\az!}\lf(A^k(x_i-y)\r)^{\az}\r|\noz\\
&\hs\hs\times\int_{x_i+B_{l_i}}|a_i(z)|
\frac{|x_i+B_{l_i}|^{\bz\frac{\ln\lambda_+}{\ln b}}}
{[\rho(y-x_i)]^{1+\bz\frac{\ln\lambda_+}{\ln b}}}\,dz\,dy\noz\\
&\ls \textrm{J}_{3,1}+\textrm{J}_{3,2},\noz
\end{align}
where the implicit positive constant depends on $m$,
$$\textrm{J}_{3,1}:=\frac{|x_i+B_{l_i}|^{\frac1{2}+\bz\frac{\ln\lambda_+}{\ln b}}}
{[\rho(x-x_i)]^{1+\bz\frac{\ln\lambda_+}{\ln b}}}\|a_i\|_{L^2({\rn})}
\int_{\rn}\lf|\Phi_k(x-y)\r|\,dy$$
and
\begin{align*}
\textrm{J}_{3,2}&:=|x_i+B_{l_i}|^{\frac1{2}+\bz\frac{\ln\lambda_+}
{\ln b}}\|a_i\|_{L^2({\rn})}\\
&\hs\hs\times\sum_{|\az|\le \lceil \bz\rceil-1}
\int_{\{y\in\rn:\ \rho(y-x_i)\ge b^{-2\omega}\rho(x-x_i)\}}
\frac{b^k}{[\rho(A^k(x-x_i))]^m}
\frac{|A^k(y-x_i)|^{|\az|}}{[\rho(y-x_i)]
^{1+\bz\frac{\ln\lambda_+}{\ln b}}}\,dy.
\end{align*}

For the term $\textrm{J}_{3,1}$, by the size condition of $a_i$
and the fact that $\Phi\in L^1(\rn)$, we easily know that,
for any $i\in\nn$ and $x\in (x_i+B_{l_i+4\omega})^\com$,
\begin{align}\label{8e17}
\textrm{J}_{3,1}
\ls\frac1{\|{\mathbf 1}_{x_i+B_{l_i}}\|_{\lv}}
\lf[M_{\rm HL}\lf({\mathbf 1}_{x_i+B_{l_i}}\r)(x)\r]
^{1+\bz\frac{\ln\lambda_-}{\ln b}}.
\end{align}
Moreover, from the size condition of $a_i$,
Definition \ref{2d2}(ii), and
an argument similar to that used in the estimation of
\eqref{8e14}, together with suitably choosing $m\in\zz_+$,
we deduce that,
for any $i\in\nn$ and $x\in (x_i+B_{l_i+4\omega})^\com$,
\begin{align*}
\textrm{J}_{3,2}
\ls\frac1{\|{\mathbf 1}_{x_i+B_{l_i}}\|_{\lv}}
\lf[M_{\rm HL}\lf({\mathbf 1}_{x_i+B_{l_i}}\r)(x)\r]
^{1+\bz\frac{\ln\lambda_-}{\ln b}},
\end{align*}
which, combined with \eqref{8e16} and \eqref{8e17},
implies that, for any $i\in\nn$ and $x\in (x_i+B_{l_i+4\omega})^\com$,
$$\textrm{J}_3
\ls\frac1{\|{\mathbf 1}_{x_i+B_{l_i}}\|_{\lv}}
\lf[M_{\rm HL}\lf({\mathbf 1}_{x_i+B_{l_i}}\r)(x)\r]
^{1+\bz\frac{\ln\lambda_-}{\ln b}}.$$
By this, \eqref{8e12}, \eqref{8e14} and \eqref{8e15},
we find that, for any $i\in\nn$ and $x\in(x_i+B_{l_i+4\omega})^\com$,
\begin{align}\label{8e18}
M_0(T(a_i))(x)
&=\sup_{k\in\zz}\lf|\Phi_k*T(a_i)(x)\r|
\ls\frac1{\|{\mathbf 1}_{x_i+B_{l_i}}\|_{\lv}}
\lf[M_{\rm HL}\lf({\mathbf 1}_{x_i+B_{l_i}}\r)(x)\r]
^{1+\bz\frac{\ln\lambda_-}{\ln b}}.
\end{align}
Then, by \eqref{8e18}, Lemma \ref{3l3}, the fact that
$p_->\frac{\ln b}{\ln b+\beta\ln \lz_-}$
and Lemma \ref{4l3}, we further conclude that
\begin{align*}
\Sigma_2
\ls \lf\|\sum_{i\in\mathbb{N}}\frac{|\lambda_{i}|}
{\|{\mathbf 1}_{x_i+B_{l_i}}\|_{\lv}}
\lf[M_{\rm HL}\lf({\mathbf 1}_{x_i+B_{l_i}}\r)\r]
^{1+\bz\frac{\ln\lambda_-}{\ln b}}\r\|_{\lv}
\ls \lf\|\lf\{\sum_{i\in\nn}
\lf[\frac{|\lz_{i}|{\mathbf 1}_{x_i+B_{l_i}}}
{\|{\mathbf 1}_{x_i+B_{l_i}}\|_{\lv}}\r]^
{\underline{p}}\r\}^{1/\underline{p}}\r\|_{\lv}
\ls 1,
\end{align*}
which, together with \eqref{8e19} and the fact that $\Sigma_1\ls 1$, implies that
\eqref{8e11} holds true and hence completes the proof of Theorem \ref{8t3}.
\end{proof}

\begin{remark}
We point out that the regularity condition \eqref{8e4} exactly reflects
the anisotropy of the homogeneous quasi-norm of the so-considered underlying
space $(\rn,\rho)$, which is quite natural (see also \cite[Remark 6.11]{hlyy}).
Actually, the difference between the
condition \eqref{8e4} and the classical one on the Euclidean space $(\rn,|\cdot|)$ is caused by
the relationship between the homogeneous quasi-norm and the Euclidean norm.
To be precise, in the first inequality of the proof of \eqref{8.1},
to transfer the Euclidean norm $|(z-x_i)^\az|$ therein
into the homogeneous quasi-norm, we have to use the following fact that,
for any multi-index $\az\in\zz_+^n$
with $|\az|=\lceil \bz\rceil-1$ and $z\in x_i+B_{l_i}$,
\begin{equation}\label{8.2}
\lf|(z-x_i)^\az\r|
\le \max\lf\{\lf[\rho(z-x_i)\r]^{(\lceil \bz\rceil-1) \frac{\ln \lz_+}{\ln b}},\,
\lf[\rho(z-x_i)\r]^{(\lceil \bz\rceil-1) \frac{\ln \lz_-}{\ln b}}\r\}
\end{equation}
(see Lemma \ref{3l8}). Therefore, in order to
cancel this quantity appearing on the right-hand side of \eqref{8.2},
we need that $\mathcal{K}$ has the regularity of the version as in \eqref{8e4}.
The same problem appears in the estimations of \eqref{8e16}.
\end{remark}

\noindent\textbf{Acknowledgements}\quad Long Huang would like to express
his deep thanks to Ziyi He for several useful conversations
on Lemma \ref{6l1'}. The authors would also like to thank
both referees for their carefully reading and
many motivating and useful comments which indeed improve the
quality of this article.

\bigskip

\noindent  Long Huang, Dachun Yang (Corresponding author) and Wen Yuan

\medskip

\noindent  Laboratory of Mathematics and Complex Systems
(Ministry of Education of China),
School of Mathematical Sciences, Beijing Normal University,
Beijing 100875, People's Republic of China

\smallskip

\noindent {\it E-mails}:
\texttt{longhuang@mail.bnu.edu.cn} (L. Huang)

\noindent\phantom{{\it E-mails:}}
\texttt{dcyang@bnu.edu.cn} (D. Yang)

\noindent\phantom{{\it E-mails:}}
\texttt{wenyuan@bnu.edu.cn} (W. Yuan)

\bigskip

\noindent Jun Liu

\medskip

\noindent  School of Mathematics,
China University of Mining and Technology,
Xuzhou 221116, Jiangsu, People's Republic of China

\smallskip

\noindent{\it E-mail:}
\texttt{junliu@cumt.edu.cn} (J. Liu)

\end{document}